\newtheorem{theorem}{Theorem}[section]
\newtheorem{lemma}[theorem]{Lemma}
\newtheorem{prop}[theorem]{Proposition}
\newtheorem{question}[theorem]{Question}
\newtheorem{corollary}[theorem]{Corollary}
\theoremstyle{definition} 
\newtheorem{definition}[theorem]{Definition}
\theoremstyle{remark} 
\newtheorem{remark}[theorem]{Remark}
\newcommand{\ov}{\overline}
\newcommand{\angled}[1]{\langle#1\rangle}
\newcommand {\mr}{\mathrm}
\newcommand {\tX}{\widetilde X}
\newcommand{\act}{\curvearrowright}
\newcommand{\Xa}{X^{\ast}}
\newcommand{\Pa}{\Pi^{\ast}}
\newcommand{\Xb}{X^{\scalebox{0.53}{$\triangle$}}}
\newcommand{\ui}[1]{u_{#1}^{-1}}
\newcommand{\di}[1]{d_{#1}^{-1}}
\newcommand{\sk}[2]{#1^{(#2)}}
\newcommand {\bX}{\bar X}
\newcommand{\finv}{\bar f}
\newcommand{\Linv}{L^{-1}}
\newcommand{\length}{\operatorname{length}}
\begin{document}

\title[
Metric systolicity and two-dimensional Artin groups
]{
	Metric systolicity and two-dimensional Artin groups
}

\author{Jingyin Huang}
\address{Max Planck Institute for Mathematics, Vivatsgasse 7, 53111 Bonn, Germany}
\email{jingyin@mpim-bonn.mpg.de}

\author{Damian Osajda}
\address{Instytut Matematyczny,
Uniwersytet Wroc\l awski\\
pl.\ Grun\-wal\-dzki 2/4,
50--384 Wroc{\l}aw, Poland}
\address{Institute of Mathematics, Polish Academy of Sciences\\
\'Sniadeckich 8, 00-656 War\-sza\-wa, Poland}
\email{dosaj@math.uni.wroc.pl}

\subjclass[2010]{{20F65, 20F36, 20F67, 20F06, 20F10}} 
\keywords{two-dimensional Artin group, metrically systolic group, Word Problem, Conjugacy Problem}

\date{\today}

\begin{abstract}
	We introduce the notion of metrically systolic simplicial complexes. We study geometric and large-scale properties of such complexes and of groups acting on them geometrically. We show that all two-dimensional Artin groups act geometrically on metrically systolic complexes. As direct corollaries we obtain new results on two-dimensional Artin groups and all their finitely presented subgroups: we prove that  the Conjugacy Problem is solvable, and that the Dehn function is quadratic.
	We also show several large-scale features of finitely presented subgroups of two-dimensional Artin groups, lying background for further studies concerning their quasi-isometric rigidity.
\end{abstract}

\maketitle
\tableofcontents
\setcounter{tocdepth}{2}

\section{Introduction}
\label{s:intro}
Artin groups are among most intensively studied classes of groups in Geometric Group Theory. Conjecturally,
they possess nice geometric, topological, algebraic, and algorithmic properties, but most of such features 
are established only for rather restricted subclasses. Even in the case of two-dimensional Artin groups such basic questions as solvability of the Conjugacy Problem or the form of the Dehn function have remained open.
One, conjectural, approach to many questions concerning Artin groups is showing that they act geometrically
on CAT(0) spaces. Such results were established only for a number of rather limited subclasses of Artin groups, for: right-angled Artin groups (RAAGs) \cite{MR1368655}; certain classes of $2$--dimensional Artin groups \cite{brady2002two,BradyMcCammond2000}; Artin groups of finite type with three generators \cite{brady2000artin}; $3$--dimensional Artin groups of type FC \cite{bell2005three}; spherical Artin groups of type $A_4$ and $B_4$ \cite{brady2010braids}; $6$--strand braid group \cite{haettel20166}. 
Another method of treating Artin groups is finding other non-positive-curvature-like structures
describing them. Such approach was successfully carried out e.g.\ in \cite{AppelSchupp1983,Appel1984,Pride1986,Peifer,Bestvina1999}. In \cite{Artinsystolic} the authors 
undertake similar path showing that Artin groups of large type are systolic, that is, simplicially non-positively curved. This allowed to prove many new results about such groups.
In the current article we exhibit a non-positive-curvature-like structure of all two-dimensional
Artin groups and all their finitely presented subgroups, and conclude a number of new algorithmic, and large-scale geometric results
for those groups.
\medskip

As the main tool we introduce a new notion of \emph{metrically systolic} simplicial complex. Roughly speaking,
a simply connected flag simplicial complex with a piecewise Euclidean metric on its $2$--skeleton is {metrically systolic}
if all essential loops in links of vertices have (angle) length at least $2\pi$ (see Section~\ref{s:metric_syst} for details). 
This definition may be treated as a metric analogue of the definition of \emph{systolic complex} (see e.g.\ \cite{Chepoi2000,JanuszkiewiczSwiatkowski2006,Haglund,Artinsystolic}). 
Our main tool for exploring features of metrically systolic complexes is the use of disc diagrams.
It allows us to prove the following results about metrically systolic complexes and groups acting on them geometrically, that is, \emph{metrically systolic} groups.

\begin{theorem}
	\label{t:metsysprop}
	Let $X$ be a metrically systolic complex, and let $G$ be a metrically systolic group. Then the following properties hold.
	\begin{enumerate}
		\item Every loop in $X$ bounds a CAT(0) disc diagram (see Theorems~\ref{t:CAT(0)diagrem} and \ref{t:flatpt} in the text).
		\item The Dehn functions of $X$ and $G$ are quadratic (see Corollary~\ref{c:Dehn}).
		\item Finitely presented subgroups of $G$ are metrically systolic (see Theorem~\ref{t:fps}).
		\item If $G$ is torsion-free and $g^m$ is conjugated to $g^n$ only when $g^n=g^m$, for every $g\in G$, then the Conjugacy Problem is solvable in $G$ (see Theorem~\ref{t:cp}).
		\item $X$ and $G$ have constant filling radius for $2$--spherical cycles (see Theorem~\ref{t:sfrc} and Corollary~\ref{c:S2FRC2}).
		\item Morse Lemma for $2$--dimensional quasi-discs in $X$ (see Theorem~\ref{t:Morse}).
	\end{enumerate}
\end{theorem}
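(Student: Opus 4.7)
The overall strategy is to set up a disc-diagram calculus for metrically systolic complexes that plays the same role here as the combinatorial disc-diagram calculus does in the systolic setting and the geodesic-triangle calculus does in the CAT(0) setting. Concretely, the metric systolic condition (every essential link loop has angle-length $\geq 2\pi$) is tailor-made to control local curvature of disc diagrams via their link structure, so I would make (1) the foundational statement and derive the other five items as corollaries or variations.

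For part (1), I would take any filling disc diagram $D\to X$ of a loop $\gamma$ and pick one that is minimal in a suitable lexicographic order (say first on area, then on some secondary complexity of its $1$--skeleton). Consider the induced piecewise Euclidean structure on $D$. At every interior vertex $v$ of $D$, the link $\mathrm{lk}(v,D)$ maps to $\mathrm{lk}(\bar v, X)$, and the total angle around $v$ is precisely the angle length of that link loop. If this total angle were less than $2\pi$, the link loop in $X$ would be essential of length $<2\pi$, violating the metric systolicity hypothesis; hence the link loop must be null-homotopic in the link, and this null-homotopy should be used to perform a local modification (a diamond move / folding / collapse) of $D$ that strictly reduces the chosen complexity, contradicting minimality. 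A similar but weaker argument handles boundary vertices so the CAT(0) link condition holds there. This reduction step is the main technical obstacle: making the local modification well-defined and genuinely complexity-decreasing is exactly where the angle-metric replaces the usual combinatorial arguments of \cite{Artinsystolic}.

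Given (1), part (2) is almost automatic: a CAT(0) piecewise Euclidean disc satisfies the Euclidean isoperimetric inequality, so its area is bounded by a constant times the square of its boundary length, giving a quadratic Dehn function for both $X$ and $G$. Part (3) uses a standard tower/inflation construction: from a finite presentation of $H\leq G$, form a presentation $2$--complex $Y$, lift its relators to discs in $X$ via (1), and glue to obtain a piecewise Euclidean $2$--complex on which $H$ acts geometrically; the metric systolicity of the new complex reduces to checking link loops, which can be compared to link loops in $X$ using the same disc-diagram machinery. Parts (5) and (6) follow by upgrading the disc-diagram technology to spherical and quasi-disc fillings respectively: for (5) one fills an $S^2$--cycle with a $3$--ball diagram built from a continuous family of CAT(0) disc diagrams and bounds its diameter; for (6) one runs the classical Morse-type argument (deep point, no backtracking) on minimal CAT(0) disc diagrams spanned by quasi-geodesic boundary.

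The most delicate piece is (4). Here I would encode conjugacy of two cyclically reduced words $u,v$ by a minimal-area annular diagram $A\to X$, and then apply the same minimality-plus-link argument used for (1) to conclude that the universal cover of $A$ is CAT(0), so $A$ itself is either very thin (a band of bounded complexity) or contains an essentially flat subannulus. The torsion-free and root-rigidity hypothesis on $G$ is precisely what is needed to rule out arbitrarily large flat subannuli: a genuine flat sub-annulus would produce two conjugate powers $g^m,g^n$ with $m\neq n$ and $g^m\neq g^n$. Ruling this out bounds the area of a minimal annular diagram linearly in $|u|+|v|$, giving a decidable search for the conjugating element. The main obstacle here, beyond (1), is to articulate the "flat annulus alternative" carefully enough in the metric systolic setting for the root-rigidity hypothesis to kick in, and to convert the resulting area bound into an effective algorithm.
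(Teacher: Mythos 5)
Your treatment of parts (1) and (2) matches the paper's in spirit: the paper minimizes area (and secondary complexity) via explicit local ``moves'' (A--F), with the key step (Lemma~\ref{l:fillshort}) using the $2\pi$--large condition exactly as you describe to shortcut any interior link of angular length $<2\pi$; the quadratic Dehn function then follows from the Euclidean isoperimetric inequality for CAT(0) piecewise-Euclidean discs with finitely many triangle shapes. For part (3) you take a genuinely different but plausible route: the paper does not build a presentation complex and fatten it, it instead observes that local $2\pi$--largeness is preserved under passage to covers and to full subcomplexes and then appeals to the general theorem of Hanlon--Mart\'inez~\cite{HanlonMartinez} (see also Wise). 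Your tower/inflation sketch is, in effect, an attempt to reprove their result in this special case; it can be made to work, but the paper's citation is considerably shorter and cleaner.

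The real gaps are in (4), (5), and (6). For (4), your ``flat annulus alternative'' is not what the paper does, and the sentence ``a genuine flat sub-annulus would produce two conjugate powers $g^m,g^n$ with $m\neq n$'' does not hold: a flat subannulus only shows that some conjugate of $g$ acts by translation on a flat strip, which says nothing about distinct powers of $g$ being conjugate. In the paper the root-rigidity hypothesis is used upstream and differently: together with torsion-freeness and properness it guarantees that for a uniform $n$ the \emph{minimal displacement} of $g^n$ (over all vertices) is large enough to satisfy the hypothesis of Lemma~\ref{l:CP}; the conjugacy bound itself then comes from building a $\langle g\rangle$-periodic CAT(0) singular strip, using convexity to control the displacement along an approximating path, and a pigeonhole cut-and-paste in the Cayley graph. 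For (5), the paper's proof of Theorem~\ref{t:sfrc} is not a ``continuous family of CAT(0) disc diagrams'' but a direct induction on the area of the simplicial $2$--sphere: since $S^2$ carries no non-positively curved metric there is a vertex with link of angular length $<2\pi$, and Lemma~\ref{l:fillshort} replaces its star, strictly reducing area. For (6), the paper's Morse Lemma is not a backtracking/deep-point argument on a single disc diagram; it is a homological contradiction. One assumes a point of $\mathrm{im}(f)$ far from $\mathrm{im}(g)$, forms a sphere from the two fillings, fills it with no interior vertices using Theorem~\ref{t:sfrc}, and then derives a contradiction via Mayer--Vietoris since a cycle around the deep point is nontrivial in $H_1$ of the intersection. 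Your sketches for (5) and (6) as written do not constitute viable proof strategies.
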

We believe that metrically systolic complexes deserve further extensive studies on their own; see a list of open questions in Section~\ref{s:open}. 
Geometrically, metric systolicity enables us to formalize a weaker notion of non-positively curved space where one only requires every minimal filling disc of a $1$--cycle to be non-positively curved. This naturally arises by examining the geometry of $2$--dimensional Artin groups. It is interesting to compare this with the work of Petrunin and Stadler \cite{PetruninStadler}, where (roughly speaking) they showed any minimal disc in a $CAT(0)$ space is $CAT(0)$. Thus it is natural to wonder whether one can set up this weaker notion in a more analytical way and apply it to natural classes of examples.

In the current
paper we focus on the use of metric systolicity in the context of Artin groups. To this end, starting with the standard
Cayley complex for a $2$--dimensional Artin group $G$, we modify it to obtain a metrically systolic $G$--complex. Therefore, we conclude the following.
\begin{theorem}[Theorem~\ref{t:Xmetrsys}]
	\label{t:main}
	Two-dimensional Artin groups are metrically systolic.
\end{theorem}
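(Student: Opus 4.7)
The plan is to construct, for any $2$-dimensional Artin group $G$, a simply connected flag simplicial complex $X$ with a $G$-invariant piecewise Euclidean metric on its $2$-skeleton and a geometric $G$-action, then verify the angle-length condition of Section~\ref{s:metric_syst}. The starting point is the standard presentation $2$-complex of $G$, whose $2$-cells are the $2m_{ij}$-gonal Artin relators; this carries a geometric $G$-action but is not simplicial, so it must be triangulated and, in fact, more carefully refined.

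The naive triangulation, coning each relator $2m_{ij}$-gon over a central vertex and placing an isosceles Euclidean metric with apex angle $\pi/m_{ij}$ at the centre, fails the link condition. The reason is that two distinct Cayley-relator $2$-cells of the same type $\{s_i,s_j\}$ can share a pair of consecutive edges at a vertex $v$ of $G$ (this is already visible for $m_{ij}=4$), producing length-$4$ link cycles at $v$ of angular length $2\pi-2\pi/m_{ij}<2\pi$. So the actual construction must amend this: I would either merge such offending $2$-cells before triangulating, insert auxiliary vertices along shared edges to separate the coinciding corners, or pass to a thicker simplicial model before coning. The aim is to obtain a flag simplicial $G$-complex $X$ in which no essential link cycle of length less than $6$ survives, while the flag property, simple connectedness, and geometric action are preserved.

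With such an $X$ in hand, the link verification splits cleanly. At each central subdivision vertex the link is a cycle of angular length exactly $2\pi$. At an original vertex $v$ the link is bipartite between generator-edge vertices and cone-vertices, so essential cycles have even length. For cycles of length at least $8$, the bound $\pi-\pi/m_{ij}\ge\pi/2$ on each two-step segment forces angular length at least $2\pi$ automatically. The crucial case is length-$6$ cycles, which correspond to triples of generators $s_i,s_j,s_k$ with pairwise finite labels and have total angular length $3\pi-\pi(1/m_{ij}+1/m_{jk}+1/m_{ik})$; this is at least $2\pi$ precisely when the $2$-dimensionality hypothesis $1/m_{ij}+1/m_{jk}+1/m_{ik}\le 1$ holds, so the combinatorial definition of $2$-dimensional Artin group is exactly the input we need. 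The main obstacle will be the careful engineering of the modification described in the previous paragraph, together with a classification of essential link cycles in the modified complex precise enough to reduce to the length-$6$ case.
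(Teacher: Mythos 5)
You correctly identify the starting point (cone-subdivision of the presentation $2$--complex with apex angle $\pi/m_{ij}$), the failure mode (a length-$4$ link cycle of angular length $2\pi-2\pi/m_{ij}$ whenever two same-type precells share $\geq 2$ edges), and the crucial length-$6$ computation $3\pi-\pi(1/m_{ij}+1/m_{jk}+1/m_{ik})\geq 2\pi$ via $2$--dimensionality. That calculation is indeed exactly case (3) of Lemma~\ref{lem:2pi cycle}. However, there is a real gap in the middle of your argument.

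First, you never commit to a fix, and at least one of your three candidates is a dead end: inserting auxiliary vertices along the shared edges subdivides those edges but leaves the link of the original vertex $v$ combinatorially and metrically unchanged, so the offending $4$--cycle persists. The paper's mechanism is different: whenever two cells $\Pi_1,\Pi_2$ share $\geq 2$ edges, one adds a \emph{diagonal edge} between their interior (cone) vertices $o_1,o_2$, with its length chosen by the function $\phi$ of Definition~\ref{def:length} so that each modified $o$--link stays flat; one then takes the flag completion. Second, and more seriously, once any such fix introduces new edges, the remainder of your link verification no longer applies. The link of a real vertex is no longer bipartite between ``generator-edge vertices'' and ``cone vertices'' -- now cone vertices can be mutually adjacent (the type~II edges of Lemma~\ref{lem:edge type II}) -- so essential cycles are not forced to have even length and your $\geq 8$--step estimate does not cover the new cycles that use the diagonals. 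Similarly, after modification the link of a cone vertex $o$ is no longer a cycle of angular length $2\pi$ but a substantially larger graph. Controlling these modified links is the actual content of the proof: the paper spends Section~\ref{s:link} showing that each half of each link is a tree of cliques (Lemmas~\ref{lem:half}, \ref{lem: interior tree of cliques}), that any $2$--full cycle must pass through the two tip vertices, and that any edge path between the tips has angular length $\geq\pi$ (Lemmas~\ref{lem:pi lower bound}, \ref{lem:pi lower bound1}); and then Section~\ref{s:general} glues the dihedral blocks and classifies the $2\pi$--short cycles that straddle blocks (Lemma~\ref{lem:2pi cycle}). Your length-$6$ case is one of the four cases of that final lemma; the others, and all of Section~\ref{s:link}, are missing from your sketch.
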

We refer to the next subsection for an intuitive explanation of the construction of the complex, as well as comparison with our previous work on constructing systolic complexes for large-type Artin groups from \cite{Artinsystolic}.

Direct consequences of Theorem~\ref{t:metsysprop} and Theorem~\ref{t:main} are new results on $2$--dimensional
Artin groups and their subgroups listed in Corollary~\ref{c:maincor}. Let us note that even if $2$--dimensional Artin groups were CAT(0),
this, a priori, would not say anything about their finitely presented subgroups -- this suggests an important 
advantage of metric systolicity. Moreover, by Brady and Crisp \cite{brady2002two}, there are $2$--dimensional Artin groups which can not act  nicely on $2$--dimensional CAT(0) complexes. On the other hand, metric systolicity enables us to stay in the $2$--dimensional world -- one need to study only CAT(0) disc diagrams. This will be convenient for our further work in \cite{Artinsrigidity} concerning quasi-isometries of $2$--dimensional Artin groups.

\begin{corollary}
	\label{c:maincor}
	Let $G$ be a finitely presented subgroup of a $2$--dimensional Artin group. Then:
	\begin{enumerate}
		\item $G$ has quadratic Dehn function and, in particular, solvable Word Problem;
		\item $G$ has solvable Conjugacy Problem;
		\item $G$ has constant filling radius for $2$--spherical cycles;
		\item Morse Lemma for two-dimensional quasi-discs in $G$ holds.
	\end{enumerate}
\end{corollary}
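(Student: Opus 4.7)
The plan is to deduce the corollary by combining Theorem~\ref{t:main} with the appropriate parts of Theorem~\ref{t:metsysprop}, after a single preliminary step which transfers metric systolicity from the ambient Artin group to its finitely presented subgroup.

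First I would set up notation: let $A$ be a two-dimensional Artin group and $G\leq A$ a finitely presented subgroup. Theorem~\ref{t:main} supplies a metrically systolic $A$-complex, so $A$ is a metrically systolic group. By Theorem~\ref{t:metsysprop}(3), the class of metrically systolic groups is closed under taking finitely presented subgroups, and hence $G$ itself is metrically systolic. This is the one place where the strength of the metric-systolic framework (as opposed to merely working with a CAT(0) action of $A$) is essential; it is also what allows us to stay in dimension two throughout.

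With $G$ metrically systolic, claims (1), (3), and (4) of the corollary follow essentially for free. Claim (1) is Theorem~\ref{t:metsysprop}(2) (quadratic Dehn function) together with the standard fact that a recursive presentation with a recursive upper bound on the Dehn function has solvable Word Problem. Claim (3) is Theorem~\ref{t:metsysprop}(5), and claim (4) is Theorem~\ref{t:metsysprop}(6) applied verbatim to $G$.

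The step that requires more care is claim (2), since Theorem~\ref{t:metsysprop}(4) is conditional: it presupposes that $G$ is torsion-free and that in $G$ no two distinct powers of a common element are conjugate. I expect this to be the main obstacle. Torsion-freeness is easy, because two-dimensional Artin groups are known to be torsion-free (they admit a two-dimensional $K(\pi,1)$), and this property passes to subgroups. The condition on conjugated powers needs to be verified for all finitely presented subgroups of $A$, not just for $A$ itself. I would approach this by first establishing it for $A$ via the structure of centralizers of infinite-order elements in two-dimensional Artin groups (cyclic, or at worst of a very restricted form arising from dihedral parabolic subgroups), and then observing that the property $g^m\sim g^n\Rightarrow g^m=g^n$ is inherited by arbitrary subgroups: if $h\in G$ conjugates $g^m$ to $g^n$ in $G$, then the same relation holds in $A$, forcing $g^m=g^n$. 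Once this is in place, Theorem~\ref{t:metsysprop}(4) applies and gives the Conjugacy Problem for $G$, completing the corollary.
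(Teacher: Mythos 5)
Your overall structure matches the paper's: Theorem~\ref{t:main} makes the ambient Artin group $A$ metrically systolic, Theorem~\ref{t:metsysprop}(3) (that is, Theorem~\ref{t:fps}) transfers this to the finitely presented subgroup $G$, and parts (1), (3), (4) of the corollary then read off directly from Theorem~\ref{t:metsysprop}(2), (5), (6). For part (2) you correctly isolate the two hypotheses of Theorem~\ref{t:metsysprop}(4); torsion-freeness via \cite{CharneyDavis} and the observation that the power--conjugacy condition is inherited by subgroups both match the paper's (partly implicit) reasoning. The one genuine difference is the device used to verify $g^m\sim g^n\Rightarrow m=n$ in $A$: you propose to argue through the structure of centralizers, whereas the paper invokes Theorem~\ref{t:further}(1), i.e.\ Theorem~\ref{t:absbgrp}, that abelian (in particular cyclic) subgroups are undistorted. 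The undistortion route is arguably more direct: from $g^n=pg^mp^{-1}$ one iterates to $p^kg^{m^k}p^{-k}=g^{n^k}$ for all $k$, so $|n|\neq|m|$ would force $\langle g\rangle$ to be exponentially distorted, a contradiction. Note that your centralizer route does not buy independence from external input, since Theorem~\ref{t:further} itself is proved using the CAT(0) Deligne complex and the flat torus theorem rather than metric systolicity. Finally, be aware that neither your sketch nor the paper's one-sentence justification explicitly addresses the case $m=-n$, which the distortion argument alone does not exclude; the intended reading of the hypothesis of Theorem~\ref{t:cp} appears to be over positive exponents, which is also what its proof uses, and under that reading the undistortion argument suffices.
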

Dehn function, Word Problem, and Conjugacy Problem are among the most basic notions explored 
in the context of any group. Still, little was known about them for $2$--dimensional Artin groups and their 
finitely presented subgroups prior to our work.

As far as we know there have been no general results concerning Dehn function for $2$--dimensional Artin groups before. Chermak \cite{Chermak1998} proved the Word Problem is solvable for $2$--dimensional Artin groups, but no general statement of this type have been known for finitely presented subgroups.

Solvability of the Conjugacy Problem for $2$--dimensional Artin groups and their finitely presented subgroups follows directly from Theorem~\ref{t:metsysprop} (4). It is so because $2$--dimensional Artin groups are torsion-free by \cite{CharneyDavis}, and their cyclic subgroups are undistorted (see Theorem~\ref{t:further} below).
Prior to our result solvability of the Conjugacy Problem was established only for a few particular subclasses
of Artin groups: braid groups \cite{Garside1969}, finite type Artin groups \cite{BrieskornSaito1972,Deligne1972,Charney1992,MR1314589}, large-type Artin groups \cite{Appel1984,AppelSchupp1983}, triangle-free Artin groups \cite{Pride1986}, $3$--dimensional Artin groups of type FC \cite{bell2005three}, certain $2$--dimensional Artin groups with 3 generators \cite{brady2002two}, some Artin groups of Euclidean type \cite{MR2150887,MR2208796,MR2985512,MR3351966,mccammond2011artin}, RAAG's \cite{MR1023285,MR1285550,MR1314099,MR2546582}. In particular, the question about solvability of the Conjugacy Problem has been open for the class of $2$--dimensional Artin groups.

Assertions (3) and (4) from Corollary~\ref{c:maincor} could be derived without referring to
metric systolicity. However, for the proof of the strong form of (3), as presented in Theorem~\ref{t:sfrc} in the text, the use of metric systolicity is very convenient. This result, in turn, is a crucial ingredient in the proof of the Morse Lemma for two-dimensional quasi-discs (see the proof of Theorem~\ref{t:Morse}). The latter
is an important large-scale feature of metrically systolic complexes, groups, and of $2$--dimensional Artin groups.

The metrically systolic complexes constructed in Theorem~\ref{t:main}, as well as the large-scale features mentioned above, will play fundamental role in the study of quasi-isometric invariants of $2$--dimensional Artin groups in our subsequent work \cite{Artinsrigidity}. 
For applications in \cite{Artinsrigidity} we need another result, presented in the following theorem.
It does not rely on metric systolicity, and follows from known facts, but it seems that it is not
present in the literature.
\begin{theorem}[Theorem~\ref{t:absbgrp} and Corollary~\ref{c:solv}]
	\label{t:further}
	Let $A_{\Gamma}$ be a $2$--dimensional Artin group. Then
	\begin{enumerate}
		\item every abelian subgroup of $A_{\Gamma}$ is quasi-isometrically embedded;
		\item nontrivial solvable subgroups are either $\mathbb Z$ or virtually $\mathbb Z^2$.
	\end{enumerate}
\end{theorem}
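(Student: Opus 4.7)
The plan is to derive part (1) from a classification of abelian subgroups of $A_\Gamma$ up to commensurability with standard parabolic subgroups, and then to derive part (2) from part (1) together with a Tits alternative for $2$-dimensional Artin groups. As a starting observation, by Charney-Davis \cite{CharneyDavis} the Salvetti complex gives a $2$-dimensional $K(\pi,1)$ for $A_\Gamma$, so $A_\Gamma$ is torsion-free with cohomological dimension at most $2$; in particular, every abelian subgroup has rank at most $2$.

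For (1), I would separate the cases $\operatorname{rk}(H)=1$ and $\operatorname{rk}(H)=2$. For a cyclic $H=\langle g\rangle$, the aim is to produce a uniform positive lower bound on the stable translation length of $g$ on the Cayley graph of $A_\Gamma$, which implies undistortion of $H$. This can be achieved by combining the exponent-sum homomorphism $A_\Gamma\to\mathbb{Z}$ (handling elements with nontrivial exponent sum) with the action of $A_\Gamma$ on its (modified) Deligne complex, in which $g$ is either elliptic inside a finite-type parabolic subgroup (and hence sits in $\mathbb{Z}$, $\mathbb{Z}^2$, or a dihedral Artin group where the claim reduces to a direct check) or loxodromic with rational positive translation length. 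For $\operatorname{rk}(H)=2$, I would use the structure of centralizers in $2$-dimensional Artin groups, as worked out by Crisp and by Appel-Schupp in the large-type case, to show that after conjugation $H$ is commensurable with one of two standard rank-$2$ abelian parabolic subgroups: $\langle s,t\rangle$ with $m_{st}=2$, or $\langle z_{st},s\rangle$, where $z_{st}$ generates the center of a dihedral parabolic $\langle s,t\rangle$ with $m_{st}\geq 3$. The quasi-isometric embedding of these two types of subgroups then follows either from a retraction $A_\Gamma\to A_{\{s,t\}}$ (available when all adjacent edge labels are even) or from convexity of the corresponding subcomplex in a suitable non-positively-curved model for $A_\Gamma$.

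For (2), I would invoke the Tits alternative for $2$-dimensional Artin groups (established by Martin-Przytycki and related work via acylindrical hyperbolicity of the action on the Deligne complex): every subgroup of $A_\Gamma$ either contains a nonabelian free subgroup or is virtually abelian. A nontrivial solvable subgroup $H$ is therefore virtually abelian and contains a finite-index abelian subgroup $A$; by (1), $A$ is isomorphic to $\mathbb{Z}$ or $\mathbb{Z}^2$. Since $A_\Gamma$ is torsion-free, $H$ is forced to be $\mathbb{Z}$ in the first case and remains virtually $\mathbb{Z}^2$ in the second, as claimed. The main obstacle throughout is the uniform quasi-isometric embedding of standard rank-$2$ parabolic subgroups: retraction-based arguments succeed cleanly when all adjacent edge labels are even, but the odd-label case demands a convexity argument in a CAT(0)-like model, which is delicate since $2$-dimensional Artin groups are not in general CAT(0) in dimension $2$ by Brady-Crisp \cite{brady2002two}, so one must work with a Moussong-rescaled Deligne complex or, alternatively, with Chermak's normal form \cite{Chermak1998} to extract the needed length comparison directly.
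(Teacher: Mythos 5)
Your overall skeleton (torsion-freeness and $\mathrm{cd}\le 2$ from Charney--Davis, then treat rank $1$ and rank $2$ separately, then bootstrap to solvable subgroups) matches the paper, and your rank-$1$ argument via the semisimple action on the modified Deligne complex is essentially what the paper does. But for rank $2$ and for the passage to solvable subgroups you take genuinely different routes, and the rank-$2$ route has a real gap.

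For rank $2$, you propose first to \emph{classify} rank-$2$ abelian subgroups up to commensurability with standard parabolics, citing Crisp and Appel--Schupp for centralizer structure. The problem is that those results are established only for the large-type (indeed CLTTF / extra-large) subclasses, and you yourself flag this. No such classification is available in the literature for all two-dimensional Artin groups, and proving it would be harder than the theorem you are trying to prove. It is also not obvious that the list you give ($\langle s,t\rangle$ with $m_{st}=2$, or $\langle z_{st},s\rangle$) is exhaustive even in principle --- e.g.\ you would need to rule out flat planes in the Deligne complex that are not confined to a single block. The paper avoids any classification: it applies the Flat Torus Theorem directly to the action of the abelian subgroup $A$ on the CAT(0) modified Deligne complex $D_\Gamma$. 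The interesting case is when $A$ preserves a geodesic line $\ell$ but does not act cocompactly; there the paper uses cellularity to see that the translation-length homomorphism $A\to\mathbb R$ has image $\mathbb Z$, passes to a finite-index subgroup $\langle a_1,a_2\rangle$ with $a_1$ elliptic and $a_2$ loxodromic, and then gets the lower bound on $d_{A_\Gamma}(n_1a_1+n_2a_2,\ast)$ by combining the Lipschitz orbit-plus-projection map $A_\Gamma\to\ell$ (which sees $n_2$) with the already-established undistortion of $\langle a_1\rangle$ (which sees $n_1$). No parabolic classification, no retraction, and no even-vs-odd label dichotomy is needed. This is both cleaner and actually available in the required generality.

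For part (2), you invoke a Tits alternative for two-dimensional Artin groups and then apply (1). That is a plausible and genuinely different route, but it imports a substantially stronger external input, and the relevant Tits-alternative results for the full class of two-dimensional Artin groups are not obviously in place (the acylindrical-hyperbolicity work you gesture at covers the hyperbolic-type subclass). The paper's route is more economical and self-contained: part (1) shows $A_\Gamma$ is translation discrete, and then Conner's theorem (\cite[Theorem~3.4]{Conner2000}) says solvable subgroups of finite cohomological dimension in a translation-discrete group are virtually $\mathbb Z^n$; the bound $\mathrm{cd}\le 2$ forces $n\le 2$, and torsion-freeness upgrades the rank-$1$ case to $\mathbb Z$. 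You should keep this version in mind as the one that needs no Tits alternative.
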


\noindent
{\bf Comments on the proof of Theorem~\ref{t:main}.}
Here we present a rough idea of the construction of metrically systolic complexes for two dimensional Artin groups. 

Let $\Gamma$ be a finite simple graph with each of its edges labeled by an integer $\ge 2$. An \emph{Artin group with defining graph $\Gamma$}, denoted $A_\Gamma$, is given by the following presentation. The generators of $A_\Gamma$ are in one to one correspondence with vertices of $\Gamma$, and there is a relation of the form $\underbrace{aba\cdots}_{n}=\underbrace{bab\cdots}_{n}$ whenever two vertices $a$ and $b$ are connected by an edge labeled by $n$.

An Artin group is of \emph{dimension $d$} if it has cohomological dimension $d$. By Charney and Davis \cite{CharneyDavis}, $A_\Gamma$ has dimension $\le 2$ if and only if for any triangle $\Delta\subset\Gamma$ with its sides labeled by $p,q,r$, we have $\frac{1}{p}+\frac{1}{q}+\frac{1}{r}\le 1$. In particular, the class of all \emph{large-type} Artin groups, where the label of each edge in $\Gamma$ is $\ge 3$, is properly contained in the class of Artin groups of dimension $\le 2$.

Let  $A_\Gamma$ be an Artin group of dimension $\le 2$ and let $\Xa_\Gamma$ be the presentation complex of $A_\Gamma$. A natural way to metrize $\Xa_\Gamma$ is to declare each $2$--cell in $\Xa_\Gamma$ is a regular polygon in the Euclidean plane. However, if we take $2$--cells $\Pi_1$ and $\Pi_2$ (say, two $n$--gons) such that $P=\Pi_1\cap\Pi_2$ is a path with $\ge 2$ edges, then any interior vertex of $P$ is not non-positively curved. Let $o_i$ be the center of $\Pi_i$ and let the two endpoints of $P$ be $v_1$ and $v_2$. Let $K$ be the region in $\Pi_1\cup \Pi_2$ bounded by the $4$--gon whose vertices are $o_1$, $o_2$, $v_1$ and $v_2$. Those positively curved corner points are contained in $K$. Now we add a new edge $e$ between $o_1$ and $o_2$ and add two new triangles $\{\Delta_i\}_{i=1}^2$ such that the three sides of $\Delta_i$ are $e$, $\overline{o_1v_i}$ and $\overline{o_2v_i}$; see Figure~\ref{f:explanation}.

\begin{figure}[h]
	\centering
	\includegraphics[width=0.5\linewidth]{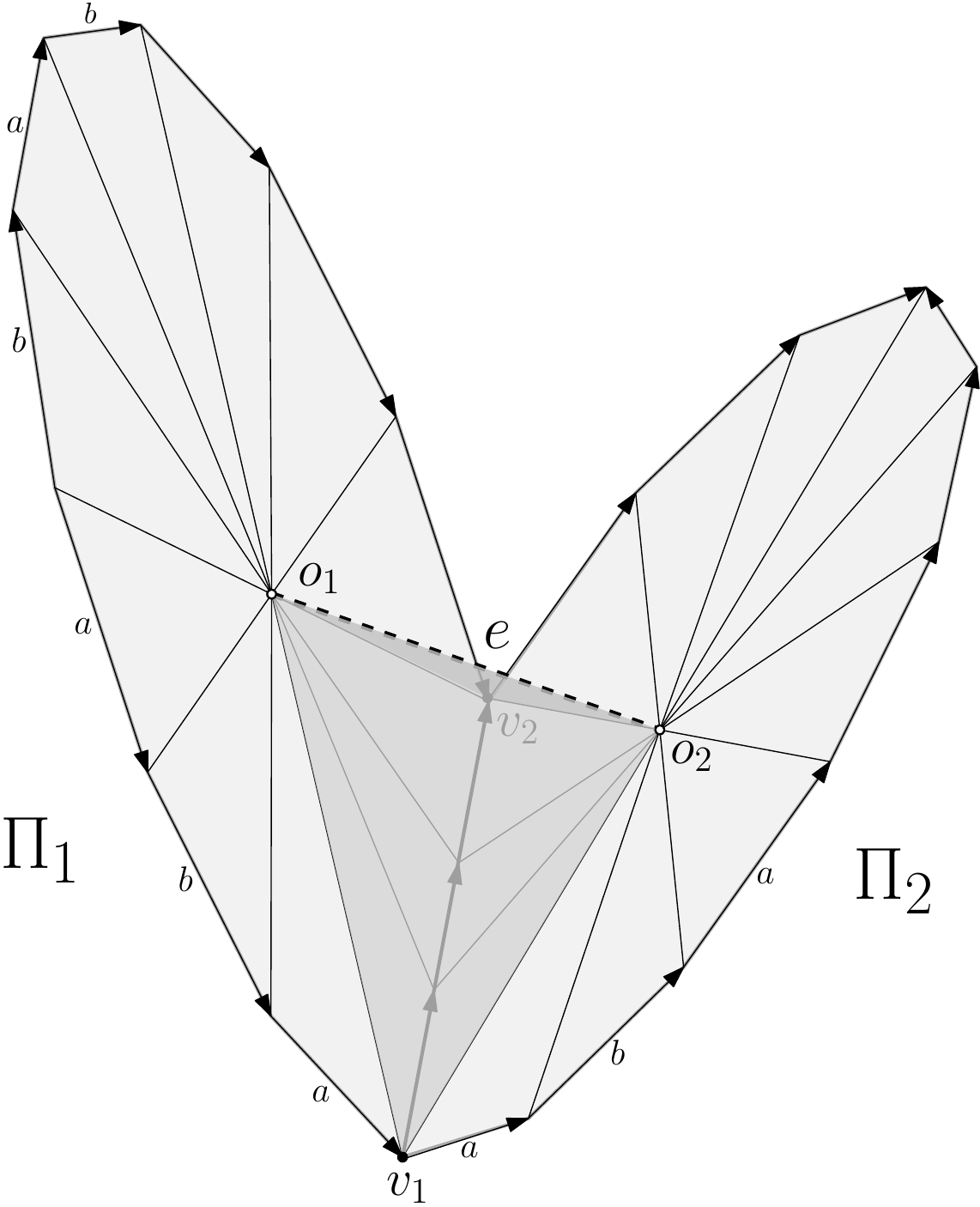}
	\caption{Adding the edge $e$ and the triangles $\Delta_1=o_1o_2v_1$, and $\Delta_2=o_1o_2v_2$ (in the universal cover of $\Xa_\Gamma$).}
	\label{f:explanation}
\end{figure}

Geometrically, one can think of $K$ as a configuration sitting inside the Euclidean $3$--space $\mathbb E^3$. Then positively curved points in $K$ give rise to corners in the configuration. Now we use the polyhedron bounded by $K\cup\Delta_1\cup\Delta_2$ to fill these corners. Combinatorially, one can think of $\Delta_1\cup\Delta_2$ as a replacement of $K$ to get rid of positively curved points in the disc diagram.

Now we decide the length of $e$. From the geometric viewpoint, $e$ should be shorter if $P$ is longer. From the combinatorial viewpoint, we would like $o_i$ to be flat after we replace $K$ by $\Delta_1\cup\Delta_2$. Thus $\angle_{o_1}(v_1,o_2)=\angle_{o_1}(o_2,v_2)=\frac{|P|}{4n}2\pi$ ($|P|$ is the number of edges in $P$), which determines the length of $e$. 

Pick a triangle $\Delta\subset\Gamma$, then $\Delta$ gives rise to three $2$--cells arranged in a cyclic fashion around a vertex $v$. The condition on two dimensionality of $A_\Gamma$ implies $v$ is already non-positively curved in such configuration, so we do not apply any modifications here.

The main difference between the construction in \cite{Artinsystolic} and the one in this paper is that the former is purely combinatorial, while the current one uses both the metric and combinatorial structure. Thus the method in this paper has more flexibility and applies to a much larger class of Artin groups. Moreover, the structure of flat points in the disc diagrams is more convenient for our later use in \cite{Artinsrigidity}. However, since we are now outside the purely combinatorial setting, 
some results from \cite{Artinsystolic} -- e.g.\ biautomaticity -- are much harder to obtain.

\medskip

\noindent
{\bf Organization of the paper.}
In Section~\ref{s:metric_syst} we define metrically systolic complexes and prove their fundamental property -- every cycle can be filled by a $CAT(0)$ disc diagram. In Section~\ref{s:msprop}, we prove the rest of properties in Theorem~\ref{t:metsysprop}, using $CAT(0)$ disc diagrams as a basic tool. In Section~\ref{s:dihedral}, we construct the metrically systolic complexes for dihedral Artin groups. In Section~\ref{s:link}, we study the local properties of these complexes with two purposes. First we show the complexes in Section~\ref{s:dihedral} are indeed metrically systolic. Second we show that there are no local obstructions to metric systolicity if we glue these complexes together under certain conditions. In Section~\ref{s:general} we glue the complexes for dihedral Artin groups to construct the metrically systolic complexes for any $2$--dimensional Artin groups, and prove Theorem~\ref{t:main}. In Section~\ref{s:open}, we prove Theorem~\ref{t:further} and leave some open questions about metrically systolic groups and complexes.
\medskip

\noindent
{\bf Acknowledgments.} 
The authors were partially supported by (Polish) Narodowe Centrum Nauki, grants no.\ UMO-2015/\-18/\-M/\-ST1/\-00050 and UMO-2017/25/B/ST1/01335. A major part of the work on the paper was carried out while D.O.\ was visiting McGill University.
We would like to thank the Department of Mathematics and Statistics of McGill University
for its hospitality during that stay.

\section{Metrically systolic complexes}
\label{s:metric_syst}
In this section we introduce the notion of metrically systolic complex. Then we show its most
important feature, used later extensively for proving other properties of metrically systolic complexes and groups. The feature is the existence of CAT(0) disc diagrams filling any cycle inside the complex; see Theorem~\ref{t:CAT(0)diagrem} and Theorem~\ref{t:flatpt}. 
The proofs presented in Subsection~\ref{s:msCAT(0)} go by modifying any given disc diagram
to a CAT(0) one by performing a finite sequence of local ``moves". 
As an immediate consequence
we obtain the quadratic Dehn function in Corollary~\ref{c:Dehn}.

\subsection{Definition}
\label{s:msdef}
Let $X$ be a flag simplicial complex with its two-skeleton $\sk{X}{2}$ equipped with a metric $d$
in which every $2$--simplex (triangle) is isometric to a Euclidean triangle.
For a vertex $v\in X$ its \emph{link}, denoted $lk(v,X^{(1)})$, is the full subcomplex (subgraph) of $\sk{X}{1}$
spanned by all vertices adjacent to $v$. Every link is equipped with an \emph{angular metric}, defined as follows. For an edge $\overline{v_1v_2}$, we define the \emph{angular length} of this edge to be the angle $\angle_v(v_1,v_2)$ with the apex $v$. This turns the link into a metric graph, and the angular metric, which we denote by $d_\angle$, is the path metric of this metric graph (note that a priori we do not know whether $\angle_v(v_1,v_2)=d_\angle(v_1,v_2)$ for adjacent vertices $v_1$ and $v_2$). The \emph{angular length} of a path $\omega$ in the link, which we denote by $\length_\angle(\omega)$, is the summation of angular lengths of edges in this path. In this paper we assume that the following weak form of triangle inequality
holds for angular length in $X$: for each $v\in X$ and every three pairwise adjacent vertices $v_1,v_2,v_3$
in the link of $v$ we have that $\angle_v(v_1,v_3)\le  \angle_v(v_1,v_2)+\angle_v(v_2,v_3)$.
Then we call $X$ (with metric $d$) a \emph{metric simplicial complex}.

\begin{remark}
	Note that we allow that the inequality becomes equality -- intuitively it corresponds to degenerate $2$--simplices
	in a link, which corresponds to degenerate $3$--simplices in $X$.
\end{remark}

For $k=4,5,6,\ldots$, a simple $k$--cycle $C$ in a simplicial complex is \emph{$2$--full} if 
there is no edge connecting any two vertices in $C$ having a common neighbor in $C$. 

\begin{definition}[Metrically systolic complexes and groups]
	\label{d:metric_syst}
	A link in a metric simplicial complex is \emph{$2\pi$--large} if every $2$--full
	simple cycle in the link has angular length at least $2\pi$.
	A metric simplicial complex $X$ is \emph{locally $2\pi$--large} if every its link is $2\pi$--large.
	A simply connected locally $2\pi$--large metric complex is called a \emph{metrically systolic} complex.
	\emph{Metrically systolic} groups are groups acting geometrically by isometries on metrically systolic complexes.
\end{definition}

\begin{remark}
	A \emph{systolic complex}, that is, a connected simply connected flag simplicial complex
	for which all full cycles in links consist of at least six edges is metrically systolic
	when equipped with the metric in which all triangles are Euclidean triangles with edges of unit lengths.
	For more on systolic complexes see e.g.\ \cite{Chepoi2000,JanuszkiewiczSwiatkowski2006,Haglund,JanuszkiewiczSwiatkowski2007,Wise2003-sixtolic,Elsner2009-flats,ChepoiOsajda,Artinsystolic}.
\end{remark} 

\subsection{CAT(0) disc diagrams}
\label{s:msCAT(0)}
A standard reference for singular disc diagrams (or van Kampen diagrams) is \cite[Chapter V]{LSbook}. Our approach is close to the ones from
\cite[Section 5]{Chepoi2000} and \cite[Section 1]{JanuszkiewiczSwiatkowski2006}. The material is rather standard, however we need a precise description of diagram modifications for further use.

A \emph{singular disc} $D$ is a simplicial complex isomorphic to a finite connected and simply connected subcomplex
of a triangulation of the plane. There is the (obvious) \emph{boundary cycle} for $D$, that is, a map
from a triangulation of $1$--sphere (circle) to the boundary of $D$, which is injective on edges. 
For a cycle $C$ in a simplicial complex $X$, a \emph{singular disc diagram for $C$} is a simplicial
map $f\colon D \to X$ from a singular disc $D$, which maps the boundary cycle of $D$ onto $C$; see Figure~\ref{f:diagrams} (left).  
\begin{figure}[ht!]
	\centering
	\includegraphics[width=1\textwidth]{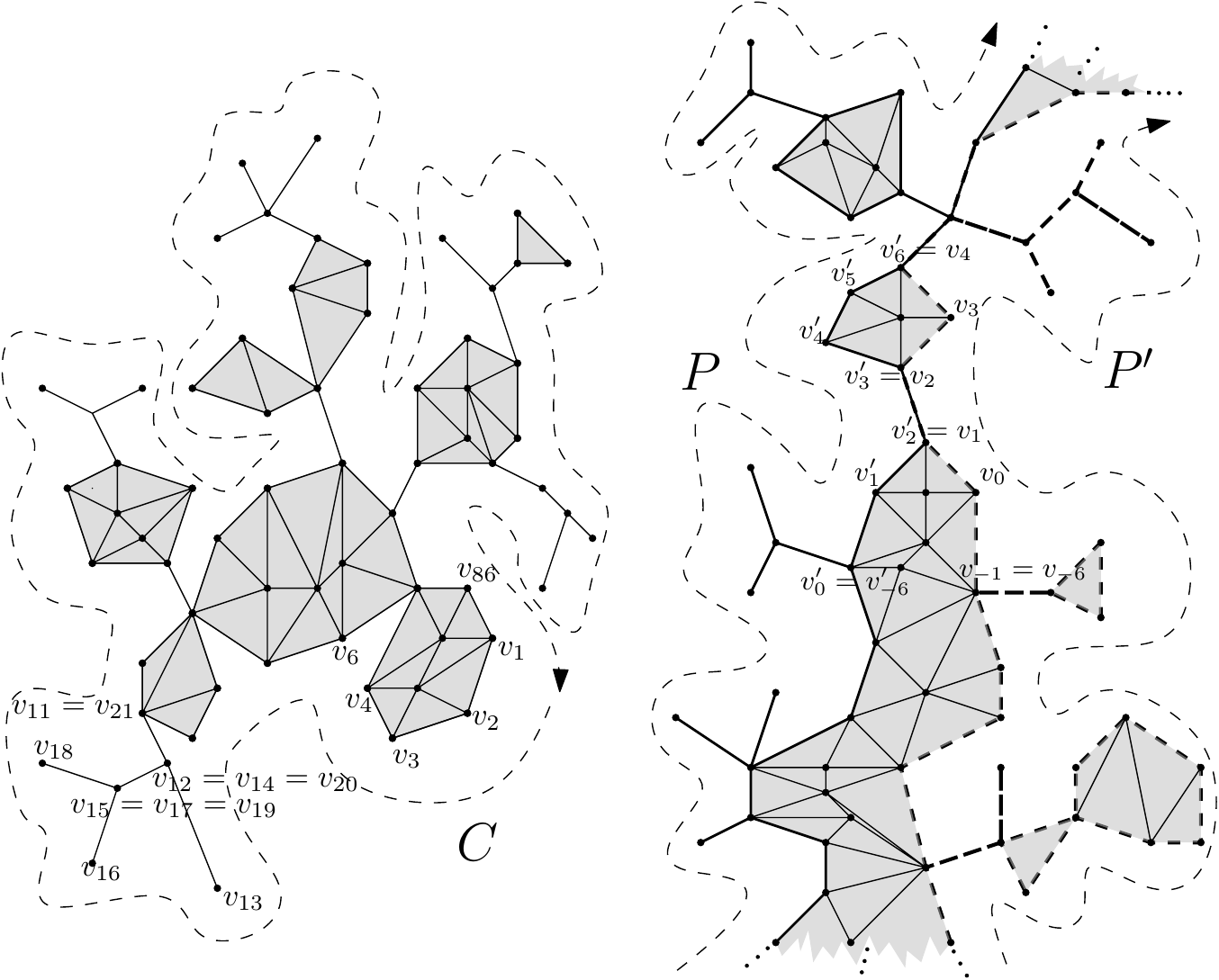}
	\caption{A singular disc with the boundary cycle $C=(v_1,v_2,\ldots,v_{86})$ (left), and a singular strip  for a pair $P=(\ldots,v_{-1},v_0,v_1,v_2,\ldots)$ (solid thick), $P'=(\ldots,v'_{-1},v'_0,v'_1,v'_2,\ldots)$ (dashed thick).}
	\label{f:diagrams}
	\end{figure}
By the relative simplicial approximation theorem \cite{Zeeman1964}, for every cycle in a simply connected simplicial complex there exists a singular disc diagram (cf.\ also van Kampen's lemma e.g.\ in \cite[pp.\ 150-151]{LSbook}). Below we describe how to obtain singular disc diagrams with some additional properties, by modifying a given one.

A singular disc diagram is called \emph{nondegenerate} if it is injective on all simplices.
It is \emph{reduced} if distinct adjacent triangles (i.e., triangles sharing an edge) are mapped 
into distinct triangles. The \emph{area} of a singular disc diagram is the number of $2$--simplices (triangles) in the underlying singular disc. A singular disc diagram for a cycle $C$ in $X$ is \emph{minimal}
if it has the minimal area among singular disc diagrams for $C$ in $X$.
For a metric simplicial complex $X$ and a nondegenerate singular disc diagram $f\colon D \to X$ we equip
$D$ with a metric in which $f|_{\sigma}$ is an isometry onto its image, for every simplex $\sigma$ in $D$.
Then, $f$ is a \emph{CAT(0) singular disc diagram} if $D$ is CAT(0), that is, if the angular length of
every link in $D$ being a cycle (that is, the link of an interior vertex in $D$) is at least $2\pi$. 

Parallelly to singular disc diagrams one may consider a related notion of singular strip diagrams.
A \emph{singular strip} $S$ is 
a simplicial complex isomorphic to an infinite connected and simply connected subcomplex
of a triangulation of the plane whose complement has two infinite components.
The two infinite paths being boundaries of those components are
called the \emph{boundary paths} of $S$. Having two infinite paths $P,P'$ in $X$, a \emph{singular strip diagram for the pair $P,P'$} is a simplicial map $f\colon S \to X$ from a singular strip $S$ into $X$ mapping boundary paths of $S$ onto, respectively, $P$ and $P'$; see Figure~\ref{f:diagrams} (right). A \emph{nondegenerate, reduced} or \emph{CAT(0)
singular strip diagram} is defined analogously as the corresponding singular disc diagram.   
\medskip

Having a singular disc diagram $f\colon D \to X$ for a cycle $C$ in $X$ we describe a way of producing
a new singular disc diagram $f'\colon D'\to X$ for $C$, with some additional properties (see e.g.\ Theorem~\ref{t:CAT(0)diagrem} below). In order to do this we need elementary operations -- \emph{moves} --
described below. 
\medskip

\noindent
{\bf A-move: }Assume there exist pairwise adjacent vertices $u,v,w$ not bounding a triangle in $D$, that
is, there are vertices $v_1,\ldots,v_k$ in the region in $D$ bounded by edges between $u,v,w$. The new singular
disc $D'$ is obtained from $D$ by removing all the vertices $v_i$ (and hence also edges containing them); 
see Figure~\ref{f:moves} (at the top).
The new map $f'\colon D' \to X$ is defined as $f'(x)=f(x)$, for all vertices $x$ in $D'$, and then extended simplicially. Such modification is called the \emph{A-move on $u,v,w$} and is denoted by A($u,v,w$).
\begin{figure}[ht!]
	\centering
	\includegraphics[width=0.9\textwidth]{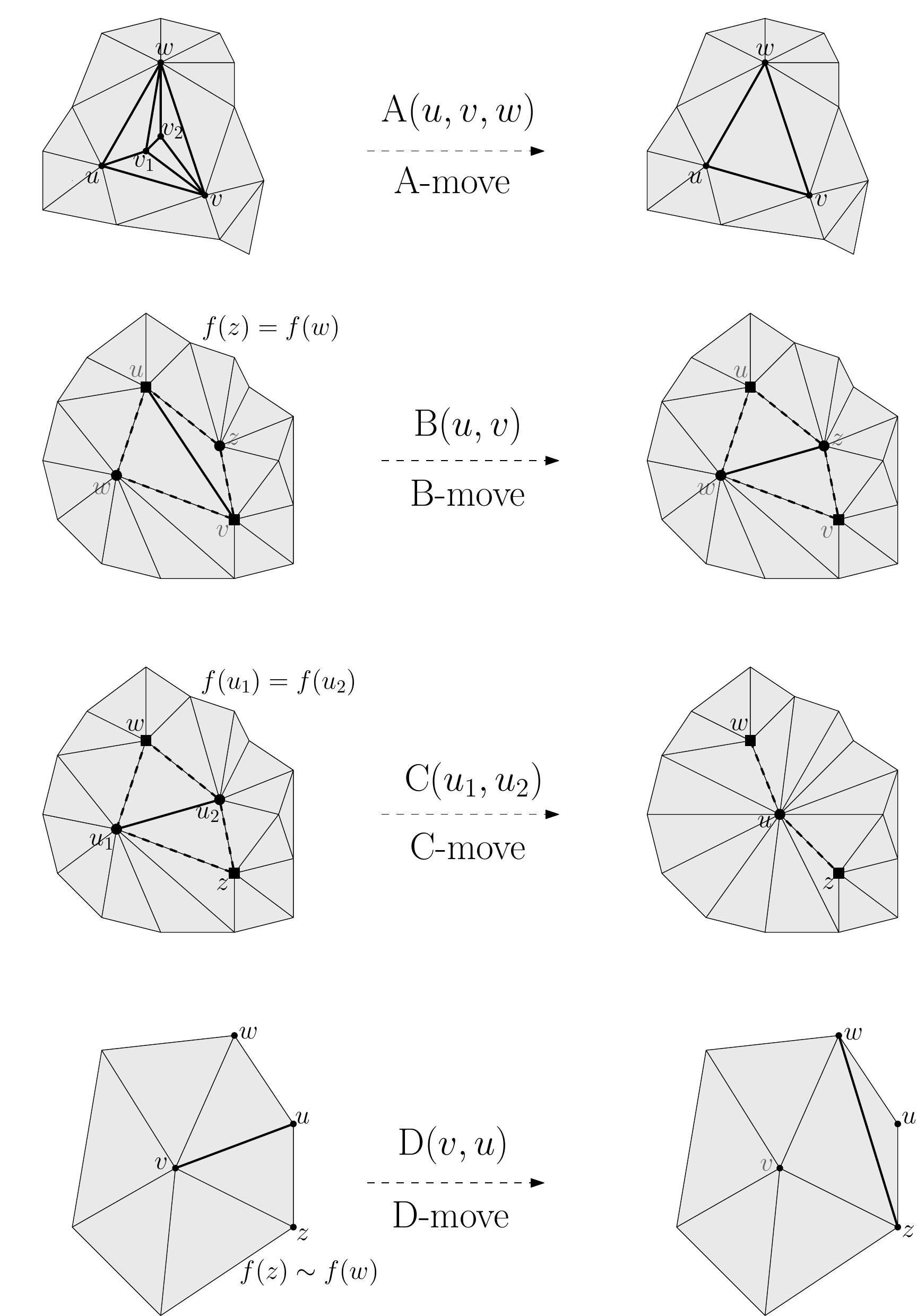}
	\caption{Moves.}
	\label{f:moves}
\end{figure}
\medskip

For the next moves we assume that the situation as above does not happen, that is, each triple of pairwise adjacent
vertices defines a triangle in $D$. In particular it means that for each internal edge $\ov{uv}$ in $D$
there are exactly two vertices $w,z$ each adjacent to both $u$ and $v$. 

\medskip

\noindent
{\bf B-move: }Assume there are 
two triangles $uvw$ and $uvz$ such that $f(w)=f(z)$. 
The new singular disc $D'$ is obtained
from $D$ by removing the edge $\ov{uv}$ and adding an edge $\ov{wz}$; see Figure~\ref{f:moves}.
By our assumptions $D'$ is a simplicial singular disc.
The new map $f'\colon D' \to X$ is defined as $f'(x):=f(x)$, for all vertices $x$ in $D'$, and then 
extended simplicially. Such modification is called \emph{B-move on $u,v$} and is denoted by B($u,v$). 

\medskip

\noindent
{\bf C-move: }Assume there is an edge $\ov{u_1u_2}$ such that $f(u_1)=f(u_2)$. Such edge need to be internal, so that there are two triangles
$u_1u_2w$ and $u_1u_2z$ containing the edge.
The new singular disc $D'$ is obtained
from $D$ by removing $u_1,u_2$ (and all edges containing them), and then adding a new vertex $u$
adjacent to all vertices (of $D$ except $u_1,u_2$) that are adjacent in $D$ to $u_1$ or $u_2$; see Figure~\ref{f:moves}.
By our assumptions $D'$ is a simplicial singular disc.
The new map $f'\colon D' \to X$ is defined as $f'(x):=f(x)$, for all vertices $x\neq u$ in $D'$, and $f'(u):=f(u_1)=f(u_2)$, and then 
extended simplicially. Such modification is called \emph{C-move on $u_1,u_2$} and is denoted by C($u_1,u_2$).
\medskip

\noindent
{\bf D-move: }Assume there is a vertex $v$ in $D$ with the link being a cycle (that $v$ is an internal vertex), and such that for a vertex $u$ adjacent to $w,z$ in the link the vertices $f(w)$ and $f(z)$ are adjacent (we write $f(w)\sim f(z)$). Then the new singular diagram $D'$ is obtained
from $D$ by removing the edge $\ov{uv}$ and adding an edge $\ov{wz}$; see Figure~\ref{f:moves} (bottom).
The new map $f'\colon D' \to X$ is defined as $f'(x):=f(x)$, for all vertices $x$ in $D'$, and then 
extended simplicially. Such modification is called \emph{D-move on $v,u$} and is denoted by D($v,u$). 
\medskip

The following lemma is essentially the same as \cite[Lemma 5.1]{Chepoi2000} and  
\cite[Lemma 1.6]{JanuszkiewiczSwiatkowski2006}. Although in the latter two only simple cycles are considered,
the general case follows by decomposing a given cycle into simple pieces. We omit the straightforward proof.

\begin{lemma}
	\label{l:reddiag}
	Let $f\colon D \to X$ be a singular disc diagram for a cycle $C$ in a simplicial complex $X$. Then by applying A-moves, B-moves, and C-moves the diagram may be modified to a nondegenerate reduced singular disc diagram for $C$.
	In particular, any minimal singular disc diagram for $C$ is nondegenerate and reduced.
\end{lemma}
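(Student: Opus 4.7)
The plan is a noetherian induction on the area $|D|$, the number of $2$-simplices of $D$. Each of the three moves, or a short composition of them, strictly decreases $|D|$ while preserving the boundary cycle (and hence produces another singular disc diagram for the same $C$). More precisely: an A$(u,v,w)$-move excises at least one interior vertex with its star, replacing a subregion triangulated by at least three triangles with a single triangle $uvw$, so $|D|$ strictly drops; a C$(u_1,u_2)$-move identifies the two endpoints of an internal edge with common image and thereby kills the two incident triangles, decreasing $|D|$ by at least two; a B$(u,v)$-move leaves $|D|$ unchanged but creates an internal edge $\overline{wz}$ with $f(w)=f(z)$, which can be immediately collapsed by a subsequent C$(w,z)$-move, so that the combined step B then C decreases $|D|$ by two.

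The reduction loop is then straightforward. Repeat: (i)~if three pairwise adjacent vertices of $D$ do not span a $2$-simplex of $D$, apply the corresponding A-move; else (ii)~if some internal edge $\overline{u_1u_2}$ satisfies $f(u_1)=f(u_2)$, apply C$(u_1,u_2)$; else (iii)~if two adjacent $2$-simplices $uvw,uvz$ of $D$ are mapped onto the same triangle of $X$, then by~(ii) we have $f(u)\ne f(v)$, hence $f(w)=f(z)$, and we apply B$(u,v)$ followed by C$(w,z)$. Each iteration strictly decreases the nonnegative integer $|D|$, so the loop terminates. A terminal diagram admits none of~(i)--(iii); by direct case inspection this is exactly the statement that $f$ is injective on every simplex and that distinct adjacent triangles of $D$ map to distinct triangles of $X$, i.e.\ that $f$ is nondegenerate and reduced. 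The ``in particular'' claim is then immediate: on a minimal singular disc diagram for $C$ the existence of any of these reductions would contradict minimality, so none is available, and the diagram is already nondegenerate and reduced.

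The main obstacle is the local planar bookkeeping in step~(iii). One needs to verify that after the flip B$(u,v)$ the new edge $\overline{wz}$ is genuinely internal (so that C$(w,z)$ is legitimate) and that the subsequent identification $w=z$ produces a valid simplicial singular disc whose boundary cycle is again $C$; here the prior exhaustion of case~(i), which forces every triple of pairwise adjacent vertices to bound a $2$-simplex, is used to rule out spurious identifications of $\overline{wz}$ with other parts of $D$ and to ensure that the quadrilateral $u,w,v,z$ behaves as a standard planar patch. Finally, the extension of the simple-cycle arguments of \cite{Chepoi2000,JanuszkiewiczSwiatkowski2006} to arbitrary cycles requires no new ideas: the moves, the area monovariant, and the case analysis are insensitive to simplicity of $C$, so after decomposing $C$ into simple pieces along repeated vertices the reduction above applies piecewise without modification.
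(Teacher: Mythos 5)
Your proof is correct and follows the standard approach (the area of $D$ as a monovariant, case-by-case reduction loop), which is the same argument sketched in the references the paper cites and used again implicitly in the paper's own proof of Theorem~\ref{t:CAT(0)diagrem}. The paper deliberately omits the proof, so your reconstruction is welcome; the logic of the terminal-state analysis (no A $\Rightarrow$ flag, no C $\Rightarrow$ nondegenerate, no B given nondegeneracy $\Rightarrow$ reduced) is exactly right, and the ``in particular'' clause follows as you say.

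One small point where you are stronger than the paper itself and where the bookkeeping deserves a word of caution: you assert that a B$(u,v)$-move is \emph{always} immediately followed by the legitimate C$(w,z)$-move on the newly created edge. But the B-move can create an ``empty triangle'' (three pairwise adjacent vertices $p,w,z$ not spanning a $2$-simplex) when some other vertex $p$ was already adjacent to both $w$ and $z$, and the paper's B- and C-moves are declared only under the standing hypothesis that no empty triangle exists. The paper therefore phrases its own termination step more conservatively in the proof of Theorem~\ref{t:CAT(0)diagrem}: \emph{``after performing B-move we are in position to perform A-move or C-move.''} Your monovariant argument survives unchanged if you replace ``B then C'' by ``B then (A or C)'': the area still strictly decreases each round because A- and C-moves do so while B is never terminal. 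You touch on this issue (``rule out spurious identifications of $\overline{wz}$''), but you should either prove that C$(w,z)$ is always legal right after the B-move even in the presence of such a $p$, or simply weaken the claim to match the paper's. Either fix is a one-liner, and the rest of your argument, including the decomposition of a non-simple $C$ into simple pieces, is sound.
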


The main technical tool for dealing with metrically systolic complexes are CAT(0) singular disc diagrams.
Their existence is established in the following theorem. It is an analogue of a result for systolic complexes
obtained in
\cite[pp.\ 159--161]{Chepoi2000} and \cite[Lemma 1.7]{JanuszkiewiczSwiatkowski2006}. The proof is also an analogue of the systolic case proof.
Before the theorem we prove a useful lemma.

\begin{lemma}
	\label{l:fillshort}
	Let $f\colon D\to X$ be a singular disc diagram into a metrically systolic complex $X$. Suppose that 
	there is an interior vertex $v$ in $D$ whose link is a cycle $C$ of angular length less than $2\pi$. Then, by performing
	a finite number of
	A-, and D-moves we may find a singular disc diagram $f'\colon D'\to X$ such that 
	$D'$ is a union of the full subcomplex of $D$ spanned by all vertices of $D$ except $v$, and triangles with vertices in $C$, and the map $f'$ agrees with $f$ on all vertices of $D'$ and on all edges
	coming from $D$. 
\end{lemma}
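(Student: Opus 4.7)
The plan is to induct on the combinatorial length $n = |C|$ of the link cycle of $v$, reducing $n$ by one with each D-move and terminating with a single A-move. Write $C = (w_0, w_1, \ldots, w_{n-1})$. For the base case $n = 3$, the three vertices $w_0, w_1, w_2$ are pairwise adjacent in $D$ and $v$ is the unique interior vertex of the triangular region they bound, subdivided into the three triangles $v w_i w_{i+1}$. I would apply the A-move A$(w_0, w_1, w_2)$: it deletes $v$ together with the three surrounding triangles and replaces them by the single triangle $w_0 w_1 w_2$, whose vertices lie in $C$. Setting $f'(x) = f(x)$ on the remaining vertices satisfies the conclusion.

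For $n \geq 4$, because $f$ is simplicial into the metric simplicial complex $X$, the image $f(C)$ is a closed walk in the link $L := lk(f(v), X^{(1)})$ whose angular length equals $\length_\angle(C) < 2\pi$. Metric systolicity of $X$ says that every $2$-full simple cycle in $L$ has angular length at least $2\pi$. I will use this to locate a vertex $u \in C$ whose two $C$-neighbors $w, z$ satisfy $f(w) \sim f(z)$ in $X$ (allowing equality). Given such $u$, the D-move D$(v, u)$ removes $\overline{uv}$ and inserts $\overline{wz}$, shortening the link of $v$ into a cycle $C_1$ of length $n-1$. The weak triangle inequality assumed for angles at $f(v)$ (pulled back via $f$ to $v$) yields $\angle_v(w,z) \leq \angle_v(w,u) + \angle_v(u,z)$, so $\length_\angle(C_1) \leq \length_\angle(C) < 2\pi$, and the induction hypothesis applies to the new diagram. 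Each newly introduced triangle $uwz$ has all its vertices in $C$, so iterating down to the base case produces a $D'$ of the desired form, with $f'$ agreeing with $f$ on the surviving vertices and on the edges inherited from $D$.

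The main obstacle is the extraction of the consecutive triple $(w, u, z)$ in $C$ with $f(w) \sim f(z)$ in $X$. The cleanest case is when $f(C)$ is a simple cycle in $L$: then $f(C)$ itself is a simple cycle of angular length $< 2\pi$ and length $n \geq 4$, hence not $2$-full, and any chord immediately provides the desired triple. When $f(C)$ has self-identifications, I would first handle the subcase where $f(w) = f(z)$ for two vertices at cyclic $C$-distance exactly $2$ (which is the degenerate form of the D-move), and otherwise pass to a shortest simple subcycle of $f(C)$ in $L$ and apply $2\pi$-largeness there to locate a chord, then transport this chord back to a consecutive triple in $C$. Once this combinatorial extraction is in place, the remainder is routine verification that each move preserves the simplicial disc structure and the compatibility of $f'$ with $f$ on the surviving cells, and that the total number of A- and D-moves is bounded by $n - 2$.
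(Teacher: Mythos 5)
Your proof follows the same route as the paper's: induction on the combinatorial length of the link cycle $C$, an A-move at the base case $n=3$, and for $n\ge 4$ a D-move at a vertex located by extracting a simple subcycle of $\bar f(C)$ in $lk(\bar f(v))$ and invoking failure of $2$-fullness, with the weak triangle inequality guaranteeing the shortened link stays below $2\pi$. The only difference is stylistic: you flag the self-identification subtleties in $\bar f(C)$ (passing to a simple subcycle and transporting the chord back to a consecutive triple in $C$) more explicitly than the paper, which handles them in a single sentence, but the underlying argument and its level of detail on that step are essentially identical.
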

\begin{proof}
	We proceed by induction on the combinatorial length of $C$. If this length is $3$ then we perform
	A-move. Assume that $C$ consists of at least $4$ edges. 
	Denote $C=(v_1,v_2,\ldots,v_k)$. 
	Then $\ov f(C)=(\ov f(v_1),\ov f(v_2), \ldots, \ov f(v_k))$ is a cycle in $X$ of angular length 
	less then $2\pi$. There is $2<l\le  k$ such that $C'=(\ov f(v_1),\ov f(v_2), \ldots, \ov f(v_l))$
	is a simple cycle. This is a cycle in the link of $\ov f(v)$ of angular length less than $2\pi$.
	If $l=3$ then $\ov f(v_1)$ and $\ov f(v_3)$ are adjacent.
	If $l>3$ then, by metric systolicity, $C'$ is not $2$--full. This means that
	there exists a vertex, say $\ov f(v_2)$, such that its neighbors in $C'$ -- in our case
	$\ov f(v_1)$ and $\ov f(v_3)$ -- are adjacent. Hence we may perform D-move D($v,v_2$), to obtain
	a new singular disc diagram $\ov f'\colon \ov D' \to X$. Furthermore, $\angle_v(v_1,v_3)$ in $\ov D'$ is at most $\angle_v(v_1,v_2)+\angle_v(v_2,v_3)$ in $\ov D$, so that the angular length of the link of 
	$v$ in $\ov D'$, being the cycle $(v_1,v_3,\ldots,v_k)$, is less than $2\pi$.
	By the inductive assumption we obtain the desired diagram $f'\colon D'\to X$.
\end{proof}

\begin{theorem}[CAT(0) disc diagram]
	\label{t:CAT(0)diagrem}
	Let $f\colon D\to X$ be a singular disc diagram for a cycle $C$ in a metrically systolic
	complex $X$. By performing a finite number of A-, B-, C-, D-moves the diagram may be modified to
	a CAT(0) nondegenerate reduced singular disc diagram $f' \colon D' \to X$ for $C$. 
	Furthermore:
	\begin{enumerate}
		\item 
		$f'$ does not use any new vertices in the sense that there is an injective map $i$ from the vertex set of $D'$ to the vertex set of $D$ such that $f=f'\circ i$ on the vertex set of $D$;
		\item the number of $2$--simplices in $D'$ is at most the number of $2$--simplices in $D$; 
		\item any minimal singular disc diagram for $C$ is CAT(0) nondegenerate and reduced.
	\end{enumerate}
\end{theorem}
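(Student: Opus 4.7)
The plan is to prove the theorem by induction on the number of $2$--simplices (area) of the diagram, alternating between two cleanup procedures that never increase area and strictly decrease it whenever the output is still ``bad''. Start from the given $f\colon D\to X$. First apply Lemma~\ref{l:reddiag} to obtain, via a finite sequence of A-, B-, and C-moves, a nondegenerate reduced diagram without increasing area. If every interior vertex of the resulting diagram has link of angular length $\geq 2\pi$, it is CAT(0) and we are done. Otherwise, pick an interior vertex $v$ whose link cycle has angular length less than $2\pi$, and feed this diagram to Lemma~\ref{l:fillshort}, which uses A- and D-moves to replace the open star of $v$ (containing $\deg(v)$ triangles) by a triangulation of the link of $v$ using only vertices of that link (which has $\deg(v)-2$ triangles). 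This strictly decreases the area by $2$, and then we return to Lemma~\ref{l:reddiag} and iterate.

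Termination is immediate: area is a non-negative integer and each full pass that does not stop strictly decreases it. To see that individual moves behave correctly with respect to properties (1) and (2), one tracks their effect on vertex sets and triangle counts. For (2): A-moves and C-moves strictly decrease the number of triangles, while B-moves (edge flips after identifying the two opposite vertices in $X$) and D-moves (edge flips) preserve it; and Lemma~\ref{l:fillshort} decreases it as noted above, so area is monotone non-increasing throughout. For (1): A-moves and Lemma~\ref{l:fillshort} delete vertices; B-moves and D-moves leave the vertex set unchanged; a C-move on $u_1,u_2$ (where $f(u_1)=f(u_2)$) introduces a single vertex $u$ with $f'(u)=f(u_1)$, so one sets $i(u):=u_1$ and $i$ to be the inclusion on the unaffected vertices. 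Composing the injections coming from each move yields the desired injection $V(D')\hookrightarrow V(D)$ intertwining $f'$ and $f$ on vertices; and since $f'$ is simplicial, this determines it on all of $D'$.

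For (3), suppose $f_{\min}\colon D_{\min}\to X$ is a minimal diagram for $C$. If it were not reduced nondegenerate or not CAT(0), then one application of the procedure above would produce a diagram for $C$ of strictly smaller area, contradicting minimality.

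The one step requiring care is the invocation of Lemma~\ref{l:reddiag} after Lemma~\ref{l:fillshort}: in principle, filling a short link may create new coincidences of simplices and destroy reducedness. However, this is precisely why the outer loop is necessary, and the point is that each reducing pass only removes triangles (A- and C-moves strictly decrease area, while B-moves flip edges without affecting area), so the total area counter continues to drop across the combined cycle of ``reduce, then kill a short-link vertex''. Since the starting area is finite, only finitely many iterations can occur, and the process must exit through the CAT(0) check, yielding the required $f'\colon D'\to X$.
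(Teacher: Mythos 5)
Your proposal is correct and takes essentially the same route as the paper: alternate between clearing A-, B-, C-moves (to get a nondegenerate reduced diagram) and applying Lemma~\ref{l:fillshort} at a short-link interior vertex, with termination driven by the fact that each short-link fill reduces the area by exactly $2$ while the A/B/C cleanup is non-increasing. The paper simply interleaves the moves in a single loop (Case~1/Case~2/Case~3) rather than invoking Lemma~\ref{l:reddiag} as a subroutine, but the argument, including the bookkeeping for assertions (1)--(3), is the same.
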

\begin{proof}
	We proceed with successive diagrams $\ov f \colon \ov D \to X$, starting from $\ov f:= f$ depending on the following cases.
	\medskip
	
	\noindent
	{\bf Case 1: }A-move, B-move, or C-move may be performed. Then the new diagram $\ov f' \colon \ov D' \to X$ is obtained by
	performing the corresponding move.
	
	\medskip
	
	\noindent
	{\bf Case 2: }No A-move, B-move, or C-move may be performed and there exists an internal vertex $v$
	whose link is a cycle $C=(v_1,v_2,\ldots,v_k)$ of angular length less than $2\pi$ (in the metric induced from $X$). Then, by Lemma~\ref{l:fillshort} there exists a singular disc diagram 
		$\ov f'\colon \ov D' \to X$, where $\ov D'$ is obtained from $\ov D$ by replacing the star of $v$ with a disc without internal vertices, and $\ov f'$ coincides with $\ov f$ on all vertices except $v$.
	\medskip

	\noindent
	{\bf Case 3: } We are not in situations from Case 1 or Case 2. Then the diagram $\ov f \colon \ov D \to X$
	is a CAT(0) nondegenerate reduced singular disc diagram for $C$.
	\medskip
	
	After performing modifications as in Case 2, the area of the diagram decreases.
	Proceeding as in Case 1, that is performing A-moves, B-moves, or C-moves eventually decreases
	the area of the diagram. It is so because A-move and C-move decrease the area, and after performing
	B-move we are in position to perform A-move or C-move. Hence eventually we end up in Case 3.
	
	Assertions (1), (2) and (3) follow immediately from the construction.
\end{proof}

\begin{corollary}
	\label{c:Dehn}
	The Dehn function of a metrically systolic complex or group is at most quadratic.
\end{corollary}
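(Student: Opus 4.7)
The plan is to reduce the problem to the classical quadratic isoperimetric inequality for CAT(0) piecewise Euclidean $2$-discs, using Theorem~\ref{t:CAT(0)diagrem} as the crucial bridge between arbitrary disc diagrams and CAT(0) ones.

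First, given any edge-loop $C$ in $X$ of combinatorial length $n$, simple-connectedness supplies some singular disc diagram $f\colon D\to X$ for $C$. Applying Theorem~\ref{t:CAT(0)diagrem}, I would replace $D$ by a CAT(0) nondegenerate reduced singular disc diagram $f'\colon D'\to X$ for $C$ whose number of $2$-simplices does not exceed that of $D$. Since this inequality holds for \emph{every} filling of $C$, a minimal-area filling is already realized by a CAT(0) disc diagram. It therefore suffices to bound the number of triangles of such a CAT(0) diagram quadratically in $n$.

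Next, I would invoke the standard quadratic isoperimetric inequality for CAT(0) surfaces (compare Bridson--Haefliger): a $2$-dimensional piecewise Euclidean CAT(0) disc with boundary of metric length $L$ has metric area at most $L^{2}/(4\pi)$. Applied to $D'$, this yields
\[
\mathrm{area}_{\mathrm{metric}}(D')\le \frac{L^{2}}{4\pi},
\]
where $L$ is the metric length of $\partial D'$, equal to the metric length of $C$.

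Finally, I would convert this metric bound into the desired combinatorial one. For a metrically systolic group $G$ acting geometrically on $X$, cocompactness of the action on $X^{(2)}$ forces only finitely many $G$-orbits of triangles, and hence uniform constants $e_{\max}<\infty$ and $a_{0}>0$ such that every edge has length at most $e_{\max}$ and every triangle has area at least $a_{0}$. Then $L\le n\,e_{\max}$, so the number of triangles of $D'$ is at most $L^{2}/(4\pi a_{0})\le (e_{\max}^{2}/4\pi a_{0})\,n^{2}$, which is quadratic in $n$; for a bare metrically systolic complex, the same estimate applies under the analogous bounded-geometry hypothesis, or one reads the assertion metrically. The main thing one must verify carefully is that the disc $D'$ produced by Theorem~\ref{t:CAT(0)diagrem}, equipped with its piecewise Euclidean metric, is genuinely CAT(0); this reduces to the link condition that every interior link of $D'$ has angular length $\ge 2\pi$, which is precisely what Theorem~\ref{t:CAT(0)diagrem} ensures. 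Given that, the remaining bookkeeping is routine.
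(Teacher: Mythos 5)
Your argument is correct and is essentially the proof the paper has in mind: the paper states only that the corollary is ``an immediate consequence'' of Theorem~\ref{t:CAT(0)diagrem}, and the route you sketch --- take the minimal filling, observe via Theorem~\ref{t:CAT(0)diagrem}(3) that it is a CAT(0) piecewise Euclidean disc, apply the CAT(0) quadratic isoperimetric bound, and convert metric area to simplex count using the finitely many isometry types of triangles --- is exactly how that implication is meant to be filled in. You even flag the bounded-geometry caveat for a bare complex, which is the right thing to notice. The one step you might spell out a bit more is why the particular CAT(0) disc $D'$ (rather than some abstract minimal filling) has metric area at most $L^2/4\pi$: this follows from Reshetnyak majorization together with the observation that in a compact $2$--dimensional CAT(0) disc every interior point lies on a geodesic between two boundary points, so the convex hull of the boundary is the whole disc, and the 1-Lipschitz majorizing map from a convex plane region of perimeter $\le L$ is onto; the possible presence of cut vertices in a singular disc is harmless since the pieces can be treated separately and the bound is superadditive in perimeter.
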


For further applications (e.g.\ in \cite{Artinsrigidity}) we will need singular disc diagrams with some
further features (see Theorem~\ref{t:flatpt} below).
To construct them we have to consider other types of moves: E-moves and F-moves described below.
Again, starting from a singular disc diagram $f\colon D\to X$ into a metrically systolic complex $X$ we
construct a new diagram $f'\colon D'\to X$.
For the new moves we assume that we are in the situation when no A-, B-, or C-move may be performed, and
there is an interior vertex $v$ and two vertices $w,z$ in its link such that $f(w)=f(z)$.
Observe that then $w$ and $z$ are not adjacent.
\medskip 

\noindent
{\bf E-move: }
Assume that there does not exist a vertex different than $v$ and adjacent to both $w,z$.
We assume furthermore that the angular lengths of two paths between $w$ and $z$ in the link of $v$ are strictly smaller
than $2\pi$.
The new disc diagram $f'\colon D'\to X$ is obtained as follows. First we construct an intermediate singular disc $D''$ by ``collapsing" vertices $w,z$ to a single vertex $x$, that is, we remove $w,z$, and introduce a
new vertex $x$ adjacent to all vertices that were adjacent in $D$ to $w$ or $z$; see Figure~\ref{f:EFmoves} (top). 
Furthermore, we add two ``copies" $v',v''$ of the vertex $v$, adjacent to vertices in two paths of the link of $v$, and to $x$. A singular disc diagram $f''\colon D'' \to X$ is defined by setting $f''(x)=f(w)$, 
$f''(v')=f''(v'')=f(v)$, and $f''$ agrees with $f$ otherwise. Observe that the angular lengths of
links of $v'$ and $v''$ are strictly smaller than $2\pi$. Hence, by double application of Lemma~\ref{l:fillshort} we find a desired singular disc diagram $f'\colon D'\to X$ with the two links
filled without internal vertices.  
\medskip

\noindent
{\bf F-move: }Assume that there exists a vertex $u$ different than $v$ and adjacent to both $w,z$.
We first construct a singular disc diagram $f''\colon D'' \to X$ by joining $w$ and $z$ by an edge,
removing edges from $v$ ``crossing" the new edge $\ov{wz}$ and adding
a copy $v'$ of $v$ adjacent to vertices in the original link of $v$ not adjacent to $v$ anymore; see Figure~\ref{f:EFmoves} (bottom). In $D''$ there is a triangle $wzu$, and performing the A-move $A(u,w,z)$
we obtain the desired singular disc diagram $f'\colon D' \to X$.

\medskip

\begin{figure}[ht!]
	\centering
	\includegraphics[width=0.8\textwidth]{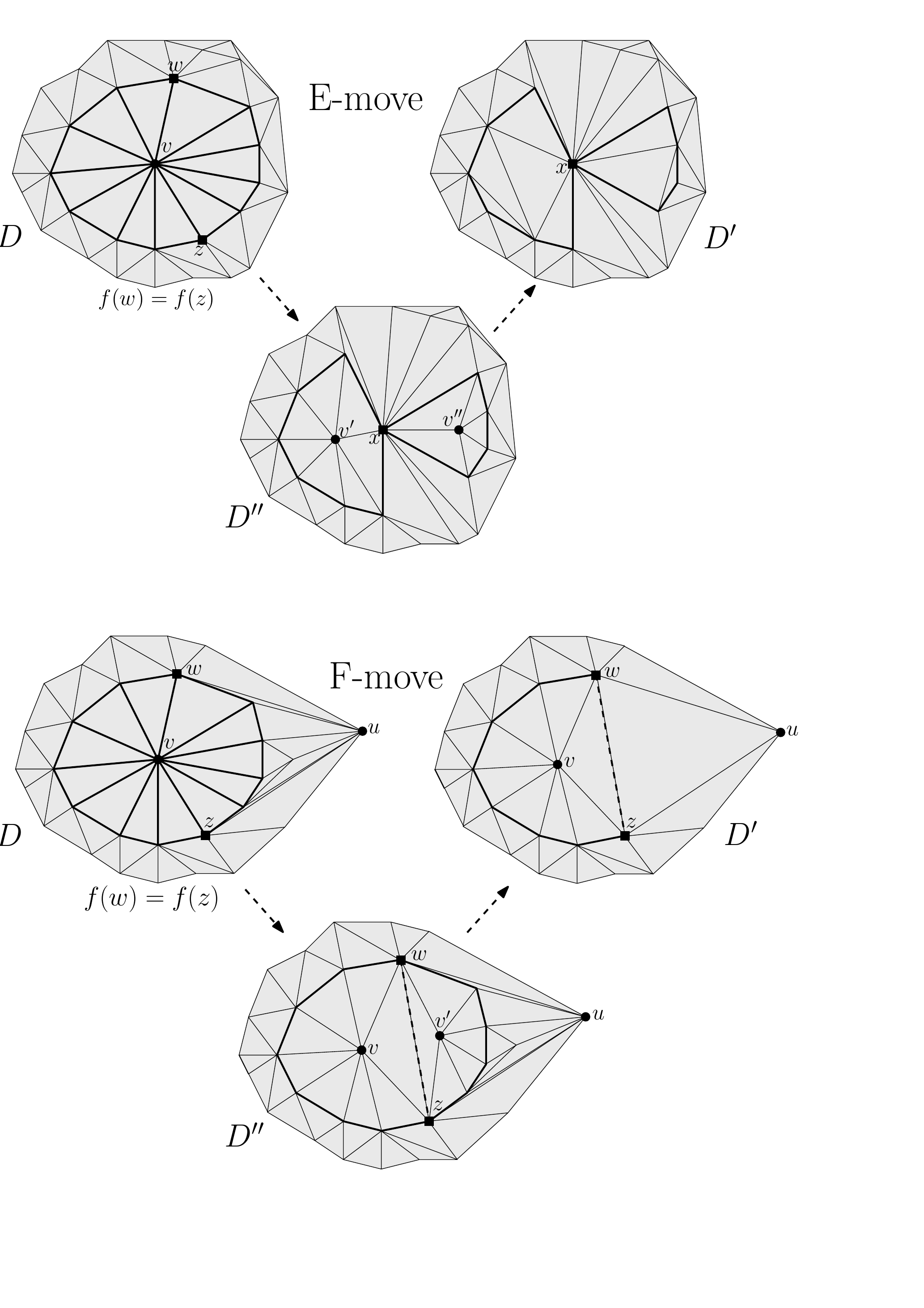}
	\caption{E-move and F-move.}
	\label{f:EFmoves}
\end{figure}
\medskip

\begin{theorem}[CAT(0) disc diagram II]
	\label{t:flatpt}
	Let $f\colon D\to X$ be a singular disc diagram for a cycle $C$ in a metrically systolic
	complex $X$. By performing a finite number of A-, B-, C-, D-, E-, F-moves the diagram may be modified to
	a CAT(0) nondegenerate reduced singular disc diagram $f' \colon D' \to X$ for $C$ satisfying the following
	property. For every flat vertex $v\in D'$ the restriction $f|_{\mr{St}(v)}$ is injective. 
	Furthermore:
	\begin{enumerate}
		\item 
		$f'$ does not use any new vertices in the sense that there is an injective map $i$ from the vertex set of $D'$ to the vertex set of $D$ such that $f=f'\circ i$ on the vertex set of $D$;
		\item the number of $2$--simplices in $D'$ is at most the number of $2$--simplices in $D$; 
		\item any minimal singular disc diagram for $C$ is such.
	\end{enumerate}
\end{theorem}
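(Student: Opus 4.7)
The plan is to iterate the moves, starting from the diagram produced by Theorem~\ref{t:CAT(0)diagrem}, using strict area decrease as the termination invariant. The point is that the existing moves A--D already give CAT(0) nondegenerate reduced diagrams, and the two new moves E, F are designed to kill exactly the obstruction described in the statement -- namely, a flat interior vertex $v$ together with two link-vertices $w, z$ such that $f(w)=f(z)$.

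First I would apply Theorem~\ref{t:CAT(0)diagrem} to obtain a CAT(0) nondegenerate reduced singular disc diagram $\ov f \colon \ov D \to X$ for $C$ of minimal area among such diagrams obtainable by A--D moves. If $\ov f|_{\mr{St}(v)}$ is injective at every flat vertex $v$, we are done. Otherwise, choose a flat vertex $v$ violating this and two vertices $w,z$ in its link with $\ov f(w)=\ov f(z)$. Since $\ov f$ is nondegenerate and reduced, $w$ and $z$ are distinct and non-adjacent in $\ov D$, so we may split into two cases. If there is a vertex $u\ne v$ adjacent to both $w$ and $z$, apply the F-move, which introduces an edge $\ov{wz}$, a duplicate copy $v'$ of $v$, and then triggers an A-move at the triangle $uwz$; this A-move removes at least one $2$--simplex, so the total area strictly decreases. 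If no such $u$ exists, apply the E-move. The critical verification here is that both paths in the link cycle of $v$ from $w$ to $z$ have angular length strictly less than $2\pi$: since $v$ is flat, these two angular lengths sum to exactly $2\pi$, and each is strictly positive (both paths contain at least one edge and all link edges in a Euclidean metric have positive angular length), so neither equals $2\pi$. This justifies applying Lemma~\ref{l:fillshort} to the two sub-discs bounded by the new links of $v'$ and $v''$ in the intermediate diagram $D''$, giving fills without internal vertices; a direct simplex count (the star of $v$ with link of length $k$ has $k$ triangles, while the two no-interior fills contribute a total of at most $k-4$ triangles after identifying $w$ and $z$ into $x$) shows that area decreases strictly.

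After performing an E-move or F-move, re-apply Theorem~\ref{t:CAT(0)diagrem} to restore the CAT(0), nondegenerate, reduced property; by conclusion (2) of that theorem the area does not grow in the process, and combined with the strict decrease just produced, the new CAT(0) diagram has area strictly smaller than $\ov f$. Since area is a nonnegative integer, iteration terminates, producing the desired $f'\colon D'\to X$. Conclusion (1) holds because each of the moves A--F introduces new disc-vertices only as copies or identifications of old ones and always sends them into $f(D)\subseteq X$ (for E: $v',v''\mapsto f(v)$ and $x\mapsto f(w)=f(z)$; for F: $v'\mapsto f(v)$); and conclusion (2) is just the accumulated area bound. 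Conclusion (3) is immediate: a minimal singular disc diagram must already satisfy the flat-star injectivity condition, since otherwise the procedure above would produce one of strictly smaller area.

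The main obstacle I expect is the area-decrease verification for the E-move; unlike the F-move, where the decrease comes directly from the A-move, the E-move involves splitting $v$ into two copies, identifying two boundary link-vertices, and then invoking Lemma~\ref{l:fillshort} twice. Carefully bookkeeping the triangle counts of the two half-discs and checking that the angular hypothesis of Lemma~\ref{l:fillshort} is met for both halves (which requires the flatness of $v$ in an essential way) is the delicate point. Once this is settled, the rest of the argument is a straightforward termination argument on area.
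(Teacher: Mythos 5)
Your proof is correct and follows the same strategy as the paper's: apply Theorem~\ref{t:CAT(0)diagrem} to reach a CAT(0) nondegenerate reduced diagram with no A-, B-, C-move available, then use an E- or F-move (with the case split you describe) whenever some flat vertex has a non-injective star, and terminate by strict area decrease. You actually spell out two points the paper leaves implicit — that flatness of $v$ guarantees both link arcs between $w$ and $z$ have angular length strictly less than $2\pi$ (so the E-move's hypothesis is met), and the triangle count showing E strictly reduces area — which is a useful unpacking of the paper's one-line assertion that "both decrease the area."
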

\begin{proof}
	By Theorem~\ref{t:CAT(0)diagrem}, using finitely many A-, B-, C-, D-moves we may
	modify $f$ to a CAT(0) nondegenerate reduced singular disc diagram $\ov f$. Moreover,
	we may reach the situation when no A-, B-, C-move is possible. If for every
	flat vertex $v$ the restriction $\ov f|_{\mr{St}(v)}$ is injective then we are done with $f'=\ov f$.
	If not, we are in a position to perform an E-move or an F-move.
	Both decrease the area.
	
	Applying iteratively the above procedure we finally obtain the desired singular disc diagram $f'\colon D'\to X$. Assertions (1), (2), and (3) follow directly from the construction.
\end{proof}

\begin{remark}
	Observe that the assertion of the lemma is not true if the vertex is not flat -- the star of such vertex
	could be mapped onto the simplicial cone over a wedge of two cycles.
\end{remark}

\begin{remark}
	We could reduce the number of moves for proving Theorem~\ref{t:CAT(0)diagrem} or Theorem~\ref{t:flatpt} by allowing 
	singular discs to be non-simplicial, as e.g.\ in \cite[proof of Lemma 1.6]{JanuszkiewiczSwiatkowski2006}.
	We decided to stay in the realm of simplicial complexes.
\end{remark}

\section{Properties of metrically systolic complexes and groups}
\label{s:msprop}
In this section we prove several properties of metrically systolic complexes and groups.
In particular, such properties hold for two-dimensional Artin groups, and -- as explained 
in Subsection~\ref{s:msfps} below -- for all their finitely presented subgroups.

\subsection{Finitely presented subgroups}
\label{s:msfps}
In this subsection we show that being metrically systolic for groups is inherited by taking finitely presented subgroups. It follows that all subsequent features (and the quadratic isoperimetric inequality established above) of metrically systolic groups are
valid also for all their finitely presented subgroups. In particular, they hold for all finitely
presented subgroups of two-dimensional Artin groups.

\begin{theorem}
	\label{t:fps}
	Finitely presented subgroups of metrically systolic groups are metrically systolic.
\end{theorem}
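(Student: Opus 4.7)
The plan is to build an $H$-invariant metrically systolic complex $Z$ by realizing it as a full subcomplex of $X$. Fix a finite presentation $\langle S \mid R\rangle$ of $H$ and a basepoint $x_0 \in X$. For each relator $r \in R$, the loop $\alpha_r$ in $X$ traced from $x_0$ by the letters of $r$ is nullhomotopic (since $r=1$ in $H\le G$), so by Theorem~\ref{t:CAT(0)diagrem} it bounds a CAT(0) singular disc diagram $f_r\colon D_r\to X$. I let $V_0:=\{x_0\}\cup\bigcup_{r\in R}f_r(D_r^{(0)})$, a finite set, and let $Z$ be the full subcomplex of $X$ spanned by the vertex set $V:=H\cdot V_0$.

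Most required properties of $Z$ follow by inheritance from $X$. The set $V$ is $H$-invariant with finitely many $H$-orbits, so the $H$-action on $Z$ is cocompact, and properness together with isometric action is inherited from $G\curvearrowright X$. Being a full subcomplex of the flag complex $X$, the complex $Z$ is itself flag, carries the restricted piecewise Euclidean $2$-simplex metric, and satisfies the weak triangle inequality on angular lengths. For local $2\pi$-largeness, observe that any $2$--full simple cycle $C$ in $lk(v,Z^{(1)})$ is also $2$--full in $lk(v,X^{(1)})$: any putative chord $uw$ of $C$ in $X$ would produce a triangle $uvw$ with all vertices in $V$, hence a triangle of $Z$ by fullness, contradicting $2$--fullness of $C$ in $Z$. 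Metric systolicity of $X$ then forces $\lan(C)\ge 2\pi$, with angular lengths coinciding since the relevant $2$-simplices agree.

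The main obstacle is simple connectivity of $Z$. By construction each translate $h\cdot f_r(D_r)$ is contained in $Z$, so every $H$-translate of every relator loop bounds in $Z$. To upgrade this to $\pi_1(Z)=1$, I would use the natural simplicial map from the Cayley $2$-complex $Y_H$ of $H$, with each $2$-cell subdivided by the corresponding disc diagram $D_r$, into $Z$; this subdivided complex is simply connected, and the map must be shown to be $\pi_1$-surjective via an iterative argument that pushes any loop in $Z^{(1)}$ across the $2$-simplices of $Z$ until it lies on the Cayley graph of $H$, where the relator fillings close it up. The delicate point is handling ``extraneous'' edges of $Z$ --- edges of $X$ with both endpoints in $V$ but not belonging to any translate $h\cdot f_r(D_r)$ --- which may force a small enlargement of $V_0$ before the push-off argument terminates.
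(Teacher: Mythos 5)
Your construction of $Z$ as an $H$-cocompact full subcomplex of $X$ correctly handles flagness, the metric structure, and local $2\pi$-largeness: the observation that a $2$-full simple cycle in a link of $Z$ stays $2$-full in the corresponding link of $X$ is exactly the argument the paper uses for full subcomplexes. However, as you acknowledge, simple connectivity of $Z$ is the crux, and your treatment of it has a genuine gap. There is no a priori reason the push-off of a loop in $Z^{(1)}$ onto the Cayley graph of $H$ terminates, nor that the iterative enlargement of $V_0$ you suggest stabilizes after finitely many $H$-orbits. A loop in $Z$ built from edges not belonging to any translate $h\cdot f_r(D_r)$ need not bound inside $Z$; filling it in $X$ introduces new vertices and hence potentially new such loops, and no compactness principle forces this to close up. Indeed, there is no reason an arbitrary $H$-cocompact full subcomplex of $X$ should be simply connected, so the strategy of producing $Z$ by a one-shot choice of $V_0$ cannot work as stated.

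The paper sidesteps this entirely by citing Hanlon--Martinez~\cite{HanlonMartinez}: if a class of simplicial complexes is closed under taking covers and full subcomplexes, then a finitely presented subgroup of a group acting geometrically on a simply connected complex in the class again acts geometrically on a simply connected complex in the class. The paper's proof then merely verifies the two closure properties for locally $2\pi$-large complexes (the full-subcomplex verification is essentially your argument; the cover verification is the piece you never touch). Internally, Hanlon--Martinez use an equivariant tower lift in the spirit of Howie and Wise: one factors the $H$-equivariant map from the universal cover of the presentation $2$-complex of $H$ into $X$ through an alternating sequence of covers and full subcomplexes, and a maximal such factorization has simply connected target with a geometric $H$-action. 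The covers are what make the tower terminate; your construction stays inside $X$ and thus has no access to that mechanism. Completing your direct approach would essentially amount to reproving the tower theorem, so you should either cite it or carry out that argument.
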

\begin{proof}
	In view of \cite[Theorem 1.1]{HanlonMartinez} (compare also \cite[Corollary 5.8]{Wise2003-sixtolic}) it is enough to show that the class of locally $2\pi$--large
	complexes is closed under taking covers and full subcomplexes.
	
	Let $\tX \to X$ be a cover of a locally $2\pi$--large complex $X$. Then links in $\tX$ are combinatorially isomorphic to
	links in $X$. It follows that such links equipped with a metric induced by the isomorphism are
	$2\pi$--large. Such metric on links is the angular metric coming from the metric on
	$\tX$ induced by the covering. Therefore, $\tX$ is metrically systolic.
	
	Let $\bX$ be a full subcomplex of a metrically systolic complex $X$, equipped with a subcomplex metric. Let $C$ be a $2$--full simple cycle in the link of a vertex of $\bX$. By fullness of $\bX$, $C$ is $2$--full in $X$, hence its angular length is at least $2\pi$. Therefore, the angular length of $C$
	in $\bX$ is at least $2\pi$ as well. It follows that $\bX$ is locally $2\pi$--large.
\end{proof}

\subsection{Solvability of the Conjugacy Problem}
\label{s:mscp}
In this subsection we show that the Conjugacy Problem is solvable for torsion-free metrically systolic 
groups satisfying some additional technical assumption; see Theorem~\ref{t:cp}.
The proof is a typical argument for showing solvability of the Conjugacy Problem
in the non-positive curvature setting; see e.g.\ \cite[pp.\ 445-446]{BridsonHaefliger1999}

Below, and in further parts of the article we use the following convention concerning quasi-isometries. 
\begin{definition}
	Assume $K,L>1$.
	A \emph{$(K,L)$--quasi-isometric embedding} is a map $f\colon (X,d_X)\to (Y,d_Y)$ between metric spaces such that 
	\begin{align*}
	K^{-1} d_X(x,y) -L \le  d_Y(f(x),f(y)) \le  K d_X(x,y)+L,
	\end{align*}
	for all $x,y\in X$.
	
	A \emph{$(K,L)$--quasi isometry} is a $(K,L)$--quasi-isometric embedding $f\colon X\to Y$ having an $L$--coarse inverse $\bar f \colon Y\to X$, that is, a $(K,L)$--quasi-isometric embedding
	such that $d_X(x,\bar f \circ f(x))\le  L$ for all $x\in X$, and
	$d_Y(y,f \circ \bar f(y))\le  L$ for all $y\in Y$.
\end{definition}

For the rest of the subsection let $G$ be a torsion-free group acting geometrically on a metrically
systolic complex $X$. We will use here the induced metric $d$ in the one-skeleton of
$X$. By scaling the metric we may assume that all edges have length at most $1$.
Let $S$ be a finite (symmetrized) generating set for  $G$, and let $\Gamma:= \mr{Cay}(G, S)$ be the corresponding
Cayley graph. Let $d_S$ be the word metric on $G$ and (the
$0$--skeleton of) $\Gamma$, and $|g|_S=d_S(1_G,g)$.

The following two lemmas are standard but we formulate them for the purpose of refereeing 
to constants appearing later.
 The first one is just the Milnor-Schwarz lemma.
\begin{lemma}
	\label{l:constOI}
	There exist $K_1,L_1>1$ such that for every vertex $x\in X$ the orbit map   $\Phi \colon (G,d_S) \to X: g\mapsto gx$ is a $(K_1,L_1)$--quasi-isometry, and for every vertex $v\in X$ there exists $g\in G$ such that 
	$d(v,\Phi(g))\le  L_1$.
\end{lemma}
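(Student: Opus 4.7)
The plan is to carry out the Milnor-Schwarz (\v Svarc-Milnor) argument in the vertex-metric setting, paying attention only to where the ingredients of a geometric action are used. Because $G$ acts cocompactly on $X$ and $X$ has finitely many $G$-orbits of vertices, one may choose orbit representatives $v_1,\dots,v_m$ and set $L_1 \ge \max_i d(x,v_i)$. Then every vertex $v\in X$ has the form $v = g v_i$ for some $g\in G$ and some $i$, so by $G$-invariance of $d$ one gets $d(v,\Phi(g)) = d(v_i,x)\le L_1$, yielding the quasi-density claim.

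For the upper Lipschitz bound on $\Phi$, let $M := \max_{s\in S} d(x,sx)$. For $g,h\in G$, write $g^{-1}h = s_1\cdots s_n$ with $n = d_S(g,h)$. Applying $G$-invariance and the triangle inequality,
\begin{equation*}
d(\Phi(g),\Phi(h)) \;=\; d(x,s_1\cdots s_n x)\;\le\; \sum_{i=1}^n d(s_1\cdots s_{i-1}x,\,s_1\cdots s_i x)\;\le\; nM\;=\; M\, d_S(g,h),
\end{equation*}
so up to enlarging $K_1,L_1$ one obtains $d(\Phi(g),\Phi(h))\le K_1 d_S(g,h)+L_1$.

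The main obstacle is the lower bound, which is where properness enters. Set $R := 2L_1 + 2$ and let $T := \{g\in G : d(x,gx)\le R\}$; properness of the $G$-action ensures $T$ is finite. Given $g\in G$, take any edge-path in $X^{(1)}$ from $x$ to $gx$ with $k$ consecutive vertices $x = u_0,u_1,\dots,u_k = gx$ and each $d(u_i,u_{i+1})\le 1$ (possible since we rescaled edges to have length at most $1$). Using quasi-density, pick $g_i\in G$ with $d(u_i,g_ix)\le L_1$, taking $g_0 = 1$ and $g_k = g$. The triangle inequality yields
\begin{equation*}
d(g_i x,\,g_{i+1}x) \;\le\; d(g_ix,u_i) + d(u_i,u_{i+1}) + d(u_{i+1},g_{i+1}x)\;\le\; 2L_1 + 1 \;\le\; R,
\end{equation*}
so $g_i^{-1}g_{i+1}\in T$, giving a factorization $g = (g_0^{-1}g_1)\cdots(g_{k-1}^{-1}g_k)$ of length $k$ over $T$. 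Since $S$ generates $G$ and $T$ is finite, $C := \max_{t\in T}|t|_S$ is finite, and $d_S(1,g)\le Ck$. Choosing $k$ to be the combinatorial length of a near-geodesic edge-path from $x$ to $gx$ (so that $k$ is bounded above in terms of $d(x,gx)$) and translating by $g$, one obtains $d_S(g,h)\le K_1 d(\Phi(g),\Phi(h)) + L_1$ after adjusting constants. The delicate point is the choice of the threshold $R$: it must be at least $2L_1+1$ so that consecutive orbit approximations along any edge-path remain connected by elements of $T$, and this is precisely what makes the lower bound go through.
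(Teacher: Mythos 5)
Your proof is a correct and complete execution of the Milnor-Schwarz argument in the vertex-metric setting, and it coincides with the paper's approach: the paper gives no proof of this lemma, remarking only that it ``is just the Milnor-Schwarz lemma.'' The one small point you pass over is that bounding $k$ (the combinatorial length of a near-geodesic edge-path from $x$ to $gx$) in terms of $d(x,gx)$ implicitly uses a positive lower bound on edge lengths in $X^{(1)}$, which does hold because cocompactness forces finitely many $G$-orbits of edges.
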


Let $D$ be a planar CAT(0) $2$--complex constructed from triangles isometric to triangles
in $X$. Let $\delta$ be a CAT(0) geodesic between two given vertices $v,u$ in $D$.
A path $\delta'$ in the $1$--skeleton of $D$ is \emph{approximating} the geodesic 
$\delta$ if $\delta'$ is contained in the union of all edges and triangles intersecting $\delta$,
and $\delta'$ is the shortest path with this property. 
The following is a consequence of e.g.\ \cite[Proposition I.7.31]{BridsonHaefliger1999}.

\begin{lemma}
	\label{l:constAPPROX}
	There exist constants $K_2,L_2>1$ depending only on the geometry of $X$ (in fact, on the set of isometry types of triangles in $X$) such that $K_2^{-1}|\delta'|-L_2\le  |\delta| \le  K_2|\delta'|+L_2$. 
\end{lemma}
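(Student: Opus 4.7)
The plan is to prove the two inequalities separately. The upper bound on $|\delta|$ is essentially free, and the lower bound on $|\delta|$ is where the finite-shapes hypothesis (that $D$ is built from a finite set of isometry types of Euclidean triangles, a consequence of cocompactness of $G\act X$) does all the work.

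For the upper bound $|\delta|\le K_2|\delta'|+L_2$, my argument is as follows: since $D$ is a planar CAT(0) $2$-complex and $\delta$ is a CAT(0) geodesic in $D$ between its endpoints, $\delta$ minimizes length among rectifiable paths in $D$ joining those endpoints. The approximating path $\delta'$ lies in the $1$-skeleton of $D$, hence in $D$, and joins $v$ to $u$, so $|\delta|\le|\delta'|$. This gives the upper bound with $K_2=1$, $L_2=0$.

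For the lower bound $|\delta'|\le K_2|\delta|+L_2$, the plan is to construct an explicit comparison path $\delta''$ in the $1$-skeleton of $N(\delta):=\bigcup\{T:T\cap\delta\neq\emptyset\}$ joining $v$ to $u$ with $|\delta''|\le K_2|\delta|+L_2$; the minimality of $\delta'$ among such paths will then transfer the bound. Enumerate the triangles $T_1,\ldots,T_n$ that $\delta$ enters in order, and the successive edges $e_0,\ldots,e_n$ of $D$ that $\delta$ crosses. For each intermediate $i$ let $w_i$ be the vertex of $T_i$ shared by $e_{i-1}$ and $e_i$, and build $\delta''$ by routing along edges through the sequence $v,w_1,\ldots,w_n,u$. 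Using the uniform upper bound $\ell_{\max}$ on edge lengths coming from finite shapes, each triangle contributes $O(\ell_{\max})$ to $|\delta''|$, so it suffices to bound $n$ linearly in $|\delta|$.

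The main obstacle will be that a CAT(0) geodesic can clip the corners of many triangles sharing a common vertex $w$ while contributing arbitrarily little to $|\delta|$, inflating $n$. I would handle this by grouping consecutive triangles whose entering/exiting edges all meet at a single vertex $w$ into a \emph{corner block}, and in $\delta''$ replacing the corresponding stretch by a single local detour through $w$ whose length is uniformly bounded by the diameter of the star of $w$ (finite by cocompactness). Outside such blocks, each crossed triangle contributes a chord of length at least a uniform constant depending on $\ell_{\min}$ and the minimum angle $\theta_{\min}>0$, yielding a linear bound on the number of non-clip crossings in terms of $|\delta|$. Combining the two contributions gives $K_2$ and $L_2$ depending only on the shape set. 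Alternatively — and this is how I would actually write the proof — one simply invokes Proposition~I.7.31 of \cite{BridsonHaefliger1999}, which packages precisely this kind of quasi-isometry between the intrinsic metric on a finite-shapes piecewise-Euclidean complex and its $1$-skeleton path metric.
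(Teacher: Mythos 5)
Your proposal is correct and, in its final form, takes exactly the same route as the paper: the paper's entire justification for this lemma is the citation to \cite[Proposition I.7.31]{BridsonHaefliger1999}, which is precisely what you invoke at the end. The direct sketch you give beforehand (upper bound from geodesic minimality of $\delta$ in $D$, lower bound via a comparison path through the shared vertices $w_i$ with corner-clipping crossings grouped into blocks bounded by $\pi/\theta_{\min}$) reconstructs the standard finite-shapes argument underlying that proposition and correctly identifies the key obstruction, though it is, as you say, a plan rather than a finished proof; the paper does not attempt a direct argument at all.
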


Let $K:=\max \{K_1,K_2\}$ and $L=\max \{L_1,L_2\}$. In particular, it means that 
the assertions of Lemmas~\ref{l:constOI} and \ref{l:constAPPROX} hold when 
the corresponding constants $K_i$ and $L_i$ are replaced by $K$ and $L$.

\begin{lemma}
	\label{l:CP}
	Let $g,h\in G$ be conjugate elements, such that, for every vertex $v\in X$, the shortest path between 
	$v$ and $gv$ consists of at least $4$ edges.
	Then there exists an element $p\in G$, conjugating them, that is, $g=php^{-1}$, and such that
	$|p|_S\le A$, where $A$ is a constant depending only on $|g|_S$ and $|h|_S$ (and on the action of $G$ on $X$).
\end{lemma}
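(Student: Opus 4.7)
The strategy is the classical non-positive-curvature approach to the Conjugacy Problem (compare \cite[III.$\Gamma$]{BridsonHaefliger1999}): combine a pigeonhole argument on initial segments of a minimal conjugator with a width estimate for a CAT(0) disc diagram bounding the natural quadrilateral loop encoding the conjugation relation. I would take $p$ to be a shortest conjugator of $h$ to $g$, with $n := |p|_S$, and fix a geodesic word $p = s_1 \cdots s_n$ in $S$; set $p_i := s_1 \cdots s_i$ and $h_i := p_i^{-1} g p_i$, so that $h_0 = g$, $h_n = h$, and each $h_i$ is conjugate to $g$. A direct calculation shows that if $h_i = h_j$ with $0 \le i < j \le n$, then $q := p_j p_i^{-1}$ commutes with $g$ and $p' := p_i (p_j^{-1} p)$ is a conjugator of length at most $n - (j - i) < n$, contradicting minimality of $p$. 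Hence $h_0, \ldots, h_n$ are pairwise distinct, so it suffices to bound $\max_i |h_i|_S \le M$ by a constant $M = M(|g|_S, |h|_S)$: then $n \le |B_M^G|$, the cardinality of the ball of radius $M$ in $\Gamma$, and one may take $A := |B_M^G|$.

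To bound $|h_i|_S$ I would fix a basepoint $v_0 \in X$ and form the loop $C$ in $X$ by concatenating, via the orbit map $\Phi$ of Lemma~\ref{l:constOI}, orbit paths spelling $g$, $p$, $h^{-1}$, $p^{-1}$, so that $C$ traverses $v_0 \to g v_0 \to g p v_0 = p h v_0 \to p v_0 \to v_0$; its length is at most $2 K |p|_S + K(|g|_S + |h|_S) + 4 L$. By Theorem~\ref{t:CAT(0)diagrem}, $C$ bounds a CAT(0) singular disc diagram $f \colon D \to X$ whose boundary decomposes into four arcs $\alpha, \beta, \gamma, \delta$ lifting the four orbit paths, with $|\alpha| \le K|g|_S + L$ and $|\gamma| \le K|h|_S + L$, while the two ``$p$-sides'' $\beta$ and $\delta$ carry vertices $u_0, \ldots, u_n$ of $D$ on $\beta$ and $w_0, \ldots, w_n$ on $\delta$ mapping respectively to $p_i v_0$ and $g p_i v_0$. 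Since the simplicial map $f$ is distance non-increasing and $\Phi$ is a $(K,L)$-quasi-isometry, any bound $d_D(u_i, w_i) \le M'$ depending only on $|g|_S, |h|_S$ yields $|h_i|_S = d_S(p_i, g p_i) \le K \, d_X(p_i v_0, g p_i v_0) + L \le K M' + L$.

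The main obstacle is this width estimate in $D$. The CAT(0) convexity of distance between geodesics gives, for the CAT(0) geodesics $\beta^*, \delta^*$ from $V_1$ to $V_4$ and from $V_2$ to $V_3$ parametrized proportionally by $[0,1]$, that $t \mapsto d_D(\beta^*(t), \delta^*(t))$ is convex with endpoint values bounded by $|\alpha|$ and $|\gamma|$, hence $\le K \max(|g|_S, |h|_S) + L$ throughout. The subtlety is that the vertices $u_i, w_i$ lie on the piecewise-geodesic boundary arcs $\beta, \delta$, not on $\beta^*, \delta^*$, so they must additionally be transported to the interior CAT(0) geodesics at controlled cost; I would achieve this by splitting $D$ along a diagonal CAT(0) geodesic from $V_1$ to $V_3$ into two CAT(0) triangular subdiscs, and in each subdisc applying the $1$-Lipschitz CAT(0) projection onto the short side combined with a convexity argument to move each boundary vertex onto the corresponding interior geodesic at distance bounded by $|g|_S + |h|_S$ plus a constant. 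The hypothesis $d(v, g v) \ge 4$ guarantees that the $g$-side of $D$ carries at least four distinct edges, ensuring the quadrilateral is nondegenerate and the CAT(0) geometric arguments above have genuine $2$-dimensional room to apply.
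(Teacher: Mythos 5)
Your overall pigeonhole strategy is the right one, but the crucial width estimate—which you correctly identify as ``the main obstacle''—has a genuine gap, and the proposed fix does not repair it. You attempt to bound $|h_i|_S = d_S(p_i, gp_i)$ for the letter-by-letter prefixes $p_i$ of a shortest conjugator by building a single CAT(0) disc diagram $f\colon D\to X$ for the quadrilateral and invoking convexity of distance between the interior CAT(0) geodesics $\beta^*,\delta^*$. That convexity statement is correct, but it controls only interior geodesics. Your vertices $u_i,w_i$ lie on the \emph{boundary} arcs $\beta,\delta$, and in a CAT(0) disc a boundary vertex can be arbitrarily far from the chord joining the arc's endpoints (a Euclidean ball of radius $R$ with antipodal endpoints already exhibits deviation $\sim R$). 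The proposed remedy—splitting along a diagonal and projecting onto a short side—does not give a bound depending only on $|g|_S$ and $|h|_S$; what you are trying to establish is precisely the kind of $\delta$-thin-quadrilateral statement that holds in hyperbolic geometry but fails for general CAT(0) spaces. There is also a second obstruction you do not address: even if $u_i$ were close to some $\beta^*(t)$, the map $f$ does not intertwine $\delta^*$ with $g\circ f|_{\beta^*}$, so ``$d_D(\beta^*(t),\delta^*(t))$ bounded'' does not convert into ``$d_X(f(\beta^*(t)),g f(\beta^*(t)))$ bounded.'' In short, there is no reason the prefix path $\{p_i\}$ should have bounded $g$-displacement, and that is the quantity your pigeonhole needs.

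The paper sidesteps both problems by \emph{not} using the original prefix path. It stacks $\mathbb{Z}$-many copies of the disc $D$ along the two $p$-sides to obtain a bi-infinite singular strip $\ov D$ carrying a $\langle g\rangle$-action, uses the hypothesis $d(v,gv)\ge 4$ to perform the A-, B-, C-, D-moves \emph{equivariantly} (the displacement guarantee is what makes moves on distinct $\langle g\rangle$-translates independent—this is the real role of that hypothesis, not ``$2$-dimensional room''), and arrives at a CAT(0) strip on which $g$ acts by isometries. On this strip the convexity of the $g$-displacement function along the CAT(0) geodesic from $x$ to $px$ gives the uniform bound $d(y,gy)\le E$ directly. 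It then approximates that interior geodesic by a $1$-skeleton path, pushes it into the Cayley graph via the orbit map to produce a \emph{new} chain $g_0=1,g_1,\dots,g_r=p$ with bounded steps and bounded $g$-displacement, and applies pigeonhole (by cutting and regluing the resulting quadrilateral) to that chain. Your argument would go through if you replaced the prefix path $\{p_i\}$ by such an approximating path and replaced the finite disc by the $\langle g\rangle$-invariant strip; as written, the width estimate is unsupported.
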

\begin{proof}
	For every generator $s\in S$, choose a geodesic $1$--skeleton path $q(s)$ in $X$,
	between $x$ and $sx$.
	Let $p$ be an element conjugating $g$ and $h$. We will show that starting with $p$
	we may find a conjugator $p'$ with  
	$|p'|_S\le A$, where $A$ is a constant depending only on $|g|_S$ and $|h|_S$.
	
	Let $\alpha_S, \gamma_S$, and $\eta_S$ be geodesics in $\Gamma$ between $1_G$ and,
	respectively $p,g$, and $h$.  Let $s^p_1\cdots s^p_a$, $s^g_1\cdots s^g_b$, and $s^h_1\cdots s^h_c$
	be words in $S$ defined by these geodesics.
	Let $\alpha$ be the concatenation of paths $q(s^p_1),s^p_1q(s^p_2),s^p_1s^p_2q(s^p_3)\ldots,$\\$s^p_1\cdots s^p_{a-1}q(s^p_a)$.
	Similarly, let $\gamma$ be the concatenation of paths $q(s^g_1),s^g_1q(s^g_2),$\\$s^g_1s^g_2q(s^g_3)\ldots,s^g_1\cdots s^g_{b-1}q(s^g_b)$, and let $\eta$ be the concatenation of paths $q(s^h_1),$\\$s^h_1q(s^h_2),\ldots,s^h_1\cdots s^h_{c-1}q(s^h_c)$.
	Consider the cycle $C$ based at $x$, being the concatenation of (in this order) $\gamma,g\alpha,p\eta$, and $\alpha$; see Figure~\ref{f:CP}. 
	
	By Lemma~\ref{l:constOI}, there exist constants $E_1$ and $F_1$ depending only on $|g|_S$ and $|h|_S$
	(and the action of $G$ on $X$) such that $|\gamma|\le  E_1$, $|\eta|\le  F_1$,
	where $|\cdot|$ denotes the $d$--length. In what follows we will consider constants depending on
	$E_1,F_1$, and $K,L$ leading, eventually, to a constant $A$ as in the statement of the lemma.
	 
	Let $f\colon D\to X$ be a singular disc diagram for the cycle $C$. We create a singular strip diagram
	$\ov f \colon \ov D \to X$ as follows. For every $n\in \mathbb Z$ let $D^n$ be a copy of $D$,
	and let $f^n$ be the simplicial map such that $f^n(v):=g^nf(v)$, for every vertex $v\in D$ -- here
	we identify $D^n$ with $D$. In particular $f^0=f$. Next, for every $n$, we identify the copy of the path $g\alpha$ in $D^n$ with the copy of the path $\alpha$ in $D^{n+1}$. This way we obtain a singular
	strip $\ov D = \bigcup_{n\in \mathbb Z} D^n$. We define the map $\ov f$ as the union of maps
	$f^n$, for all $n$. This way we obtain the singular strip diagram $\ov f \colon \ov D \to X$ for the pair 
	of paths $\ov{\gamma}$, $\ov {p\eta}$, where $\ov{\gamma}$ is the concatenation of paths $g^n\gamma$, and 
	$\ov{p\eta}$ is the concatenation of paths $g^np\eta$, for all $n\in \mathbb Z$; see Figure~\ref{f:CP}.
		\begin{figure}[ht!]
			\centering
			\includegraphics[width=0.5\textwidth]{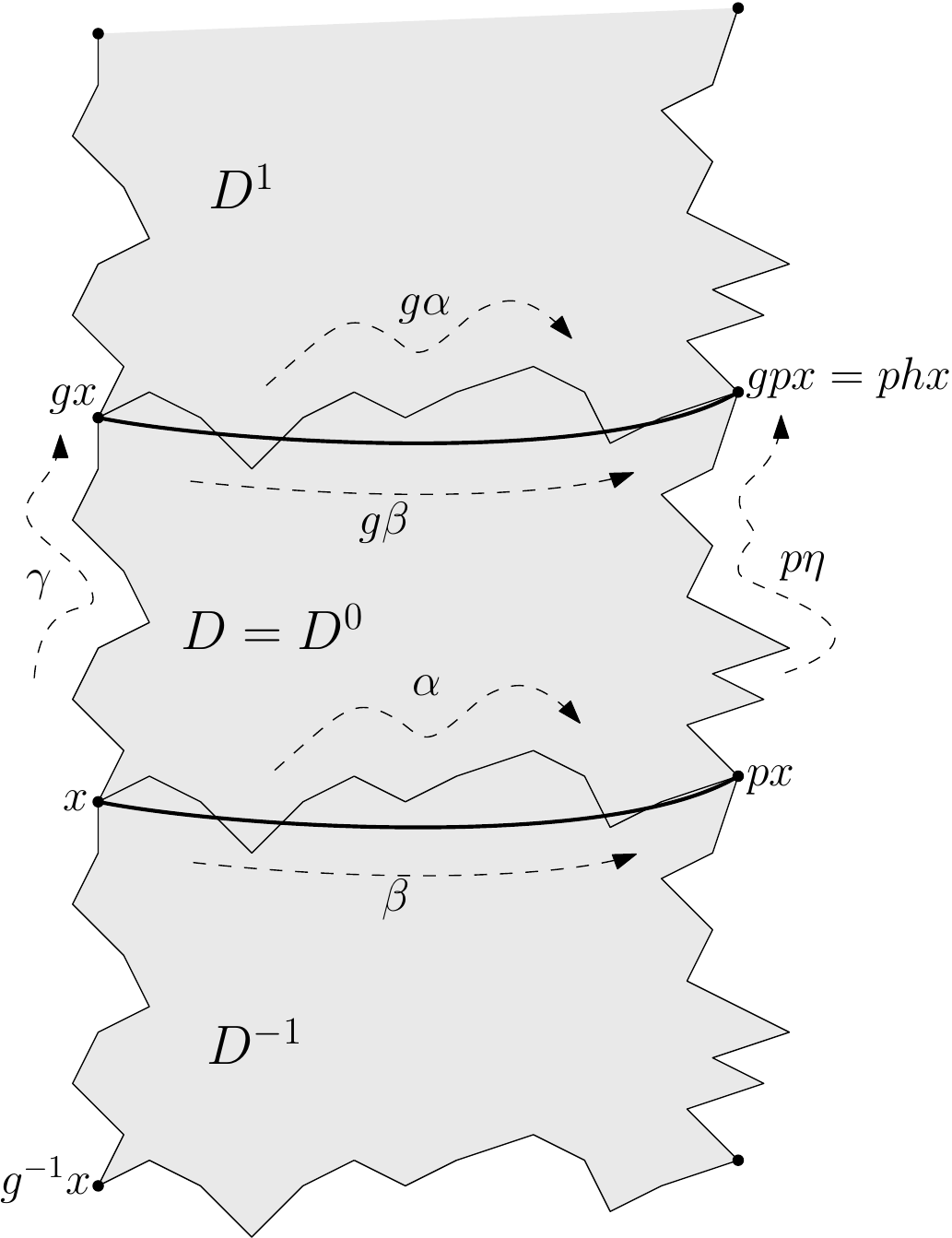}
			\caption{Scheme for proving Lemma~\ref{l:CP}.}
			\label{f:CP}
		\end{figure}	 
	Observe that there is a $\langle g \rangle$--action on $\ov D$: $g^nD^m=D^{n+m}$, and that the 
	map $\ov f$ is equivariant with respect to this action and the $\langle g \rangle$--action
	on $X$.
	
	For every $m\neq n$, and for each triple of pairwise adjacent vertices $v_1,v_2,v_3$ in $\ov D$, the A-moves A($g^mv_1,g^mv_2,g^mv_3$) and A($g^nv_1,g^nv_2,g^nv_3$) may be performed independently, since 
	the shortest path between $g^mv_i$ and $g^nv_j$ has at least $3$ edges.
	Similarly, B-moves, C-moves, and D-moves may be performed independently for distinct translates
	of the defining vertices. Thus, we may define an \emph{equivariant A-move on $u,v,w$} as
	the modification consisting of A-moves A($g^nu,g^nv,g^nw$), for all $n$. Similarly we define
	\emph{equivariant B-move}, \emph{equivariant C-move}, and \emph{equivariant D-move}.
	As an equivariant analogue of Theorem~\ref{t:CAT(0)diagrem} we claim that by performing a finite
	number of equivariant moves the singular strip diagram $\ov f \colon \ov D \to X$
	may be modified to a CAT(0) nondegenerate reduced singular strip diagram $\ov f' \colon \ov D' \to X$ for the pair $\ov{\gamma},\ov{p\eta}$.
	
	Let $\beta'$ be the CAT(0) geodesic in $\ov D'$ with endpoints $x,px$ (that is, their preimages in $\ov D'$). Let $d_{\ov D'}$ denote the CAT(0) distance in ${\ov D'}$. Since $d_{\ov D'}(x,gx)$\\$\le 
	|\gamma|\le  E_1$, and $d_{\ov D'}(x,hx)\le 
	|\eta|\le  F_1$, by the CAT(0) geometry, and the $\langle g \rangle$--invariance of $\ov D'$, we
	have 
	\begin{align*}
	d_{\ov D'}(y,gy)\le  \max \{E_1,F_1\}=:E,
	\end{align*}
	for every point $y\in \beta'$.

	Let $\beta=(v_0=x,v_1,\ldots,v_r=px)$ be a path in the $1$--skeleton of $\ov D'$ with endpoints $x,px$ (that is, their preimages in $\ov D'$) approximating the CAT(0) geodesic in $\ov D'$ between $x$ and $px$.
	Then $g\beta$ approximates $g\beta'$, and hence, for every vertex $v$ of $\beta$, we have
	\begin{align}
	\label{e:20}
	d_{\ov D'}(v,gv)\le  d_{\ov D'}(v,y)+d_{\ov D'}(y,gy)+d_{\ov D'}(gy,gv)\le 
	1+E+1,
	\end{align}
	where $y\in \beta'$ is a point closest to $v$.
		Using Lemma~\ref{l:constAPPROX} we get
	\begin{align}
	\label{e:30}
	d(v,gv)\le  K(E+2)+L.
	\end{align}
	For every $i\in \{1,\ldots,r-1\}$ we find $g_i\in G$ such that $d(v_i,g_ix)\le  L$ (see Lemma~\ref{l:constOI}). Additionally, we set $g_0=1_G$ and $g_r=p$.
	Then, by Lemma~\ref{l:constOI}, for every $i=0,1,\ldots,r-1$ we have
	\begin{align}\label{e:40}
		\begin{split}
			d_S(g_i,g_{i+1})&=d_S(gg_i,gg_{i+1})\le  Kd(g_ix,g_{i+1}x)+KL \le  \\
			&\le  K(d(g_ix,v_i)+d(v_i,v_{i+1})+d(v_{i+1},g_{i+1}x))+KL\le  \\
			&\le  K(L+1+L)+KL=:\ov L.
		\end{split}
	\end{align}
	By (\ref{e:20}) and (\ref{e:30}), we have
	\begin{align}
	\label{e:50}
		\begin{split}
		d_S(g_i,gg_i)&\le  Kd(g_ix,gg_{i}x)+KL \le  \\
		&\le  K(d(g_ix,v_i)+d(v_i,gv_i)+d(gv_i,gg_{i}x))+KL \le  \\
		&\le  K(L+E+2+L)+KL=:\widehat L.
		\end{split}
	\end{align}
	For every $i$ we choose a $d_S$--geodesic between $g_i$ and $g_{i+1}$. Let $\beta_S$ be their
	concatenation. This is a path in $\Gamma$ connecting $1_G$ and $p$.
	By (\ref{e:40}) and (\ref{e:50}), for every $a\in \beta_S$, we have 
	\begin{align*}
		\begin{split}
			d_S(a,ga)\le  d_S(a,g_i)+d_S(g_i,gg_i)+d_S(gg_i,ga)\le  2\ov L+ \widehat L=: \widetilde{L},
		\end{split}
	\end{align*} 
	where $g_i$ is the closest to $a$ among $g_i$'s.
	
	Now consider the quadrilateral $Q$ in $\Gamma$ formed by paths $\gamma_S, \beta_S, p\eta_S, g\beta_S$.
	For every vertex $v$ on $\beta_S$ pick a geodesic $\gamma_v$ between $v\in \beta_S$ and $gv\in g\beta_S$.
	There are at most $A:=|S|^{\widetilde{L}+1}$ different up to $G$--action on $\Gamma$ paths of length
	less than $\widetilde{L}$. Hence if $|\beta_S|_S>A$ then there are two vertices $v,v'\in \beta_S$
	such that the two paths $\gamma_v$ and $\gamma_{v'}$ are the same up to $G$.
	Cutting $Q$ along such paths and gluing together we obtain a quadrilateral $Q'$ formed by paths 
	$\gamma_S, \beta_S', p\eta_S, g\beta_S'$, and such that again $d_S(a,ga)\le  \widetilde{L}$,
	for all $a\in \beta_S'$. This way we construct a quadrilateral $Q''$ consisting
	of paths $\gamma_S, \beta_S'', p\eta_S, g\beta_S''$, with $|\beta_S''|_S\le  A$.
	Hence we obtain an element $p'\in G$ conjugating $g$ and $h$, with $|p'|_S\le  A$.
\end{proof}

\begin{theorem}
	\label{t:cp}
	Let $G$ be a torsion-free metrically systolic group such that for every element $g\neq 1_G$ of $G$
	if $g^n$ and $g^m$ are conjugated then $n=m$. Then 
	the Conjugacy Problem is solvable for $G$.
\end{theorem}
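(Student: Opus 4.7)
Fix a basepoint $x_0 \in X$ and suppose we are given $g, h \in G$ as words in $S$. The plan is to reduce the Conjugacy Problem to a finite search by establishing a \emph{computable short conjugator bound}: a computable function $A(m,n)$ such that whenever $g$ and $h$ are conjugate with $|g|_S \le m$ and $|h|_S \le n$, some conjugator $p$ satisfies $|p|_S \le A(m,n)$. Given such $A$, the algorithm enumerates all $p$ with $|p|_S \le A(|g|_S, |h|_S)$ and tests the equality $pgp^{-1}=h$ using the Word Problem, which is solvable via the quadratic Dehn function (Corollary~\ref{c:Dehn}).

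Lemma~\ref{l:CP} supplies such a bound, but only under the \emph{large-displacement} hypothesis that for every vertex $v \in X$ the shortest $1$-skeleton path from $v$ to $gv$ has at least four edges. The first step is to invoke Lemma~\ref{l:CP} directly whenever this hypothesis holds. By Lemma~\ref{l:constOI} the orbit map is a quasi-isometry, so one can compute an effective threshold $M_0$ depending only on the action such that $|g|_S \ge M_0$ forces the large-displacement hypothesis, leaving only finitely many short elements for separate treatment.

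For the \emph{small-displacement} case (when $d(v, gv) < 4$ in edges for some $v$), $G$-invariance shows that $g$ is conjugate to an element moving $x_0$ by fewer than four edges, and properness of the action makes the set of such elements finite and effectively enumerable. I would then pass to a suitable power $g^N$ for which the displacement hypothesis is restored. Torsion-freeness forces $g$ to have infinite order, and the power-conjugacy hypothesis $g^n \sim g^m \Rightarrow n = m$ prevents the stable translation length of $g$ from being zero; combined with the geometric action this should yield an effectively computable $N$ with $\tau(g^N) \ge 4$. Lemma~\ref{l:CP} applied to $(g^N, h^N)$ then produces short conjugators between the powers, and the trivial implication $g \sim h \Rightarrow g^N \sim h^N$ handles one direction.

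The main obstacle I foresee is the reverse direction $g^N \sim h^N \Rightarrow g \sim h$. If $p g^N p^{-1} = h^N$, then $pgp^{-1}$ and $h$ are two $N$-th roots of the common element $h^N$, both lying in its centraliser. The power-conjugacy hypothesis is indispensable here: if $pgp^{-1}$ and $h$ were non-conjugate, a structural analysis of the centraliser of $h^N$ via CAT(0) disc diagrams (Theorems~\ref{t:CAT(0)diagrem} and \ref{t:flatpt}) should produce two distinct powers of a single element that become conjugate in $G$, contradicting the assumption. Turning this heuristic into an effective algorithm — in particular, gaining sufficient control over the centraliser of $h^N$ in a general metrically systolic group — is the delicate part of the proof.
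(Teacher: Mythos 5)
Your plan is structurally the same as the paper's: pass to a uniformly bounded power $g^n$ whose displacement is at least $4$, apply the short-conjugator Lemma~\ref{l:CP} to $(g^n,h^n)$, and reduce the Conjugacy Problem to a finite search backed by the solvable Word Problem (Corollary~\ref{c:Dehn}). Where you invoke translation lengths, the paper obtains the uniform exponent more directly: the elements of $G$ that move some vertex by fewer than $4$ edges fall into finitely many conjugacy classes, bounded by the number of orbit points within distance $4$ of a fundamental domain; since the power-conjugacy hypothesis makes $g,g^2,g^3,\dots$ pairwise non-conjugate, at most that many powers can be small, and the bound on $n$ does not depend on $g$. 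No stable-translation-length estimate is needed, which is a small simplification.

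The obstacle you flag is a genuine subtlety, and the paper's own proof is silent on it. Reading the cutting argument in the proof of Lemma~\ref{l:CP}, applied to $(g^n,h^n)$ starting from a conjugator $p$ of $g$ and $h$, each cut replaces $p$ by $w^{-1}p$ where $w$ carries the parallel path $\gamma_v$ from $v$ to $g^n v$ onto $\gamma_{v'}$ from $v'$ to $g^n v'$; this forces $w\in C(g^n)$ but does not force $w\in C(g)$. Consequently the shortened conjugator for $(g^n,h^n)$ need not conjugate $g$ to $h$, which is exactly the reverse-direction issue you describe. Closing the gap requires either showing that $C(g^n)=C(g)$ for infinite-order $g$ in a torsion-free metrically systolic group satisfying the power-conjugacy hypothesis — essentially a unique-roots statement inside centralisers, tied to the authors' own open question about centraliser structure — or re-running the cutting argument with decorated paths recording the intermediate orbit points $g^kv$ so that the resulting $w$ lies in $C(g)$, which in turn needs a bound on $d(v,g^kv)$ that the strip diagram for $g^n$ does not directly provide. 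Your diagnosis is on target and your proposed route via the centraliser of $h^N$ is the natural one; but as written neither your sketch nor the paper's terse proof actually supplies the missing step.
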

\begin{proof}
	Suppose $G$ acts geometrically on a metrically systolic complex $X$. Let $g=php^{-1}$. By the assumption on conjugates of $g$, we may find $n$ such that the displacement of $g^n$ is as large as in Lemma~\ref{l:CP}. Note that $n$ does not depend on $g$, it only depends on the number of elements in the orbit of $G$ contained in a ball of $X$ of given size. Clearly $g^n=ph^np^{-1}$. By Lemma~\ref{l:CP} the displacement of $p$ is bounded by
	value depending only on displacements of $g$, and $h$, and the action of $G$ on $X$. Hence there is 
	a bound on the number of possible $p$'s. Note that this number is of the same order as the number of words we need to search in the $CAT(0)$ case.
\end{proof}

\subsection{Spherical fillings}
\label{s:sfrc}

The following result is a direct analogue of \cite[Theorem 9.2]{JanuszkiewiczSwiatkowski2007} and \cite[Theorem 2.4]{Elsner2009-flats} concerning systolic complexes.
\begin{theorem}
	\label{t:sfrc}
	Let $X$ be a metrically systolic complex and $f\colon S \to X$ be a simplicial map from a triangulation of the two-sphere. Then $f$ can be extended to a simplicial map $F\colon B \to X$, where
	$B$ is a triangulation of a $3$-–ball such that $\partial B=S$ and $B$ has no internal vertices.
\end{theorem}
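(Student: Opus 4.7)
My plan is to prove Theorem~\ref{t:sfrc} by induction on the number of $2$-simplices of $S$, along the lines of \cite[Theorem 9.2]{JanuszkiewiczSwiatkowski2007} and \cite[Theorem 2.4]{Elsner2009-flats}. The base case is $S$ being the boundary of a $3$-simplex: any three of its four vertices span a triangle in $f(S)\subset X$, hence by the flag property the four images span a $3$-simplex, which provides the filling with no new vertices.

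For the inductive step I would choose a simple cycle $C\subset S$ separating $S$ into triangulated discs $D_1,D_2$ so that the restrictions $f|_{D_i}$ are singular disc diagrams for $f(C)\subset X$. By Theorem~\ref{t:CAT(0)diagrem}, each $f|_{D_i}$ can be transformed into a CAT(0) nondegenerate reduced diagram $f'_i\colon D'_i\to X$ via a sequence of A-, B-, C-, D-moves, and property~(1) of that theorem ensures no new vertices are introduced along the way (so the vertex set of $D'_i$ injects into the vertex set of $S$).

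The heart of the argument is to realize each elementary move \emph{three-dimensionally}: whenever a single move carries a disc $D$ to $D'$, I would construct a triangulated $3$-ball $B^*$ with $\partial B^*=D\cup D'$ (glued along their common boundary), with no internal vertices, and equipped with a simplicial extension of both diagrams to a map $B^*\to X$. For an A-move collapsing a cap $\Sigma\subset D$ onto a triangle $uvw$, the cobordism is a $3$-ball filling of the auxiliary sphere $\Sigma\cup_{\partial}uvw$, which has strictly fewer $2$-simplices than $S$ and is filled by the inductive hypothesis. The B-, C-, and D-moves are treated analogously, each time invoking the inductive filling of a small auxiliary sphere arising from the local picture of the move. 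Concatenating these cobordisms yields $3$-balls $B^*_i$ between $D_i$ and $D'_i$; the resulting sphere $D'_1\cup_C D'_2$ has strictly fewer $2$-simplices than $S$ and is filled by a further application of the inductive hypothesis. Gluing the three pieces along common boundaries produces the required $3$-ball $B$ with $\partial B=S$ and no internal vertices.

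The main obstacle I anticipate is the degenerate case in which both $f|_{D_1}$ and $f|_{D_2}$ happen to be CAT(0) to begin with, so that Theorem~\ref{t:CAT(0)diagrem} performs no nontrivial reduction and the induction stalls. To handle this I would need a separate argument, such as locating an interior vertex of one of the $D_i$ whose angular link in $D_i$ is shorter than $2\pi$ (as in Lemma~\ref{l:fillshort}) to obtain a local reduction, or choosing $C$ more carefully so that at least one side admits a nontrivial move. A second subtlety is verifying that the cobordism attached to each move is genuinely simplicial into $X$: the flag property of $X$, together with the explicit local adjacencies required by A-, B-, C-, D-moves (in particular, pairwise adjacency of the image vertices whenever a new simplex is introduced), is exactly what makes this coherent, but it needs to be checked case by case.
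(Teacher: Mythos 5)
Your base case agrees with the paper's, and you have the right overall frame (induction on area, flagness of $X$, disc-diagram technology). But your inductive step differs substantially from the paper's and, as you yourself flag, has a genuine gap that you do not close. You start from an arbitrary separating cycle $C\subset S$ and try to push each hemisphere toward a smaller-area CAT(0) diagram via Theorem~\ref{t:CAT(0)diagrem}, realizing each A-, B-, C-, D-move as a triangulated $3$-cobordism filled by a nested induction. The problem is exactly the one you name: if $f|_{D_1}$ and $f|_{D_2}$ are both already CAT(0) nondegenerate reduced diagrams, no move is available, the area does not drop, and the induction has nowhere to go. Your proposed workaround (``locate an interior vertex with short link, or choose $C$ more carefully'') is precisely the missing content, but you leave it as a wish rather than an argument.

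The key idea you are missing is a topological one: the $2$-sphere does \emph{not} admit a piecewise Euclidean metric of non-positive curvature. This is what the paper uses to break the stall. The paper's proof runs a cleaner dichotomy. If $S$ is not flag, pick three pairwise adjacent vertices that do not span a $2$-simplex of $S$; cutting along that ``empty triangle'' splits $S$ into two spheres of strictly smaller area, and one glues the two inductive $3$-ball fillings along the common triangle (this is your A-move case, but done once and globally, with an honest area count). If $S$ \emph{is} flag, then since the sphere cannot be non-positively curved in the induced metric, some vertex $v\in S$ has link a cycle $C$ of angular length less than $2\pi$. One then applies Lemma~\ref{l:fillshort} to $f|_C$ to produce a disc $D$ with no internal vertices, cones $D$ to $v$ to fill the star of $v$, and applies the inductive hypothesis to the smaller sphere $(S\smallsetminus \mathrm{St}(v))\cup D$. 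This sidesteps entirely your ``cobordism per elementary move'' construction, whose area bookkeeping is also shaky (e.g.\ your auxiliary sphere $\Sigma\cup uvw$ need not have strictly fewer $2$-simplices than $S$ when $\Sigma$ exhausts one hemisphere). In short: your plan is not wrong in spirit, but without the curvature obstruction on $S^2$ it does not terminate, and the paper's use of that fact together with Lemma~\ref{l:fillshort} is what actually makes the induction go through.
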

\begin{proof}
	The proof is a direct analogue of the proof of \cite[Theorem 2.4]{Elsner2009-flats}. 
	It goes by the induction on the area (number of triangles) of $S$. If the area is $4$ (the smallest possible) then $S$ is the $2$--skeleton of the tetrahedron and the result follows by flagness of $X$.
	For larger area we consider the two following subcases.
	\medskip
	
	\noindent
	{\bf Case 1: }$S$ is not flag. Then we proceed exactly as in the proof of \cite[Theorem 2.4]{Elsner2009-flats}: we decompose $S$ into two discs along an ``empty" triangle, create two spheres of smaller area
	and use the induction assumption.
	\medskip
	
	\noindent
	{\bf Case 2: }$S$ is flag.	Since the $2$--sphere does not admit a metric of non-positive curvature
	there exists a vertex $v$ in $S$ whose link, a cycle $C$, has angular length less than $2\pi$.
	We have the decomposition $S=D_1\cup D_2$, where $D_1$ is the star of $v$ and $D_2$ is the complement 
	of the interior of $D_1$. By Lemma~\ref{l:fillshort} the cycle $f|_C$ has 
	a singular disc diagram $D$ with no internal vertices.
	Let $B_1$ be the simplicial cone over $D$ with apex $v$, and let $F_1\colon B_1 \to X$ be the simplicial map such that $F_1(u)=f(u)$, for all vertices $u$ (it is well defined by flagness of $X$).
	Then $S_2=D_2\cup D$ is a simplicial sphere of area smaller than the one of $S$. Let $f_2\colon S_2\to X$ 
	be the simplicial map coinciding on vertices with $f$. Applying the inductive assumption we extend it to 
	$F_2\colon B_2\to X$, where $B_2$ is a triangulation of the ball with no internal vertices satisfying
	$\partial B_2=S_2$. Finally we put $B=B_1\cup B_2$ and $F=F_1\cup F_2$.
\end{proof}

Januszkiewicz-\'Swi\c atkowski introduced in \cite{JanuszkiewiczSwiatkowski2007} the notion
of \emph{constant filling radius for $k$--spherical cycles}, shortly \emph{$S^k$FRC}.
This is a coarse feature of metric spaces saying, roughly, that in large scale every $k$--sphere
has a filling within its uniform neighbourhood. A direct consequence of Theorem~\ref{t:sfrc} is the following.  

\begin{corollary}
	\label{c:S2FRC2}
	Metrically systolic complexes and groups are $S^2$FRC, that is, they have constant filling radius for $2$--spherical cycles.  
\end{corollary}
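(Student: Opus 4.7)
The plan is to deduce the corollary essentially immediately from Theorem~\ref{t:sfrc}, with the only small point being translating the combinatorial ``no internal vertices'' conclusion into a metric ``bounded filling radius'' conclusion. Recall that $S^2$FRC asks for a uniform constant $R$ such that any simplicial $2$--sphere $f\colon S\to X$ admits a simplicial filling whose image lies in the $R$--neighborhood of $f(S)$.

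First, given any simplicial map $f\colon S\to X$ from a triangulated $2$--sphere, apply Theorem~\ref{t:sfrc} to produce a simplicial extension $F\colon B\to X$, where $B$ is a triangulated $3$--ball with $\partial B=S$ \emph{and no internal vertices}. The key observation is that, because $B$ has no interior $0$--simplices, every vertex of $B$ lies on $\partial B=S$; consequently, every $3$--simplex of $B$ is spanned by vertices whose images already lie on $f(S)$. Hence $F(B)$ is contained in the union of $3$--simplices of $X$ all of whose vertices belong to $f(S)$.

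To convert this into a metric statement, I would fix a uniform bound $R$ on the diameters of simplices of $X$. For a metrically systolic \emph{group} $G$ acting geometrically on $X$, cocompactness yields finitely many isometry types of simplices, so such an $R$ exists; for a general metrically systolic complex one assumes (as is implicit in coarse applications) bounded geometry of the piecewise Euclidean metric. With this $R$ in hand, every point of $F(B)$ lies in some (closed) simplex all of whose vertices are in $f(S)$, and hence within $d$--distance at most $R$ from $f(S)$. This is precisely the $S^2$FRC condition with constant $R$, proving the statement for the complex $X$.

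Finally, for the group $G$, the usual Milnor--Schwarz reasoning (already used in Lemma~\ref{l:constOI}) identifies $G$ quasi-isometrically with an orbit in $X$, and the $S^2$FRC property is invariant under quasi-isometry for uniformly locally finite spaces. Thus the constant filling radius for $2$--spherical cycles passes from $X$ to $G$, completing the proof. No step here is a serious obstacle; the only mild care needed is in recording the uniform bound $R$ on simplex diameters, which is exactly where the geometric action (or bounded geometry of $X$) gets used.
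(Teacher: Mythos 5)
Your proposal is correct and takes essentially the same route the paper intends: the paper states the corollary as an immediate consequence of Theorem~\ref{t:sfrc} and gives no proof, while you spell out the (standard) reason it is immediate — namely, that a filling $3$--ball without internal vertices has all its vertices on $\partial B = S$, so the image of the filling lies within one simplex-diameter of $f(S)$, and bounded geometry (from cocompactness or an assumed uniform bound on simplex size) together with quasi-isometry invariance of $S^2$FRC finishes the job.
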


\subsection{Morse Lemma for $2$--dimensional quasi-discs}
\label{s:Morse}
In this subsection we prove a Morse Lemma  for $2$--dimensional quasi-discs. It states, roughly speaking,
that, for a given cycle $C$ in a metrically systolic complex, a quasi-isometrically embedded disc diagram is contained in an  $a$--neighbourhood of any other singular disc diagram for $C$, with $a$ independent of the size of the disc.

We use the combinatorial metric on simplicial complexes. In particular, the distance between adjacent vertices is $1$.
Let $B(R,v)$ denote the \emph{(combinatorial) ball} of radius $R$ centered at $v$, that is the 
full subcomplex of a simplicial complex spanned by all vertices at distance at most $R$ from $v$. Similarly, the \emph{sphere} $S(R,v)$ is the full subcomplex spanned by all vertices at distance $R$ from $v$. Let
$T(r,R;v)$ denote the \emph{tube} (annulus) of radii $r,R$ around $v$, that is, the full
subcomplex spanned by all vertices $u$ such that $r\le  d(v,u)\le  R$.
Observe that for $(L,A)$-quasi-isometry $f$ we have $f(T(r,R;v)\subseteq T(\Linv r -A,LR+A;f(v))$.
Recall that the \emph{systolic plane}, denoted $\mathbb E_{\triangle}^2$, is the triangulation of the Euclidean plane by regular triangles.
\begin{theorem}[Morse Lemma for $2$--dimensional quasi-discs]
	\label{t:Morse}
	Let $D$ be a combinatorial ball in the systolic plane $\mathbb E_{\triangle}^2$. Let
	$f\colon D \to X$ be a disc diagram for a cycle $C$ in $X$ 
	being an $(L,A)$--quasi-isometric embedding. 
	Let $g\colon D' \to X$ be a singular disc diagram for $C$.
	Then $\mr{im}(f)\subseteq N_a(\mr{im}(g))$, where $a>0$ is a constant depending only on $L$ and $A$.
\end{theorem}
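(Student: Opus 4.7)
The plan is to combine Theorem~\ref{t:CAT(0)diagrem} with the $S^2$FRC property of Theorem~\ref{t:sfrc}: glue the two disc diagrams into a $2$-sphere, fill it by a $3$-ball with no interior vertices, and then exploit the quasi-isometric embedding condition on $f$ to bound the combinatorial width of this ball between $D$ and $D'$.

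First, using assertion~(1) of Theorem~\ref{t:CAT(0)diagrem}, I would replace $g$ by a CAT(0) reduced nondegenerate singular disc diagram for the same cycle $C$; this only shrinks $\mathrm{im}(g)$, so it suffices to prove the statement with $g$ assumed CAT(0). Next, I glue $D$ and $D'$ along $C$ to form a simplicial $2$-sphere $S$, and combine $f$ and $g$ (which agree on $C$) into a simplicial map $h\colon S\to X$. Theorem~\ref{t:sfrc} extends $h$ to a simplicial map $F\colon B\to X$ where $B$ is a simplicial $3$-ball with $\partial B = S$ and no interior vertices. Since $F$ is simplicial and thus $1$-Lipschitz on vertices,
\[
d_X\bigl(f(v),\,g(D')\bigr) \;\le\; d_B(v,D') \qquad \text{for every } v\in D.
\]
The theorem therefore reduces to the bound $d_B(v,D') \le a(L,A)$, with $a$ independent of $v$, $D$, and $D'$.

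To establish this bound I would exploit the no-interior-vertex property of $B$: setting $k=d_B(v,D')$, every vertex of the $B$-ball $B_B(k-1,v)$ lies on $\partial B = D\cup_C D'$ and misses $D'$, hence $B_B(k-1,v)\subseteq D$. Combined with the $(L,A)$-QI condition on $f$, this gives
\[
B_B(k-1,v) \;\subseteq\; \{u\in D : d_D(u,v)\le L(k-1)+LA\},
\]
so arbitrarily large $B$-balls around $v$ would have to fit inside bounded disc regions of $D$. The uniform bound on $k$ is then obtained by iterating the link analysis from the proof of Theorem~\ref{t:sfrc}: for each $u\in D\setminus C$ with $d_B(u,D')\ge 2$, the link $\mathrm{lk}(u,B)$ is a $2$-disc whose boundary cycle is $\mathrm{lk}(u,D)$ and whose interior vertices are constrained by the QI condition to lie within bounded $d_D$-distance of $u$; propagating this link-by-link control using metric systolicity of $X$ forces $k$ to be bounded in terms of $L$ and $A$ alone.

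The main obstacle is precisely this last step: extracting an explicit uniform bound on $k$ from the interplay between the no-internal-vertex property of $B$, the QI condition on $f$, and the metric systolicity of $X$. The subtlety is that $B$ and $D'$ can be arbitrarily large, so the bound cannot come from any global volume count; it must emerge from a careful local-to-global propagation analysis at links inside $B$, combined with a systolic-plane geometric estimate on how many successive link strata of $B$ can be absorbed into a disc region of $D$ of bounded radius.
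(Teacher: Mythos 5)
Your opening moves --- gluing $D$ and $D'$ along $C$ into a simplicial $2$-sphere and filling it by a $3$-ball $B$ with no interior vertices via Theorem~\ref{t:sfrc} --- coincide with the paper's setup, modulo a technicality the paper handles by inserting an annular collar $Y$ (with a retraction onto $C$) between the two discs before gluing, so that the result is a genuine simplicial sphere even when $D'$ is singular. The reduction $d_X(f(v),g(D'))\le d_B(v,D')$ via the $1$-Lipschitz simplicial extension $F\colon B\to X$ is also valid. But the step you flag as ``the main obstacle'' is in fact a genuine gap, not an unfinished computation.

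The inclusion
\[
B_B(k-1,v)\;\subseteq\;\{u\in D : d_D(u,v)\le L(k-1)+LA\}
\]
is correct, but it gives no bound on $k$: both radii grow linearly in $k$, so nothing forces $k$ to be small, and the claim that ``arbitrarily large $B$-balls around $v$ would have to fit inside bounded disc regions of $D$'' does not follow --- the right-hand side is not bounded. The proposed link-by-link propagation is not a proof either: the no-interior-vertex property of $B$ says nothing about the combinatorial thickness of $B$ (the ball may contain arbitrarily long ``chords'' between distant boundary vertices), and there is no volume or area comparison available to generate a contradiction. Your approach tries to extract a direct metric estimate from the $3$-ball, and I do not see how that can be made to work.

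The paper's proof, after the same gluing-and-filling preparation, is a homological separation argument rather than a distance estimate. Assuming $d_X(v,g(D'))>a=K^{20}$, one picks a cycle $\alpha$ generating $H_1$ of the combinatorial sphere of radius $\approx K^{10}$ about $\finv(v)$ in $D$; its image lies in an annular region $T(K^8,K^{12};v)$ of $X$. One then sets $X_1=\mr{span}(f(D)\cap B(K^{12};v))$ and $X_2=\mr{span}\bigl(g(D')\cup(f(D)\setminus B(K^8,v))\bigr)$, so $X_1\cap X_2$ sits in the tube and $X_1\cup X_2$ contains the image of the glued sphere. Pulling a putative filling of $f(\alpha)$ back into $D$ via the coarse inverse $\finv$ shows that $f(\alpha)$ is \emph{not} null-homologous in $T(K^8,K^{12};v)\cap X_1$. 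The no-interior-vertex property of the filling ball from Theorem~\ref{t:sfrc} is used not to bound a distance but to guarantee that the filling lands entirely in $X_1\cup X_2$, so the fundamental class $[\psi]$ of the sphere vanishes in $H_2(X_1\cup X_2)$. The Mayer--Vietoris boundary map $H_2(X_1\cup X_2)\to H_1(X_1\cap X_2)$ sends $[\psi]$ to $[\alpha]$, forcing $[\alpha]=0$ --- a contradiction. It is this topological obstruction, not a thickness bound on $B$, that supplies the missing leverage; I would revise your argument to follow that route.
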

\begin{proof}
	There exist constants $L'\ge  L$ and $A'\ge  A$  depending only on $L,A$ such that 
	$f\colon D\to f(D)$ is an $(L',A')$--quasi-isometry, and there is an $(L',A')$--quasi-isometry
	$\finv \colon f(D)\to D^{(0)}$ such that $\finv \circ f$ and $f\circ \finv$ are $A'$--close to
	identities. Let $K\ge  \max \{L',A',3\}$. We will further work with $K$ instead of $L,A$ --
	this will make the computations easier. In particular $(L',A')$--quasi-isometries are
	$(K,K)$--quasi-isometries.
	We claim that $a=K^{20}$ satisfies the assertion of the 
	lemma.

	We proceed by contradiction. Suppose there is a vertex 
	$v\in \mr{im}(f)\setminus N_a(\mr{im}(g))$. Then clearly $d(v,C)>a$.
	Let $v':=f(\finv(v))$. Then $d(v,v')\le  K$.

	Let $X_1=\mr{span}(f(D)\cap B(K^{12};v))$, and let $X_2=\mr{span}(g(D')\cup(f(D)-B(K^8,v))$.

	\medskip
	
	Let $\alpha=(v_0,v_1,\ldots,v_k)$ be a cycle in $S(K^{10},\finv(v))$ being a generator of
	$H_1(S(K^{10},\finv(v));\mathbb Z)$. 
	Observe that then $\alpha$ represents also a generator of $H_1(T(K^5,K^{15},\finv(v)
	);\mathbb Z)$.
	Let $f(\alpha)=(f(v_0),f(v_1),\ldots,f(v_k))$ be the cycle (possibly with $f(v_i)=f(v_j)$ for some $i\neq j$) being the image of $\alpha$.
	Observe that, by $d(v,v')\le  K$ and $K\ge 3$, we have
	\begin{align*}
	\label{e:b10}
	\begin{split}
	f(\alpha)&\subseteq T(K^{-1}K^{10}-K,K^{11}+K,v')\subseteq \\ & \subseteq T(K^{-1}K^{10}-2K,K^{11}+2K,v)
	\subseteq T(K^8,K^{12};v).
	\end{split}
	\end{align*}	
	\medskip
	
	\noindent
	{\bf Claim.} The cycle $f(\alpha)$ is not null-homologous inside $T(K^8,K^{12};v)\cap X_1$.
	\medskip
	
	To prove the claim suppose, by contradiction, that $f(\alpha)$ is null-homologous in $T(K^8,K^{12};v)\cap X_1$.
	Then there exists a simplicial map 
	\begin{align*}
	h\colon T \to T(K^8,K^{12};v)\cap X_1
	\end{align*}
	from a simplicial $2$--complex $T$ to 
	$T(K^8,K^{12};v)\cap X_1$ sending the boundary cycle to $f(\alpha)$. 
	We define a map $\finv\circ h\colon T \to  D$ as follows.
	For every vertex $u\in T$ we send it to $\finv\circ h(u)$. An edge $\ov{uw}$ is sent 
	to a geodesic between $\finv(u)$ and $\finv(w)$. A triangle $uwz$ is sent to a singular disc in $D$ bounded
	by the chosen geodesic between images of vertices.	
	Since
	\begin{align*}
	\finv(T(K^8,K^{12};v)\cap X_1)&\subseteq T(K^7-K,K^{13}+K,\finv(v))\subseteq \\
	&\subseteq T(K^6,K^{14},\finv(v)),
	\end{align*}
	and since the image of every edge has diameter at most $K$, and similarly the image of every 
	triangle has diameter at most $K$, we have that the image of $\finv\circ h$ is contained in
	$T(K^6-K,K^{14}+K,\finv(v))$.
	Furthermore, for every $i$, we have $d(v_i,\finv(f(v_i)))\le  K$, and $d(\finv(f(v_i)),\finv(f(v_{i+1})))\le  K$. Therefore, there exists a homotopy
	between $\alpha$ and the image of $f(\alpha)$ by $\finv\circ h$ within the
	$2K$--neighborhood of $\alpha$. It follows that $\alpha$ is 
	null-homologous within $T(K^5,K^{15};\finv(v))$ -- contradiction concluding the proof of the claim.

	\medskip
	
	\noindent
	Let $Y$ be a simplicial complex homeomorphic to an annulus (tube) in $\mathbb E^2$ with
	the inner boundary cycle isomorphic to the boundary cycle $C$ of $D$, and
	admitting a simplicial retraction on $C$. Observe that the boundary cycle
	of $D'$ is also $C$. Let $\ov{D}=D\cup_{C}Y$ be the complex obtained by gluing $D$ and $Y$
	along $C$. Similarly, let $\ov{D'}=D'\cup_{C}Y$. Both, $\ov{D}$ and $\ov{D'}$ are non-singular
	discs, with isomorphic boundaries $C'$ -- the other boundary cycle of $Y$. 
	Consider a triangulated sphere $S:=\ov{D}\cup_{C'} \ov{D'}$ obtained by the identification of the boundaries,
	and the map $\psi \colon  S \to X$ being the union of maps $f$, $g$, and the retraction maps sending copies of $Y$ to their internal cycles $C$.
	By Theorem~\ref{t:sfrc} there exists a simplicial
	extension of $\psi$ to a three-ball without internal vertices. Hence $[\psi]=0$ in 
	$H_2(X_1\cup X_2; \mathbb Z)$. 
	\medskip
	
	On the other hand the $1$--cycle $\alpha$ is null-homotopic inside $B(K^{10},\finv(v))\subseteq
	D$. Hence there exists a simplicial disc $D_1 \subseteq B(K^{10},\finv(v))$ providing the homotopy.
	Similarly, there is a disc $D_2 \subseteq D-B(K^{10},\finv(v))\cup_{C} Y \cup_{C'} \ov{D'}$
	with boundary equal $\alpha$. Observe that $\psi(D_1)\subseteq X_1$, $\psi(D_2)\subseteq X_2$,
	and $\psi(\alpha)\subseteq X_1\cap X_2$. Therefore, in the Mayer-Vietoris sequence for the pair $X_1,X_2$ the boundary map
	\begin{align*}
	H_2(X_1\cup X_2;\mathbb{Z}) \to H_1(X_1\cap X_2; \mathbb Z)
	\end{align*}
	sends $[\psi]$ to the nontrivial element represented by $\alpha$. Hence the contradiction concluding the proof of the lemma.
\end{proof}

\begin{remark}
	In fact, a more general version of Lemma~\ref{t:Morse} could be proved following
	the same lines. Namely, we could require that $f\colon D\to X$ is a disc diagram being a quasi-isometry
	such that $D$ is quasi-isometric to a ball in $\mathbb E_{\triangle}^2$, rather than being the ball itself.
	Since the original statement allows technically much simpler proof, and it is the version that we 
	subsequently use in \cite{Artinsrigidity}, we decided to formulate it this way.
\end{remark}

\section{The complexes for $2$--generated Artin groups}
\label{s:dihedral}
In this section, we focus on $2$--generated Artin groups. We construct metric simplicial complexes for them by modifying their Cayley complexes (see the \textquotedblleft comments on the proof\textquotedblright\ subsection in the Introduction for an intuitive explanation). Later in Section~\ref{s:link} we will show these metric simplicial complexes are metrically systolic, and in Section~\ref{s:general} we will glue them together to form metrically systolic complexes for general two-dimensional Artin groups.

\subsection{Precells in the presentation complex}
\label{subsec:precells}
Let $DA_n$ be the $2$--generator Artin group presented by $\angled{a,b\mid \underbrace{aba\cdots}_{n} =
	\underbrace{bab\cdots}_n}$.

Let $P_n$ be the standard presentation complex for $DA_n$. Namely the $1$--skeleton of $P_n$ is the wedge of two oriented circles, one labeled $a$ and one labeled $b$. Then we attach the boundary of a closed $2$--cell $C$ to the $1$--skeleton with respect to the relator of $DA_n$. Let $C\to P_n$ be the attaching map. Let $\Xa_n$ be the universal cover of $P_n$. Then any lift of the map $C\to P_n$ to $C\to \Xa_n$ is an embedding (cf.\ \cite[Corollary 3.3]{Artinsystolic}). These embedded discs in $\Xa_n$ are called \emph{precells}. Figure~\ref{f:precell} depicts a precell $\Pa$. $\Xa_n$ is a union of copies of $\Pa$'s.
\begin{figure}[h!]
	\centering
	\includegraphics[width=1\textwidth]{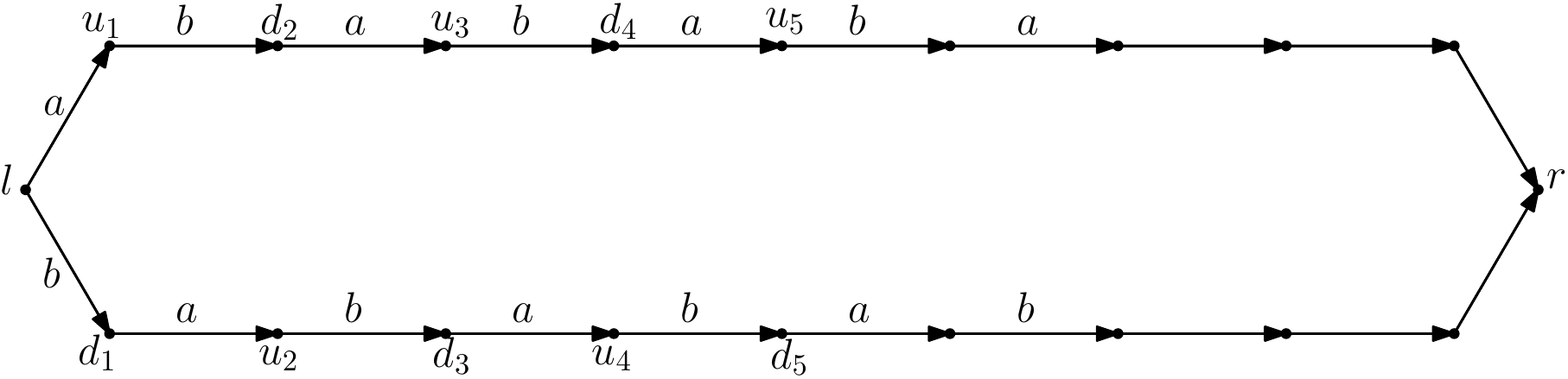}
	\caption{Precell $\Pa$.}
	\label{f:precell}
\end{figure}
We pull back the labeling and orientation of edges in $P_n$ to obtain labeling and orientation of edges in $\Xa_n$.
We label the vertices of $\Pa$ as in Figure \ref{f:precell}. The vertices $\ell$ and $r$ are called the \emph{left tip} and the \emph{right tip} of $\Pa$. The boundary $\partial\Pa$ is made of two paths. The one starting at $\ell$, going along $\underbrace{aba\cdots}_{n}$ (resp.\ $\underbrace{bab\cdots}_{n}$), and ending at $r$ is called the \emph{upper half} (resp.\ \emph{lower half}) of $\partial\Pa$. The orientation of edges inside one half is consistent, thus each half has an orientation. 
We summarize several basic properties of how these precells intersect each other. See \cite[Section 3.1]{Artinsystolic} for proofs of these properties.
\begin{lemma}
	\label{cor:connected intersection}
Let $\Pa_1$ and $\Pa_2$ be two different precells in $\Xa_n$. Then 
\begin{enumerate}
	\item either $\Pa_1\cap\Pa_2=\emptyset$, or $\Pa_1\cap\Pa_2$ is connected;
	\item if $\Pa_1\cap\Pa_2\neq\emptyset$, $\Pa_1\cap\Pa_2$ is properly contained in the upper half or in the lower half of $\Pa_1$ (and $\Pa_2$);
	\item if $\Pa_1\cap\Pa_2$ contains at least one edge, then one end point of $\Pa_1\cap\Pa_2$ is a tip of $\Pa_1$, and another end point of $\Pa_1\cap\Pa_2$ is a tip of $\Pa_2$, moreover, among these two tips, one is a left tip and one is a right tip.
\end{enumerate}
\end{lemma}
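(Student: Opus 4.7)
The plan is to exploit the fact that $\Xa_n$ is the universal cover of a $2$--complex with a single $2$--cell, so distinct precells have disjoint interiors; consequently $\Pa_1 \cap \Pa_2 \subseteq \partial\Pa_1 \cap \partial\Pa_2$ is a subcomplex of each boundary circle, namely a disjoint union of vertices and arcs. All three parts then rest on the boundary combinatorics: the labels alternate along each half ($\underbrace{aba\cdots}_n$ on the upper, $\underbrace{bab\cdots}_n$ on the lower) with consistent orientation, and at a tip both incident boundary edges point outward (left tip) or inward (right tip) with distinct labels.

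For part~(1), I would argue by contradiction. If $\Pa_1 \cap \Pa_2$ has two components $A_1, A_2$, then $\Pa_1 \cup \Pa_2$ is topologically an annulus, since the two closed discs are glued along two disjoint boundary arcs and their interiors are disjoint. Each of the two boundary circles of this annulus is null-homotopic in the simply connected $\Xa_n$, hence bounds a disc there; gluing these two filling discs to the annulus yields an embedded $2$--sphere in $\Xa_n$. Since $\Xa_n$ is aspherical (as the universal cover of the presentation complex of a $2$--dimensional Artin group, by Charney--Davis), $H_2(\Xa_n;\mathbb Z)=0$, so the sphere must be null-homologous, forcing a degeneracy incompatible with the disjointness of precell interiors. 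This is the step I expect to be the most delicate, since the planar/topological reasoning inside a $2$--complex must be handled carefully and the relative position of the two filling discs understood.

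For part~(2), assuming now that $\Pa_1 \cap \Pa_2$ is connected, suppose it contains a tip of $\Pa_1$, say $\ell$, as a non-endpoint. Then both boundary edges of $\Pa_1$ at $\ell$, namely an outgoing $a$--edge and an outgoing $b$--edge, lie in $\partial\Pa_2$. Since at each vertex of $\Xa_n$ there is a unique outgoing $a$--edge and a unique outgoing $b$--edge, $\ell$ must also be a left tip of $\Pa_2$; propagating this forced agreement along the unique syllable pattern $\underbrace{aba\cdots}_n$ emanating from a left tip forces $\Pa_1 = \Pa_2$, a contradiction.

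For part~(3), granting (1) and (2), the intersection is a single arc inside one half of each precell. At each endpoint $p$ the two boundaries diverge, and since within one half a vertex's incident boundary edges are uniquely determined by label and orientation, such divergence is possible only if $p$ is a tip of $\Pa_1$ or of $\Pa_2$. If both endpoints were tips of the same precell, the arc would cover an entire half and thus contain the other tip too, violating~(2); hence the two endpoints are a tip of $\Pa_1$ and a tip of $\Pa_2$ respectively, with one being a left tip and the other a right tip by the orientation consistency inside the common half.
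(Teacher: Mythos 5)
The paper itself does not prove this lemma; it cites \cite[Section 3.1]{Artinsystolic} for all three parts, so there is no in-text proof to compare against. Evaluating your argument on its own merits, parts (2) and (3) are essentially sound, but part (1) has a genuine gap.

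In part (1), after building the annulus $\Pa_1\cup\Pa_2$ and capping its two boundary circles with filling discs, you claim to obtain an \emph{embedded} $2$--sphere and then invoke $H_2(\Xa_n)=0$. But the filling discs come from simple connectivity of $\Xa_n$ and are just singular disc diagrams: they may overlap the annulus, overlap each other, and fold. What you actually get is a map $S^2\to\Xa_n$, and in an aspherical space such a map being null-homotopic (or the pushforward $2$--cycle vanishing — which does hold since $\Xa_n$ is $2$--dimensional, so $Z_2=H_2=0$) is not a contradiction by itself. In the resulting chain identity the contributions of $\Pa_1$ and $\Pa_2$ coming from the annulus could perfectly well be cancelled by opposite-sign copies of the same cells inside the filling discs, which is not ruled out. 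To close this gap you would need to control the filling discs — e.g.\ use a minimality/reducedness argument as in small cancellation theory, or replace the topological argument entirely by a combinatorial one in $DA_n$ (the dihedral Artin group has a Garside/normal-form structure and a $C(4)$--$T(4)$ presentation, either of which gives a more local handle on overlapping $2$--cells). As stated, ``forcing a degeneracy incompatible with disjointness of interiors'' is an assertion, not an argument.

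There is also a small unaddressed case in part (2): your argument shows that no tip of $\Pa_1$ can be an interior point of the arc $\Pa_1\cap\Pa_2$, which rules out the arc crossing a tip. It does not by itself rule out $\Pa_1\cap\Pa_2$ being exactly a whole half, with both tips appearing as endpoints — yet the lemma asserts \emph{proper} containment in a half, and part (3) relies on this. The extra observation needed is that an $n$--edge forward-oriented arc labelled $aba\cdots$ sitting in $\partial\Pa_2$ must be exactly a half of $\Pa_2$ as well, which would identify the tips of the two precells and force $\Pa_1=\Pa_2$. Part (3) is correct once (1) and (2) are in place: the ``divergence only at a tip'' step uses precisely the uniqueness of the outgoing $a$-- and $b$--edges at a vertex together with alternation of labels along a half, and the left/right dichotomy at the two endpoints follows from the consistent orientation of edges within the shared half, as you indicate.
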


\begin{lemma}
	\label{cor:disjoint}
Suppose there are three precells $\Pa_1$, $\Pa_2$ and $\Pa_3$ such that $\Pa_1\cap \Pa_2$ is a nontrivial path $P_1$ in the upper half of $\Pa_2$, and $\Pa_3\cap \Pa_2$ is a nontrivial path $P_3$ in the lower half of $\Pa_2$. Then $\Pa_1\cap \Pa_3$ is either empty or one point.
\end{lemma}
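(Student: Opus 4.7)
The plan is to argue by contradiction. Assume $\Pa_1\cap\Pa_3$ contains more than one point; by Lemma~\ref{cor:connected intersection}(1) the intersection $Q := \Pa_1\cap\Pa_3$ is then a connected nontrivial path in the $1$-skeleton of $\Xa_n$ containing at least one edge, and by Lemma~\ref{cor:connected intersection}(3) its endpoints $q_1, q_3$ are tips -- one of $\Pa_1$, one of $\Pa_3$ -- with one being a left tip and the other a right tip.

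The key observation is the triple-intersection bound
\[
\Pa_1\cap\Pa_2\cap\Pa_3 \;=\; P_1\cap P_3 \;\subseteq\; \{\ell_2, r_2\},
\]
which holds because $P_1$ lies in the upper half of $\partial\Pa_2$ and $P_3$ in the lower half, and these two halves meet only at the tips $\ell_2, r_2$ of $\Pa_2$. Consequently $Q\cap\Pa_2\subseteq\{\ell_2, r_2\}$: the path $Q$ meets $\Pa_2$ in at most two vertices, no edge of $Q$ lies in $\Pa_2$, and the three arcs $P_1, P_3, Q$ are pairwise edge-disjoint.

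Combining this with Lemma~\ref{cor:connected intersection}(2) (each of $P_1, Q$ lies in a single half of $\partial\Pa_1$; similarly for $P_3, Q$ in $\partial\Pa_3$) and the tip data of Lemma~\ref{cor:connected intersection}(3) applied to each pair $(\Pa_i,\Pa_j)$, I enumerate how the endpoints of $P_1, P_3, Q$ can sit among the tips of $\Pa_1, \Pa_2, \Pa_3$. The natural $\mathbb{Z}/2\times\mathbb{Z}/2$ symmetry that swaps upper/lower halves of $\Pa_2$ and swaps left/right tips simultaneously cuts the enumeration down to two or three representative configurations.

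In each such configuration I construct a closed edge-path $\gamma$ in the $1$-skeleton of $\Xa_n$ by concatenating $Q$ with an arc of $\partial\Pa_2$ between the two points of $Q\cap\Pa_2\subseteq\{\ell_2,r_2\}$, plus (if needed) short arcs of $\partial\Pa_1$ and $\partial\Pa_3$ joining $q_1, q_3$ to $P_1, P_3$. Reading labels, the word $w$ traced by $\gamma$ is a proper, freely reduced subword of a cyclic conjugate of the defining relator $\bigl(\underbrace{aba\cdots}_{n}\bigr)\bigl(\underbrace{bab\cdots}_{n}\bigr)^{-1}$ of $DA_n$, and hence is non-trivial in $DA_n$ by classical one-relator group theory (e.g.\ Magnus's Freiheitssatz); but $\gamma$ being a closed loop in the universal cover $\Xa_n$ forces $w = 1$ in $DA_n$, the sought contradiction. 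The main obstacle is the combinatorial case analysis of how the three arcs can be positioned; the symmetry reduction together with the orientation constraints along each arc (inherited from the labeling of the Artin relation) make each representative case routine to check.
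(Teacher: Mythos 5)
Your preliminary observations are sound --- $\Pa_1\cap\Pa_2\cap\Pa_3 = P_1\cap P_3 \subseteq \{\ell_2,r_2\}$, and the arcs $P_1,P_3,Q$ are pairwise edge-disjoint --- but the loop you then try to build does not exist. Since $P_1$ is a \emph{proper} subpath of the upper half of $\partial\Pa_2$ with one endpoint a tip of $\Pa_2$ (Lemma~\ref{cor:connected intersection}(3)), it contains \emph{exactly one} of $\ell_2,r_2$; likewise $P_3$ contains exactly one. Hence $Q\cap\Pa_2 = P_1\cap P_3$ has at most one vertex, never two, so there is no ``arc of $\partial\Pa_2$ between the two points of $Q\cap\Pa_2$''. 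Independently of the ill-defined construction, the claim that the word traced is a proper, freely reduced subword of a single cyclic conjugate of the relator is asserted but never verified; a loop glued from boundary arcs of $\Pa_1,\Pa_2,\Pa_3$ reads pieces of three different Garside translates of the relator, and the natural candidate loops here have combinatorial length greater than $2n$, so they cannot be such subwords. (The Freiheitssatz is also the wrong reference: it concerns free subgroups on proper subsets of generators, not nontriviality of relator subwords --- one would cite Weinbaum's theorem, or the $C(4)$--$T(4)$ small-cancellation structure of the dihedral presentation, if such a subword were in hand.) Finally, the case analysis, which is the substance of the proof, is deferred to a ``routine check.''

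What does work is a short case split on whether $P_1$ and $P_3$ contain the same tip of $\Pa_2$. If the same tip, say $\ell_2\in P_1\cap P_3$: the other endpoints $r_1\in P_1$ and $r_3\in P_3$ lie strictly inside the respective halves of $\partial\Pa_2$, so if either lay in $Q$ it would lie in $Q\cap\Pa_2\subseteq\{\ell_2,r_2\}$, which is impossible; thus $Q$'s endpoints are $\ell_1$ and $\ell_3$, two \emph{left} tips, contradicting Lemma~\ref{cor:connected intersection}(3) applied to $\Pa_1,\Pa_3$. If different tips, say $\ell_2\in P_1$ and $r_2\in P_3$: identify vertices with group elements, set $\ell_2=1$, so $r_1=\underbrace{ab\cdots}_{k}$ and $\ell_3=\underbrace{ba\cdots}_{n-m}$ where $k=|P_1|$, $m=|P_3|\in[1,n-1]$; the same interiority argument identifies $Q$'s endpoints as $\ell_1=r_1\,(\underbrace{ab\cdots}_{n})^{-1}$ and $r_3$, and reading $Q$ as the terminal segment of a half of $\partial\Pa_3$ gives $\ell_1=\ell_3\, w$ for a \emph{positive} word $w$ of length $n-|Q|$. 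Hence $\underbrace{ab\cdots}_{k}=\underbrace{ba\cdots}_{n-m}\,w\,\underbrace{ab\cdots}_{n}$, and the length homomorphism $DA_n\to\mathbb Z$ sending $a,b\mapsto 1$ forces $k=3n-m-|Q|\geq n+2>n-1\geq k$, a contradiction. So the obstruction in the second case is a count of letters in positive words, not a relator-subword argument.
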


\begin{corollary}
	\label{cor:unique}
	Let $\Pa_1$ and $\Pa_2$ be two different precells in $\Xa_n$. If $\Pa_1\cap\Pa_2$ contains at least one edge, and $\Pa_3\cap\Pa_2=\Pa_1\cap\Pa_2$, then $\Pa_3=\Pa_1$. 
\end{corollary}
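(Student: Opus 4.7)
The plan is to track tips of precells through two applications of Lemma~\ref{cor:connected intersection}. Let $P := \Pa_1 \cap \Pa_2$ and denote its endpoints by $u$ and $v$. By Lemma~\ref{cor:connected intersection}(3), one endpoint, say $u$, is a tip of $\Pa_1$ and the other, $v$, is a tip of $\Pa_2$, and these two tips are of opposite types (one left, one right). From Lemma~\ref{cor:connected intersection}(2), $P$ is properly contained in one half of $\partial \Pa_2$, so $u$ cannot also be a tip of $\Pa_2$ (otherwise $P$ would be that entire half, not properly contained).

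Next I would apply Lemma~\ref{cor:connected intersection}(3) to the intersection $\Pa_3 \cap \Pa_2 = P$: one endpoint of $P$ is a tip of $\Pa_3$ and the other is a tip of $\Pa_2$. Since $u$ is not a tip of $\Pa_2$, the tip of $\Pa_2$ among the endpoints must sit at $v$, and consequently $u$ has to be a tip of $\Pa_3$. The ``left/right'' clause of Lemma~\ref{cor:connected intersection}(3), applied to both $\Pa_1 \cap \Pa_2$ and $\Pa_3 \cap \Pa_2$, forces this tip of $\Pa_3$ to be of the same left/right type as $u$ is in $\Pa_1$, because in each case it must be opposite to the (fixed) type of $v$ as a tip of $\Pa_2$.

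To finish I would invoke the fact that each precell in $\Xa_n$ is determined by any one of its tips together with its left/right type. Indeed, precells are lifts of the attaching map of the unique $2$--cell of $P_n$ to the universal cover $\Xa_n$, so they correspond bijectively with elements of $DA_n$, and a single tip (with its left/right label) reads off this element. Applying this to $u$ as a tip of the same type in $\Pa_1$ and in $\Pa_3$ yields $\Pa_1 = \Pa_3$.

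The only step that is not pure bookkeeping is the very last one, namely the claim that a precell is determined by any one of its tips. This is standard covering-space theory applied to the universal cover $\Xa_n \to P_n$ and should already be implicit in the setup of Subsection~\ref{subsec:precells} (and is used repeatedly in \cite{Artinsystolic}); if desired, it can be justified in a single sentence alongside the proof.
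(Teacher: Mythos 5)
Your proof is correct and follows essentially the same route as the paper's: both apply Lemma~\ref{cor:connected intersection}(3) to the two intersections $\Pa_1\cap\Pa_2$ and $\Pa_3\cap\Pa_2$ to conclude that $\Pa_1$ and $\Pa_3$ share a tip of the same left/right type, and then invoke the (implicit) fact that a precell is determined by a tip together with its type. Your version usefully fills in two details the paper leaves silent -- that Lemma~\ref{cor:connected intersection}(2) forces the $\Pa_2$--tip endpoint of $P$ to be uniquely determined, so the roles of $u$ and $v$ cannot swap between the two applications of (3), and that precells correspond bijectively to elements of $DA_n$ so a single labeled tip pins one down -- but the underlying argument is the one in the paper.
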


\begin{proof}
We apply Lemma~\ref{cor:connected intersection} (3) to $\Pa_3\cap\Pa_2$ and $\Pa_1\cap\Pa_2$ to deduce that either $\Pa_1$ and $\Pa_3$ have the same left tip, or they have the same right tip. Thus $\Pa_1=\Pa_3$.
\end{proof}

\subsection{Subdividing and systolizing the presentation complex}
\label{subsec:subividing and adding new edges}

We subdivide each precell in $\Xa_n$ as in Figure~\ref{f:cell} to obtain a simplicial complex $\Xb_n$. A \emph{cell} of $\Xb_n$ is defined to be a subdivided precell, and we use the symbol $\Pi$ for a cell. 
The original vertices of $\Xa_n$ in $\Xb_n$ are called the \emph{real vertices}, and the new vertices of $\Xb_n$ after subdivision are called \emph{interior vertices}. The interior vertex in a cell $\Pi$ is denoted $o$ as in Figure~\ref{f:cell}. (Here and further we use the convention that the real vertices are drawn as solid points and the interior vertices as circles.)

\begin{figure}[ht!]
	\centering
	\includegraphics[scale=0.8]{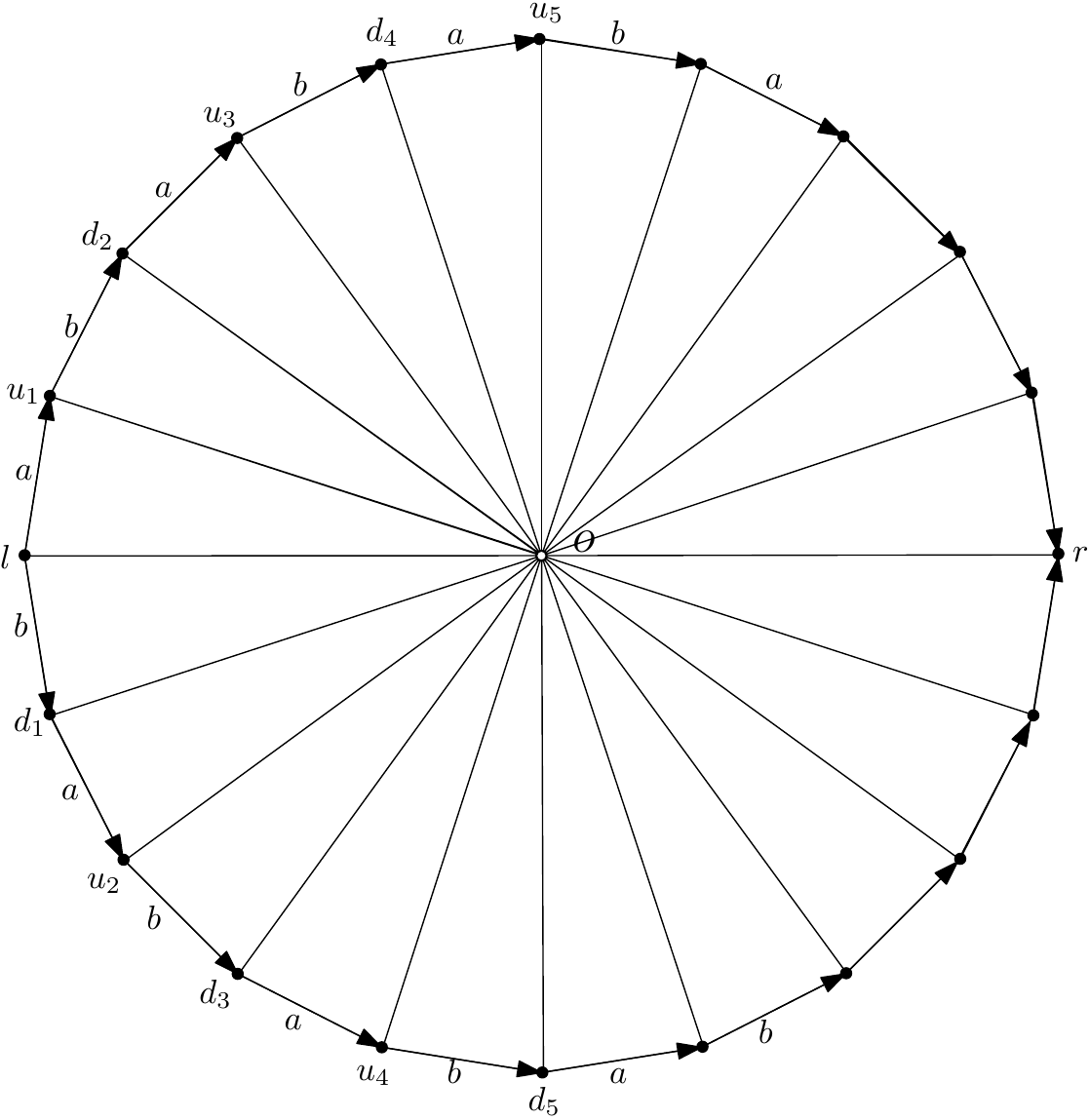}
	\caption{Cell $\Pi$}
	\label{f:cell}
\end{figure}

Let $\Lambda$ be the collection of all unordered pairs of cells of $\Xb_n$ such that their intersection contains at least two edges (these intersections are connected by Lemma~\ref{cor:connected intersection}). For each $(\Pi_1,\Pi_2)\in\Lambda$, we add an edge between the interior vertex of $\Pi_1$ and the interior vertex of $\Pi_2$ (cf.\ Figure~\ref{f:explanation}). Denote the resulting complex by $X'_n$. It is clear that $DA_n$ acts on $X'_n$. Let $X_n$ be the flag completion of $X'_n$. Then $X_n$ is the simplicial complex we will work with.

Now we give an alternative, but more detailed definition of $X'_n$. Pick a base cell $\Pi$ in $\Xb_n$ such that $\ell\in\Pi$ coincides with the identity element of $DA_n$. Let $\Lambda_0$ be the collection of pairs of the form $(\Pi, \ui{i}\Pi)$, $(\Pi, \di{i}\Pi)$ for $i=1,\ldots,n-2$ (here each vertex of $\Pi$ can be identified as an element of $DA_n$, and $\ui{i}\Pi$ means the image of $\Pi$ under the action of $\ui{i}$). Then the following is proved in \cite[Section 3.1]{Artinsystolic}.

\begin{lemma}\
	\label{lem:representative}
\begin{enumerate}
	\item $\Lambda_0\subset\Lambda$.
	\item Different elements in $\Lambda_0$ are in different $DA_n$--orbits.
	\item Every $DA_n$--orbit in $\Lambda$ contains an element from $\Lambda_0$.
\end{enumerate}
\end{lemma}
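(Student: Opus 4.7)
The plan is to verify the three assertions in turn, using the intersection structure from Lemma~\ref{cor:connected intersection} and the unique-determination statement of Corollary~\ref{cor:unique}.

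For (1), I would compute $\Pi\cap\ui{i}\Pi$ directly. The translate $\ui{i}\Pi$ has a vertex at $\ui{i}u_i=1=\ell(\Pi)$; tracking the parity of $i$, one checks that the portion of the upper boundary of $\ui{i}\Pi$ running from this vertex toward $\ui{i}r$ passes through $n-i$ further vertices that coincide with consecutive vertices on one boundary half of $\Pi$ (the upper half if $i$ is even, the lower half if $i$ is odd). This produces a common subpath of exactly $n-i$ edges, so $(\Pi,\ui{i}\Pi)\in\Lambda$ whenever $i\le n-2$. The case $(\Pi,\di{i}\Pi)$ is symmetric under swapping $a$ and $b$.

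For (2), I would use that $DA_n$ acts freely on $\Xa_n$ as the deck transformation group; in particular, any non-trivial $g$ with $g\Pi=\Pi$ would induce a self-homeomorphism of a closed disc and hence possess a fixed point (by Brouwer), contradicting freeness. Thus $g\Pi=\Pi$ forces $g=1$. If $(\Pi,\ui{i}\Pi)$ and $(\Pi,\ui{j}\Pi)$ belong to the same orbit, the two possible matchings of the unordered pairs give either $g=1$ and $\ui{i}=\ui{j}$ (whence $i=j$), or an equation $u_iu_j=1$ incompatible with positive word length. The mixed case pairing $\ui{i}\Pi$ with $\di{j}\Pi$ is analogous, using that $u_i$ and $d_j$ are distinct interior vertices of the embedded disc $\Pi$ for $1\le i,j\le n-1$ and hence distinct elements of $DA_n$. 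The edge case $n=2$ is vacuous since $\Lambda_0$ is empty.

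For (3), given $(\Pi_1,\Pi_2)\in\Lambda$, translate so that $\Pi_1=\Pi$ (possible by transitivity of $DA_n$ on precells). By Lemma~\ref{cor:connected intersection}, $P:=\Pi\cap\Pi_2$ is a connected subpath of $\partial\Pi$ of length $k\ge 2$, entirely contained in one half, with one endpoint at a tip of $\Pi$ and the other at a tip of $\Pi_2$, one left and one right. I would split into two cases according to which tip of $\Pi$ is an endpoint of $P$. When this tip is $\ell$, the other endpoint of $P$ is the right tip of $\Pi_2$, sitting $k$ steps along one half of $\Pi_2$ from $r(\Pi_2)$, so it equals $\ell(\Pi_2)u_{n-k}$ or $\ell(\Pi_2)d_{n-k}$; equating this with $\ell(\Pi)=1$ yields $\ell(\Pi_2)\in\{u_{n-k}^{-1},d_{n-k}^{-1}\}$, and Corollary~\ref{cor:unique} then forces $\Pi_2\in\{\ui{n-k}\Pi,\di{n-k}\Pi\}$---an element of $\Lambda_0$ with $j=n-k\in\{1,\ldots,n-2\}$. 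When this tip is $r$, the analogous analysis yields $\Pi_2\in\{u_{n-k}\Pi,d_{n-k}\Pi\}$, and the swap $(\Pi,g\Pi)\sim(\Pi,g^{-1}\Pi)$ obtained by translating by $g^{-1}$ reduces to the previous form.

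The main obstacle will be the combinatorial bookkeeping in (3): each main case must be further refined by whether $P$ sits on the upper or lower half of $\Pi_2$, and one must verify in each of the four resulting sub-cases that the parity of $n-k$, together with the relator identity $u_n=d_n$ in $DA_n$, forces $\ell(\Pi_2)$ to coincide with the predicted translate among $u_{n-k}^{\pm1}$ and $d_{n-k}^{\pm1}$. This appears to be a finite, routine case analysis rather than a conceptual difficulty.
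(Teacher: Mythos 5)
The paper does not actually prove Lemma~\ref{lem:representative} here; it only states that the result is established in \cite[Section 3.1]{Artinsystolic}. So there is no in-text proof to compare your argument against, and your proposal has to be assessed on its own terms.

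Your argument is essentially correct. Part~(1) correctly computes that $\Pi\cap\ui{i}\Pi$ is the common path of $n-i$ edges emanating from $\ell(\Pi)=\ui{i}u_i$ along the half of $\Pi$ determined by the parity of $i$, and the $a\leftrightarrow b$ symmetry handles $\di{i}\Pi$. Part~(2) correctly reduces the claim to showing the action on precells is free (your Brouwer argument works, though one can avoid it: a label/orientation-preserving deck transformation preserving $\Pi$ must fix $\ell(\Pi)=1$, hence is trivial) and then to elementary word/abelianization facts ($u_iu_j\neq 1$ for $i,j\geq 1$, and $u_i\neq d_j$ for $1\leq i,j\leq n-1$ because $\Pi$ is embedded). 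Part~(3) correctly uses Lemma~\ref{cor:connected intersection}~(3) to identify which tips occur as endpoints of $P=\Pi\cap\Pi_2$, reads off $\ell(\Pi_2)\in\{u_{n-k}^{\pm 1},d_{n-k}^{\pm 1}\}$, and handles the right-tip case by the swap $(\Pi,g\Pi)\sim(g^{-1}\Pi,\Pi)$.

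Two small imprecisions worth fixing. First, the step ``$\ell(\Pi_2)\in\{u_{n-k}^{-1},d_{n-k}^{-1}\}$ forces $\Pi_2\in\{\ui{n-k}\Pi,\di{n-k}\Pi\}$'' does not follow from Corollary~\ref{cor:unique} as cited; what you actually need is that a precell is determined by its left tip, which is immediate from simple transitivity of the action (every precell is $g\Pi$, and $\ell(g\Pi)=g$). Corollary~\ref{cor:unique} could be made to work, but you would first need to check that $\Pi\cap\ui{n-k}\Pi$ (resp.\ $\Pi\cap\di{n-k}\Pi$) coincides with $P$ as a subcomplex, which is the same parity bookkeeping you defer; the left-tip argument is cleaner. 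Second, in part~(3) you should note explicitly that $k\leq n-1$ (the intersection is proper in a half), so $n-k\in\{1,\ldots,n-2\}$ and the resulting pair genuinely lies in $\Lambda_0$. These are routine points, and the remaining parity case-check you defer is indeed mechanical, so your overall proposal is sound.
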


For each $1\le i\le n-2$, we add an edge between $o\in\Pi$ and $\ui{i}o\in\ui{i}\Pi$, and an edge between $o\in\Pi$ and $\di{i}o\in\di{i}\Pi$. Then we use the action of $DA_n$ to add more edges in the equivariant way. The resulting complex is exactly $X'_n$, by Lemma~\ref{lem:representative}.

\begin{definition}
	\label{def:length}
We assign lengths to edges of $X_n$. Edges between a real vertex and an interior vertex have length 1. Edges between two real vertices have length equal to the distance between two adjacent vertices in a regular $(2n)$--gon with radius 1. 

Now we assign lengths to edges between two interior vertices. First define a function $\phi:[0,\pi)\to \mathbb R$ as follows. Let $\Delta(ABC)$ be a Euclidean isosceles triangle with length of $AB$ and $AC$ equal to 1, and $\angle_A(B,C)=\alpha$. Then $\phi(\alpha)$ is defined to be the length of $BC$. For $1\le i\le n-2$, let $e_i$ be the edge between $o$ and $\ui{i}o$ (or $o$ and $\di{i}o$).  Then the length of $e_i$ is defined to be $\phi(\frac{i}{2n}2\pi)$. Now we use the $DA_n$ action to define the length of edges between interior vertices in an equivariant way.
\end{definition}

Note that $\Pi\cap \ui{i}\Pi$ and $\Pi\cap \di{i}\Pi$ have $n-i$ edges. Thus we have the following observation by using the $DA_n$--action and Lemma~\ref{lem:representative}.

\begin{lemma}
	\label{lem:overlap and length}
Suppose $\Pi_1\cap \Pi_2$ has $m$ edges for $m\ge 2$. Let $o_i\in\Pi_i$ be the interior vertex for $i=1,2$. Then there is an edge between $o_i$ and $o_j$ in $X_n$ whose length is $\phi(\frac{n-m}{2n}2\pi)$.
\end{lemma}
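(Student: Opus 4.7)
The plan is to leverage the $DA_n$--equivariance of both the edge-length function and the construction of $X'_n$, and reduce to an orbit representative pair from $\Lambda_0$ supplied by Lemma~\ref{lem:representative}.

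First I would note that $\{\Pi_1,\Pi_2\}\in\Lambda$, since by hypothesis their intersection has $m\ge 2$ edges. By Lemma~\ref{lem:representative}(3), there is $g\in DA_n$ with $g\{\Pi_1,\Pi_2\}\in\Lambda_0$, so after possibly swapping $\Pi_1$ and $\Pi_2$ we have $g\Pi_1=\Pi$ and $g\Pi_2=\ui{j}\Pi$ or $g\Pi_2=\di{j}\Pi$ for some unique $1\le j\le n-2$ (uniqueness being part (2) of Lemma~\ref{lem:representative}).

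The next step is to determine $j$ in terms of $m$. Using the explicit description of $\Pa\cap\ui{j}\Pa$ (respectively $\Pa\cap\di{j}\Pa$) as a subpath of the upper (respectively lower) half of $\Pa$ that joins the left tip of $\Pa$ to the right tip of the translate, and noting that this path contains $n-j$ edges (as the remark preceding the lemma observes), together with the fact that $g$ is a simplicial isomorphism (hence preserves the combinatorial structure and in particular the number of edges in the intersection), I would conclude $m=n-j$, i.e.\ $j=n-m$.

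Finally, by the construction of $X'_n$, the pair $\{\Pi,\ui{j}\Pi\}$ (or $\{\Pi,\di{j}\Pi\}$) is already equipped with an edge between the two interior vertices, and Definition~\ref{def:length} assigns it length $\phi(\frac{j}{2n}2\pi)$. Pulling back by $g^{-1}$, which is an isometry of $X_n$ since edges between interior vertices were metrized in a $DA_n$--equivariant manner, yields an edge between $o_1$ and $o_2$ of length $\phi(\frac{j}{2n}2\pi)=\phi(\frac{n-m}{2n}2\pi)$. There is no real obstacle here; the only point requiring care is verifying the count of $n-j$ edges for the orbit representative, which is immediate from the geometry of how $\ui{j}\Pa$ sits next to $\Pa$ along a common subpath of the upper half.
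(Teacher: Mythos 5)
Your proposal is correct and follows exactly the argument the paper indicates (the paper leaves the proof implicit, pointing only to the fact that $\Pi\cap\ui{i}\Pi$ and $\Pi\cap\di{i}\Pi$ have $n-i$ edges and invoking the $DA_n$--action together with Lemma~\ref{lem:representative}). Your write-up simply makes explicit the reduction to a representative in $\Lambda_0$, the identification $j=n-m$, and the pullback by the isometry $g^{-1}$, all of which are exactly what the paper has in mind.
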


\begin{lemma}
	\label{lem:strict triangle inequality}
The lengths of the three sides of each triangle in $X^{(1)}_n$ satisfy the strict triangle inequality. Thus each $2$--simplex of $X_n$ can be metrized as a non-degenerate Euclidean triangle whose three sides have length equal to the assigned length of the corresponding edges.
\end{lemma}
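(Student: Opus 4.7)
The plan is to classify triangles in $X_n^{(1)}$ according to the types of their three vertices. Every vertex of $X_n$ is either a \emph{real} vertex (from $\Xb_n$) or an \emph{interior} vertex (the centre $o$ of a cell). The edges of $X_n^{(1)}$ come in three flavours: (i) Cayley edges between adjacent real vertices, (ii) the spokes from a centre $o$ to the $2n$ boundary vertices of its cell (each of length $1$), and (iii) the newly added edges between centres of cells $\Pi_i,\Pi_j$ with $|\Pi_i\cap\Pi_j|\ge 2$. Accordingly, every triangle has one of four vertex types: RRR, RRI, RII, or III.

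The RRR case does not arise: three real vertices pairwise joined by Cayley edges would require the product of two generators from $\{a^{\pm1},b^{\pm1}\}$ to equal a third such generator, which is impossible. The RRI case is just a triangle of the subdivision of a cell, with sides of lengths $1,\,1,\,\phi(\pi/n)=2\sin(\pi/(2n))<2$, so the strict triangle inequality is immediate. In the RII case the real vertex $v$ lies in both cells $\Pi_1,\Pi_2$ whose centres span the interior--interior edge, and Lemma~\ref{cor:connected intersection}(2) forces $m:=|\Pi_1\cap\Pi_2|$ to satisfy $2\le m\le n-1$; hence the third side $\phi((n-m)\pi/n)$ lies strictly in $(0,2)$, which, combined with the two length-$1$ sides, yields strict triangle inequality.

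The main work is the III case, where three distinct cells $\Pi_1,\Pi_2,\Pi_3$ pairwise share $m_{ij}\ge 2$ edges. First, using Lemmas~\ref{cor:connected intersection}, \ref{cor:disjoint}, \ref{cor:unique}, I would show that the three cells share a common real vertex $v\in\Pi_1\cap\Pi_2\cap\Pi_3$: Lemma~\ref{cor:disjoint} forces each pair of intersections with a given cell to lie in the same half of that cell, and a case analysis on which half (upper or lower) is used by each pair, together with $m_{ij}\ge 2$, exhibits such a $v$. From the isosceles triangle $vo_io_j$ with $|vo_i|=|vo_j|=1$ and $|o_io_j|=\phi((n-m_{ij})\pi/n)$, the law of cosines yields $\alpha_{ij}:=\angle_v(o_i,o_j)=(n-m_{ij})\pi/n$. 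A further combinatorial check using the same lemmas verifies the spherical inequalities $\alpha_{ik}\le \alpha_{ij}+\alpha_{jk}$ and $\alpha_{12}+\alpha_{13}+\alpha_{23}\le 2\pi$, so the configuration $\{v,o_1,o_2,o_3\}$ embeds isometrically in $\mathbb{R}^3$ with $o_1,o_2,o_3$ on the unit sphere centred at $v$. Since three distinct points on a sphere in $\mathbb{R}^3$ are never collinear, the triangle $o_1o_2o_3$ is non-degenerate, giving strict triangle inequality.

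The principal obstacle is the combinatorial book-keeping in the III case: producing the common vertex $v$ and deriving the spherical inequalities on the $\alpha_{ij}$ directly from the overlap sizes $m_{ij}$. I expect this to require tracking which half of each cell contains each pairwise intersection and the positions of the tips along the shared paths, using Lemmas~\ref{cor:connected intersection}(3), \ref{cor:disjoint}, and \ref{cor:unique} to restrict the admissible configurations; the rest of the argument is then a direct computation of angles and an appeal to Euclidean geometry.
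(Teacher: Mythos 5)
Your plan takes essentially the same route as the paper's proof: the RRI and RII cases are disposed of directly, and the III case is reduced via Lemmas~\ref{cor:connected intersection}--\ref{cor:unique} to the overlap arithmetic on the $m_{ij}$ (after WLOG placing both $\Pi_1\cap\Pi_2$ and $\Pi_1\cap\Pi_3$ in the same half of $\Pi_1$, one of the relations $m_{ik}=m_{ij}+m_{jk}-n$ holds), which yields the exact angle identity $\alpha_{ik}=\alpha_{ij}+\alpha_{jk}$. Once you carry out the combinatorial check you defer, you will find the spherical inequalities hold with equality, so your unit-sphere realization in $\mathbb{R}^3$ collapses onto a great circle and becomes the unit-circle picture the paper uses directly; your explicit common vertex $v\in\Pi_1\cap\Pi_2\cap\Pi_3$ is a clean way to see where that circle's center comes from, though the paper obtains the angle relation from the overlap sizes without naming one.
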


\begin{proof}
We only prove the case when this triangle is made of three interior vertices $\{o_i\in\Pi_i\}_{i=1}^3$. The other cases are already clear from the construction. By Lemma~\ref{cor:disjoint}, $\Pi_1\cap\Pi_2$ and $\Pi_1\cap \Pi_3$ are contained in the same half (say upper half) of $\Pi_1$, otherwise $\Pi_2\cap \Pi_3$ is at most one vertex, which contradicts that $o_2$ and $o_3$ are joined by an edge. We assume without loss of generality that $\Pi_1$ is the base cell $\Pi$. By Lemma~\ref{cor:connected intersection} (3), each of $\Pi_2$ and $\Pi_3$ contains exactly one tip of $\Pi_1$. We first consider the case when $\Pi_2$ contains the left tip of $\Pi_1$ and $\Pi_3$ contains the right tip of $\Pi$. Suppose $\Pi_2\cap \Pi_1$ (resp.\ $\Pi_3\cap\Pi_1$) contains $m_2$ (resp.\ $m_3$) edges. Then by Lemma~\ref{cor:connected intersection} (3), $\Pi_2\cap \Pi_3$ contains $m_2+m_3-n$ edges. By Lemma~\ref{lem:overlap and length}, length$(\overline{o_1o_2})=\phi(\frac{n-m_2}{2n} 2\pi)$, length$(\overline{o_1o_3})=\phi(\frac{n-m_3}{2n} 2\pi)$, and length$(\overline{o_2o_3})=\phi(\frac{n-(m_2+m_3-n)}{2n} 2\pi)$. Note that $\pi>\frac{n-(m_2+m_3-n)}{2n} 2\pi=\frac{n-m_2}{2n} 2\pi+\frac{n-m_3}{2n} 2\pi$, thus we can place $o_2,o_1,o_3$ consecutively in the unit circle such that they span a Euclidean triangle with side lengths as required. Next we consider the case that both $\Pi_2$ and $\Pi_3$ contains the left tip of $\Pi_1$. We assume without loss of generality that $\Pi_1\cap\Pi_2\subsetneq \Pi_1\cap\Pi_3$. Then, by Corollary~\ref{cor:connected intersection} (3), the left tip of $\Pi_3$ is contained in $\Pi_2\cap\Pi_3$. Thus we can repeat the argument in the previous case with $\Pi_1$ replaced by $\Pi_3$. The case when both $\Pi_2$ and $\Pi_3$ contain the right tip of $\Pi_1$ can be handled similarly.
\end{proof}

From now on, we think of each $2$--simplex of $X_n$ as a Euclidean triangle with the required side lengths. If three vertices $x_1$, $x_2$ and $x_3$ span a $2$--simplex in $X_n$, then we use $\angle_{x_1}(x_2,x_3)$ to denote the angle at $x_1$ of the associated Euclidean triangle.

\section{The link of $X_n$}
\label{s:link}
In this section we study links of vertices in the complex $X_n$ defined in the previous section.

Choose a vertex $v\in X_n$, let $\Lambda_v$ be the link $lk(v,X^{(1)}_n)$ of $v$ in $X_n$, i.e.\ $\Lambda_v$ is the full subgraph of $X^{(1)}_n$ spanned by vertices which are adjacent to $v$. For an edge $\overline{v_1v_2}\subset\Lambda_v$, we define the \emph{angular length} of this edge to be $\angle_v(v_1,v_2)$. This makes $\Lambda_v$ a metric graph. We define angular metric on $\Lambda_v$ in the same way as in Subsection~\ref{s:msdef} and use the notation from over there. 

The main result of the section is the following proposition.

\begin{prop} 
	\label{prop:real link}
	Let $v$ be a vertex of $X_n$.
	\begin{enumerate}
		\item The angular lengths of the three sides of each triangle in $\Lambda_v$ satisfy the triangle inequality.
		\item Let $\sigma$ be a simple cycle in $\Lambda_v$ which is $2$--full. Then $\length_\angle(\sigma)\ge 2\pi$.
	\end{enumerate}
\end{prop}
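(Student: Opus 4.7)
The plan is to treat separately the cases where $v$ is an interior vertex of some cell and where $v$ is a real vertex, since the local combinatorics of $\Lambda_v$ differs significantly between the two. When $v=o\in\Pi$, the neighbors in $\Lambda_v$ consist of the $2n$ real vertices of $\partial\Pi$ together with the interior vertex $o'$ of every cell $\Pi'$ with $|\Pi\cap\Pi'|\ge 2$; the $2$--simplices at $v$ split into the $2n$ subdivision triangles of $\Pi$ (each with apex angle $\pi/n$ at $o$) and auxiliary triangles $oo'w$ where $w$ is a real vertex of $\Pi\cap\Pi'$. In the real-vertex case, the neighbors of $v$ are the two boundary neighbors of $v$ in each cell $\Pi\ni v$, together with the interior vertex of that cell, and the $2$--simplices at $v$ are the corresponding subdivision triangles and the added $oo'v$--triangles. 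I would also record the identity $\phi(\alpha)=2\sin(\alpha/2)$, which realizes each edge length as a Euclidean chord length on the unit circle centered at $v$.

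For item (1), I would verify the triangle inequality case by case on the combinatorial type of $v_1v_2v_3$. Using the chord identity above, each configuration places $v_1,v_2,v_3$ at prescribed angular positions on the unit circle around $v$: the positions are dictated by the regular subdivision of $\Pi$ together with Lemma~\ref{lem:overlap and length}, which pins down where the interior vertex of each overlapping cell sits (at an angular position that is a multiple of $\pi/n$ determined by the size of the overlap). The assigned edge lengths from Definition~\ref{def:length} coincide with the Euclidean chord lengths between these positions, so the triangle inequality on angles at $v$ reduces to the elementary statement that arc distances among three points on a circle satisfy it.

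For item (2), given a $2$--full simple cycle $\sigma$ in $\Lambda_v$, I would identify each vertex of $\sigma$ with a position on the unit circle around $v$ (interior case) or on a disjoint union of angular sectors glued along the real-vertex transitions (real case). The angular length of each edge of $\sigma$ is then the length of the arc it subtends, and the key point is that consecutive edges of $\sigma$ must progress monotonically around the circle: any backtracking would exhibit three consecutive vertices $u_1,u_2,u_3$ of $\sigma$ such that $u_1$ and $u_3$ are adjacent in $\Lambda_v$ via either a subdivision edge or an added interior--interior edge, which by the description in Subsection~\ref{subsec:subividing and adding new edges} is exactly the kind of chord that a $2$--full cycle must contain — violating $2$--fullness. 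Once monotonicity is established, closing the cycle forces the total angular length to be at least $2\pi$.

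The main obstacle will be the combinatorial case analysis around a real vertex $v$: here $\Lambda_v$ is not a single circle but a union of angular sectors indexed by the cells through $v$, and the added edges between interior vertices of overlapping cells provide exactly the short-cuts that $2$--fullness is designed to forbid. The heart of the argument will be a case-check, using Lemma~\ref{cor:connected intersection} and Lemma~\ref{cor:disjoint}, that in every configuration of cells meeting at $v$ — and every way their pairwise overlaps can arise — a genuine short-cut along an added edge always exhibits a $2$--full violation, so the monotonicity needed to close the cycle at angular length $\ge 2\pi$ is preserved.
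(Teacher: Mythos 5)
Your plan---casing on $v$ interior versus real, and arguing that a $2$-full cycle is forced to ``go all the way around''---captures the spirit of the paper, and the chord identity $\phi(\alpha)=2\sin(\alpha/2)$ is indeed the tool behind the edge-length computations underlying item (1). However, your mechanism for item (2) has a genuine gap: the angular metric on $\Lambda_v$ is \emph{not} the arc metric of a circle around $v$, so ``monotonic progression around the circle'' has no precise meaning. Concretely, take $v=o$ the interior vertex of $\Pi$. The vertex $L_i$ of $\Lambda_v$ (corresponding to a cell overlapping $\Pi$ in $i$ edges from the left tip) is adjacent to \emph{every} real vertex $v_0,v_1,\ldots,v_i$ of $\partial\Pi$, and each of these edges has the \emph{same} angular length $\frac{i}{4n}2\pi$ (Lemma~\ref{lem:real interior}). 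Since $v_0,\ldots,v_i$ are spread over an angular sector of width $\frac{i}{2n}2\pi$ around $o$, there is no single circular position for $L_i$ compatible with all of those edges being arcs. The link is a metric graph with many non-circular chords, not a subdivided circle, and your ``backtracking $\Rightarrow$ short-cut edge'' step has nothing to hang on.

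The paper closes this gap with a different structure. It splits $\Lambda_v$ into two halves $\Lambda_v^+$, $\Lambda_v^-$ and shows each is a \emph{tree of cliques} (Definition~\ref{def:tree}, Lemmas~\ref{lem:half} and \ref{lem: interior tree of cliques}). This structure carries exactly the two properties your circular picture was meant to supply: a tree of cliques admits no $2$-full simple $n$-cycle for $n\ge 4$ (Lemma~\ref{lem:cycle in tree}~(1)), and it satisfies a short-cut inequality letting one lower-bound the angular length of any edge path by the angular length of a single edge with the same endpoints (Lemma~\ref{lem:cycle in tree}~(2)). Consequently any $2$-full simple cycle must pass through both ``tip'' vertices shared by the two halves, and the quantitative heart of the proof---Lemmas~\ref{lem:pi lower bound} and \ref{lem:pi lower bound1}---is a case analysis, driven by the short-cut inequality and the explicit edge-length formulas, showing that every edge path between the tips within one half has angular length $\ge\pi$. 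You correctly flag the real-vertex case as ``the main obstacle,'' but the obstacle is structural (the absence of a global circular order on $\Lambda_v$), not bookkeeping, and the tree-of-cliques decomposition is precisely the idea missing from your proposal. Your treatment of item (1) is essentially sound but imprecise in one respect: the circle arguments in Corollary~\ref{cor:triangle inequality2} place the three link vertices \emph{together with} $v$ on a circle (so $v$ lies on, not at the center of, the circle), and this is done triangle by triangle rather than via a single global embedding.
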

%

We caution the reader that each edge in $\Lambda_v$ has an angular length, and has a length as defined in the previous section. Here we mostly work with angular length, but will switch to length occasionally. In this section we study the structure of $\Lambda_v$ with respect to the angular metric.

The proof of Proposition~\ref{prop:real link} is divided into two cases: the case of a real vertex $v$ is treated in Subsection~\ref{s:real} and the case of an interior vertex $v$ is treated in Subsection~\ref{s:interior}. In each case we first describe precisely the combinatorial and metric structure of the link and then we study in details angular lengths of simple cycles in the link.  

\subsection{Link of a real vertex}
\label{s:real}
The main purpose of this subsection is to prove Proposition~\ref{prop:real link} for a real vertex $v$.

Since the links of any two real vertices are isomorphic as metric graphs with the angular metric, we can assume without loss of generality that $v$ is the vertex $l$ in the base cell $\Pi$ (cf. Figure~\ref{f:cell}). 

In the following proof, we will assume $u_0=d_0=\ell$ and $u_n=d_n=r$. Recall that each edge of $X_n$ which belongs to $X^\ast_n$ has an orientation and is labeled by one of the generators $a$ and $b$. We will first establish a sequence of lemmas towards the proof of Proposition~\ref{prop:real link}.

The vertices of $\Lambda_v$ can be divided into two classes.
\begin{enumerate}
	\item Real vertices $a^i,a^o,b^i$ and $b^o$, where $a^i$ and $a^o$ are the vertices in $\Lambda_v$ which correspond to the incoming and outgoing $a$--edge containing $v$ ($b^i$ and $b^o$ are defined similarly).
	\item Interior vertices. There is a 1-1 correspondence between such vertices and cells in $X_n$ that contain $\ell$. Thus the interior vertices of $\Lambda_v$ are of form $w^{-1}o$ where $w$ is a vertex of $\partial\Pi$ (recall that we have identified vertices of $X^{\ast}_n$ with group elements of $DA_n$, and $\ell$ is identified with the identity element of $DA_n$, so $w^{-1}o$ means the image of $o$ under the action of $w^{-1}$). More precisely, interior vertices of $\Lambda_v$ are $\{\ell^{-1} o,r^{-1}o,d^{-1}_1o,d^{-1}_2o,\ldots,d^{-1}_{n-1}o,u^{-1}_1o,u^{-1}_2o,\ldots,u^{-1}_{n-1}o\}$.
\end{enumerate}

The edges of $\Lambda_v$ can be divided into two classes.
\begin{enumerate}
	\item Edges between a real vertex and an interior vertex. These are exactly the edges of $\Lambda_v$ which are in $\Xb_n$, and they are called edges of \emph{type I}.
	\item Edges between two interior vertices. These are exactly the edges of $\Lambda_v$ which are not in $\Xb_n$, and they are called edges of \emph{type II}.
\end{enumerate}
Note that there do not exist edges of $\Lambda_v$ which are between two real vertices. 

Now we characterize all edges of type I. See Figure~\ref{f:real} below for a picture of $\Lambda_v$ with only edges of type I shown.
\begin{lemma}
	\label{lem:edge type I}
	\begin{enumerate}
		\item The collection of vertices in $\Lambda_v$ which are connected to $b^i$ (resp.\ $a^i$) by an edge of type $I$ is exactly $\{d^{-1}_1o,d^{-1}_2o,\ldots,d^{-1}_no\}$ (resp.\ $\{u^{-1}_1o,u^{-1}_2o,\ldots,u^{-1}_no\}$).
		\item The collection of vertices in $\Lambda_v$ which are connected to $a^o$ (resp.\ $b^o$) by an edge of type $I$ is exactly $\{d^{-1}_0o,d^{-1}_1o,\ldots,d^{-1}_{n-1}o\}$ (resp.\ $\{u^{-1}_0o,u^{-1}_1o,\ldots,u^{-1}_{n-1}o\}$).
		\item Each edge of type I has angular length equal to $\frac{n-1}{4n}2\pi$.
	\end{enumerate}
\end{lemma}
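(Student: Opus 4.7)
The plan is to verify the three assertions by a direct enumeration of the type I edges of $\Lambda_v$ at $v$, together with a short geometric computation of the angles they carry.

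For assertion (3), I observe that every type I edge of $\Lambda_v$ records the angle at $v$ inside a $2$--simplex of $\Xb_n$ whose vertex set is $\{v,x,o'\}$, where $x$ is a real neighbor of $v$ and $o'$ is the interior vertex of some cell $\Pi'$ of $\Xb_n$ containing both $v$ and $x$. By Definition~\ref{def:length} this is a Euclidean isosceles triangle with $|vo'|=|xo'|=1$ and $|vx|=2\sin(\pi/(2n))$, the edge length of a regular $2n$--gon inscribed in the unit circle. Its apex angle at $o'$ equals the central angle $\pi/n$ of that $2n$--gon subtending one side, and hence its base angle at $v$ is $\frac{\pi-\pi/n}{2}=\frac{n-1}{4n}\cdot 2\pi$, which is by construction the angular length of the corresponding edge of $\Lambda_v$.

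For assertions (1) and (2), I argue by a case analysis over the cells of $\Xb_n$ at $v$. These cells are in bijection with the $2n$ boundary vertices $w\in\partial\Pi$ via $w\mapsto w^{-1}\Pi$, where $v$ occupies the position of $w$, and each such cell contributes two type I edges at $v$, one for each of the two boundary edges of $w^{-1}\Pi$ meeting $v$. This produces $4n$ incidences in total, matching the four lists of size $n$ in the statement. To determine which of the four real vertices $a^i, a^o, b^i, b^o$ each of these incidences hits, I translate the two boundary edges of $\Pi$ at $w$ back to $v$ via $w^{-1}$ and read off their labels and orientations. The argument splits into cases according to whether $w$ is the left tip $\ell$, the right tip $r$, or an interior vertex of the upper or lower half of $\partial\Pi$; in each case the labels are immediate from the relator words $\underbrace{aba\cdots}_n$ and $\underbrace{bab\cdots}_n$ defining the upper and lower halves, and the distribution of the two edges at $v$ between the four real vertices is then determined.

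The main obstacle, though not a conceptual one, is the bookkeeping: the pair of labels at an interior upper or lower half vertex depends on the parity of its index, and one then has to aggregate the $2n$ contributions to match the claimed lists $\{u_j^{-1}o\}$ and $\{d_j^{-1}o\}$. Once this case analysis is tabulated, the four identities in (1) and (2) follow by inspection, and (3) was already settled by the geometric computation above.
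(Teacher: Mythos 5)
Your proposal follows essentially the same route as the paper's. The paper fixes a real vertex such as $b^i$ and characterizes which $w^{-1}o$ are adjacent to it (namely, those $w\in\partial\Pi$ with a $b$--edge terminating at $w$); you iterate over $w\in\partial\Pi$ and read off, from the two boundary edges of $\Pi$ at $w$, which two of $a^i,a^o,b^i,b^o$ the cell $w^{-1}\Pi$ contributes. These are the same computation traversed in opposite directions, and your incidence count ($2n$ cells $\times$ $2$ edges $= 4n$ incidences $=$ four lists of size $n$) is the right sanity check. Your derivation of (3) via the base angle of the isosceles triangle is correct and is just a rephrasing of the paper's ``half the interior angle of a regular $2n$--gon.''

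One warning about the step you call bookkeeping: it hides a real subtlety in what $u_j$ and $d_j$ denote. They are \emph{not} the $j$-th vertices of the upper and lower halves of $\partial\Pi$ respectively, which is the most natural first reading. If they were, your parity analysis would show that $b^i$ is adjacent to cells indexed by a \emph{mix} of upper- and lower-half vertices, contradicting the clean $\{d_1^{-1}o,\ldots,d_n^{-1}o\}$ in the statement. The correct reading (forced, for instance, by the claim in the proof of Lemma~\ref{lem:edge type II} that $d_i^{-1}\Pi\cap\Pi$ lies in the upper half of $\Pi$ and has $n-i$ edges, and by the identity $d_id_j^{-1}=u_{j-i}^{-1}$ or $d_{j-i}^{-1}$ used there) is that $u_j$ lies in the upper half of $\partial\Pi$ when $j$ is odd and in the lower half when $j$ is even, and $d_j$ the other way around. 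With that convention, the parity-dependence you correctly observe in the raw edge labels disappears after renaming: every $u_j$ with $1\le j\le n-1$ carries an incoming $a$--edge and an outgoing $b$--edge, every $d_j$ an incoming $b$--edge and an outgoing $a$--edge, $\ell=u_0=d_0$ has two outgoing edges, and $r=u_n=d_n$ has two incoming edges — which is precisely what produces the four stated lists. So your plan goes through, but you should make the identification of $u_j,d_j$ explicit rather than leaving it to ``inspection,'' since an incorrect identification would make your tabulation appear to refute the lemma.
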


\begin{proof}	
	If a vertex in $\Lambda_v$ is adjacent to $b^i$, then this vertex must be an interior vertex, hence is of form $w^{-1}o$ for a vertex $w\in\partial\Pi$. Note that if there is a vertex $w'\in\partial\Pi$ such that there is a $b$--edge pointing from $w'$ to $w$, then by applying the action of $w^{-1}$ to the triangle $\Delta(w'wo)$, we know $w^{-1}o$ and $b^i$ are adjacent. We can reverse this argument to show that $w^{-1}o$ and $b^i$ are adjacent, then there is a $b$--edge in $\partial\Pi$ terminating at $w$. It follows that $b^i$ is connected to $w^{-1}o$ if and only if $u=d^{-1}_i$ for $1\le i\le n$. Thus the part of (1) concerning $b^i$ follows. We can analyze vertices to $b^o,a^i$ and $a^o$ in a similar way. Thus (1) and (2) follow. Note that the angular length of each edge of type I is equal to half of the interior angle of a regular $2n$--gon. Thus (3) follows.
\end{proof}

\begin{figure}[ht!]
	\centering
	\includegraphics[width=1\textwidth]{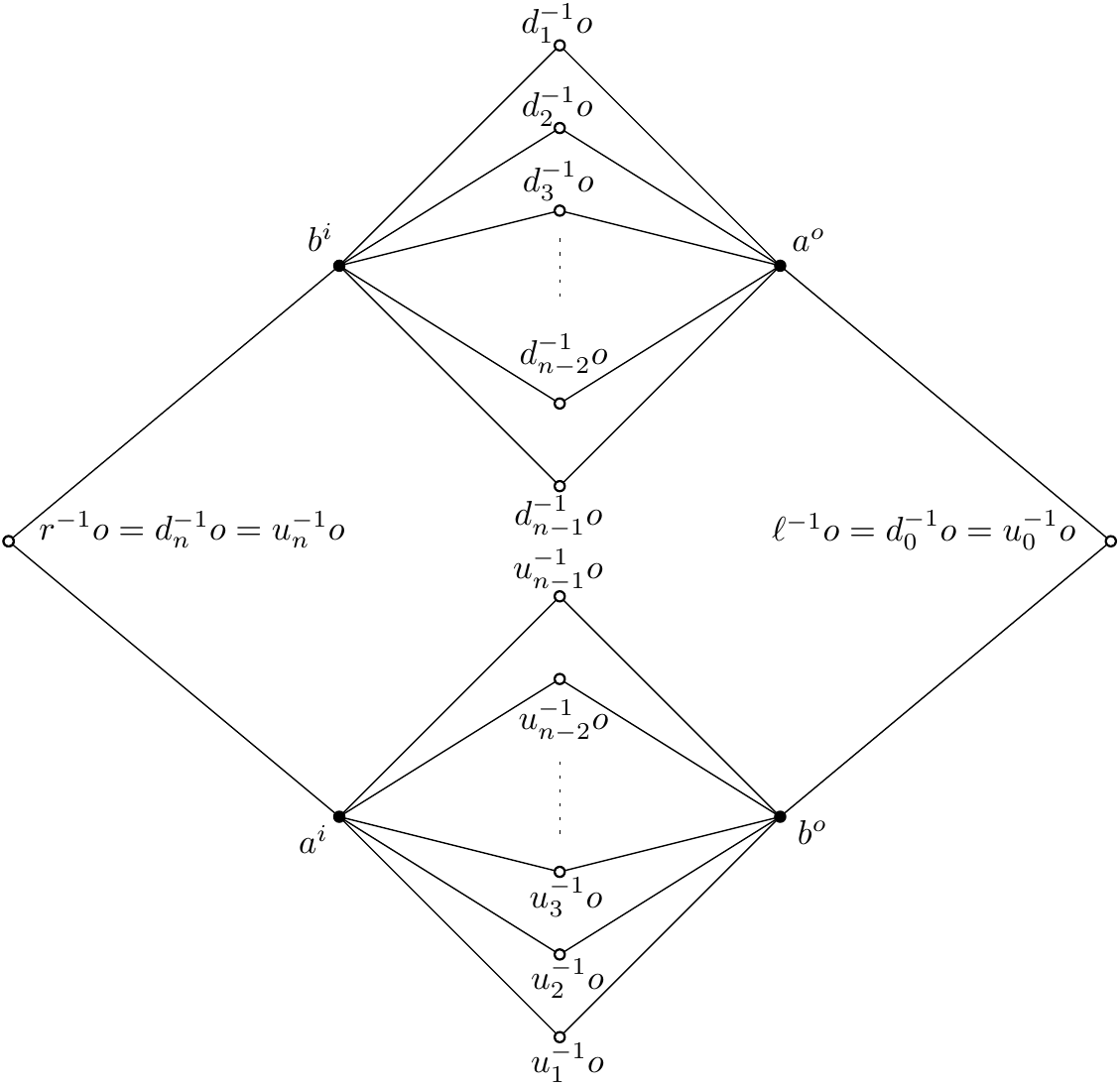}
	\caption{Edges of type I in the link of a real vertex}
	\label{f:real}
\end{figure}

\begin{lemma}\
	\label{lem:edge type II}
	\begin{enumerate}
		\item There is an edge of type II between $d^{-1}_io$ and $d^{-1}_jo$ if and only if $1\le |j-i|\le n-2$.
		\item There is an edge of type II between $u^{-1}_io$ and $u^{-1}_jo$ if and only if $1\le |j-i|\le n-2$.
		\item If $1\le i\le n-1$ and $1\le j\le n-1$, then there is no edge between $d^{-1}_io$ and $u^{-1}_jo$.
		\item Suppose $0\le i<j\le n$ and $j-i\le n-2$. Then the edge between $d^{-1}_io$ and $d^{-1}_jo$ has angular length $=\frac{j-i}{2n}2\pi$.
		\item Suppose $0\le i<j\le n$ and $j-i\le n-2$. Then the edge between $u^{-1}_io$ and $u^{-1}_jo$ has angular length $=\frac{j-i}{2n}2\pi$.
	\end{enumerate}
\end{lemma}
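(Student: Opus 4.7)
The plan is to read off the existence and combinatorial length of each type II edge directly from the intersection pattern of pairs of cells around $\ell$. By construction of $X_n$, two interior vertices $w_1^{-1}o$ and $w_2^{-1}o$ are joined by a type II edge precisely when the cells $w_1^{-1}\Pi$ and $w_2^{-1}\Pi$ share at least two boundary edges; when this happens, Lemma~\ref{lem:overlap and length} converts the number of shared boundary edges into the length of the edge between the two centers.

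For parts (1) and (4) I would use the $DA_n$-action to translate by $d_i$, reducing the pair $(d_i^{-1}\Pi, d_j^{-1}\Pi)$ to a pair of the form $(\Pi, g\Pi)$ with $g=d_id_j^{-1}$, and then locate this pair (up to orbit) among the canonical representatives from Lemma~\ref{lem:representative}. Both cells contain $\ell$ along their lower halves, at positions $i$ and $j$ respectively. Tracking these lower halves forward and backward from $\ell$ and invoking Lemma~\ref{cor:connected intersection} to control the connectedness of the intersection, I expect the shared sub-path to consist of exactly $n-|j-i|$ consecutive edges. This gives a type II edge precisely when $n-|j-i|\ge 2$, which is the condition in part (1). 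For part (4), Lemma~\ref{lem:overlap and length} then assigns to this edge the length $\phi\bigl(\tfrac{|j-i|}{2n}\cdot 2\pi\bigr)$. Combined with the two unit spokes $\overline{\ell\, d_i^{-1}o}$ and $\overline{\ell\, d_j^{-1}o}$ (of length $1$ by Definition~\ref{def:length}), the very definition of $\phi$ as the opposite-side length in the unit isosceles triangle with apex angle $\tfrac{|j-i|}{2n}\cdot 2\pi$ identifies that apex angle with $\angle_\ell(d_i^{-1}o, d_j^{-1}o)$, yielding the stated angular length.

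Parts (2) and (5) follow by the symmetric argument on the upper halves, swapping the roles of $a$ and $b$ in the defining words of the relator. For part (3), the new feature is that $\ell$ sits in the \emph{interior} of the lower half of $d_i^{-1}\Pi$ and the \emph{interior} of the upper half of $u_j^{-1}\Pi$ (since $1\le i,j\le n-1$). My plan is to apply Lemma~\ref{cor:disjoint} with $\Pi$ itself playing the role of the middle precell: in $\Pi$ the vertex $\ell$ is the common tip where the upper and lower halves meet, and the hypothesized nontrivial intersections of $d_i^{-1}\Pi$ and $u_j^{-1}\Pi$ with $\Pi$ sit on opposite halves of $\Pi$. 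Lemma~\ref{cor:disjoint} then forces $d_i^{-1}\Pi\cap u_j^{-1}\Pi$ to be at most a single vertex, so no type II edge is added.

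The main obstacle I expect is the bookkeeping in part (1): organizing the two translated lower halves around $\ell$ and matching their shared sub-path cleanly against a representative in $\Lambda_0$ requires patience with the alternating $bab\cdots$ labeling and the identification $u_n=d_n=r$. Once that is in place, parts (2), (4), (5) become parallel statements, and part (3) reduces to a single clean application of Lemma~\ref{cor:disjoint}.
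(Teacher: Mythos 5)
Your proposal follows the paper's argument essentially verbatim: translate by $d_i$ to compare with the canonical representatives of Lemma~\ref{lem:representative}, count the shared boundary path as $n-|j-i|$ edges, feed that count into Lemma~\ref{lem:overlap and length} for (1) and (4), mirror the argument for (2) and (5), and invoke Lemma~\ref{cor:disjoint} for (3) using the fact that $d_i^{-1}\Pi\cap\Pi$ and $u_j^{-1}\Pi\cap\Pi$ lie on opposite halves of $\Pi$. The one place where you go beyond the paper is in spelling out the conversion from side length $\phi\bigl(\tfrac{j-i}{2n}2\pi\bigr)$ to the angular length at $\ell$ via the unit isosceles triangle — the paper leaves this implicit — and that elaboration is correct.
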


\begin{proof}
	First we claim the number of edges in $d^{-1}_i\Pi\cap d^{-1}_j\Pi$ equals to $n-(|j-i|)$. We assume without loss of generality that $i<j$. Then the number of edges in $d^{-1}_i\Pi\cap d^{-1}_j\Pi$ equals the number of edges in $\Pi\cap d_id^{-1}_j\Pi$. By direct computation, we know $d_id^{-1}_j=u^{-1}_{j-i}$ or $d^{-1}_{j-i}$. Moreover, the number of edges in $\Pi\cap d^{-1}_k\Pi$ (or $\Pi\cap u^{-1}_k\Pi$) equals  $n-k$ for any $1\le k\le n-1$. Thus the claim follows.
	
	There is an edge of type II between $d^{-1}_io$ and $d^{-1}_jo$ if and only if $d^{-1}_i\Pi\cap d^{-1}_j\Pi$ has at least two edges, thus (1) follows from the claim. (4) follows the claim and Lemma~\ref{lem:overlap and length}. (2) and (5) can be proved in a similar way. To see (3), note that $d^{-1}_i\Pi\cap\Pi$ (resp.\ $u^{-1}_i\Pi\cap\Pi$) is contained in the upper half (resp.\ lower half) of $\Pi$. Thus (3) follows from Lemma~\ref{cor:disjoint}.
\end{proof}

\begin{corollary}
	\label{cor:triangle inequality1}
	\begin{enumerate}
		\item The angular lengths of the three sides of any triangle in $\Lambda_v$ satisfy the triangle inequality. 
		\item Let $\Delta$ be a $3$--simplex in $X_n$ which contains a real vertex. Then there exists a (possibly degenerate) $3$--simplex $\Delta'$ in the Euclidean $3$--space such that there is a simplicial isomorphism $\Delta\to\Delta'$ which preserves the lengths of edges.
	\end{enumerate}
\end{corollary}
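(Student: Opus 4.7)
The plan is to prove both statements by a direct case analysis of the triangles in $\Lambda_v$. The key observation is that $\Lambda_v$ has no edges between two real vertices (noted just after the classification of edge types), so every triangle in $\Lambda_v$ contains at most one real vertex. Using the explicit descriptions of type~I edges in Lemma~\ref{lem:edge type I} and of type~II edges in Lemma~\ref{lem:edge type II}, the possibilities split into two cases, handled in turn.

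In the one-real-vertex case, the real vertex (say $b^i$; the other three choices are symmetric) is connected only to interior vertices in the $d$-family by Lemma~\ref{lem:edge type I}(1), so the two interior vertices are $d^{-1}_i o$ and $d^{-1}_j o$ with $i < j$, and their adjacency forces $j - i \leq n - 2$ by Lemma~\ref{lem:edge type II}(1). The three angular lengths are then $\frac{n-1}{4n}2\pi$, $\frac{n-1}{4n}2\pi$, and $\frac{j-i}{2n}2\pi$, and the triangle inequality reduces to $j - i \leq n - 1$, which holds. In the three-interior-vertex case, Lemma~\ref{lem:edge type II}(3) forces all three vertices to lie in a single family, say $d^{-1}_i o, d^{-1}_j o, d^{-1}_k o$ with $i < j < k$ and $k - i \leq n - 2$; Lemma~\ref{lem:edge type II}(4) gives angular lengths $\frac{j-i}{2n}2\pi$, $\frac{k-j}{2n}2\pi$, $\frac{k-i}{2n}2\pi$, which satisfy the triangle inequality with equality. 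This settles~(1).

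For~(2), given a $3$-simplex $\Delta \subset X_n$ with real vertex $v$, the plan is to realize $\Delta$ in $\mathbb R^3$ by placing $v$ at the origin, emanating three rays at the face angles at $v$, and placing the other three vertices $w_1, w_2, w_3$ along these rays at the prescribed edge lengths from $v$. Such a configuration exists (possibly degenerate) precisely when the three angles at $v$ satisfy the triangle inequality and have sum at most $2\pi$. The triangle inequality is~(1), and the sum bound follows from the same case analysis: in the one-real-vertex case the sum equals $\frac{(n-1)+(j-i)}{n}\pi \leq \frac{2n-3}{n}\pi$, and in the three-interior-vertex case the sum equals $\frac{k-i}{n}2\pi \leq \frac{n-2}{n}2\pi$, both strictly less than $2\pi$. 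Once $w_1, w_2, w_3$ are placed, the distances $|w_iw_j|$ in the realization are forced by the law of cosines and match the prescribed edge lengths in $X_n$ because each face $v w_i w_j$ of $\Delta$ is already a Euclidean triangle with the same data by Lemma~\ref{lem:strict triangle inequality}. A degenerate $\Delta'$ arises precisely in the equality situation of the three-interior-vertex case.

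The main work is organizing the case enumeration for~(1); once the two cases are identified and the angular lengths written down, all the relevant inequalities are elementary. Statement~(2) then follows from the standard solid-angle realizability criterion together with Lemma~\ref{lem:strict triangle inequality}, requiring no further geometric input.
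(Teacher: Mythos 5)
Your proof is correct and follows essentially the same route as the paper: both split into the cases of one or zero real vertices in the link triangle, read off the angular lengths from Lemmas~\ref{lem:edge type I} and \ref{lem:edge type II}, observe the additivity relation $\angle_v(\di{i}o,\di{j}o)+\angle_v(\di{j}o,\di{k}o)=\angle_v(\di{i}o,\di{k}o)$ in the all-interior case, and deduce~(2) from the standard realizability criterion for the three face angles at the apex $v$. The only difference is presentational: you spell out the $\le 2\pi$ bound for the vertex-angle sum explicitly, whereas the paper notes it via "the summation of the angular length of two edges of type I is $<\pi$."
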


\begin{proof}
	Let $\Delta$ be a triangle in $\Lambda_v$. Since no two real vertices in $\Lambda_v$ are adjacent, $\Delta$ either has two interior vertices, or three interior vertices. In the former case, since the angular length of any edge of type II is at most $\frac{n-2}{2n}2\pi$ (Lemma~\ref{lem:edge type II}), it is less than the summation of the angular length of two edges of type I (Lemma~\ref{lem:edge type I}), we consequently deduce that the triangle inequality holds. Moreover, (2) holds by triangle inequality and that the summation of the angular length of two edges of type I in $\Delta$ is $<\pi$. In the latter case, by Lemma~\ref{lem:edge type II} (3), the three vertices of $\Delta$ are either of form $\di{i}o,\di{j}o,\di{k}o$, or of form $\ui{i}o,\ui{j}o,\ui{k}o$. By Lemma~\ref{lem:edge type II} (4), $\angle_v(\di{i}o,\di{j}o)+\angle_v(\di{j}o,\di{k}o)=\angle_v(\di{i}o,\di{k}o)$ when $i<j<k$. A similar equality holds with $d$ replaced by $u$. Thus (1) and (2) follow.
\end{proof}

We record a simple graph theoretic observation for later use.
\begin{definition}
	\label{def:tree}
	A simple graph $\Gamma$ is a \emph{tree of cliques} if there are complete subgraphs $\{\Delta_i\}_{i=1}^k$ such that 
	\begin{enumerate}
		\item $\Gamma=\cup_{i=1}^k\Delta_i$;
		\item for each $1<m\le k$, $(\cup_{i=1}^{m-1}\Delta_i)\cap\Delta_m$ is a complete subgraph.
	\end{enumerate}
\end{definition}

\begin{lemma}
	\label{lem:cycle in tree}
	Let $\Gamma$ be a tree of cliques. Then the following hold.
	\begin{enumerate}
		\item Any simple $n$--cycle for $n\ge 4$ in $\Gamma$ is not $2$--full.
		\item If $\Gamma$ is a metric graph such that the three sides of each of its triangle satisfy the triangle inequality then, for any edge $\overline{w_1w_2}\subset\Gamma$, the length of $\overline{w_1w_2}$ is bounded above by the length of any edge path from $w_1$ to $w_2$.
	\end{enumerate}
\end{lemma}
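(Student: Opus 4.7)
My plan is to prove both parts by induction: (1) by induction on the number $k$ of cliques appearing in the decomposition, and (2) by induction on the length of the path, with (1) used as the key combinatorial input.

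For (1), I would induct on $k$ in a decomposition $\Gamma=\bigcup_{i=1}^k\Delta_i$ realizing the tree-of-cliques structure. The base case $k=1$ is immediate: in a single clique, $\overline{v_1v_3}$ is a chord of $C$ whose endpoints share the common neighbor $v_2$ in $C$, so $C$ fails to be $2$-full. For the inductive step I would distinguish two cases. If some vertex $v$ of $C$ lies in $\Delta_k\setminus\bigcup_{i<k}\Delta_i$, then every $\Gamma$-neighbor of $v$ (in particular, both neighbors $u,w$ of $v$ on $C$) must lie in $\Delta_k$, and the clique property produces the edge $\overline{uw}$; since $u$ and $w$ share $v$ as a common neighbor in $C$, this certifies failure of $2$-fullness. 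Otherwise every vertex of $C$ lies in $\bigcup_{i<k}\Delta_i$, and the completeness of the intersection $(\bigcup_{i<k}\Delta_i)\cap\Delta_k$ forces every edge of $C$ (including any edge coming only from $\Delta_k$) to already belong to $\bigcup_{i<k}\Delta_i$, so that $C$ lies in this subgraph and the inductive hypothesis applies.

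For (2), after possibly passing to a simple sub-path (which can only decrease the total length, since edge lengths are positive), I would induct on $m$, the length of $P=(u_0=w_1,u_1,\ldots,u_m=w_2)$. The case $m=1$ is trivial, and $m=2$ is a direct triangle inequality on $w_1u_1w_2$. For $m\ge 3$, concatenating $P$ with the edge $\overline{w_1w_2}$ yields a simple cycle $C$ of length $m+1\ge 4$ in $\Gamma$; by (1), $C$ admits a chord between two vertices that share a common neighbor in $C$, which in a cycle means a chord joining vertices at cyclic distance exactly $2$. If this chord is internal, $\overline{u_iu_{i+2}}$ with $0\le i\le m-2$, then the triangle inequality on $u_iu_{i+1}u_{i+2}$ shows that replacing the subpath $(u_i,u_{i+1},u_{i+2})$ by the edge $\overline{u_iu_{i+2}}$ produces a path of length $m-1$ whose total length is at most that of $P$, and the inductive hypothesis closes the argument. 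If instead the chord is $\overline{u_0u_{m-1}}$ (common neighbor $u_m=w_2$ in $C$) or $\overline{u_1u_m}$ (common neighbor $u_0=w_1$), I would apply the inductive hypothesis to the path of length $m-1$ connecting the chord's endpoints and then combine it with one more triangle inequality, on $u_0u_{m-1}u_m$ or $u_0u_1u_m$ respectively.

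The main subtle point I expect is the vertex-set-contained case in (1): one must check that even when $C$ uses no vertex of $\Delta_k\setminus\bigcup_{i<k}\Delta_i$, the completeness of the intersection $(\bigcup_{i<k}\Delta_i)\cap\Delta_k$ transfers every edge of $C$ with both endpoints in the intersection back to $\bigcup_{i<k}\Delta_i$, which is what makes the induction close cleanly. Once (1) is in place, (2) is essentially bookkeeping with the triangle inequality.
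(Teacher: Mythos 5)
Your proof is correct and takes essentially the same approach as the paper: induction on the number of cliques $k$ for part (1), and for part (2) induction driven by the chord supplied by (1) together with the triangle inequality. The paper streamlines (2) by restating the inductive claim symmetrically — in every simple cycle, each edge is bounded by the total length of the remaining edges — which collapses your three-way case split (internal chord vs.\ chord through $w_1$ vs.\ chord through $w_2$) into a single uniform reduction.
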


\begin{proof}
	For (1), we induct on the number $k$ in Definition~\ref{def:tree}. Let $\omega\subset\Gamma$ be a simple $n$--cycle. If $\omega\subset\cup_{i=1}^{k-1}\Delta_i$, then $\omega$ is not $2$--full by induction. Now we assume $\omega\nsubseteq\cup_{i=1}^{k-1}\Delta_i$. Then there must be an edge $e\subset \omega$ such that $e\nsubseteq\cup_{i=1}^{k-1}\Delta_i$. Let $s,t$ be two vertices of $e$. By Definition~\ref{def:tree} (1), $e\subset \Delta_k$. Hence $\{s,t\}\subset \Delta_k$. If $\{s,t\}\subset \cup_{i=1}^{k-1}\Delta_i$, then by Definition~\ref{def:tree} (2) and the assumption that $\Gamma$ is simple, we know $e\subset (\cup_{i=1}^{k-1}\Delta_i)\cap\Delta_k$, which is a contradiction. So at least one of $\{s,t\}$ is not contained in $\cup_{i=1}^{k-1}\Delta_i$. Now we assume $s\in \Delta_k\setminus (\cup_{i=1}^{k-1}\Delta_i)$. Let $t_1$ and $t_2$ be two vertices in $\omega$ that are adjacent to $s$. Since $n\ge 4$, $t_1$ and $t_2$ have combinatorial distance $\ge 2$ in $\omega$. By Definition~\ref{def:tree} (1), the edge $\overline{t_1s}$ is contained in one of the $\Delta_i$. Thus we must have $\overline{t_1s}\subset \Delta_k$. In particular $t_1\in\Delta_k$. Similarly, $t_2\in\Delta_k$. Thus there is an edge between $t_1$ and $t_2$, and $\omega$ is not $2$--full.
	
	For (2), we can assume without loss of generality that $\overline{w_1w_2}$ together with another given edge path from $w_1$ to $w_2$ form a simple cycle. Thus it suffices to show that for any simple cycle $\omega\subset \Gamma$, the length of an edge $e\in\omega$ is bounded above by the summation of the lengths of other edges in $\omega$. Let $n$ be the number of edges in $\omega$. We induct on $n$. The case $n=3$ follows from the assumption. The case $n\ge 4$ follows from the induction assumption and from the fact that $\omega$ is not $2$--full.
\end{proof}

Let $\Lambda^+_v$ be the full subgraph of $\Lambda_v$ spanned by $\{b^i,a^o,\di{0}o,\di{1}o,\ldots,\di{n}o\}$. Let $\Lambda^-_v$ be the full subgraph of $\Lambda_v$ spanned by $\{b^o,a^i,\ui{0}o,\ui{1}o,\ldots,\ui{n}o\}$.

\begin{lemma}
	\label{lem:half}
	Each of $\Lambda^+_v$ and $\Lambda^-_v$ is a tree of cliques.
\end{lemma}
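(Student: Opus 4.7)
\noindent
\textbf{Proof proposal for Lemma~\ref{lem:half}.}
The plan is to exhibit, for each of $\Lambda^+_v$ and $\Lambda^-_v$, an explicit family of at most four maximal cliques and check directly that they satisfy Definition~\ref{def:tree}. By the evident symmetry between the two halves (swap $a\leftrightarrow b$, $u\leftrightarrow d$, and swap ``in'' and ``out'' on real vertices) it suffices to handle $\Lambda^+_v$.

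First I would write out the combinatorial structure of $\Lambda^+_v$ explicitly from the earlier lemmas. Its vertex set is $\{a^o,b^i,\di{0}o,\di{1}o,\ldots,\di{n}o\}$, where we use the convention $\di{0}o=\ell^{-1}o$, $\di{n}o=r^{-1}o$. By Lemma~\ref{lem:edge type I} the real vertex $a^o$ is adjacent exactly to $\di{0}o,\di{1}o,\ldots,\di{n-1}o$ and $b^i$ is adjacent exactly to $\di{1}o,\ldots,\di{n}o$, while $a^o$ and $b^i$ are not adjacent to each other (no edge of $\Lambda_v$ connects two real vertices). By Lemma~\ref{lem:edge type II}(1), the interior vertices satisfy $\di{i}o\sim\di{j}o$ if and only if $1\le|i-j|\le n-2$.

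Next I would define the four candidate cliques
\begin{align*}
\Delta_1 &= \{a^o,\di{0}o,\di{1}o,\ldots,\di{n-2}o\}, &
\Delta_2 &= \{a^o,\di{1}o,\di{2}o,\ldots,\di{n-1}o\},\\
\Delta_3 &= \{b^i,\di{1}o,\di{2}o,\ldots,\di{n-1}o\}, &
\Delta_4 &= \{b^i,\di{2}o,\di{3}o,\ldots,\di{n}o\}.
\end{align*}
That each $\Delta_k$ is a complete subgraph is immediate from the two adjacency criteria above: within any of the four index blocks the maximum gap between consecutive indices is exactly $n-2$, so Lemma~\ref{lem:edge type II}(1) makes the interior vertices pairwise adjacent, and Lemma~\ref{lem:edge type I} handles the edges to the real vertex. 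Checking that $\Lambda^+_v=\Delta_1\cup\Delta_2\cup\Delta_3\cup\Delta_4$ reduces to verifying edge-wise: every edge incident to $a^o$ lies in $\Delta_1\cup\Delta_2$, every edge incident to $b^i$ lies in $\Delta_3\cup\Delta_4$, and for an interior edge $\overline{\di{i}o\,\di{j}o}$ with $i<j\le i+n-2$ one distinguishes the cases $i=0$, $j=n$, and $1\le i<j\le n-1$, which respectively place the edge in $\Delta_1$, $\Delta_4$, or $\Delta_2\cap\Delta_3$.

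Finally I would verify Definition~\ref{def:tree}(2) for the ordering $\Delta_1,\Delta_2,\Delta_3,\Delta_4$. Direct inspection gives
\begin{align*}
\Delta_1\cap\Delta_2 &= \{a^o,\di{1}o,\ldots,\di{n-2}o\},\\
(\Delta_1\cup\Delta_2)\cap\Delta_3 &= \{\di{1}o,\ldots,\di{n-1}o\},\\
(\Delta_1\cup\Delta_2\cup\Delta_3)\cap\Delta_4 &= \{b^i,\di{2}o,\ldots,\di{n-1}o\},
\end{align*}
and each of these index sets again has maximum gap $\le n-2$, so each intersection is itself a clique. (The small-$n$ cases, where some blocks degenerate to a single vertex or an empty set, fit into this scheme trivially.) I do not anticipate any genuine obstacle: the argument is a bookkeeping exercise, and the only point requiring care is keeping track of the endpoint cases $i=0$ and $j=n$, which are exactly the places where the adjacency rule of Lemma~\ref{lem:edge type II}(1) fails to hold and therefore dictate why there must be two cliques containing $a^o$ and two containing $b^i$ rather than a single large one.
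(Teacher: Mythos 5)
Your proof is correct and is essentially the paper's own argument: the paper uses the same four cliques, just labeled $V_1,\ldots,V_4$ in the order reverse to your $\Delta_1,\ldots,\Delta_4$. You simply spell out the verification that the paper compresses into ``Definition~\ref{def:tree}~(2) can be verified directly.''
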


\begin{proof}
	We define the following sets of vertices of $\Lambda^+_v$.
	\begin{enumerate}
		\item $V_1=\{b^i,\di{n}o,\di{n-1}o,\ldots,\di{2}o\}$;
		\item $V_2=\{b^i,\di{n-1}o,\di{n-2}o,\ldots,\di{1}o\}$;
		\item $V_3=\{a^o,\di{n-1}o,\di{n-2}o,\ldots,\di{1}o\}$;
		\item $V_4=\{a^o,\di{n-2}o,\di{n-3}o,\ldots,\di{0}o\}$.
	\end{enumerate}
	By Lemma~\ref{lem:edge type I} and Lemma~\ref{lem:edge type II}, each $V_i$ spans a complete subgraph, which we denote by $\Delta_i$. Moreover, $\Lambda^+_v=\Delta_1\cup\Delta_2\cup\Delta_3\cup\Delta_4$. Definition~\ref{def:tree} (2) can be verified directly. Thus $\Lambda^+_v$ is a tree of cliques. Similarly, $\Lambda^-_v$ is a tree of cliques.
\end{proof}

\begin{lemma}
	\label{lem:$2$--full cycle contains tips}
	Let $\sigma\subset\Lambda_v$ be a simple cycle such that $\sigma\nsubseteq\Lambda^+_v$ and $\sigma\nsubseteq\Lambda^-_v$. Then $\ell^{-1}o\in\sigma$ and $r^{-1}o\in\sigma$. Consequently, if $\sigma$ is $2$--full simple $n$--cycle in $\Lambda_v$ for $n\ge 4$, then $\ell^{-1}o\in\sigma$ and $r^{-1}o\in\sigma$. 
\end{lemma}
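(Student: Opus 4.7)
The plan is to exhibit $\Lambda_v$ as the union $\Lambda^+_v \cup \Lambda^-_v$ whose intersection consists only of the two vertices $\ell^{-1}o$ and $r^{-1}o$, with no edge between them. Once this structural decomposition is verified, a traversal argument shows that any simple cycle having edges in both halves must pass through both shared vertices.

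First I will check the vertex decomposition: the vertex set of $\Lambda_v$ equals the union of the vertex sets of $\Lambda^+_v$ and $\Lambda^-_v$, with intersection exactly $\{\ell^{-1}o, r^{-1}o\}$ (using the conventions $\ell^{-1}o = d^{-1}_0 o = u^{-1}_0 o$ and $r^{-1}o = d^{-1}_n o = u^{-1}_n o$). Next I will verify that every edge of $\Lambda_v$ lies entirely in $\Lambda^+_v$ or entirely in $\Lambda^-_v$. Type I edges are handled by Lemma~\ref{lem:edge type I}: the real vertices $b^i, a^o$ attach only to $d$-type interior vertices and the real vertices $b^o, a^i$ attach only to $u$-type interior vertices. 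Type II edges are handled by Lemma~\ref{lem:edge type II}: item (3) forbids any edge between a "$d$"-vertex $d^{-1}_i o$ and a "$u$"-vertex $u^{-1}_j o$ with $1\le i,j\le n-1$, while items (1)--(2) together with the numerical condition $|n-0|=n>n-2$ exclude any edge between $\ell^{-1}o$ and $r^{-1}o$.

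With this decomposition in place, I will label each edge of $\sigma$ by "$+$" or "$-$" according to which of $\Lambda^+_v, \Lambda^-_v$ contains it; the previous step makes the label well defined. The hypothesis that $\sigma$ is contained in neither half forces both labels to appear. Traversing $\sigma$ cyclically, every transition between consecutive edges of opposite labels must occur at a vertex in $\Lambda^+_v\cap\Lambda^-_v=\{\ell^{-1}o,r^{-1}o\}$. Since $\sigma$ is closed the number of transitions is even and hence at least two, and since $\sigma$ is simple distinct transitions occur at distinct vertices. With only two candidate transition vertices available, both $\ell^{-1}o$ and $r^{-1}o$ must lie on $\sigma$.

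For the \textquotedblleft consequently\textquotedblright\ statement, I will combine Lemma~\ref{lem:half} (each of $\Lambda^+_v,\Lambda^-_v$ is a tree of cliques) with Lemma~\ref{lem:cycle in tree}(1) (no $2$--full simple $n$--cycle with $n\ge 4$ can live inside a tree of cliques): a $2$--full simple $n$--cycle of length at least $4$ is therefore excluded from each of $\Lambda^+_v$ and $\Lambda^-_v$, reducing to the first part. The only step that needs genuine attention is the case analysis of type II edges in paragraph two, in particular the observation that no edge joins $\ell^{-1}o$ to $r^{-1}o$; everything else is formal bookkeeping with the lemmas already in the text.
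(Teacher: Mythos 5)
Your proposal is correct and takes essentially the same approach as the paper: both hinge on the observation (via Lemma~\ref{lem:edge type I} and Lemma~\ref{lem:edge type II}(3)) that $\Lambda^+_v$ and $\Lambda^-_v$ meet only in $\{\ell^{-1}o,r^{-1}o\}$ with no edge joining the rest of one half to the rest of the other, and both finish the \textquotedblleft consequently\textquotedblright\ part via Lemma~\ref{lem:half} and Lemma~\ref{lem:cycle in tree}. The paper expresses the key step as a connectivity statement for $\Lambda_v\setminus\{\ell^{-1}o,r^{-1}o\}$, while your edge-labeling/transition-parity argument is an equivalent reformulation of the same point.
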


\begin{proof}
	It follows from Lemma~\ref{lem:edge type I} and Lemma~\ref{lem:edge type II} (3) that there are no edges between a vertex in $\Lambda^+_v\setminus \{\ell^{-1}o,r^{-1}o\}$ and a vertex in $\Lambda^-_v\setminus \{\ell^{-1}o,r^{-1}o\}$. Thus vertices of $\Lambda^+_v\setminus \{\ell^{-1}o,r^{-1}o\}$ and vertices of $\Lambda^-_v\setminus \{\ell^{-1}o,r^{-1}o\}$ are in two different connected components of $\Lambda_v\setminus \{\ell^{-1}o,r^{-1}o\}$. Since $\sigma$ is a simple cycle, it follows that at least one of the following three situations happens: (1) $\sigma\subset \Lambda^+_v$; (2) $\sigma\subset \Lambda^-_v$; (3) $r^{-1}o\in\sigma$ and $\ell^{-1}o\in\sigma$. Thus the first statement follows. Lemma~\ref{lem:half} and Lemma~\ref{lem:cycle in tree} imply that (1) and (2) are not possible, thus the second statement follows.
\end{proof}

\begin{lemma}
	\label{lem:pi lower bound}
	Any edge path in $\Lambda_v$ from $r^{-1}o$ to $\ell^{-1}o$ has angular length $\ge \pi$. 
\end{lemma}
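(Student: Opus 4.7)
The plan is to reduce to a simple edge path lying entirely in one of $\Lambda^+_v$ or $\Lambda^-_v$, and then exhibit a $1$--Lipschitz potential function $f$ on the vertex set of that subgraph with $f(\ell^{-1}o)=0$ and $f(r^{-1}o)=\pi$.

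First I may replace the given path $\gamma$ by a simple one without increasing its angular length, by cutting out any sub-loop at a repeated vertex. The proof of Lemma~\ref{lem:$2$--full cycle contains tips} -- which uses Lemmas~\ref{lem:edge type I} and~\ref{lem:edge type II}(3) -- shows that no edge of $\Lambda_v$ joins a vertex of $\Lambda^+_v\setminus\{\ell^{-1}o,r^{-1}o\}$ to a vertex of $\Lambda^-_v\setminus\{\ell^{-1}o,r^{-1}o\}$. Since a simple path from $r^{-1}o$ to $\ell^{-1}o$ can visit these two separating vertices only at its endpoints, it must lie entirely in $\Lambda^+_v$ or entirely in $\Lambda^-_v$. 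By the combinatorial symmetry exchanging $a$ and $b$ I may assume $\gamma\subset\Lambda^+_v$.

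I would then define $f\colon V(\Lambda^+_v)\to \mathbb R$ by
\[
f(d^{-1}_j o):=\tfrac{j}{n}\pi\;\; (0\le j\le n),\qquad f(a^o):=\tfrac{n-1}{2n}\pi,\qquad f(b^i):=\tfrac{n+1}{2n}\pi,
\]
and verify that $|f(x)-f(y)|$ is at most the angular length of every edge $\overline{xy}$ of $\Lambda^+_v$. For type II edges joining $d^{-1}_i o$ and $d^{-1}_j o$ this is equality by Lemma~\ref{lem:edge type II}(4). For the type I edges to $a^o$, which by Lemma~\ref{lem:edge type I} connect $a^o$ to $d^{-1}_j o$ for $0\le j\le n-1$, a direct calculation gives $|f(a^o)-f(d^{-1}_j o)|=\tfrac{|n-1-2j|}{2n}\pi\le \tfrac{n-1}{2n}\pi$, matching the angular length; the analogous estimate for edges to $b^i$ follows symmetrically. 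Summing this Lipschitz bound along $\gamma$ then yields $\length_\angle(\gamma)\ge |f(r^{-1}o)-f(\ell^{-1}o)|=\pi$.

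The main step is choosing $f$ on the real vertices $a^o$ and $b^i$ so that the Lipschitz estimate holds simultaneously on all type I edges incident to them. In fact the values $\tfrac{n-1}{2n}\pi$ and $\tfrac{n+1}{2n}\pi$ are forced as the unique midpoints of the admissible intervals determined by the constraints from $j=0$ (resp.\ $j=n$) and $j=n-1$ (resp.\ $j=1$); all other constraints are then weaker. Once this is in place, the inequality follows immediately, and the separation argument from the first paragraph ensures the $\pi$--bound for arbitrary paths in the full link $\Lambda_v$.
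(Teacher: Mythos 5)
Your proof is correct, and it takes a genuinely different route from the paper's. The paper establishes the same separation (vertices of $\Lambda^+_v\setminus\{\ell^{-1}o,r^{-1}o\}$ and of $\Lambda^-_v\setminus\{\ell^{-1}o,r^{-1}o\}$ lie in different components) to reduce to a path in $\Lambda^+_v$, but then argues via the tree-of-cliques structure: it invokes Lemma~\ref{lem:half} and Lemma~\ref{lem:cycle in tree}(2) to say every edge realizes the shortest angular length between its endpoints, and then performs a four-way case analysis depending on which interior vertices among $\{d^{-1}_i o\}_{i=1}^{n-1}$ the path visits, bounding $\length_\angle(\omega)$ below by an explicit sum of three or four edge lengths equaling $\pi$ in each case. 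You instead produce a single $1$--Lipschitz potential $f\colon V(\Lambda^+_v)\to\mathbb R$ with $f(\ell^{-1}o)=0$ and $f(r^{-1}o)=\pi$, which yields the bound immediately by summing along the path. Both approaches are valid; your argument has the advantage of being short, uniform, and independent of the tree-of-cliques machinery (it needs only the edge classifications in Lemmas~\ref{lem:edge type I} and~\ref{lem:edge type II} plus the separation fact), whereas the paper's case analysis reuses structural lemmas that are also needed elsewhere and, as a byproduct, makes transparent exactly which vertex sequences achieve the minimum $\pi$ -- information later extracted in Lemma~\ref{lem:exactly pi}. One small remark: your claim that the symmetry exchanging $a$ and $b$ swaps $\Lambda^+_v$ and $\Lambda^-_v$ is fine, but you could equally run the identical potential argument on $\Lambda^-_v$ with $d$ replaced by $u$ and $(a^o,b^i)$ by $(b^o,a^i)$, avoiding any appeal to symmetry.
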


\begin{proof}
	Let $\omega$ be an edge path from $r^{-1}o$ to $\ell^{-1}o$. Since vertices of $\Lambda^+_v\setminus\{\ell^{-1}o,r^{-1}o\}$ and vertices of $\Lambda^-_v\setminus\{\ell^{-1}o,r^{-1}o\}$ are in different components of $\Lambda_v\setminus\{\ell^{-1}o,r^{-1}o\}$, there is a sub-path $\omega'\subset \omega$ traveling from $r^{-1}o$ to $\ell^{-1}o$ such that $\omega'\subset \Lambda^+_v$ or $\omega'\subset \Lambda^-_v$. So it suffices to show any edge path $\omega$ in $\Lambda^+_v$ or $\Lambda^-_v$ from $r^{-1}o$ to $\ell^{-1}o$ has angular length $\ge \pi$. We only prove the case $\omega\subset\Lambda^+_v$ since the other case is similar. Note that $\omega$ has to pass through at least one vertex in $\{\di{i}o\}_{i=1}^{n-1}$, so we can divide into the following four cases.
	
	\emph{Case 1:} If there exists $1< k< n-1$ such that $\di{k}o\in\omega$, then Lemma~\ref{lem:cycle in tree} (2), Lemma~\ref{lem:half} and Lemma~\ref{lem:edge type II} imply that $\length_\angle(\omega)\ge\angle_v(\di{n}o,\di{k}o)+\angle_v(\di{k}o,\di{0}o)=\frac{n-k}{2n}2\pi+\frac{k}{2n}2\pi=\pi$. 
	
	\emph{Case 2:} If both $\di{1}o$ and $\di{n-1}o$ are in $\omega$, then Lemma~\ref{lem:cycle in tree} (2), Lemma~\ref{lem:half} and Lemma~\ref{lem:edge type II} imply that $\length_\angle(\omega)\ge\angle_v(\di{n}o,\di{n-1}o)+\angle_v(\di{n-1}o,\di{1}o)+\angle_v(\di{1}o,\di{0}o)=\frac{1}{2n}2\pi+\frac{n-2}{2n}2\pi+\frac{1}{2n}2\pi=\pi$. 
	
	\emph{Case 3:} Suppose among $\{\di{i}o\}_{i=1}^{n-1}$, only $\di{1}o$ is inside $\omega$. Then we must have $b^i\in \omega$ (since there has to be a vertex in $\omega$ which is adjacent to $r^{-1}o$). Thus $\length_\angle(\omega)\ge\angle_v(\di{n}o,b^i)+\angle_v(b^i,\di{1}o)+\angle_v(\di{1}o,\di{0}o)=\frac{n-1}{4n}2\pi+\frac{n-1}{4n}2\pi+\frac{1}{2n}2\pi=\pi$. 
	
	\emph{Case 4:} Suppose among $\{\di{i}o\}_{i=1}^{n-1}$, only $\di{n-1}o$ is inside $\omega$. This can be dealt in the same way as the previous case.
\end{proof}

\begin{proof}[Proof of Proposition~\ref{prop:real link} (for real vertices)]
	Proposition~\ref{prop:real link} (1) follows from Corollary~\ref{cor:triangle inequality1} and (2) follows from Lemma~\ref{lem:$2$--full cycle contains tips} and Lemma~\ref{lem:pi lower bound}.
\end{proof}

The following lemma will be used in Section~\ref{s:general}.

\begin{lemma}\
	\label{lem:distance between real vertices}
	\begin{enumerate}
		\item $d_\angle(a^i,b^i)=d_\angle(a^i,b^o)=d_\angle(a^o,b^i)=d_\angle(a^o,b^o)=\frac{n-1}{2n} 2\pi$.
		\item $d_\angle(a^i,a^o)=d_\angle(b^i,b^o)=\pi$.
	\end{enumerate}
\end{lemma}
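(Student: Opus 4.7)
My plan is to establish both bounds by leveraging the explicit combinatorial description of $\Lambda_v$ from Lemmas~\ref{lem:edge type I} and~\ref{lem:edge type II}: every type I edge has angular length $\tfrac{n-1}{4n}2\pi$; every type II edge between $\ui{i}o$ and $\ui{j}o$ (or between $\di{i}o$ and $\di{j}o$) has angular length $\tfrac{|i-j|}{2n}2\pi$ provided $|i-j|\le n-2$; and no edge connects $\ui{i}o$ to $\di{j}o$ whenever $1\le i,j\le n-1$. I will also use the identifications $\ell^{-1}o=\ui{0}o=\di{0}o$ and $r^{-1}o=\ui{n}o=\di{n}o$.

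For the upper bounds I would exhibit explicit short paths. In part~(1) each of the four pairs admits a common interior neighbor: $\ui{n}o=r^{-1}o$ for $(a^i,b^i)$, $\ui{0}o=\ell^{-1}o$ for $(a^o,b^o)$, $\ui{1}o$ for $(a^i,b^o)$, and $\di{1}o$ for $(a^o,b^i)$. Each resulting two-edge path has angular length $2\cdot\tfrac{n-1}{4n}2\pi=\tfrac{n-1}{2n}2\pi$. In part~(2) with $n\ge 3$, the path $a^i-\ui{1}o-\ell^{-1}o-a^o$ has angular length $\tfrac{n-1}{4n}2\pi+\tfrac{1}{2n}2\pi+\tfrac{n-1}{4n}2\pi=\pi$; the case $(b^i,b^o)$ is symmetric. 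When $n=2$ no type II edges exist, so I would instead take the all--type--I path $a^i-\ui{1}o-b^o-\ell^{-1}o-a^o$, a path of four type I edges of total length $4\cdot\tfrac{\pi}{4}=\pi$.

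For the lower bounds, the basic principle is that any simple path between two real vertices must begin and end with a type I edge; more generally, a simple path with $k\ge 0$ real internal vertices uses at least $2(k+1)$ type I edges, contributing angular length at least $(k+1)\tfrac{n-1}{2n}2\pi$. This alone yields the lower bound $\tfrac{n-1}{2n}2\pi$ in part~(1). For part~(2) I would split into two cases. If $k\ge 1$, the type I contribution is already $\ge 2\cdot\tfrac{n-1}{2n}2\pi\ge \pi$ for every $n\ge 2$. If $k=0$, then since $a^i$ is adjacent only to vertices of type $\ui{i}o$, $a^o$ only to vertices of type $\di{j}o$, and Lemma~\ref{lem:edge type II}(3) forbids any direct edge between such $\ui{i}o$ and $\di{j}o$ with $1\le i,j\le n-1$, the path must cross from the $u$-side to the $d$-side through either $\ell^{-1}o$ or $r^{-1}o$; applying the triangle inequality to type II edge lengths $\tfrac{|i-j|}{2n}2\pi$ on the $u$-subpath from $\ui{i_0}o$ (with $i_0\ge 1$) to $\ui{0}o$, or dually on the $d$-subpath from $\di{n}o$ to $\di{j_k}o$ (with $j_k\le n-1$), yields an additional contribution of at least $\tfrac{1}{2n}2\pi$, for a total of at least $\tfrac{n-1}{2n}2\pi+\tfrac{1}{2n}2\pi=\pi$.

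The main technical subtlety is the $k=0$ subcase of part~(2): one must verify that any crossing from the $u$-side to the $d$-side produces a nontrivial type II segment on at least one side, regardless of whether the transit vertex is $\ell^{-1}o$ or $r^{-1}o$. This is ensured by the constraints $i_0\ge 1$ and $j_k\le n-1$ on the first $u$-vertex and last $d$-vertex visited by the path, which together force the extra $\tfrac{1}{2n}2\pi$ that lifts the bound from $\tfrac{n-1}{2n}2\pi$ to the required $\pi$.
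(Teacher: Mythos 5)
Your proof is correct and follows the same basic strategy as the paper's: use the explicit description of $\Lambda_v$ from Lemmas~\ref{lem:edge type I} and~\ref{lem:edge type II}, observe that any path between real vertices begins and ends with a type~I edge, note that $a^i$ and $a^o$ (resp.\ $b^i,b^o$) share no interior neighbour so a third edge is forced, and realize the distances by explicit short paths. Two small remarks. First, you are actually more careful than the paper about the degenerate case $n=2$: the paper's realizing path $a^o\to d^{-1}_{n-1}o\to r^{-1}o\to a^i$ uses a type~II edge (which requires $n\ge 3$), and its statement that every edge of $\Lambda_v$ has angular length at least $\frac{1}{2n}2\pi$ fails for type~I edges when $n=2$; your all--type--I detour and your case split on $k$ correctly patch this (for $n=2$ the $k=0$ branch is vacuous since there are no type~II edges, and the $k\ge 1$ branch alone gives $\pi$). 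Second, your $k=0$ analysis is a bit more elaborate than necessary: since $i_0\ge 1$ and $j_k\le n-1$, the vertices $\ui{i_0}o$ and $\di{j_k}o$ can never coincide (coincidence would force $\ell^{-1}o$, i.e.\ $i_0=0$, or $r^{-1}o$, i.e.\ $j_k=n$), so the interior sub-path contains at least one type~II edge, which already contributes $\ge\frac{1}{2n}2\pi$ by Lemma~\ref{lem:edge type II}(4)--(5); the discussion of where the path ``crosses'' from the $u$--side to the $d$--side is not needed for this bound. These are stylistic rather than substantive differences — the argument is sound.
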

Recall that $d_\angle$ denotes the angular metric on $\Lambda_v$.
\begin{proof}
	Note that all edges of type II are between two interior vertices, and there are no edges between real vertices. Thus to travel from one real vertex to another real vertex in $\Lambda_v$, one has to go through at least two edges of type I. Then (1) follows from Lemma~\ref{lem:edge type I} (3). Now we prove (2). Still, traveling from $a^i$ to $a^o$ has to go through at least two edges of type I. However, one readily verifies that only two edges of type I do not bring one from $a^i$ to $a^o$. So we need at least one other edge. By Lemma~\ref{lem:edge type I} and Lemma~\ref{lem:edge type II}, an edge in $\Lambda_v$ has angular length at least $\frac{1}{2n}2\pi$. Thus $d_\angle(a^i,a^o)\ge \pi$. On the other hand, the distance $\pi$ can be realized by $a^o\to \di{n-1}o\to r^{-1}o\to a^i$. Thus $d_\angle(a^i,a^o)=\pi$. Similarly, we obtain $d_\angle(b^i,b^o)=\pi$.
\end{proof}

It is natural to ask when an edge path in $\Lambda_v$ from $r^{-1}o$ to $\ell^{-1}o$ has angular length exactly $=\pi$. We record the following simple observation about such edge paths. The following will be crucial for applications in \cite{Artinsrigidity}.

\begin{lemma}
	\label{lem:exactly pi}
	Suppose $v$ is real and $\omega$ is an edge path in $\Lambda_v$ from $r^{-1}o$ to $\ell^{-1}o$ of angular length $\pi$. Then either $\omega\subset\Lambda^+_v$ or $\omega\subset\Lambda^-_v$. If $\omega\subset\Lambda^+_v$, then the following are the only possibilities for $\omega$:
	\begin{enumerate}
		\item $\omega=r^{-1}o\to b^i\to d^{-1}_{1}o\to d^{-1}_{0}o$;
		\item $\omega=d^{-1}_{n}o\to d^{-1}_{n-1}o\to a^o\to \ell^{-1}o$;
		\item $\omega=d^{-1}_{i_1}o\to d^{-1}_{i_2}o\to\cdots\to d^{-1}_{i_k}o$, where $n=i_1>i_2>\cdots>i_k=0$.
	\end{enumerate}
	A similar statement holds for $\omega\subset\Lambda^-_v$.
\end{lemma}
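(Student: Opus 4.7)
My plan has two stages. First, I localize $\omega$ to one of the halves $\Lambda^+_v$ or $\Lambda^-_v$; then, assuming $\omega\subset\Lambda^+_v$, I split into four sub-cases according to which of the real vertices $a^o, b^i$ appear along $\omega$. The statement for $\omega\subset\Lambda^-_v$ follows by applying the same argument verbatim to the other half.

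For the localization, I would reuse the connectedness observation underlying Lemma~\ref{lem:$2$--full cycle contains tips}: the subgraphs $\Lambda^+_v\setminus\{\ell^{-1}o, r^{-1}o\}$ and $\Lambda^-_v\setminus\{\ell^{-1}o, r^{-1}o\}$ lie in distinct components of $\Lambda_v\setminus\{\ell^{-1}o, r^{-1}o\}$. Hence $\omega$ contains a sub-path $\omega'$ from $r^{-1}o$ to $\ell^{-1}o$ lying entirely in one half, and by Lemma~\ref{lem:pi lower bound}, $\length_\angle(\omega')\ge\pi=\length_\angle(\omega)$, forcing $\omega=\omega'$.

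Now assume $\omega\subset\Lambda^+_v$. In the first sub-case, neither $a^o$ nor $b^i$ appears in $\omega$, so every edge is of type II; Lemma~\ref{lem:edge type II}(4) expresses $\length_\angle(\omega)=\frac{2\pi}{2n}\sum_j|i_j-i_{j+1}|$, and the inequality $\sum_j|i_j-i_{j+1}|\ge|i_1-i_k|=n$ (equality iff the integer sequence is monotone) forces case~(3). In the second sub-case, $\omega$ contains $b^i$ but not $a^o$; here $b^i$ is strictly interior to $\omega$, and its two incident type I edges account for $\frac{(n-1)\pi}{n}$ of the total (Lemma~\ref{lem:edge type I}(3)), leaving a residue of $\frac{\pi}{n}$. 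Since every remaining edge (necessarily of type II) has length $\ge\frac{\pi}{n}$, at most one further edge is allowed; and since $b^i$ is not adjacent to $\ell^{-1}o$ by Lemma~\ref{lem:edge type I}(1), a two-edge path is impossible, so $\omega$ has exactly three edges. Enumerating positions for $b^i$ and combining Lemma~\ref{lem:edge type I}(1) with Lemma~\ref{lem:edge type II}(4) pins $\omega$ down as $r^{-1}o\to b^i\to d^{-1}_1 o\to\ell^{-1}o$, i.e., case~(1). The third sub-case (only $a^o$) is symmetric and yields case~(2). In the fourth sub-case both $b^i$ and $a^o$ appear; they are both interior to $\omega$, and Lemma~\ref{lem:half}'s clique decomposition $\Lambda^+_v=\Delta_1\cup\Delta_2\cup\Delta_3\cup\Delta_4$ shows that $b^i\in\Delta_1\cup\Delta_2$ and $a^o\in\Delta_3\cup\Delta_4$ lie in disjoint cliques, hence are non-adjacent. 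Thus four distinct type I edges of $\omega$ are incident to $\{b^i, a^o\}$, contributing at least $\frac{2(n-1)\pi}{n}>\pi$ for $n\ge 3$, contradicting $\length_\angle(\omega)=\pi$.

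The main technical obstacle is the second sub-case, where one must combine the tight length accounting (every type II edge has angular length at least $\frac{\pi}{n}$, with equality only for consecutive interior vertices, by Lemma~\ref{lem:edge type II}(4)) with the asymmetric adjacency structure (the neighbors of $b^i$ in $\Lambda^+_v$ are $\{d^{-1}_1o,\ldots,d^{-1}_no\}$, missing $\ell^{-1}o=d^{-1}_0 o$) in order to eliminate all alternative three-edge paths and pin $\omega$ down uniquely.
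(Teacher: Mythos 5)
Your proposal is correct and follows essentially the same strategy as the paper's proof: localize $\omega$ to one half using the connectedness of $\Lambda^\pm_v\setminus\{r^{-1}o,\ell^{-1}o\}$ together with Lemma~\ref{lem:pi lower bound}, then split on whether $\omega$ contains real vertices and close via the exact angular-length budget, noting that two type I edges already consume $\frac{n-1}{2n}2\pi$ of the available $\pi$ and every edge costs at least $\frac{1}{2n}2\pi$. Your four-way split on the presence of $a^o$ and $b^i$ is a more explicit rendering of the paper's terse ``real vertex'' case (in particular, you spell out why both real vertices cannot occur, which the paper leaves implicit), but the key lemmas invoked and the underlying counting are the same.
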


\begin{proof}
	Note that $\omega$ is embedded, otherwise we can pass to a shorter path from $r^{-1}o$ to $\ell^{-1}o$, which contradicts	Lemma~\ref{lem:pi lower bound}. The statement $\omega\subset\Lambda^+_v$ or $\omega\subset\Lambda^-_v$ follows from the fact that there are no edges between a vertex in $\Lambda^+_v\setminus \{r^{-1}o,\ell^{-1}o\}$ and a vertex in $\Lambda^-_v\setminus \{r^{-1}o,\ell^{-1}o\}$. Now we assume $\omega\subset\Lambda^+_v$.
	
	If $\omega$ does not contain any real vertices, then we are in case (3), by Lemma \ref{lem:edge type II} (4). If $\omega$ contains a real vertex, then it contains at least two edges of type I. Note that the angular length of $\omega$ with two edges of type I removed is $\pi-\frac{n-1}{2n}2\pi=\frac{1}{2n}2\pi$, which equals to the smallest possible angular length of edges in $\Lambda_v$. Thus we are in cases (1) or (2).
\end{proof}

\subsection{Link of an interior vertex}
\label{s:interior}
In this subsection we prove Proposition~\ref{prop:real link} for an interior vertex $v$.

We assume without loss of generality that $v$ is the interior vertex $o$ of the base cell $\Pi$. Moreover, we assume $v\in X_n$ for $n\ge 3$, since the $n=2$ case is clear. Vertices of $\Lambda_v$ can be divided into the following two classes.
\begin{enumerate}
	\item Real vertices. These are the vertices in $\partial\Pi$.
	\item Interior vertices. They are the interior vertices of some cell $\Pi'$ such that $\Pi'\cap\Pi$ contains at least two edges.
\end{enumerate}

For the convenience of the proof, we name the vertices in $\partial\Pi$ differently in this subsection. The vertices in the upper half (resp.\ lower half) of $\partial\Pi$ are called $v_0,v_1,\ldots,v_n$ (resp.\ $v'_0,v'_1,\ldots,v'_n$) from left to right. Note that $v_0=v'_0$ and $v_n=v'_n$.

Let $\mathcal{P}$ be the collection of subcomplexes of $\partial\Pi$ such that
\begin{enumerate}
	\item they are homeomorphic to the unit interval $[0,1]$;
	\item each of them has $m$ edges where $2\le m\le n-1$;
	\item each of them is contained in a half of $\partial\Pi$, and has nontrivial intersection with $\{
	\ell,r\}\subset \partial\Pi$.
\end{enumerate}
By Lemma~\ref{cor:connected intersection} (3), for each interior vertex of $\Lambda_v$, the intersection of the cell containing this interior vertex and $\Pi$ is an element in $\mathcal{P}$. This actually induces a one to one correspondence between interior vertices of $\Lambda_v$ and elements of $\mathcal{P}$ by Corollary~\ref{cor:unique}. Thus we can name the interior vertices of $\Lambda_v$ as follows. If the intersection of the cell which contains this interior vertex and $\Pi$ is a path in the upper half (resp.\ lower half) of $\partial\Pi$ that starts at $\ell$ and has $i$ edges, then we denote this interior vertex by $L_i$ (resp.\ $L'_i$). If the intersection of the cell which contains this interior vertex and $\Pi$ is a path in the upper half (resp.\ lower half) of $\partial\Pi$ that ends at $r$ and has $i$ edges, then we denote this interior vertex by $R_i$ (resp.\ $R'_i$). Note that $i$ is ranging from $2$ to $n-1$; see Figure~\ref{f:interior}. Let $\Pi_{L_i}$ be the cell that contains $L_i$. We define $\Pi_{L'_i},\Pi_{R_i}$ and $\Pi_{R'_i}$ similarly. 
\begin{figure}[ht!]
	\centering
	\includegraphics[width=1\textwidth]{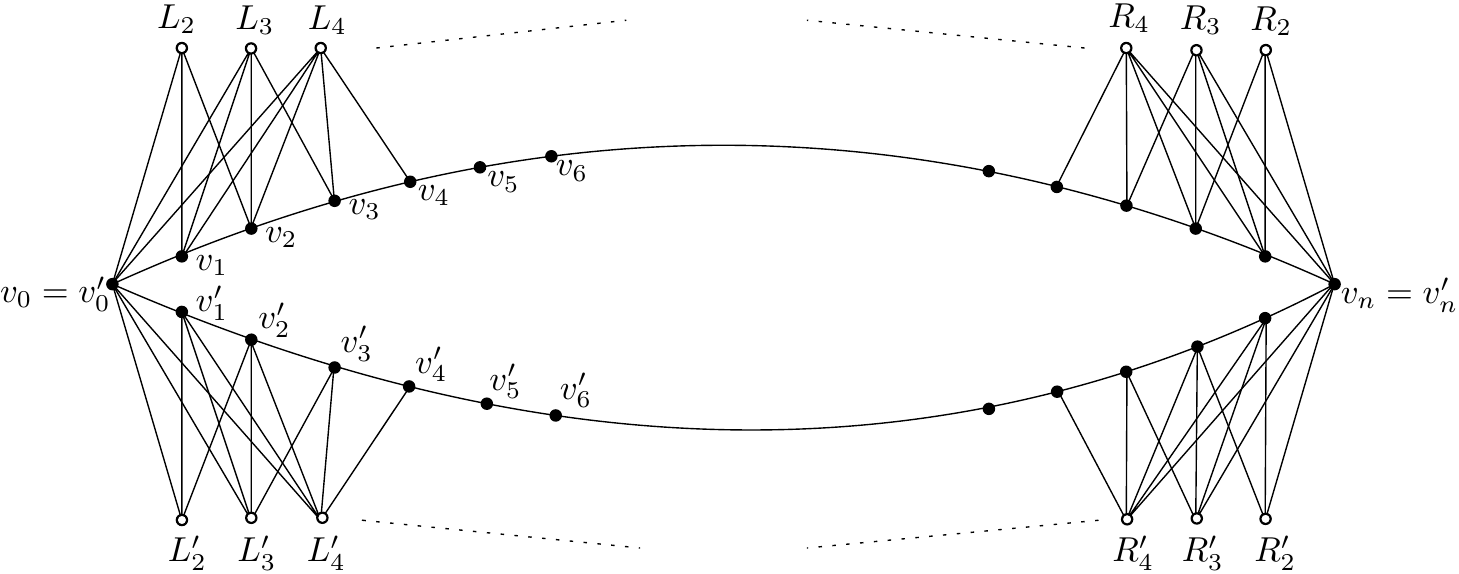}
	\caption{The link of an interior vertex}
	\label{f:interior}
\end{figure}

Now we characterize edges in $\Lambda_v$. They are divided into three classes.
\begin{enumerate}
	\item \emph{Edges of type I}. They are edges between real vertices of $\Lambda_v$. Hence they are exactly edges in $\partial\Pi$. Each of them has angular length $=\frac{1}{2n}2\pi$.
	\item \emph{Edges of type II}. They are edges between a real vertex and an interior vertex, and they are characterized by Lemma~\ref{lem:real interior} below.
	\item \emph{Edges of type III}. They are edges between interior vertices of $\Lambda_v$, and they are characterized by Lemma~\ref{lem:two interiors} below.
\end{enumerate}
We refer to Figure~\ref{f:interior} for a picture of $\Lambda_v$. Edges of type I and some edges of type II are drawn, but edges of type III are not drawn in the picture.

\begin{lemma}\
	\label{lem:real interior}
	\begin{enumerate}
		\item The collection of vertices in $\partial\Pi$ that are adjacent to $L_i$ (resp.\ $L'_i$) is $\{v_0,v_1,\ldots,v_i\}$ (resp.\ $\{v'_0,v'_1,\ldots,v'_i\}$). 
		\item The collection of vertices in $\partial\Pi$ that are adjacent to $R_i$ (resp.\ $R'_i$) is $\{v_n,v_{n-1},\ldots,v_{n-i}\}$ (resp.\ $\{v'_n,v'_{n-1},\ldots,v'_{n-i}\}$). 
		\item The angular length of any edge between $L_i$ and a real vertex of $\Lambda_v$ is $\frac{i}{4n}2\pi$. The same holds with $L_i$ replaced by $L'_i,R_i$ and $R'_i$.
	\end{enumerate}
\end{lemma}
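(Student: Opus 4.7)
The four cases $L_i, L'_i, R_i, R'_i$ are symmetric up to reflecting the upper and lower halves of $\Pi$ or exchanging the two tips, so my plan is to describe the argument only for $L_i$ and take it as read that the other three go through in exactly the same way. Parts (1)/(2) and part (3) split naturally into a combinatorial step and a metric step.

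For the combinatorial step, I would exploit the precise way in which $X_n$ was built. Edges added in the passage from $\Xb_n$ to $X'_n$ only go between pairs of interior vertices, so the real vertices of $\Lambda_v$ adjacent to $L_i$ in $X_n^{(1)}$ are exactly the real vertices lying in both $\partial\Pi$ and $\partial\Pi_{L_i}$, i.e., the vertices of $\partial\Pi\cap\partial\Pi_{L_i}$. By the definition of $L_i$, $\Pi\cap\Pi_{L_i}$ is the $i$--edge path $v_0v_1\cdots v_i$ in the upper half of $\partial\Pi$, so this set of real vertices is $\{v_0,v_1,\ldots,v_i\}$. For each such $v_k$ the three vertices $v, L_i, v_k$ are pairwise adjacent, hence by flagness of $X_n$ they span a $2$--simplex and the edge $\overline{L_iv_k}$ is an edge of $\Lambda_v$. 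This settles (1), and (2) is completely analogous.

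For the metric step, I plan to compute $\angle_v(L_i, v_k)$ directly in the Euclidean triangle $\{v,L_i,v_k\}$. By Definition~\ref{def:length}, the edges $\overline{vv_k}$ and $\overline{L_iv_k}$ each have length $1$, being radial edges of the subdivided cells $\Pi$ and $\Pi_{L_i}$ respectively, and by Lemma~\ref{lem:overlap and length} together with the definition of $\phi$, the third side has length $|vL_i|=\phi\bigl(\tfrac{n-i}{2n}2\pi\bigr)=2\sin\bigl(\tfrac{(n-i)\pi}{2n}\bigr)$. Hence for every admissible $k$ this triangle is isosceles with apex at $v_k$, and all such triangles are congruent; the apex angle is $\tfrac{(n-i)\pi}{n}$, so the two equal base angles are each $\tfrac{1}{2}\bigl(\pi-\tfrac{(n-i)\pi}{n}\bigr)=\tfrac{i\pi}{2n}=\tfrac{i}{4n}\cdot 2\pi$, which is the desired value.

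There is essentially no obstacle here; the statement is a routine verification once one unravels the construction. The only point worth highlighting is that the answer in (3) is independent of $k$, and this is not an accidental coincidence but a consequence of the congruence of all the triangles $\{v, L_i, v_k\}$ for $k\in\{0,\ldots,i\}$, which in turn rests on the uniform length $1$ of the radial edges in both $\Pi$ and $\Pi_{L_i}$.
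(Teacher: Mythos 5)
Your argument is correct and follows essentially the same route as the paper's: identify the real neighbors of $L_i$ as the vertices of $\partial\Pi\cap\partial\Pi_{L_i}$, then compute the base angle of the isosceles triangle $\Delta(v_k,L_i,o)$ with apex at $v_k$ and apex angle $\tfrac{n-i}{2n}2\pi$. The paper states the apex angle directly rather than passing through the explicit length $\phi\bigl(\tfrac{n-i}{2n}2\pi\bigr)=2\sin\bigl(\tfrac{(n-i)\pi}{2n}\bigr)$, but that is the same computation; your extra step of invoking flagness to guarantee the triangle $v\,L_i\,v_k$ actually exists is a reasonable (implicit in the paper) precaution.
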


\begin{proof}
Note that $\{v_0,v_1,\ldots,v_i\}$ are the vertices of $\partial\Pi_{L_i}\cap\partial\Pi$. Thus the part of (1) concerning $L_i$ holds. We can prove the rest of (1), as well as (2), in a similar way. For (3), pick $v_m$ with $0\le m\le i$, then $\angle_{v_m}(L_i,o)=\frac{n-i}{2n}2\pi$. Since $\Delta(v_mL_io)$ is an isosceles triangle with $v_m$ being the apex, (3) follows.
\end{proof}

\begin{lemma}\
	\label{lem:two interiors}
\begin{enumerate}
	\item $L_i$ and $L_j$ (or $R_i$ and $R_j$, $L'_i$ and $L'_j$, $R'_i$ and $R'_j$) are connected by an edge in $\Lambda_v$ if and only if $|j-i|\le n-2$. Moreover, the length of this edge is $\phi(\frac{|j-i|}{2n}2\pi)$ (see Definition~\ref{def:length} for $\phi$).
	\item $L_i$ and $R_j$ (or $L'_i$ and $R'_j$) are connected by an edge in $\Lambda_v$ if and only if $i+j-n\ge 2$. Moreover, the length of this edge is $\phi(\frac{2n-i-j}{2n}2\pi)$.
	\item $L_i$ is not adjacent to any $L'_j$ or $R'_j$. $R_i$ is not adjacent to any $L'_j$ or $R'_j$.
\end{enumerate}
\end{lemma}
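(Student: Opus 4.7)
The plan, in every case, is to count the number of edges in the intersection $\Pi_P\cap\Pi_Q$ of the two relevant cells and then read off the edge length from Lemma~\ref{lem:overlap and length}, using the fact that an edge between $o_P$ and $o_Q$ exists in $\Lambda_v$ precisely when this intersection contains at least two edges. Statement (3) is immediate: for $L_i$ versus $L'_j$ or $R'_j$, Lemma~\ref{cor:disjoint} applied with middle cell $\Pi$ forces $\Pi_{L_i}\cap\Pi_{L'_j}$ (resp.\ $\Pi_{L_i}\cap\Pi_{R'_j}$) to contain at most one vertex, since the two outer cells meet $\Pi$ in different halves; the $R_i$ cases are symmetric. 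For (2), I will show that $\Pi_{L_i}\cap\Pi_{R_j}$ equals its part inside $\Pi$, namely the sub-path $v_{n-j}v_{n-j+1}\cdots v_i$ with $i+j-n$ edges whenever $i+j\ge n$. Extension outside $\Pi$ is impossible: at the right tip $v_i$ of $\Pi_{L_i}$ the two boundary edges of $\Pi_{R_j}$ (namely $v_{i-1}\to v_i$ and $v_i\to v_{i+1}$) both lie in $\Pi$, so the remaining outside edge of $\Pi_{L_i}$ at $v_i$ cannot belong to $\Pi_{R_j}$, and a symmetric check at the left tip $v_{n-j}$ of $\Pi_{R_j}$ rules out extension on that side. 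Lemma~\ref{lem:overlap and length} then gives the length $\phi(\tfrac{2n-i-j}{2n}2\pi)$; the $L'_iR'_j$ case is analogous by upper/lower symmetry.

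Statement (1) is the most substantial case because here $\Pi_{L_i}\cap\Pi_{L_j}$ properly contains its intersection $\{v_0,\ldots,v_{\min(i,j)}\}$ with $\Pi$ and extends along an additional path outside $\Pi$ emanating from $\ell$. The plan is to show this outside extension is the alternating-inverse sequence $x_0=\ell,\ x_1=b^{-1},\ x_2=b^{-1}a^{-1},\ x_3=b^{-1}a^{-1}b^{-1},\ \ldots$ running from $\ell$ to the left tip of $\Pi_{L_j}$ (assuming $i\le j$). The crucial observation is that since $\Pi_{L_i}\cap\Pi$ starts at $\ell$ with the $a$-labeled edge $\ell\to v_1$ and $\ell$ is not a tip of $\Pi_{L_i}$, alternation of labels on the $2n$-gon boundary of $\Pi_{L_i}$ forces the other boundary edge at $\ell$ to be the incoming $b$-edge $b^{-1}\to\ell$; this works uniformly for every $i$. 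Iterating vertex by vertex along $\Pi_{L_j}$'s boundary I will identify its left tip as $x_{n-j}$ (and $\Pi_{L_i}$'s left tip as the further point $x_{n-i}$), and verify that the segment $x_0,x_1,\ldots,x_{n-j}$ sits on $\Pi_{L_i}$'s boundary with $x_{n-j}$ a non-tip there. The intersection cannot extend further: at $v_i$ (right tip of $\Pi_{L_i}$, non-tip of $\Pi_{L_j}$) and at $x_{n-j}$ (left tip of $\Pi_{L_j}$, non-tip of $\Pi_{L_i}$), the two cells' remaining boundary edges go in incompatible directions. Summing, $\Pi_{L_i}\cap\Pi_{L_j}$ has $\min(i,j)+(n-\max(i,j))=n-|j-i|$ edges, so adjacency holds iff $|j-i|\le n-2$ with length $\phi(\tfrac{|j-i|}{2n}2\pi)$ by Lemma~\ref{lem:overlap and length}. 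The $R_iR_j$, $L'_iL'_j$, $R'_iR'_j$ cases follow by the evident left/right and upper/lower symmetries of the construction.

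The main obstacle will be the bookkeeping in (1): tracking the alternating edge labels along the $2n$-gonal boundaries while splitting into cases based on the parity of $n-i$ and $n-j$ (which determines whether $\Pi\cap\Pi_{L_i}$ lies on the upper or lower half of $\Pi_{L_i}$) and then matching corresponding boundary edges at each $x_k$. The direction-matching at the two endpoints of the intersection is particularly delicate, since a single mismatched label there would alter the edge count by one and invalidate the length formula.
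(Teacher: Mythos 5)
Your proposal reaches the correct edge counts and conclusions, but the route you take for parts (1) and (2) is substantially more laborious than necessary and differs from the paper's. The paper's entire proof of (1) is: cite Lemma~\ref{cor:connected intersection}\,(3), which already tells you that $\Pi_{L_i}\cap\Pi_{L_j}$ (which contains at least one edge since it contains $v_0,\dots,v_{\min(i,j)}$) is a connected path with one endpoint at a tip of $\Pi_{L_i}$ and the other at a tip of $\Pi_{L_j}$, one left and one right. Knowing $v_i$ is the right tip of $\Pi_{L_i}$ (from $\Pi_{L_i}\cap\Pi$ and the same lemma applied to $\Pi_{L_i},\Pi$) and that the left tip of $\Pi_{L_j}$ sits $n-j$ edges beyond $v_0$ on $\Pi_{L_j}$'s boundary, the count $n-(j-i)$ drops out with no label-chasing at all, and Lemma~\ref{lem:overlap and length} finishes. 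Part (2) is the same: the endpoints are the right tip $v_i$ of $\Pi_{L_i}$ and the left tip $v_{n-j}$ of $\Pi_{R_j}$, both inside $\Pi$, giving $i+j-n$ edges. Your explicit boundary-tracing with the alternating sequence $x_0,x_1,\dots$ essentially re-proves the special case of Lemma~\ref{cor:connected intersection}\,(3) that you need, from scratch. The argument is correct (your direction-matching at the endpoints is the content of that lemma's \textquotedblleft one left tip, one right tip\textquotedblright\ clause), but you should recognize that this structural lemma is already in hand and use it. Also, the parity split you anticipate is a phantom difficulty: once you observe that $\ell$ is a non-tip vertex of every $\Pi_{L_i}$ (so both its boundary edges lie in the same half and travel in the same direction, forcing the other edge at $\ell$ to be the incoming $b$-edge), the sequence $x_0,x_1,x_2,\dots$ is the same for all $L_i$; the parity of $n-i$ only determines which half of $\Pi_{L_i}$ contains $\ell$, which plays no role in the edge count. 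Part (3) you handle exactly as the paper does, via Lemma~\ref{cor:disjoint}.
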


Note that claims (1) and (2) concern the length, not the angular length of the edge.

\begin{proof}
We prove (1). Suppose without loss of generality that $i<j$. By Lemma~\ref{cor:connected intersection} (3), the number of edges in $\Pi_{L_i}\cap \Pi_{L_j}$ is $n-(j-i)$. Thus $L_i$ and $L_j$ are adjacent if and only if $n-(j-i)\ge 2$. Now the length formula in (1) follows from Lemma~\ref{lem:overlap and length}. Other parts of (1) can be proved in a similar way. (2) can be deduced in a similar way by noting that the number of edges in $\Pi_{L_i}\cap \Pi_{R_j}$ is $i+j-n$. (3) follows from Lemma~\ref{cor:disjoint}. 
\end{proof}

\begin{corollary}
	\label{cor:triangle inequality2}
The angular lengths of the three sides of each triangle in $\Lambda_v$ satisfy the triangle inequality.
\end{corollary}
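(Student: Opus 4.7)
The strategy is to compute the angular length of every edge in $\Lambda_v$, classify all triangles, and verify the triangle inequality case by case. Type I edges (between real vertices) have angular length $\pi/n$ since $\Pi$ is a regular $2n$-gon centred at $v$, and type II (real--interior) edges have angular length $\tfrac{i\pi}{2n}$ by Lemma~\ref{lem:real interior}(3). The type III (interior--interior) angular lengths are not recorded in Lemma~\ref{lem:two interiors}, so the first task is to compute them, by applying the law of cosines to $\Delta(v,x,y)$ using the edge lengths supplied there. A short trigonometric manipulation based on the identity $2\sin\alpha\sin\beta=\cos(\alpha-\beta)-\cos(\alpha+\beta)$ yields
\begin{align*}
\angle_v(L_i,L_j)=\tfrac{|j-i|\pi}{2n}\quad\text{and}\quad\angle_v(L_i,R_j)=\tfrac{(i+j)\pi}{2n},
\end{align*}
together with the analogous formulas for $R_iR_j$, $L'_iL'_j$, $R'_iR'_j$, and $L'_iR'_j$.

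Next I would classify the triangles in $\Lambda_v$. By Lemma~\ref{lem:two interiors}(3) no edge connects an upper interior vertex ($L_i$ or $R_j$) to a lower one ($L'_i$ or $R'_j$), and by Lemma~\ref{lem:real interior} each non-tip real vertex is adjacent only to interior vertices in its own half. Hence every triangle in $\Lambda_v$ lies entirely in the upper half $\{v_0,\dots,v_n,L_2,\dots,L_{n-1},R_2,\dots,R_{n-1}\}$ or entirely in the lower half, and by the obvious symmetry of the construction it suffices to treat the upper half. In units of $\tfrac{\pi}{2n}$, the possible triangle types and their triples of side lengths are: $(v_m,v_{m+1},L_i)$ or $(v_m,v_{m+1},R_i)$ with sides $(2,i,i)$; $(v_m,L_i,L_j)$ or $(v_m,R_i,R_j)$ with $i<j$ and sides $(i,j,j-i)$; $(v_m,L_i,R_j)$ with sides $(i,j,i+j)$; $(L_i,L_j,L_k)$ or $(R_i,R_j,R_k)$ with $i<j<k$ and sides $(j-i,k-j,k-i)$; $(L_i,L_j,R_k)$ with $i<j$ and sides $(j-i,i+k,j+k)$; and $(L_i,R_j,R_k)$ with $j<k$ and sides $(i+j,k-j,i+k)$. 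In each case the triangle inequality reduces to an immediate arithmetic check; in most cases it is in fact an equality, consistent with the weak form of the inequality permitted by the definition in Section~\ref{s:msdef}.

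The main technical obstacle is the first step. In particular, the somewhat non-obvious identity $\angle_v(L_i,R_j)=\tfrac{(i+j)\pi}{2n}$ (rather than the naive $\tfrac{(2n-i-j)\pi}{2n}$ that one might guess from a planar picture) reflects the fact that $\Lambda_v$ does not admit an isometric planar realisation -- indeed the whole point of the construction in Section~\ref{s:dihedral} is to absorb positive curvature by allowing the triangles $\Delta_1,\Delta_2$ of Figure~\ref{f:explanation} to fold out of the plane, so that angular positions of vertices around $v$ do not behave like angles on a single circle. Once these angular lengths are correctly identified, the remainder of the case analysis is mechanical.
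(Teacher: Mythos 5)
Your argument is correct and I was able to check it line by line, but it takes a genuinely different route from the paper's. For a triangle in $\Lambda_v$ containing a real vertex, the paper computes no angles at all: the $3$--simplex spanned by $v=o$ and the triangle is a $3$--simplex of $X_n$ containing a real vertex, so Corollary~\ref{cor:triangle inequality1}(2) realises it isometrically (possibly degenerately) in $\mathbb E^3$, and the three face angles at any vertex of a Euclidean tetrahedron automatically satisfy the triangle inequality. For a triangle of three interior vertices, the paper again avoids computing the angles: since $\phi$ is the chord function of the unit circle and the edge-length formulas of Lemma~\ref{lem:two interiors} are additive in arc length, the points $L_i,L_j,L_k,o$ (resp.\ $L_i,L_j,o,R_k$) can be placed in the stated cyclic order on one Euclidean unit circle so that chord lengths match edge lengths, and the inscribed-angle theorem then gives the equality $\angle_o(L_i,L_j)+\angle_o(L_j,L_k)=\angle_o(L_i,L_k)$ directly. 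Your version instead front-loads the trigonometry to obtain the explicit values $\angle_v(L_i,L_j)=\tfrac{(j-i)\pi}{2n}$ and $\angle_v(L_i,R_j)=\tfrac{(i+j)\pi}{2n}$, after which the case check is purely arithmetic; your formulas are correct (with $a=\tfrac{(n-i)\pi}{2n}$ and $b=\tfrac{(n-j)\pi}{2n}$, the law-of-cosines expression collapses to $\cos(a-b)$ and $-\cos(a+b)$ respectively). This is more computation up front but not wasted, since the paper effectively recomputes these angles piecemeal later, inside the proofs of Lemma~\ref{lem:pi lower bound1} and of the interior analogue of Lemma~\ref{lem:exactly pi}. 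One small correction to your closing remark: the identity $\angle_v(L_i,R_j)=\tfrac{(i+j)\pi}{2n}$ is not a symptom of $\Lambda_v$ failing to embed in a plane, since the three points $L_i$, $o$, $R_j$ do sit on a single planar circle with chord lengths equal to the prescribed edge lengths. The correct explanation is that $o$ lies on the \emph{minor} arc between $L_i$ and $R_j$, so the inscribed-angle theorem returns half the central angle of the \emph{major} arc, i.e.\ $\tfrac12\bigl(2\pi-\tfrac{(2n-i-j)\pi}{n}\bigr)=\tfrac{(i+j)\pi}{2n}$.
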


\begin{proof}
The case when the triangle contains a real vertex follows from Corollary~\ref{cor:triangle inequality1} (2) (consider the $3$--simplex of $X_n$ spanned by this triangle and $v$). Now we assume the triangle has no real vertices. 

\emph{Case 1:} the three vertices of the triangle are $L_i,L_j$ and $L_k$ with $i<j<k$. By Lemma~\ref{lem:overlap and length}, the length of $\overline{oL_i}$ is $\phi(\frac{n-i}{2n}2\pi)$. By Lemma~\ref{lem:two interiors} (1), the length of $\overline{L_iL_j}$ is $\phi(\frac{j-i}{2n}2\pi)$. Since $\frac{n-j}{2n}2\pi+\frac{j-i}{2n}2\pi=\frac{n-i}{2n}2\pi$, we can arrange $L_i,L_j,L_k,o$ in the unit circle as in Figure~\ref{f:angle} left such that the distance between any two points in $\{L_i,L_j,L_k,o\}$ in the Euclidean plane equal to the length of the edge between them in $X_n$. In particular, $\angle_o(L_i,L_j)+\angle_o(L_j,L_k)=\angle_o(L_i,L_k)$.

\emph{Case 2:} the three vertices of the triangle are $L_i,L_j$ and $R_k$ with $i<j$. By Lemma~\ref{lem:two interiors} (2), $\frac{2n-i-k}{2n}2\pi<\pi$ and the length of $\overline{L_iR_k}$ is $\phi(\frac{2n-i-k}{2n}2\pi)$. Thus we can arrange $L_i,L_j,o,R_k$ as in Figure~\ref{f:angle} right and argue as before. Then other cases are similar.
\end{proof}

\begin{figure}[ht!]
	\centering
	\includegraphics[width=1\textwidth]{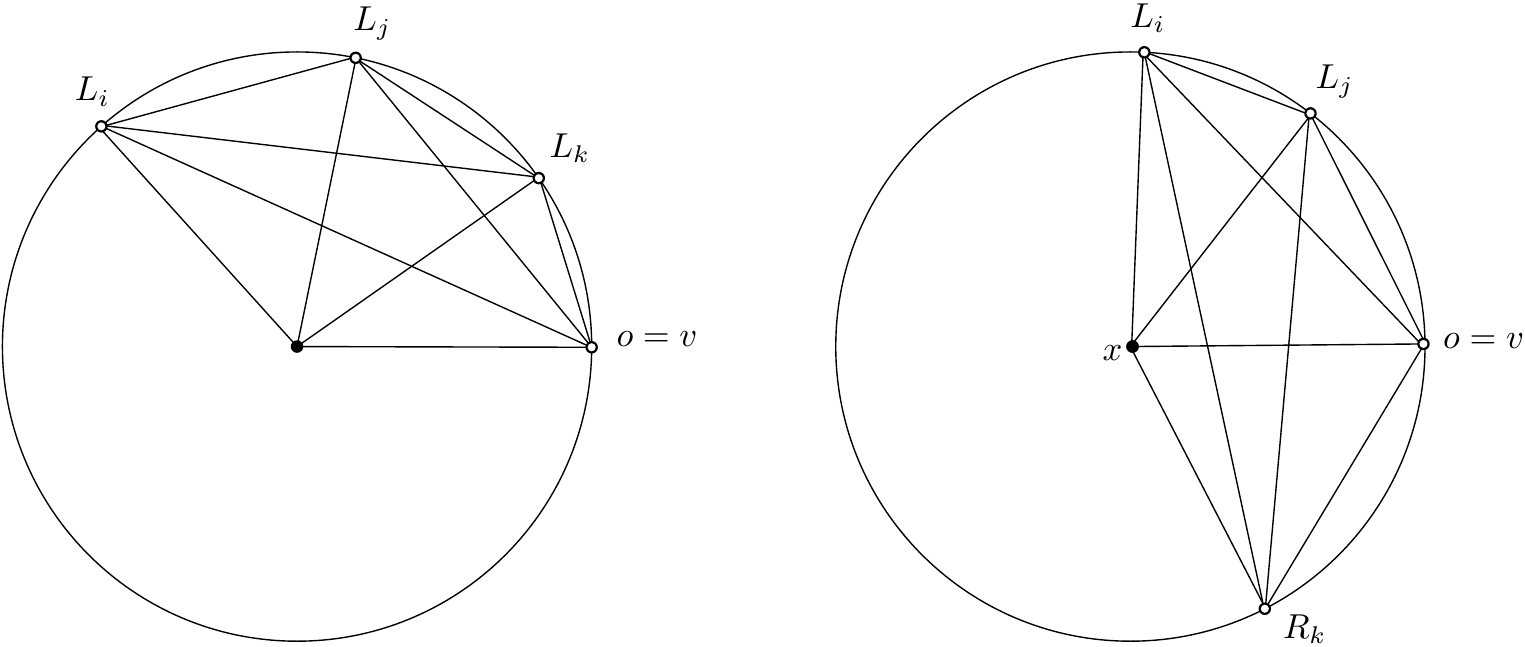}
	\caption{}
	\label{f:angle}
\end{figure}

Let $\Lambda^+_v$ be the full subgraph of $\Lambda_v$ spanned by $$\{v_0,v_1,\ldots,v_n\}\cup \{L_2,L_3,\ldots,L_{n-1}\}\cup \{R_2,R_3,\ldots,R_{n-1}\}.$$ Let $\Lambda^-_v$ be the full subgraph of $\Lambda_v$ spanned by $$\{v'_0,v'_1,\ldots,v'_n\}\cup \{L'_2,L'_3,\ldots,L'_{n-1}\}\cup \{R'_2,R'_3,\ldots,R'_{n-1}\}.$$

\begin{lemma}
	\label{lem: interior tree of cliques}
Each of $\Lambda^+_v$ and $\Lambda^-_v$ is a tree of cliques.
\end{lemma}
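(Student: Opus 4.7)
The two subcomplexes $\Lambda^+_v$ and $\Lambda^-_v$ are exchanged by the involution swapping the upper and lower halves of $\Pi$, so I will only treat $\Lambda^+_v$. From Lemmas~\ref{lem:real interior} and \ref{lem:two interiors} I first read off the adjacency structure: $v_0,\ldots,v_n$ form a path; $L_i$ is joined to $v_0,\ldots,v_i$; $R_j$ is joined to $v_{n-j},\ldots,v_n$; all pairs of $L$'s and all pairs of $R$'s are adjacent; and $L_iR_j$ is an edge iff $i+j\ge n+2$.

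My plan is to exhibit an explicit clique decomposition. For $1\le k\le n$ I will set
\[
T_k=\{v_{k-1},v_k\}\cup\{L_i:k\le i\le n-1\}\cup\{R_j:n+2-k\le j\le n-1\},
\]
and for $2\le k\le n-1$
\[
U_k=\{v_{k-1},v_k\}\cup\{L_i:k+1\le i\le n-1\}\cup\{R_j:n+1-k\le j\le n-1\},
\]
with the convention that any $L_i$ or $R_j$ whose index falls outside $\{2,\ldots,n-1\}$ is dropped. A direct check using the adjacency data will show that each $T_k$ and each $U_k$ spans a complete subgraph; the only non-routine step is to verify that $L_iR_j$ is always an edge within such a set, and this follows because in both $T_k$ and $U_k$ the minimum of $i+j$ over the $L$'s and $R$'s present equals exactly $n+2$, matching the adjacency threshold. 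I will then observe that every edge of $\Lambda^+_v$ lies in some $T_k$ or $U_k$; the slightly subtle edges are $v_kR_{n-k}$ for $1\le k\le n-2$, which are captured by $U_{k+1}$ since that clique contains both $v_k$ and $R_{n-k}$.

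To satisfy condition (2) of Definition~\ref{def:tree}, I will order the cliques as
\[
T_1,\,T_2,\,U_2,\,T_3,\,U_3,\,T_4,\,\ldots,\,T_{n-1},\,U_{n-1},\,T_n.
\]
Comparing index ranges, in this ordering each new clique contributes exactly one new vertex: $U_k$ contributes $R_{n+1-k}$, and $T_{k+1}$ contributes $v_{k+1}$. The remaining vertices of $U_k$ (resp.\ $T_{k+1}$) already appear in the immediately preceding clique $T_k$ (resp.\ $U_k$), because the $L$- and $R$-thresholds shift by one at each step. Since the preceding clique is itself complete, the intersection of the $m$-th clique with the union of its predecessors is contained in a previously listed clique, hence a complete subgraph, which confirms condition (2). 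The argument for $\Lambda^-_v$ is identical after the primed substitution $L_i\mapsto L'_i$, $R_j\mapsto R'_j$, $v_k\mapsto v'_k$.

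The main (and only moderate) obstacle will be the bookkeeping required to verify the inclusions $U_k\setminus\{R_{n+1-k}\}\subseteq T_k$ and $T_{k+1}\setminus\{v_{k+1}\}\subseteq U_k$ together with the edge-covering claim; all of these reduce to elementary inequalities on the index ranges, but some care is needed at the boundary cases $k\in\{1,2\}$ and $k\in\{n-1,n\}$, where parts of the defining sets become empty and one must track which $L$'s or $R$'s have dropped out.
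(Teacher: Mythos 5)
Your proposal is correct and, after unwinding the notation, is exactly the paper's own decomposition: your $T_k$ coincides with the paper's $V_{2k-2}=K_k\cup S_{k-1}$ (and $T_1=V_1$), your $U_k$ coincides with $V_{2k-1}=K_k\cup S_k$, and your ordering $T_1,T_2,U_2,T_3,U_3,\ldots,T_n$ is precisely the paper's ordering $V_1,\ldots,V_{2n-2}$. The verification strategy (each new clique introduces one new vertex and the rest lies in the preceding clique) is also the same as the one the paper uses.
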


\begin{proof}
We only consider $\Lambda^+_v$ since $\Lambda^-_v$ is similar. We define a sequence of collections of vertices of $\Lambda^+_v$ as follows. Let $S_1=\{L_2,\ldots,L_{n-1}\}$, $S_{n-1}=\{R_{n-1},\ldots,R_2\}$, and $S_i=\{L_{i+1},\ldots,L_{n-1},R_{n-1},\ldots,R_{n-i+1}\}$ for $2\le i\le n-2$. By Lemma~\ref{lem:two interiors}, each $S_i$ spans a clique. Moreover, any pair of adjacent interior vertices in $\Lambda_v$ are contained in at least one of the $S_i$. 

For $1\le i\le n$, let $K_i=\{v_i,v_{i-1}\}$. For $1\le i\le 2n-2$, $V_i=K_{ \lfloor i/2 \rfloor+1}\cup S_{\lceil i/2 \rceil}$ (e.g.\ $V_1=K_1\cup S_1$, $V_2=K_2\cup S_1$, $V_3=K_2\cup S_2$, $V_4=K_3\cup S_2$, $\ldots$ , $V_{2n-3}=K_{n-1}\cup S_{n-1}$, $V_{2n-2}=K_{n}\cup S_{n-1}$). By Lemma~\ref{lem:real interior}, each $V_i$ spans a clique $\Delta_i$. Moreover, any edge of $\Lambda^+_v$ is contained in at least one of the $\Delta_i$. Thus $\Lambda^+_v=\cup_{i=1}^{2n-2}\Delta_i$. Note that for each $2\le i\le 2n-2$, $\Delta_{i}\setminus\Delta_{i-1}$ has exactly one vertex, and this vertex is not contained in $\cup_{m=1}^{i-1}\Delta_m$. Thus Definition~\ref{def:tree} (2) holds, hence $\Lambda^+_v$ is a tree of cliques.
\end{proof}

\begin{lemma}
	\label{lem:interior $2$--full cycle contains tips}
Let $\sigma\subset\Lambda_v$ be a simple cycle such that $\sigma\nsubseteq\Lambda^+_v$ and $\sigma\nsubseteq\Lambda^-_v$. Then $v_0\in\sigma$ and $v_n\in\sigma$. Consequently, if $\sigma$ is  a $2$--full simple $n$--cycle in $\Lambda_v$ for $n\ge 4$, then $v_0\in\sigma$ and $v_n\in\sigma$.
\end{lemma}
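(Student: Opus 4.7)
The plan is to imitate the proof of Lemma~\ref{lem:$2$--full cycle contains tips} by first establishing a separation property of $\Lambda_v$ after removing the two tips $v_0$ and $v_n$, and then invoking the tree-of-cliques structure of each half to rule out the case that $\sigma$ is confined to one of them.

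First I would verify that every edge of $\Lambda_v$ incident to a vertex of $\Lambda^+_v\setminus\{v_0,v_n\}$ has its other endpoint in $\Lambda^+_v$, and symmetrically for $\Lambda^-_v$. For an interior vertex $L_i$ or $R_i$ this is immediate from Lemma~\ref{lem:two interiors}(3) (no edge to any $L'_j$ or $R'_j$) combined with Lemma~\ref{lem:real interior}(1)(2), which says that the only real vertices adjacent to $L_i$ or $R_i$ lie in $\{v_0,\ldots,v_n\}\subseteq\Lambda^+_v$. For a real vertex $v_k$ with $1\le k\le n-1$, the only real vertices adjacent to it on $\partial\Pi$ are $v_{k-1}$ and $v_{k+1}$, which are again in $\Lambda^+_v$, and by Lemma~\ref{lem:real interior} its interior neighbors are $L_i,R_j$ vertices. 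The symmetric statement for $\Lambda^-_v$ follows by the same argument. Hence $(\Lambda^+_v\setminus\{v_0,v_n\})$ and $(\Lambda^-_v\setminus\{v_0,v_n\})$ lie in distinct connected components of $\Lambda_v\setminus\{v_0,v_n\}$.

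With this separation in hand, the first statement follows immediately: a simple cycle $\sigma$ that contains a vertex from $\Lambda^+_v\setminus\{v_0,v_n\}$ and a vertex from $\Lambda^-_v\setminus\{v_0,v_n\}$ must cross the separating set $\{v_0,v_n\}$ at least twice, and since $\sigma$ is simple, this forces both $v_0$ and $v_n$ to lie on $\sigma$.

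For the consequence, I would argue by contradiction: if $\sigma$ is a $2$--full simple $k$--cycle with $k\ge 4$ and $\{v_0,v_n\}\not\subseteq\sigma$, then by the first statement $\sigma$ is contained in $\Lambda^+_v$ or in $\Lambda^-_v$. But by Lemma~\ref{lem: interior tree of cliques} each of $\Lambda^\pm_v$ is a tree of cliques, and Lemma~\ref{lem:cycle in tree}(1) then forces $\sigma$ not to be $2$--full, contradicting the hypothesis. The only minor subtlety to check is that the $k=3$ case is excluded by the assumption $k\ge 4$, and that the tree-of-cliques argument applies without modification since it depends only on the combinatorial structure. No step of the argument appears to require delicate metric control; the main point is the purely combinatorial separation observation above, which is a direct consequence of Lemmas~\ref{lem:real interior} and~\ref{lem:two interiors}.
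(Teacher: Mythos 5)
Your proposal is correct and follows essentially the same approach as the paper: establish the separation of $\Lambda_v\setminus\{v_0,v_n\}$ into two components using Lemmas~\ref{lem:real interior} and~\ref{lem:two interiors}(3), then invoke Lemma~\ref{lem: interior tree of cliques} and Lemma~\ref{lem:cycle in tree}(1) to rule out $\sigma$ lying entirely in one half. The paper simply states the separation fact and refers back to the identical argument in the proof of Lemma~\ref{lem:$2$--full cycle contains tips}; your write-up fills in the same reasoning in slightly more detail.
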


\begin{proof}
It follows from Lemma~\ref{lem:real interior} and Lemma~\ref{lem:two interiors} (3) that there are no edges between a vertex in $\Lambda^+_v\setminus \{v_0,v_n\}$ and a vertex in $\Lambda^-_v\setminus \{v_0,v_n\}$. Based on Lemma~\ref{lem: interior tree of cliques}, the rest of the proof is identical to the proof of Lemma~\ref{lem:$2$--full cycle contains tips}.
\end{proof}

\begin{lemma}
	\label{lem:pi lower bound1}
	Any edge path in $\Lambda_v$ from $v_0$ to $v_n$ has angular length $\ge \pi$.
\end{lemma}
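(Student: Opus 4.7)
The strategy is analogous to the proof of Lemma~\ref{lem:pi lower bound}, but where the case analysis there can be replaced by a cleaner argument using a one‑Lipschitz ``height'' function on $\Lambda^+_v$ (and symmetrically on $\Lambda^-_v$).

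\textbf{Step 1 (Reduction to a half).} By Lemma~\ref{lem:real interior} and Lemma~\ref{lem:two interiors}(3), there is no edge between a vertex in $\Lambda^+_v\setminus\{v_0,v_n\}$ and a vertex in $\Lambda^-_v\setminus\{v_0,v_n\}$. Given an edge path $\omega$ from $v_0$ to $v_n$, take the last occurrence of $v_0$ in $\omega$ and the first subsequent occurrence of $v_n$. The sub‑path $\omega'$ cut out this way does not pass through $v_0$ or $v_n$ in its interior, so it stays entirely in $\Lambda^+_v$ or entirely in $\Lambda^-_v$. Since $\length_\angle(\omega)\ge\length_\angle(\omega')$, it suffices to prove the bound for $\omega\subset\Lambda^+_v$ (the other case is symmetric).

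\textbf{Step 2 (A $1$--Lipschitz height on $\Lambda^+_v$).} Define $h\colon \Lambda^+_v\to\mathbb R$ by
\[
h(v_m)=\pi-\tfrac{m\pi}{n}\ (0\le m\le n),\qquad h(L_i)=\pi-\tfrac{i\pi}{2n},\qquad h(R_j)=\tfrac{j\pi}{2n}\ (2\le i,j\le n-1).
\]
Note $h(v_0)=\pi$ and $h(v_n)=0$. I claim that for every edge $\overline{ww'}$ in $\Lambda^+_v$, one has $|h(w)-h(w')|\le\angle_v(w,w')$. For the boundary edges $\overline{v_mv_{m+1}}$ this is immediate from Lemma~\ref{lem:real interior}/type~I. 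For an edge $\overline{L_iv_m}$ with $0\le m\le i$, Lemma~\ref{lem:real interior}(3) gives angular length $\tfrac{i\pi}{2n}$, while $h(L_i)-h(v_m)=\tfrac{(2m-i)\pi}{2n}$ has absolute value at most $\tfrac{i\pi}{2n}$ since $0\le m\le i$; the edges $\overline{R_jv_m}$ are symmetric. For an edge $\overline{L_iL_j}$ (resp.\ $\overline{R_iR_j}$) with $i<j$ and $j-i\le n-2$, both the height difference and $\angle_v(L_i,L_j)$ equal $\tfrac{(j-i)\pi}{2n}$ (the latter via the Euclidean triangle $\triangle(oL_iL_j)$ in $X_n$ whose side lengths come from Lemma~\ref{lem:overlap and length}). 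The only remaining case is an edge $\overline{L_iR_j}$ with $i+j\ge n+2$; here a short law‑of‑cosines computation in $\triangle(oL_iR_j)$ yields $\angle_v(L_i,R_j)=\tfrac{(i+j)\pi}{2n}$, whereas $h(L_i)-h(R_j)=\pi-\tfrac{(i+j)\pi}{2n}$, and the inequality $\pi-\tfrac{(i+j)\pi}{2n}\le\tfrac{(i+j)\pi}{2n}$ is equivalent to $i+j\ge n$, which holds since $i+j\ge n+2$.

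\textbf{Step 3 (Conclusion).} Writing $\omega=(w_0,\ldots,w_k)$ with $w_0=v_0$ and $w_k=v_n$ inside $\Lambda^+_v$,
\[
\length_\angle(\omega)=\sum_{i=0}^{k-1}\angle_v(w_i,w_{i+1})\ \ge\ \sum_{i=0}^{k-1}|h(w_{i+1})-h(w_i)|\ \ge\ |h(v_n)-h(v_0)|=\pi.
\]

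\textbf{Main obstacle.} Verifying the $1$--Lipschitz property for edges of type $\overline{L_iR_j}$ is the one place where the elementary additive relations $\angle_v(L_i,L_j)+\angle_v(L_j,L_k)=\angle_v(L_i,L_k)$ used in Corollary~\ref{cor:triangle inequality2} are not enough; one has to actually compute $\angle_v(L_i,R_j)$ from the Euclidean triangle $\triangle(oL_iR_j)$ and then use the hypothesis $i+j\ge n+2$ (which is exactly the condition for $\overline{L_iR_j}$ to be an edge). All other verifications are direct from Lemma~\ref{lem:real interior} and Lemma~\ref{lem:two interiors}.
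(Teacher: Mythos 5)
Your proof is correct and takes a genuinely different route from the paper. The paper establishes the bound by a five-way case analysis on the combinatorial pattern of $\omega$ inside $\Lambda^+_v$ (whether $\omega$ uses an $\overline{L_jR_k}$ edge, whether it visits some $L_i$'s and then some $R_j$'s separated by boundary vertices, whether it uses only one of the two interior families, or none), bounding $\length_\angle(\omega)$ in each case. You instead exhibit a single function $h\colon\Lambda^+_v\to\mathbb R$ with $h(v_0)-h(v_n)=\pi$ and $|h(w)-h(w')|\le \angle_v(w,w')$ on every edge, after which the bound is immediate for all paths at once. The Lipschitz checks are routine from Lemmas~\ref{lem:real interior} and~\ref{lem:two interiors}, except that you need $\angle_v(L_i,R_j)=\frac{(i+j)\pi}{2n}$; this is correct and can be read directly off the unit-circle arrangement of $L_i,o,R_j$ from the proof of Corollary~\ref{cor:triangle inequality2} (the inscribed-angle theorem gives $\angle_o(L_i,R_j)$ as half the central angle $\frac{i+j}{2n}2\pi$ of the arc not through $o$), or by law of cosines as you say. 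One small remark: in the $\overline{L_iR_j}$ check you only use $i+j\ge n$, weaker than the existence condition $i+j\ge n+2$; the unused slack is exactly what makes the corresponding inequality strict in the paper's Case~1. Your argument is cleaner and has the bonus that the edges where the Lipschitz bound is an equality transparently flag the candidate length-$\pi$ paths, which is precisely what the unnamed lemma following Proposition~\ref{prop:real link} goes on to classify.
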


\begin{proof}
Let $\omega\subset \Lambda_v$ be an edge path from $v_0$ to $v_n$. As in the proof of Lemma~\ref{lem:pi lower bound}, we only consider the case $\omega\subset\Lambda^+_v$.

\emph{Case 1:} There are two adjacent vertices of $\omega$ such that one is $L_j$ and another is $R_k$. Note that $v_0$ is adjacent to $L_j$, and $R_k$ is adjacent to $v_n$. As in the proof of Lemma~\ref{cor:triangle inequality2}, we arrange $L_j,o$ and $R_k$ consecutively in a unit circle such that the Euclidean distance between any of two points in $\{L_j,o,R_k\}$ equals to the length of the edge in $X_n$ between these two points. Consequently $\angle_v(L_j,R_k)=\angle^E_o(L_j,R_k)$, where $\angle^E$ denotes the angle in the Euclidean plane. We refer to Figure~\ref{f:angle} right. Recall that in such an arrangement, $\angle^E_x(L_j,o)=\frac{n-j}{2n}2\pi$, $\angle^E_x(o,R_k)=\frac{n-k}{2n}2\pi$ and $\angle^E_x(L_j,R_k)=\frac{2n-k-j}{2n}2\pi<\pi$. By Lemma~\ref{lem:real interior}, $\angle_v(v_0,L_j)=\frac{j}{4n}2\pi$, thus $\angle_v(v_0,L_j)=\angle^E_{L_j}(x,o)$. Similarly, $\angle_v(R_k,v_n)=\angle^E_{R_k}(x,o)$. Since $\angle^E_{L_j}(x,o)+\angle^E_{R_k}(x,o)+\angle^E_o(L_j,R_k)=2\pi-\angle^E_x(L_j,R_k)>\pi$. Hence $\angle_v(v_0,L_j)+\angle_v(L_j,R_k)+\angle_v(R_k,v_n)>\pi$. By Lemma~\ref{lem:cycle in tree} (2), Corollary~\ref{cor:triangle inequality2} and Lemma~\ref{lem: interior tree of cliques}, we have $\length_\angle(\omega)\ge \angle_v(v_0,L_j)+\angle_v(L_j,R_k)+\angle_v(R_k,v_n)>\pi$.

\emph{Case 2:} Suppose case (1) is not true and $\omega\cap \{L_2,\ldots,L_{n-1}\}\neq\emptyset$. We suppose in addition that after the last vertex of $\omega$ in $\{L_2,\ldots,L_{n-1}\}$ (say $L_i$), $\omega$ still contains at least one vertex from $\{R_2,\ldots,R_{n-1}\}$.

Then $L_i$ is followed by a sub-path $\omega'$ of $\omega$ with $\omega'\subset\partial\Pi$, and then a vertex $R_k$. Suppose the first and the last vertices of $\omega'$ are $v_m$ and $v_{m'}$ respectively. Since $L_i$ and $v_m$ are adjacent, we have $m\le i$ by Lemma~\ref{lem:real interior} (1). Similarly, $m'\ge n-j$. By Lemma~\ref{lem:cycle in tree} (2) and Lemma~\ref{lem: interior tree of cliques}, $\length_\angle(\omega)\ge \angle_v(v_0,L_i)+\angle_v(L_i,v_m)+\length_\angle(\omega')+\angle_v(v_{m'},R_j)+\angle_v(R_j,v_n)$. By Lemma~\ref{lem:real interior},
$$\angle_v(v_0,L_i)=\angle_v(L_i,v_m)=\frac{i}{4n}2\pi;\ \ \ \angle_v(R_j,v_n)=\angle_v(v_{m'},R_j)=\frac{j}{4n}2\pi.$$ 
We are done if $i+j\ge n$. Now we assume $i+j<n$. Then $m\le i<n-j\le m'$ and $\length_\angle(\omega')\ge \frac{n-j-i}{2n}2\pi$. Hence we still have $\length_\angle(\omega)\ge\pi$.

\emph{Case 3:} Suppose case (1) is not true and $\omega\cap \{L_2,\ldots,L_{n-1}\}\neq\emptyset$. We suppose in addition that after the last vertex of $\omega$ in $\{L_2,\ldots,L_{n-1}\}$ (say $L_i$), $\omega$ does not contain any vertex from $\{R_2,\ldots,R_{n-1}\}$.

Then $L_i$ is followed by a sub-path $\omega'$ of $\omega$ traveling from $v_m$ to $v_n$. It follows that $\length_\angle(\omega)\ge \angle_v(v_0,L_i)+\angle_v(L_i,v_m)+\length_\angle(\omega')$. By Lemma~\ref{lem:real interior}, $m\le i$ and  $\angle_v(v_0,L_i)=\angle_v(L_i,v_m)=\frac{i}{4n}2\pi$. It follows that $\length_\angle(\omega')\ge \frac{n-m}{2n}2\pi\ge \frac{n-i}{2n}2\pi$, and hence $\length_\angle(\omega)\ge \pi$.

\emph{Case 4:} Suppose $\omega\cap\{R_2,\ldots,R_{n-1}\}\neq\emptyset$ and $\omega\cap\{L_2,\ldots,L_{n-1}\}=\emptyset$. This is similar to the previous case.

\emph{Case 5:} The remaining case is that $\omega$ does not contain any interior vertices. Then it is clear that $\length_\angle(\omega)\ge \pi$.
\end{proof}

\begin{proof}[Proof of Proposition~\ref{prop:real link} (for interior vertices)]
In view of Corollary~\ref{cor:triangle inequality2}, it suffices to prove any $2$--full simplex $n$--cycle in $\Lambda_v$ with $n\ge 4$ has angular length $\ge 2\pi$. But this follows from Lemma~\ref{lem:interior $2$--full cycle contains tips} and Lemma~\ref{lem:pi lower bound1}.
\end{proof}

The following is an analog of Lemma~\ref{lem:exactly pi} in the case of interior vertex. It will be crucial for applications in \cite{Artinsrigidity}.

\begin{lemma}
	Suppose $v$ is interior and $\omega$ is an edge path in $\Lambda_v$ from $v_0$ to $v_n$ of angular length $\pi$. Then either $\omega\subset\Lambda^+_v$ or $\omega\subset\Lambda^-_v$. If $\omega\subset\Lambda^+_v$, then the following are the only possibilities for $\omega$:
	\begin{enumerate}
		\item $\omega$ does not contain interior vertices, i.e.\ $\omega=v_0\to v_1\to\cdots\to v_n$;
		\item $\omega=v_0\to v_1\to\cdots\to v_{n-i_1}\to R_{i_1}\to\cdots \to R_{i_m}\to v_n$, where $i_1>\cdots>i_m\ge2$;
		\item $\omega=v_0\to L_{i_1}\to\cdots\to L_{i_m}\to v_{i_m}\to v_{i_m+1}\to\cdots \to v_n$, where $2\le i_1<\cdots< i_m$;
		\item $\omega=v_0\to L_{i_1}\to\cdots\to L_{i_m}\to v_{i_m}\to v_{i_{m+1}}\to\cdots \to v_{n-i'_1}\to R_{i'_1}\to\cdots \to R_{i'_{m'}}\to v_n$, where $2\le i_1<\cdots<i_m$, $i'_{1}>\cdots>i'_{m'}\ge 2$, and $i_m\le n-i'_1$.
	\end{enumerate}
	A similar statement holds when $\omega\subset\Lambda^-_v$.
\end{lemma}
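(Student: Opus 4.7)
The plan is to sharpen the proof of Lemma~\ref{lem:pi lower bound1} by introducing an integer-valued ``height'' function on the vertex set of $\Lambda^+_v$ whose increments along any edge are bounded by the angular length of that edge, and then to track exactly when every such bound is tight along $\omega$. As a preliminary reduction, I would first note that $\omega$ must be embedded: a repeated vertex would allow excising a subloop, strictly reducing $\length_\angle(\omega)$ and contradicting Lemma~\ref{lem:pi lower bound1}. Next, Lemma~\ref{lem:real interior} together with Lemma~\ref{lem:two interiors}(3) ensures there is no edge of $\Lambda_v$ between a vertex of $\Lambda^+_v\setminus\{v_0,v_n\}$ and a vertex of $\Lambda^-_v\setminus\{v_0,v_n\}$, so the embedded $\omega$ lies entirely in $\Lambda^+_v$ or entirely in $\Lambda^-_v$; assume the former (the other case is symmetric). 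The case $n=2$ is immediate since then $\Lambda^+_v$ contains no interior vertices and the only candidate is $v_0\to v_1\to v_2$, so we may assume $n\ge 3$.

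Define $g\colon V(\Lambda^+_v)\to\mathbb{Z}$ by $g(v_m)=2m$, $g(L_i)=i$ and $g(R_j)=2n-j$, so that $g(v_0)=0$ and $g(v_n)=2n$. Using the base-angle formula $\angle_v(v_m,L_i)=\tfrac{i}{4n}2\pi$ from Lemma~\ref{lem:real interior}(3), together with inscribed-angle computations on the cyclic arrangements introduced in the proof of Corollary~\ref{cor:triangle inequality2}, one obtains the identities $\angle_v(L_i,L_{i'})=\tfrac{|i'-i|}{4n}2\pi$, $\angle_v(R_j,R_{j'})=\tfrac{|j-j'|}{4n}2\pi$ and $\angle_v(L_i,R_j)=\tfrac{i+j}{4n}2\pi$. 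These imply that for every edge $\overline{xy}\subset\Lambda^+_v$
\[
\tfrac{\pi}{2n}\cdot|g(x)-g(y)|\le\angle_v(x,y),
\]
and I would check that equality holds precisely in the following situations: for every edge of type $\overline{v_mv_{m+1}}$, $\overline{L_iL_{i'}}$, or $\overline{R_jR_{j'}}$; for $\overline{v_mL_i}$ iff $m\in\{0,i\}$; for $\overline{v_mR_j}$ iff $m\in\{n-j,n\}$; and \emph{never} for an edge $\overline{L_iR_j}$, since $i+j\le 2n-2$ forces strict inequality.

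Telescoping along $\omega$ yields $\length_\angle(\omega)\ge(g(v_n)-g(v_0))\cdot\tfrac{\pi}{2n}=\pi$, and the assumed equality forces $g$ to be strictly increasing along $\omega$ while the edge-equality condition holds at every step. In particular $\omega$ contains no $L$-to-$R$ edge, so any transition between an $L$-vertex and an $R$-vertex must pass through a real vertex. Inspecting the permissible successors vertex by vertex then gives a simple finite-state description: from $v_0$ one may proceed to $v_1$ or to any $L_i$; from $L_i$ only to some $L_{i'}$ with $i'>i$ or to $v_i$; from $v_m$ with $1\le m\le n-1$ only to $v_{m+1}$ or (when $m\le n-2$) to $R_{n-m}$; and from $R_j$ only to some $R_{j'}$ with $j'<j$ or to $v_n$. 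Reading off all paths from $v_0$ to $v_n$ in this state machine produces exactly the four normal forms listed in the lemma, with the constraint $i_m\le n-i'_1$ in case (4) encoding the requirement that the middle run $v_{i_m},v_{i_m+1},\dots,v_{n-i'_1}$ of real vertices be nondecreasing. The one technical step I anticipate as the main hurdle is the edge-wise equality characterization for $g$; once that is in place, the enumeration of equality cases is purely combinatorial.
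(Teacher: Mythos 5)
Your argument is correct, and it takes a genuinely different route from the paper's. The paper decomposes $\omega$ into maximal ``left interior components'' (runs of $L$-vertices) and ``right interior components'' (runs of $R$-vertices), then shows through several ad~hoc shortcutting arguments (each invoking Lemma~\ref{lem:pi lower bound1} to rule out a cheaper competitor path) that there is at most one component of each type, that a left component must immediately follow $v_0$, that indices increase within a left component, etc., and assembles the normal forms from these constraints. You instead introduce the potential $g(v_m)=2m$, $g(L_i)=i$, $g(R_j)=2n-j$, verify via the inscribed-angle identities that $\tfrac{\pi}{2n}|g(x)-g(y)|\le\angle_v(x,y)$ with a clean characterization of tightness, and read off all equality paths from a finite-state description. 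This makes the argument more systematic and simultaneously reproves Lemma~\ref{lem:pi lower bound1} as a byproduct. The one piece of work not explicit in the paper is the derivation of $\angle_v(L_i,L_{i'})=\tfrac{|i'-i|}{4n}2\pi$, $\angle_v(R_j,R_{j'})=\tfrac{|j-j'|}{4n}2\pi$ and $\angle_v(L_i,R_j)=\tfrac{i+j}{4n}2\pi$; these do follow from the cyclic arrangement used in Corollary~\ref{cor:triangle inequality2} and the inscribed angle theorem (the paper carries out the analogous computation only for the $\angle^E_x$ arcs in Case~1 of Lemma~\ref{lem:pi lower bound1}), so you should spell this out when writing up. I also note that in your state-machine enumeration you should keep in mind the existence constraint $|i'-i|\le n-2$ for an edge $\overline{L_iL_{i'}}$ (Lemma~\ref{lem:two interiors}(1)), though this does not change the list of normal forms, only restricts which instances of them occur. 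Overall the proposal is a correct alternative proof with a cleaner mechanism, at the cost of a small amount of extra computation up front.
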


\begin{proof}
We argue as Lemma~\ref{lem:exactly pi} to show that $\omega$ is embedded, and that $\omega\subset\Lambda^+_v$ or $\omega\subset\Lambda^-_v$. Now we assume $\omega\subset\Lambda^+_v$.
	
A \emph{left interior component} of $\omega$ is a maximal connected sub-path of $\omega$ such that each of its vertices is one of the $L_i$. We define a \emph{right interior component} in a similar way. By case 1 of Lemma~\ref{lem:pi lower bound1}, there is at least one real vertex between a left interior component and a right interior component.

We show there is at most one left interior component. Suppose the contrary is true. Let $L_i$ be the first vertex of the last left interior component. The vertex in $\omega$ preceding $L_i$ is a real vertex, which we denote by $v_{i_0}$. Since $\omega$ is embedded, $i_0>0$. Let $\omega'$ be the edge path consisting of the edge $\overline{v_0 L_i}$ together with the part of $\omega$ from $L_i$ to $v_n$. By Lemma~\ref{lem:pi lower bound1}, $\length_\angle(\omega')\ge \pi$. Since $\angle_v(v_0,L_i)=\angle_v(v_{i_0},L_i)$ and $v_{i_0}\neq v_0$, $\length_\angle(\omega')<\length_\angle(\omega)=\pi$, which leads to a contradiction.

The same argument also shows that if $L_i\in \omega$ then the vertex of $\omega$ preceding $L_i$ can not be some $v_{i'}$ with $i'\neq 0$. Thus, if there were a left interior component, then the vertex of $\omega$ following $v_0$ would be contained in such component. 

Suppose there is a left interior component. Let $L_i$ be the last vertex in this component and let $v_{i'}$ be any real vertex in the sub-path of $\omega$ from $L_i$ to $v_n$. Then $i'\ge i$. To see this, we suppose the contrary $i'<i$ is true. Let $\omega'$ be the edge path consisting of $\overline{v_0L_{i'}}$, $\overline{L_{i'}v_{i'}}$, and the part of $\omega$ from $v_{i'}$ to $v_n$. By Lemma~\ref{lem:cycle in tree} (2) and Lemma~\ref{lem: interior tree of cliques}, the angular length of the sub-path of $\omega$ from $v_0$ to $L_i$ (from $L_i$ to $v_{i'}$) is $\ge \angle_v(v_0,L_i)$ (resp.\ $\angle_v(L_i,v_{i'})$). By Lemma~\ref{lem:real interior} (3), $\angle_v(v_0,L_i)>\angle_v(v_0,L_{i'})$ and $\angle_v(L_i,v_{i'})>\angle_v(L_{i'},v_{i'})$. Thus $\length_\angle(\omega')<\length_\angle(\omega)=\pi$, which is contradictory to Lemma~\ref{lem:pi lower bound1}.

We claim that if there are two consecutive vertices $L_i$ and $L_j$ in $\omega$ such that $\omega$ reaches $L_i$ first, then $i<j$. To see this, note that by the proof of Corollary~\ref{cor:triangle inequality2} (we can think the center of the circle in Figure~\ref{f:angle} left is $v_0$), $\angle_v(v_0,L_{i'})+\angle_v(L_{i'},L_{j'})=\angle_v(v_0,L_{j'})$ for $i'<j'$. Thus if $i>j$, then the concatenation of $\overline{v_0L_{j}}$ and the sub-path of $\omega$ from $L_j$ to $v_n$ has angular length $<\length_\angle(\omega)=\pi$, which contradicts Lemma~\ref{lem:pi lower bound1}.

We can repeat the above discussion to obtain analogous statements for right interior components. Then the lemma follows.
\end{proof}

\section{The complexes for $2$--dimensional Artin groups}
\label{s:general}
In this section we finalize the proof of one of the main results of the article, namely Theorem~\ref{t:main}
from Introduction. More precisely, for any two-dimensional Artin group $A_{\Gamma}$ we construct a metric simplicial complex $X_\Gamma$, by gluing together complexes $X_n$ for $2$--generated subgroups
of $A_\Gamma$. In Lemma~\ref{lem:sc} we prove that $X_\Gamma$ is simply connected, and in Lemma~\ref{lem:2pi large} we show that links of vertices in $X_\Gamma$ are $2\pi$--large.
As an immediate consequence we obtain the main result of this section:
\begin{theorem}
	\label{t:Xmetrsys}
	$X_\Gamma$ is metrically systolic. Consequently, each $2$--dimensional Artin group is metrically systolic.
\end{theorem}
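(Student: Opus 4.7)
The plan is to verify each clause of Definition~\ref{d:metric_syst} for $X_\Gamma$. Flagness holds by construction. The requirement that each $2$--simplex be Euclidean and that each link satisfy the weak triangle inequality follows from Lemma~\ref{lem:strict triangle inequality} together with Corollaries~\ref{cor:triangle inequality1} and~\ref{cor:triangle inequality2}, once one notes that every $2$-- or $3$--simplex of $X_\Gamma$ is contained in a single dihedral piece $X_{n_e}$ (different dihedral pieces meet only along vertices and generator edges of the Cayley complex). The two remaining conditions---simple connectivity and $2\pi$--largeness of links---are the content of Lemmas~\ref{lem:sc} and~\ref{lem:2pi large}, which I outline next.

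For Lemma~\ref{lem:sc}, my plan is to show that the inclusion of the (subdivided) universal cover $\Xa_\Gamma$ of the standard presentation $2$--complex of $A_\Gamma$ into $X_\Gamma$ is a homotopy equivalence. Every added edge $e=\overline{o_1o_2}$ between interior vertices of two overlapping cells $\Pi_1,\Pi_2$ is homotopic rel endpoints to the existing path $o_1\to v\to o_2$ for any $v\in\Pi_1\cap\Pi_2$, and the triangles $\Delta_1,\Delta_2$ adjoined by flag completion realize this homotopy. Hence adjoining $e\cup\Delta_1\cup\Delta_2$ does not change the fundamental group, and $X_\Gamma$ inherits simple connectivity from $\Xa_\Gamma$.

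The main obstacle is Lemma~\ref{lem:2pi large}. At an interior vertex the link lies entirely inside one $X_{n_e}$ and Proposition~\ref{prop:real link}(2) applies verbatim. At a real vertex $v$ the link decomposes as $\Lambda_v=\bigcup_e\Lambda_v^e$, indexed by edges $e$ of $\Gamma$ both of whose labelling generators occur at $v$, with pieces glued only along the four ``generator'' vertices $a^i,a^o,b^i,b^o$ of each piece. A $2$--full simple cycle $\sigma\subset\Lambda_v$ either lies entirely in some $\Lambda_v^e$---in which case Proposition~\ref{prop:real link}(2) finishes the job---or decomposes into arcs $\sigma_1,\ldots,\sigma_k$, each contained in one piece $\Lambda_v^{e_i}$, with endpoints at shared generator vertices. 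By Lemma~\ref{lem:distance between real vertices} each arc $\sigma_i$ has angular length at least $\tfrac{n_{e_i}-1}{n_{e_i}}\pi$, and the sequence of arcs traces a closed combinatorial walk of length $k\ge 3$ in $\Gamma$. Summing,
\[
\length_\angle(\sigma)\;\ge\;\Bigl(k-\sum_{i=1}^{k}\tfrac{1}{n_{e_i}}\Bigr)\pi.
\]
For $k\ge 4$ this exceeds $2\pi$ automatically; for $k=3$ with edge labels $p,q,r$, the $2$--dimensionality hypothesis $\tfrac{1}{p}+\tfrac{1}{q}+\tfrac{1}{r}\le 1$ yields precisely $\length_\angle(\sigma)\ge 2\pi$. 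This triangular case is the pivotal step where the defining inequality for $2$--dimensional Artin groups enters, and I expect the most delicate bookkeeping to be in verifying that consecutive arcs glue into an honest closed walk in $\Gamma$ and that the $2$--fullness hypothesis rules out short-circuits that could lower the count $k$.

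With both lemmas established, $X_\Gamma$ is metrically systolic. For the ``consequently'' assertion, the equivariant nature of the construction in Section~\ref{s:dihedral} makes $A_\Gamma$ act on $X_\Gamma$ by simplicial isometries, extending the free deck action on $\Xa_\Gamma$. This action is cocompact (finitely many $A_\Gamma$--orbits of simplices) and free on vertices (all new interior vertices arose from $A_\Gamma$--equivariant subdivisions of freely permuted cells), hence geometric. Therefore $A_\Gamma$ is a metrically systolic group.
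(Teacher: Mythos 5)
Your overall strategy --- reducing the theorem to Lemmas~\ref{lem:sc} and~\ref{lem:2pi large}, handling the interior-vertex link by Proposition~\ref{prop:real link}, and at a real vertex decomposing the link into block pieces and summing the per-piece angular length bounds from Lemma~\ref{lem:distance between real vertices} --- is exactly the route the paper takes. The simple-connectivity argument is also essentially the paper's.

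However, there is a genuine gap in the real-vertex case. You assert that the decomposition of a non-block cycle $\sigma$ into block arcs $\sigma_1,\dots,\sigma_k$ always has $k\ge 3$. This is false, and in fact $k=2$ is precisely case~(2) of the paper's Lemma~\ref{lem:2pi cycle}: the cycle passes through two blocks $B_1,B_2$ whose defining edges share a single vertex $a$ of $\Gamma$, and the two transition vertices of $\sigma$ are $a^i$ and $a^o$. Your per-arc bound $\length_\angle(\sigma_i)\ge\frac{n_{e_i}-1}{n_{e_i}}\pi$ gives, when $k=2$ and $n_{e_1}=n_{e_2}=2$, only $\length_\angle(\sigma)\ge\pi$, which is not enough. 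What saves this case is Lemma~\ref{lem:distance between real vertices}(2): an arc whose \emph{two} endpoints are the in- and out-vertices of the \emph{same} generator has angular length at least $\pi$ (not merely $\frac{n-1}{n}\pi$), so each of the two arcs costs $\pi$ and the cycle has length $\ge 2\pi$. The distinction between ``endpoints labelled by distinct generators'' (where $\frac{n-1}{n}\pi$ is the right bound) and ``endpoints labelled by the same generator'' (where $\pi$ is the bound) is the missing ingredient; once you have it, the $k=2$ case closes. Relatedly, for $k=3$ you cannot get away with the inequality $\frac{1}{n_{e_1}}+\frac{1}{n_{e_2}}+\frac{1}{n_{e_3}}\le 1$ unless you have first shown that the three defining edges actually span a triangle in $\Gamma$; you correctly flag this as the delicate step, and the paper resolves it by a short parity argument (if all three transition generators were equal you would need three pairwise-distinct vertices among $\{a^i,a^o\}$, which is impossible), after which the $2$--dimensionality hypothesis applies. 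You should carry out both the $k=2$ analysis and the $k=3$ parity argument explicitly, as these are exactly the places where the hypotheses on $\Gamma$ (a simple graph) and on $A_\Gamma$ ($2$--dimensionality) enter.
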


Let $A_\Gamma$ be an Artin group with defining graph $\Gamma$. Let $\Gamma'\subset\Gamma$ be a full subgraph with induced edge labeling and let $A_{\Gamma'}$ be the Artin group with defining graph $\Gamma'$. Then there is a natural homomorphism $A_{\Gamma'}\to A_{\Gamma}$. By \cite{Van1983homotopy}, this homomorphism is injective. Subgroups of $A_{\Gamma}$ of the form $A_{\Gamma'}$ are called \emph{standard subgroups}.

Let $P_{\Gamma}$ be the standard presentation complex of $A_{\Gamma}$, and let $X^{\ast}_{\Gamma}$ be the universal cover of $P_{\Gamma}$. We orient each edge in $P_{\Gamma}$ and label each edge in $P_{\Gamma}$ by a generator of $A_\Gamma$. Thus edges of $X^{\ast}_{\Gamma}$ have induced orientation and labeling. There is a natural embedding $P_{\Gamma'}\hookrightarrow P_\Gamma$. Since $A_{\Gamma'}\to A_{\Gamma}$ is injective, $P_{\Gamma'}\hookrightarrow P_\Gamma$ lifts to various embeddings $X^{\ast}_{\Gamma'}\to X^{\ast}_{\Gamma}$. Subcomplexes of $X^{\ast}_{\Gamma}$ arising in such way are called \emph{standard subcomplexes}.

A \emph{block} of $X^{\ast}_{\Gamma}$ is a standard subcomplex which comes from an edge in $\Gamma$. This edge is called the \emph{defining edge} of the block. Two blocks with the same defining edge are either disjoint, or identical.

We define precells of $X^{\ast}_{\Gamma}$ as in Section \ref{subsec:precells}, and subdivide each precell as in Figure \ref{f:cell} to obtain a simplicial complex $\Xb_\Gamma$. Interior vertices and real vertices of $\Xb_{\Gamma}$ are defined in a similar way. We record the following simple observations.

\begin{lemma}\
	\label{lem:action}
	\begin{enumerate}
		\item Each element of $A_\Gamma$ maps one block of $\Xb_\Gamma$ to another block with the same defining edge;
		\item if $g\in A_\Gamma$ such that $g$ maps an interior vertex of a block of $\Xb_\Gamma$ to another interior vertex of the same block, then $g$ stabilizes this block;
		\item the stabilizer of each block of $\Xb_\Gamma$ is a conjugate of a standard subgroup of $A_{\Gamma}$.
	\end{enumerate}
\end{lemma}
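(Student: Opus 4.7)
The plan is to exploit the fact that the $A_\Gamma$--action on $X^\ast_\Gamma$ is the deck transformation action of the universal cover $X^\ast_\Gamma \to P_\Gamma$. I would first record that this action preserves edge labels and orientations (since both are pulled back from $P_\Gamma$), takes $2$--cells to $2$--cells with the same boundary word, and extends simplicially to the subdivision $\Xb_\Gamma$, sending precells to precells, real vertices to real vertices, and interior vertices to interior vertices.

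For part (1), I would characterize the defining edge of a block either by the set of generator labels appearing on its $1$--cells, or equivalently by the fact that a block with defining edge $e$ is the image of an embedding $X^\ast_{\{e\}} \hookrightarrow X^\ast_\Gamma$ lifting $P_{\{e\}} \hookrightarrow P_\Gamma$. Since every $g \in A_\Gamma$ preserves labels and permutes lifts of $P_{\{e\}}$, it must send a block with defining edge $e$ to another block with defining edge $e$.

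For part (2), given an interior vertex $v$ of a block $B$ with $gv = v' \in B$, I would note that $v$ is the barycenter of a unique precell $\Pi \subset B$, that $v'$ is the barycenter of a unique precell $\Pi' \subset B$, and that the closed star of any interior vertex in $\Xb_\Gamma$ coincides with the corresponding subdivided precell (the only simplices containing $v$ are the edges from $v$ to the boundary vertices of $\Pi$ and the triangles cut off in the subdivision). Simpliciality of $g$ together with $gv = v'$ then forces $g\Pi = \Pi'$, hence $\Pi' \subset gB \cap B$. By (1), $gB$ is a block with the same defining edge as $B$, and since it meets $B$ in at least the precell $\Pi'$, the disjoint-or-identical property of same-defining-edge blocks (recalled just above the lemma) yields $gB = B$.

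For part (3), I would fix a basepoint $x_0 \in X^\ast_\Gamma$ used to identify $A_\Gamma$ with the deck transformation group. For a block $B$ with defining edge $e$, I would pick any real vertex $v \in B$ and let $h \in A_\Gamma$ be the unique element with $hx_0 = v$. Then $h^{-1}B$ is a block with defining edge $e$ containing $x_0$, and by the disjoint-or-identical property it is the unique such block, namely the canonical lift $B_0$ of $P_{\{e\}}$ based at $x_0$. By the standard covering-space description, $\mr{Stab}(B_0)$ equals the image of $\pi_1(P_{\{e\}})$ in $\pi_1(P_\Gamma) = A_\Gamma$ under the inclusion-induced map, which by van der Lek's theorem \cite{Van1983homotopy} is precisely the standard subgroup $A_{\{e\}}$. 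Consequently $\mr{Stab}(B) = h A_{\{e\}} h^{-1}$. The main technical input is this last identification of $\mr{Stab}(B_0)$ with the \emph{full} standard subgroup rather than a proper subgroup; it rests on the nontrivial injectivity $A_{\{e\}} \hookrightarrow A_\Gamma$ from \cite{Van1983homotopy}, while parts (1) and (2) amount to bookkeeping with the properties of the action listed above.
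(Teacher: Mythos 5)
The paper records this lemma as a ``simple observation'' and supplies no proof at all, so there is no in-text argument to compare against. Your proof is correct and gives exactly what one would want to write down: part (1) is label-/orientation-preservation together with the characterization of blocks as lifts of $P_{\{e\}}$; part (2) correctly uses that the closed star of an interior vertex in $\Xb_\Gamma$ is the subdivided precell (so $gv=v'$ forces $g\Pi=\Pi'$) and then the disjoint-or-identical dichotomy for same-type blocks; and part (3) is the standard covering-space computation of the stabilizer of a lift, conjugated appropriately. One small clarification on (3): the equality $\mathrm{Stab}(B_0)=\mathrm{Im}\big(\pi_1(P_{\{e\}})\to\pi_1(P_\Gamma)\big)$ is covering-space theory and holds independently of injectivity; van der Lek's theorem enters one step earlier, ensuring that the connected component of $p^{-1}(P_{\{e\}})$ through $x_0$ is genuinely a copy of $X^\ast_{\{e\}}$ (rather than a proper quotient cover), i.e.\ that blocks exist as the paper defines them, and it is also what lets one identify that image with $A_{\{e\}}$, which is the paper's definition of ``standard subgroup.'' This does not affect your conclusion $\mathrm{Stab}(B)=hA_{\{e\}}h^{-1}$, which is correct.
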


Within each block of $\Xb_\Gamma$, we add edges between interior vertices as in Section \ref{subsec:subividing and adding new edges}. Then we take the flag completion to obtain $X_\Gamma$. By Lemma~\ref{lem:action}, the newly added edges are compatible with the action of deck transformations $A_{\Gamma}\curvearrowright\Xb_\Gamma$. Thus the action $A_{\Gamma}\curvearrowright\Xb_\Gamma$ extends to a simplicial action $A_{\Gamma}\curvearrowright X_{\Gamma}$, which is proper and cocompact. A \emph{block} in $X_{\Gamma}$ is defined to be the full subcomplex spanned by vertices in a block of $\Xb_\Gamma$. Two blocks of $X_\Gamma$ that have the same defining edge are either disjoint or identical.

\begin{lemma}
	\label{lem:iso}
	Any isomorphism between a block in $X^{\ast}_{\Gamma}$ and the space $\Xa_n$ (cf. Section \ref{subsec:precells}) that preserves the labeling and orientation of edges extends to an isomorphism between a block in $X_{\Gamma}$ and the space $X_n$ (cf. Section \ref{subsec:subividing and adding new edges}).
\end{lemma}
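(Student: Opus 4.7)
The plan is to verify that each stage of the construction of $X_n$ from $\Xa_n$, and in parallel of a block of $X_\Gamma$ from a block of $X^{\ast}_{\Gamma}$, is determined by the labeled oriented simplicial data of the block alone, so that the given isomorphism $f$ extends canonically at each stage.

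First, I would extend $f$ to the subdivided versions. Each precell is a labeled oriented $2n$--gon whose subdivision (Figure~\ref{f:cell}) adds exactly one interior vertex $o$ coned off to the boundary. Since $f$ bijects precells of the block onto precells of $\Xa_n$ preserving boundary labels and orientations, sending the center of each precell to the center of its image gives a canonical simplicial extension of $f$ to an isomorphism between the subdivided block and $\Xb_n$.

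Next, I would extend this to the interior-to-interior edges added in Section~\ref{subsec:subividing and adding new edges}. By construction, such an edge joins $o_1\in\Pi_1$ to $o_2\in\Pi_2$ whenever $\Pi_1\cap\Pi_2$ contains at least two edges. Since the extended map is a simplicial isomorphism, the equality $f(\Pi_1\cap\Pi_2)=f(\Pi_1)\cap f(\Pi_2)$ ensures that this combinatorial condition is preserved, and the interior-to-interior edges are put into canonical bijection. Flag completion is then a functorial operation on the $1$--skeleton, yielding a unique extension to an isomorphism between the resulting flag simplicial complexes. Edge lengths, as specified in Definition~\ref{def:length}, depend only on $n$ (preserved, since $n$ is the label of the defining edge) and on the combinatorial intersection number $|\Pi_1\cap\Pi_2|$ via Lemma~\ref{lem:overlap and length}; both data are preserved by $f$, so the extension is an isomorphism of metric simplicial complexes.

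The only point requiring genuine (but mild) care is the observation that inside the construction of $X_\Gamma$ no interior-to-interior edge within a block arises from a precell lying outside that block; this is immediate from the recipe, which adds these edges block-by-block. Granting this, the extension is canonical at every step and the lemma reduces to a direct verification.
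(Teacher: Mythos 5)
There is a genuine gap. A \emph{block} in $X_\Gamma$ is, by definition, the \emph{full} subcomplex of $X_\Gamma$ spanned by the vertices of a block of $\Xb_\Gamma$. Your argument builds a canonical copy of $X_n$ inside $X_\Gamma$ from the block data, but it never checks that the full subcomplex of $X_\Gamma$ on those vertices does not contain \emph{additional} simplices coming from the ambient complex. Since $X_\Gamma$ is flag, the danger is extra edges between two real vertices $v_1,v_2$ of the block: such an edge lives in $\Xa_\Gamma$, not in the block's own $1$--skeleton, and if it existed, the full subcomplex you call ``a block in $X_\Gamma$'' would be strictly larger than (and hence not isomorphic to) $X_n$. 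Your ``only point requiring genuine care'' is aimed at the wrong place: edges between interior vertices of a block are automatically confined to that block (this is Lemma~\ref{lem:in block}), and you correctly note that the recipe adds interior--interior edges block-by-block; the subtle point is the real--real edges.

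This is precisely what the paper's proof addresses: it reduces the lemma to the statement that if two vertices of a block $B\subset X^\ast_\Gamma$ are not adjacent in $B$, then they are not adjacent in $X^\ast_\Gamma$, and then invokes the Charney--Paris convexity theorem \cite{charney2014convexity} that $B^{(1)}$ is convex in the path metric on the $1$--skeleton of $X^\ast_\Gamma$. (One can also argue this more elementarily: if $v_1,v_2\in B=gA_e$ are joined by an edge labeled by a generator $s$, then $s^{\pm1}=v_1^{-1}v_2\in A_e$, which forces $s$ to be one of the two generators spanning $e$ by passing to the abelianization of $A_\Gamma$.) Without some such argument, the final step of your proof — identifying your constructed complex with the block in $X_\Gamma$ — does not go through, and the proof is incomplete.
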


\begin{proof}
	By our construction, it suffices to show that if two vertices $v_1$ and $v_2$ in a block $B\subset X^{\ast}_\Gamma$ are not adjacent in this block, then they are not adjacent in $\Xa_\Gamma$. However, this follows from a result of Charney and Paris (\cite{charney2014convexity}) that $B^{(1)}$ is convex with respect to the path metric on the $1$--skeleton of $X^{\ast}_\Gamma$.
\end{proof}

\begin{lemma}
	\label{lem:sc}
$X_\Gamma$ is simply connected.
\end{lemma}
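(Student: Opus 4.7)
The plan is to reduce simple connectivity of $X_\Gamma$ to that of the subdivided presentation complex $\Xb_\Gamma$, which inherits simple connectivity from the universal cover $\Xa_\Gamma$. By construction, $\Xa_\Gamma$ is simply connected as the universal cover of $P_\Gamma$, and $\Xb_\Gamma$ is obtained from $\Xa_\Gamma$ by a simplicial subdivision of each precell, hence is homotopy equivalent to $\Xa_\Gamma$ and, in particular, simply connected. The space $X_\Gamma$ is then obtained from $\Xb_\Gamma$ by adding, within each block, new edges $\overline{o_1o_2}$ for every unordered pair of cells $(\Pi_1,\Pi_2)\in\Lambda$ (i.e.\ whose intersection contains at least two edges), followed by flag completion. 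It thus suffices to show that every loop in $X_\Gamma$ can be homotoped inside $X_\Gamma$ to a loop in $\Xb_\Gamma$.

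The key combinatorial observation is the following. Let $(\Pi_1,\Pi_2)\in\Lambda$ with $o_i$ the interior vertex of $\Pi_i$. By Lemma~\ref{cor:connected intersection}(1) the intersection $\Pi_1\cap\Pi_2$ is connected, so having at least two edges forces it to contain at least three vertices. Pick any such vertex $v$. Since $v\in\partial\Pi_i$ and $o_i$ is the interior vertex of the subdivided precell $\Pi_i$, the edges $\overline{o_1v}$ and $\overline{o_2v}$ already lie in $\Xb_\Gamma\subset X_\Gamma$. Together with the newly added edge $\overline{o_1o_2}$, the three vertices $\{o_1,o_2,v\}$ are pairwise adjacent in $X_\Gamma^{(1)}$, and hence span a $2$--simplex of $X_\Gamma$ by flag completion. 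This $2$--simplex provides a homotopy in $X_\Gamma$, rel endpoints, from the new edge $\overline{o_1o_2}$ to the path $\overline{o_1v}\cup\overline{vo_2}$ lying entirely in $\Xb_\Gamma$.

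Given any based loop $\gamma$ in $X_\Gamma$, simplicial approximation allows us to replace $\gamma$ up to homotopy by an edge loop in $X_\Gamma^{(1)}$. Traversing this edge loop, we replace each occurrence of a new edge $\overline{o_1o_2}$ by the corresponding two-edge path $\overline{o_1v}\cup\overline{vo_2}$ in $\Xb_\Gamma$, using the $2$--simplex above to perform the homotopy inside $X_\Gamma$. The resulting loop lies entirely in $\Xb_\Gamma$ and is homotopic to $\gamma$ in $X_\Gamma$. Since $\Xb_\Gamma$ is simply connected, this loop is null-homotopic in $\Xb_\Gamma$, and therefore $\gamma$ is null-homotopic in $X_\Gamma$.

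The only real point requiring care is the combinatorial claim that each newly added edge $\overline{o_1o_2}$ admits a common neighbor $v\in\Pi_1\cap\Pi_2$; this is the main obstacle, but it is immediate from the definition of $\Lambda$ together with Lemma~\ref{cor:connected intersection}(1). Everything else is standard homotopy theory of CW-complexes combined with flag completion.
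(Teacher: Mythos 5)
Your argument is correct and is essentially the same as the paper's proof: both homotope each new edge $\overline{o_1o_2}$ across a $2$--simplex spanned with a vertex $v\in\Pi_1\cap\Pi_2$ into $\Xb_\Gamma$, then invoke simple connectivity of $\Xb_\Gamma$. You supply a bit more detail on why the triangle exists (flag completion plus the fact that $\overline{o_iv}$ already lies in $\Xb_\Gamma$), but the strategy and the decisive observations are identical.
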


\begin{proof}
Let $f$ be an edge of $X_{\Gamma}$ not in $\Xb_\Gamma$. Then there are two cells $\Pi_1$ and $\Pi_2$ such that $f$ connects the interior vertices $o_1\in\Pi_1$ and $o_2\in\Pi_2$. By construction, $\Pi_1$ and $\Pi_2$ are in the same block. Thus $f$ and a vertex of $\Pi_1\cap\Pi_2$ span a triangle. By flagness of $X_{\Gamma}$, $f$ is homotopic rel its end points to the concatenation of other two sides of this triangle, which is inside $\Xb_\Gamma$. 

Now we show that each loop in $X_{\Gamma}$ is null-homotopic. Up to homotopy, we assume this loop is a concatenation of edges of $X_{\Gamma}$. If some edges of this loop are not in $\Xb_\Gamma$, then we can homotop these edges rel their end points to paths in $\Xb_\Gamma$ by the previous paragraph. Thus this loop is homotopic to a loop in $\Xb_{\Gamma}$, which must be null-homotopic since $\Xb_\Gamma$ is simply connected. 
\end{proof}

Next, we assign lengths to edges of $X_\Gamma$ in an $A_\Gamma$--invariant way.

Let $B\subset X_\Gamma$ be a block with its defining edge labeled by $n$. By Lemma~\ref{lem:iso}, there is a simplicial isomorphism $i:B\to X_n$ that is label and orientation preserving. Note that all the edges between real vertices of $X_n$ has the same length, which we denote $p$. We define the length of an edge $e\subset B$ to be length$(i(e))/p$. Then, for each vertex $b\in B$, the isomorphism lk$(b,B^{(1)})\to$lk$(i(b),X^{(1)}_n)$ induced by $i$ preserves the angular lengths of edges. In particular, Proposition~\ref{prop:real link} holds for $B$. 

We repeat this process for each block of $X_\Gamma$. Each edge of $X_\Gamma$ belongs to at least one block, so it has been assigned at least one value of length. If an edge belongs to two different blocks, then both endpoints of this edge are real vertices, hence all values of lengths assigned to this edge equal to $1$ by the previous paragraph. In summary, each edge of $X_{\Gamma}$ has a well-defined length. Moreover, such assignment of lengths is $A_\Gamma$--invariant by Lemma~\ref{lem:action}.

\begin{lemma}
	\label{lem:in block}
Each simplex of $X_\Gamma$ is contained in a block.
\end{lemma}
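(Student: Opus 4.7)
The plan is to handle simplices of $X_\Gamma$ by dimension, with the real work happening in dimension $\geq 2$. First I would dispose of vertices and edges: vertices are trivially in blocks, edges of $\Xb_\Gamma$ lie in some precell and hence in some block, and the remaining edges of $X_\Gamma$---those added when passing from $\Xb_\Gamma$ to $X'_\Gamma$---connect interior vertices of cells inside a single block by construction (Section~\ref{subsec:subividing and adding new edges}). Note also that each interior vertex is the center of a unique precell and therefore lies in exactly one block.

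For a simplex $\sigma$ of dimension $\geq 2$, my strategy is first to locate an interior vertex $u \in \sigma$, let $B$ be its unique block, and then argue that every other vertex $v$ of $\sigma$ lies in $B$: indeed, $\overline{uv}$ is an edge of $X_\Gamma$, hence lies in some block by the previous paragraph, and this block must equal $B$ since $u$ belongs only to $B$. Once every vertex of $\sigma$ is seen to lie in $B$, the fact that $B$ is defined as a full subcomplex of $X_\Gamma$ gives $\sigma \subset B$ immediately.

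The main obstacle is to show that $\sigma$ actually contains an interior vertex, equivalently that no three real vertices of $X_\Gamma$ are pairwise adjacent. The construction introduces no new edges between real vertices, so any edge of $X_\Gamma$ between two real vertices is an edge of $\Xa_\Gamma$ and is labeled by a single generator of $A_\Gamma$. If $v_1, v_2, v_3$ were three pairwise adjacent real vertices then the three labels would yield a length-$3$ relation $s_1^{\epsilon_1} s_2^{\epsilon_2} s_3^{\epsilon_3} = 1$ in $A_\Gamma$ with each $s_i$ a generator and each $\epsilon_i \in \{-1,+1\}$. I would rule this out by applying the length homomorphism $A_\Gamma \to \mathbb Z$ that sends every generator to $1$; it is well-defined because each Artin relation $\underbrace{st\cdots}_n = \underbrace{ts\cdots}_n$ has the same total length on both sides. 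Any length-$3$ word in generators maps to a value in $\{-3,-1,1,3\}$, hence is nontrivial in $A_\Gamma$, producing the required contradiction and completing the reduction.
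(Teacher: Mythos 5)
Your proof is correct and follows the same overall strategy as the paper: locate an interior vertex of the simplex, observe that it lies in a unique block $B$, show that every other vertex is then forced into $B$, and conclude by fullness of $B$. The paper's proof phrases the middle step slightly differently---it directly characterizes the star of an interior vertex (real neighbors lie in the ambient cell $\Pi$, interior neighbors lie in $B$)---whereas you route through your dimension-$1$ case and the uniqueness of the block containing an interior vertex; these are interchangeable.

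The one genuine addition you make is the length-homomorphism argument ruling out a triangle with three real vertices. The paper disposes of this with the bare assertion that a simplex with no interior vertex "is a point, or an edge," while you actually prove it: any triangle of real vertices would yield a relation $s_1^{\epsilon_1}s_2^{\epsilon_2}s_3^{\epsilon_3}=1$, and the homomorphism $A_\Gamma\to\mathbb{Z}$ sending every generator to $1$ (well-defined since both sides of every Artin relation have the same length) sends this word to an odd integer, a contradiction. This is the right elementary argument (equivalently: the Cayley graph of an Artin group is bipartite) and makes the proof self-contained. One small point you might tighten: the claim that an interior vertex lies in exactly one block uses not just that it is the center of a unique precell, but also that any block containing that precell must share its defining edge, and that two blocks with the same defining edge are disjoint or identical; you implicitly use this but don't spell it out.
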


\begin{proof}
Suppose there is an interior vertex $v$ of the simplex $\Delta$. Let $\Pi$ be the cell containing $v$ and $B$ be the unique block containing $v$. Then any real vertex of $X_\Gamma$ adjacent to $v$ is contained in $\Pi$ and any interior vertex of $X_\Gamma$ adjacent to $v$ is contained in $B$. Since $B$ is a full subcomplex, we have $\Delta\subset B$. If $\Delta$ does not contain any interior vertices, then $\Delta$ is a point, or an edge, and the lemma is clear.
\end{proof}

In particular, each triangle of $X_\Gamma$ is contained in a block, its side lengths satisfy the strict triangle inequality by Lemma~\ref{lem:strict triangle inequality}. We define the angular metric on the link of each vertex of $X_\Gamma$ as before.

\begin{lemma}
	\label{lem:2pi large}
Let $v\in X^{(1)}_\Gamma$ be a vertex and let $\Lambda_v=lk(v,X^{(1)}_\Gamma)$.
\begin{enumerate}
	\item The angular lengths of the three sides of each triangle in $\Lambda_v$ satisfy the triangle inequality.
	\item $\Lambda_v$ is $2\pi$--large.
\end{enumerate}
\end{lemma}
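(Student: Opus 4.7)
The proof splits naturally by the type of $v$, and I will handle (1) and the interior case of (2) together. Every triangle in $\Lambda_v$ lies in some $\Lambda_v^B := lk(v,B^{(1)})$ for a block $B\ni v$, because the corresponding $3$--simplex at $v$ in $X_\Gamma$ sits in a block by Lemma~\ref{lem:in block}; moreover, via Lemma~\ref{lem:iso} and the length normalization introduced just before Lemma~\ref{lem:in block}, $\Lambda_v^B$ is identified isometrically, as an angular metric graph, with the corresponding link in $X_n$. Hence Proposition~\ref{prop:real link}(1) gives (1). For (2) with $v$ interior, $v$ lies in a unique cell and thus in a unique block $B$, and every neighbor of $v$ lies in $B$ because the new interior--interior edges are added only within individual blocks; thus $\Lambda_v = \Lambda_v^B$ and Proposition~\ref{prop:real link}(2) applies directly.

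For (2) with $v$ real, I would write $\Lambda_v = \bigcup_B \Lambda_v^B$ indexed by the blocks through $v$, which correspond bijectively to the edges of $\Gamma$. Two key structural facts to establish first: (a) each $\Lambda_v^B$ is a full subgraph of $\Lambda_v$, since a triangle at $v$ whose other two vertices lie in $\Lambda_v^B$ must sit in some block by Lemma~\ref{lem:in block} and in fact in $B$ (each interior vertex of $\Lambda_v^B$ belongs to $B$ alone, and two real vertices of $\Lambda_v$ are never adjacent since no $2$--simplex in $X_\Gamma$ has three real vertices); (b) distinct $\Lambda_v^B$ and $\Lambda_v^{B'}$ meet only in the tip vertices $\{a^i, a^o\}$ where $a$ is a common generator of the defining edges of $B$ and $B'$. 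Let $\sigma$ be a $2$--full simple cycle in $\Lambda_v$. If $\sigma \subset \Lambda_v^B$, fullness implies $\sigma$ is also $2$--full in $\Lambda_v^B$ and Proposition~\ref{prop:real link}(2) concludes. Otherwise, since each edge of $\Lambda_v$ lies in a unique $\Lambda_v^B$ (every triangle at $v$ contains an interior vertex belonging uniquely to one block), $\sigma$ decomposes as a concatenation of $k\ge 2$ maximal arcs $\sigma_1,\ldots,\sigma_k$ with $\sigma_j \subset \Lambda_v^{B_j}$ and consecutive arcs meeting at shared tips.

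Each arc $\sigma_j$ connects two tips of $\Lambda_v^{B_j}$; by Lemma~\ref{lem:distance between real vertices} it has angular length $\ge \pi$ when its endpoints are $\{a^i,a^o\}$ for a common generator, and $\ge \frac{n_j-1}{n_j}\pi \ge \pi/2$ between tips of distinct generators (where $n_j$ is the label of the defining edge of $B_j$). The proof then concludes by a case split on $k$: for $k=2$, two distinct edges of $\Gamma$ share at most one vertex, so the two common endpoints of the two arcs must both be tips of one generator $a$, producing two arcs of length $\ge \pi$; for $k \ge 4$, summing $k$ arcs each of length $\ge \pi/2$ yields $\ge 2\pi$; for $k=3$, if all three transition vertices correspond to distinct generators $a_1,a_2,a_3$, then the defining edges of $B_1,B_2,B_3$ form a triangle in $\Gamma$ and the $2$--dimensionality condition $\sum_j 1/n_j \le 1$ gives $\sum_j \tfrac{n_j-1}{n_j}\pi = \pi\bigl(3-\sum_j 1/n_j\bigr)\ge 2\pi$, while if two of the three transitions share a generator then some arc is of same-generator type with length $\ge \pi$ and the other two contribute $\ge \pi/2$ each. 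The main obstacle is this $k=3$ subcase: it is precisely the place where the $2$--dimensionality of $A_\Gamma$ enters in an essential way, and where the interaction between the combinatorial structure of $\Gamma$ and the angular geometry of the link must be handled carefully.
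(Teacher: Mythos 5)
Your proof is correct and follows essentially the same strategy as the paper. Part (1) and the interior case of (2) are handled identically (via Lemma~\ref{lem:in block}, Lemma~\ref{lem:iso}, and Proposition~\ref{prop:real link}), and for a real vertex you decompose the cycle into maximal arcs lying in single blocks and bound each arc from below using Lemma~\ref{lem:distance between real vertices}, just as the paper does. The main structural difference is that the paper isolates the real-vertex argument into a separate statement (Lemma~\ref{lem:2pi cycle}) which gives a full \emph{classification} of short cycles in the link — that classification is used later in the paper, but only the $\ge 2\pi$ bound is needed here — whereas you prove only the lower bound inline. One small divergence in the $k=3$ case: you treat the sub-case where two transition vertices share a generator by summing $\pi+\pi/2+\pi/2=2\pi$, whereas the paper rules this configuration out by a parity count (if any two of the three transition generators coincide then all three must, forcing the three distinct real vertices $v_0,v_1,v_2$ of the cycle into the two-element set $\{z^i,z^o\}$). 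Your bound works anyway, so this is only an unrecognized vacuity, not an error. A very minor remark: in your structural fact (a), fullness of $\Lambda_v^B$ in $\Lambda_v$ is more than you actually need — being a subcomplex already guarantees that $2$--fullness of $\sigma$ in $\Lambda_v$ implies $2$--fullness in $\Lambda_v^B$, since there are only fewer edges to create chords.
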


\begin{proof}
The $3$--simplex spanned by $v$ and a triangle in $\Lambda_v$ is inside a block by Lemma~\ref{lem:in block}. Then (1) follows from Corollary~\ref{cor:triangle inequality1} and Corollary~\ref{cor:triangle inequality2}.
	
Now we prove (2). If $v$ is an interior vertex, then there is a unique block $B\subset X_{\Gamma}$ containing this vertex, and any other vertex in $X^{(1)}_{\Gamma}$ adjacent to $v$ is contained in this block. Since $B$ is a full subcomplex of $X_{\Gamma}$, lk$(v,X^{(1)}_{\Gamma})=$lk$(v,B^{(1)})$, which is $2\pi$--large by Proposition~\ref{prop:real link}.

We assume $v$ is a real vertex. Let $\omega$ be a $2$--full simple $n$--cycle in lk$(x,X^{(1)}_\Gamma)$ for $n\ge 4$. If $\omega$ is contained in a block, then we know that $\length_\angle(\omega)\ge2\pi$ by Proposition~\ref{prop:real link}. The case when $\omega$ is not contained in a block follows from Lemma~\ref{lem:2pi cycle}.
\end{proof}

\begin{lemma}
	\label{lem:2pi cycle}
	Let $v\in X_\Gamma$ be a real vertex and let $\Lambda_v=lk(v,X^{(1)}_\Gamma)$. Let $\omega$ be a simple cycle with angular length $\le 2\pi$ in the link of $v$. Then exactly one of the following four situations happens:
	\begin{enumerate}
		\item $\omega$ is contained in one block;
		\item $\omega$ travels through two different blocks $B_1$ and $B_2$ such that their defining edges intersect in a vertex $a$, and $\omega$ has angular length $\pi$ inside each block; moreover, there are exactly two vertices in $\omega\cap B_1\cap B_2$ and they correspond to an incoming $a$--edge and an outgoing $a$--edge based at $v$;
		\item $\omega$ travels through three blocks $B_1,B_2,B_3$ such that the defining edges of these blocks form a triangle $\Delta(abc)\subset \Gamma$ and $\frac{1}{n_1}+\frac{1}{n_2}+\frac{1}{n_3}=1$ where $n_1$, $n_2$ and $n_3$ are labels of the edges of this triangle; moreover, $\omega$ is a $6$--cycle with its vertices alternating between real and fake such that the three real vertices in $\omega$ correspond to an $a$--edge, a $b$--edge and a $c$--edge based at $v$;
		\item $\omega$ travels through four blocks such that the defining edges of these blocks form a full $4$--cycle in $\Gamma$; moreover, $\omega$ is a $4$--cycle with one edge of angular length $\pi/2$ in each block.
	\end{enumerate}
Note that in cases (2), (3) and (4), $\omega$ actually has angular length $2\pi$.
\end{lemma}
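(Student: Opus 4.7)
The plan is to decompose $\omega$ into arcs determined by the block structure of $X_\Gamma$, bound each arc using the distance formulas in Lemma~\ref{lem:distance between real vertices}, and carry out a finite case analysis on the number of arcs. By Lemma~\ref{lem:in block}, every edge of $\Lambda_v$ is contained in a unique block of $X_\Gamma$; the real vertices $a^\epsilon$ are exactly the vertices shared among blocks whose defining edge contains the generator $a$, while interior vertices of $\Lambda_v$ belong to a unique block. Cutting $\omega$ at transitions between blocks yields a cyclic decomposition $\omega = \omega_1 \cup \cdots \cup \omega_k$ with each $\omega_i$ lying in a single block $B_i$, consecutive arcs in distinct blocks, and endpoints at real vertices of the form $a^\epsilon$.

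By Lemma~\ref{lem:distance between real vertices}, $\length_\angle(\omega_i) \ge (n_i-1)\pi/n_i \ge \pi/2$ when the two endpoints of $\omega_i$ correspond to different generators, and $\length_\angle(\omega_i) \ge \pi$ when they correspond to the same generator. Since $\length_\angle(\omega)\le 2\pi$, we get $k\le 4$. A short argument then shows that each block occurs at most once in the cyclic sequence: if some block were visited twice, at least one of the resulting arcs would have same-generator endpoints (contributing $\pi$) and, combined with the remaining arcs, would push the total above $2\pi$.

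With these preparations I would case-analyze on $k$. For $k=1$ we are in Case~(1). For $k=2$, the two transitions live in $B_1\cap B_2=\{a^i,a^o\}$ for the unique shared generator $a$ (two generators in common would give the same defining edge and hence $B_1=B_2$), each arc realizes $d_\angle(a^i,a^o)=\pi$, which is Case~(2). For $k=3$, I would show the three transition generators $a,b,c$ are pairwise distinct --- any coincidence either forces two of the $B_i$ to share a defining edge and thereby to coincide, or introduces a same-generator arc pushing the total above $2\pi$ --- so the defining edges form a triangle in $\Gamma$ with labels $n_1,n_2,n_3$, and $(3-\sum 1/n_i)\pi\le 2\pi$ combined with two-dimensionality ($\sum 1/n_i\le 1$) forces $\sum 1/n_i=1$. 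The alternating real/interior $6$-cycle structure in Case~(3) follows because, in a block with label $n$, a path realizing $(n-1)\pi/n$ between two different-generator real vertices must consist of exactly two type~I edges meeting at a single interior vertex (Lemma~\ref{lem:edge type I}). For $k=4$, the same reasoning yields four distinct transition generators, the defining edges form a $4$-cycle in $\Gamma$, and the bound $(4-\sum 1/n_i)\pi\le 2\pi$ forces $n_i=2$ for all $i$, giving Case~(4).

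The main obstacle is the combinatorial bookkeeping in the $k=3$ and $k=4$ cases: precisely ruling out all ways the transition generators could coincide, and showing that two distinct blocks through $v$ cannot share the same defining edge. Once these constraints are settled, the remaining work is the arithmetic of $(n-1)/n$ together with the two-dimensionality inequality on triangle labels, and reading off the fine structure of each extremal configuration from Lemma~\ref{lem:edge type I} and Lemma~\ref{lem:distance between real vertices}.
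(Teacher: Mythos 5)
Your proposal is correct and takes essentially the same route as the paper: decompose $\omega$ into maximal arcs lying in single blocks (using that every edge of $\Lambda_v$ contains an interior vertex, which belongs to a unique block), bound each arc from below via Lemma~\ref{lem:distance between real vertices}, deduce $k\le 4$, and then case-analyze. The paper phrases the $k=3$ analysis via the relation $z_{i+1}\in e_i\cap e_{i+1}$ between the defining edges, together with the observation that two blocks through $v$ with the same defining edge must coincide, rather than by first proving the transition generators are pairwise distinct, but these are equivalent. Your explicit argument ruling out repeated blocks fills in a point the paper leaves implicit for $k=4$ (it simply says ``leads to case (4)''); one minor slip is that Lemma~\ref{lem:in block} gives existence of a containing block, not uniqueness --- the uniqueness for edges of $\Lambda_v$ comes from the interior-vertex observation you state right after, as in the paper.
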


\begin{proof}
Note that each interior vertex of $\Lambda_v$ is contained in a unique block. Since each edge of $\Lambda_v$ contains at least one interior vertex (otherwise we would have a triangle in $X_\Gamma$ with all its vertices being real, which is impossible), each edge of $\Lambda_v$ is contained in a unique block. Thus, there is a decomposition $\omega=\{\omega_i\}_{i\in \mathbb Z/n\mathbb Z}$ such that
\begin{enumerate}
	\item each $\omega_i$ is a maximal sub-path of $\omega$ that is contained in a block (we denote this block by $B_i$);
	\item $\omega_i\cap \omega_{i+1}$ is made of one or two real vertices.
\end{enumerate}
Let $\{v_i\}_{i\in \mathbb Z/n\mathbb Z}$ be real vertices in $\omega$ such that the endpoints of $\omega_i$ are $v_i$ and $v_{i+1}$. It follows from Lemma~\ref{lem:distance between real vertices} that $\length_\angle(\omega_i)\ge \pi/2$. Thus $n\le 4$. The case $n=4$ leads to case (4) in the lemma. It remains to consider the case $n=3$ and $n=2$.

Each $v_i$ arises from an edge between $x$ and $v_i$. This edge is inside $\Xa_\Gamma$, hence it is labeled by a generator of $A_\Gamma$, corresponding to a vertex $z_i\in \Gamma$. Since $v_i$ corresponds to either an incoming, or an outgoing edge labeled by $z_i$, we will also write $v_i=z^i_i$ or $v_i=z^o_i$. Let $e_i$ be the defining edge of $B_i$. Then 
\begin{equation}
\label{eq:intersection}
z_{i+1}\in e_i\cap e_{i+1}.
\end{equation} 

If $n=2$, then \eqref{eq:intersection} implies that $z_0=z_1$. Thus Lemma~\ref{lem:distance between real vertices} (2) implies that $\length_\angle(\omega_i)\ge \pi$ for $i=0,1$. Thus case (2) in the lemma follows.

Suppose $n=3$. Recall that two blocks of $X_\Gamma$ with the same defining edge are either disjoint or identical. Thus $e_i\neq e_{i+1}$ (otherwise both $\omega_i$ and $\omega_{i+1}$ are contained in $B_i$). By \eqref{eq:intersection}, either $z_0=z_1=z_2$, or $e_0$, $e_1$ and $e_2$ span a triangle $\Delta$ in $\Gamma$. The former case is not possible because of the parity. Let $n_i$ be the label of $e_i$. Note that $z_i\neq z_{i+1}$ for each $i$. Hence $\length_\angle(\omega_i)\ge\frac{n_i-1}{2n_i}2\pi$ by Lemma~\ref{lem:distance between real vertices} (1). Thus $\length_\angle(\omega)\ge (3-\frac{1}{n_0}-\frac{1}{n_1}-\frac{1}{n_2})\pi\ge 2\pi$, where the last inequality follows from the fact that $A_\Gamma$ is $2$--dimensional. Case (3) in the lemma follows.
\end{proof}

\section{Ending remarks and open questions}
\label{s:open}

\subsection{Open questions}
The class of metrically systolic complexes contains the class of all $2$--dimensional $CAT(0)$ piecewise Euclidean simplicial complexes and the class of systolic complexes. This motivates the following natural questions.

\begin{question}
Let $G$ be a metrically systolic group. Is every abelian subgroup of $G$ quasi-isometrically embedded? Are solvable subgroups of $G$ virtually abelian?
\end{question}
At least we know the answer is positive for $2$--dimensional Artin groups. A proof is given in Section~\ref{subsec:abelian}. See \cite[Chapter II.7]{BridsonHaefliger1999} and \cite{OsajdaPrytula} for results in the CAT(0) and systolic settings.

\begin{question}
Let $G$ be a metrically systolic group. Is the centralizer of any infinite order element in $G$ abstractly commensurable with $F_k\times\mathbb Z$? Here $F_k$ is the free group with $k$ generators and $F_0$ denotes the trivial group.
\end{question}

The answer is affirmative for systolic groups \cite{OsajdaPrytula}.
\begin{question}
Are metrically systolic groups semihyperbolic? Biautomatic?
\end{question}

Biautomaticicty for systolic groups has been established by Januszkiewicz-\' Swi\c atkowski \cite{JanuszkiewiczSwiatkowski2006,Swiatkowski2006}.

\begin{question}
Does every finite group acting on a metrically systolic complex fix a point? 	
\end{question}

A fixed point theorem for CAT(0) spaces follows from convexity of the distance function \cite[Chapter II.2]{BridsonHaefliger1999}. A fixed point theorem for systolic complexes has been proven in \cite{ChepoiOsajda}.

\begin{question}
	\label{q:contractibility}
Let $X$ be a metrically systolic complex. Is $X$ contractible? Does $X$ satisfy $S^kFRC$ in the sense of \cite{JanuszkiewiczSwiatkowski2007} for all $k\ge 2$?
\end{question}

It is proved in Section~\ref{s:sfrc} that $X$ has trivial second homotopy group and $X$ is $S^2 FRC$. It is proved in \cite{Chepoi2000,JanuszkiewiczSwiatkowski2007} that the answer to Question~\ref{q:contractibility} is affirmative for systolic complexes.

\begin{question}
Is there a notion of boundary for metrically systolic complexes which generalizes both the $CAT(0)$ case and the systolic case?
\end{question}

See \cite[Chapter II.8]{BridsonHaefliger1999} for the definition of $CAT(0)$ boundaries and \cite{OsajdaPrzytycki} for the definition of boundaries of systolic complexes.

\subsection{Abelian and solvable subgroups}
\label{subsec:abelian}
For each Artin group $A_\Gamma$, Charney and Davis \cite{CharneyDavis} defined an associated \emph{modified Deligne complex} $D_\Gamma$. Now we recall their construction in the $2$--dimensional case. Vertices of $D_\Gamma$ are in one to one correspondence with left cosets of the form $gA_{\Gamma'}$, where $g\in A_\Gamma$ and $\Gamma'$ is either the empty-subgraph of $\Gamma$ (in which case $A_{\Gamma'}$ is the identity subgroup), or a vertex of $\Gamma$, or an edge of $\Gamma$. The \emph{rank} of a vertex $gA_{\Gamma'}$ is the number of vertices in $\Gamma'$. Note that the set $V$ of the vertices has a partial order induced by inclusion of sets. A collection $\{v_i\}_{i=1}^k\subset V$ spans a $(k-1)$--dimensional simplex if $\{v_i\}_{i=1}^k$ form a chain with respect to the partial order. It is clear that $D_\Gamma$ is a $2$--dimensional simplicial complex, and $A_\Gamma$ acts on $D_\Gamma$ without inversions, i.e.\ if an element of $A_\Gamma$ fixes a simplex of $D_\Gamma$, then it fixes the simplex pointwise.

We endow $D_\Gamma$ with a piecewise Euclidean metric such that each triangle $\Delta(g_1,g_2A_s,g_3A_{\overline{st}})$ is a Euclidean triangle with angle $\pi/2$ at $g_2A_s$ and angle $\frac{\pi}{2n}$ at $g_3A_{\overline{st}}$ with $n$ being the label of the edge $\overline{st}$ of $\Gamma$. By \cite[Proposition 4.4.5]{CharneyDavis}, $D_\Gamma$ is $CAT(0)$ with such metric. As being observed in \cite[Lemma 6]{MR2174269}, the action $A_\Gamma\act D_\Gamma$ is semisimple.

\begin{theorem}
	\label{t:absbgrp}
	Let $A_\Gamma$ be a $2$--dimensional Artin group. Then every abelian subgroup of $A_\Gamma$ is quasi-isometrically embedded.
\end{theorem}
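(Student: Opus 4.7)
My approach will exploit the action of $A_\Gamma$ on the modified Deligne complex $D_\Gamma$ recalled above: this action is cocompact and semisimple, and $D_\Gamma$ is a $2$--dimensional $CAT(0)$ piecewise Euclidean complex. The second key ingredient is the Charney--Paris convexity theorem \cite{charney2014convexity}, which implies that every standard parabolic subgroup $A_{\Gamma'}\hookrightarrow A_\Gamma$ is quasi-isometrically embedded. Given an abelian subgroup $A\subseteq A_\Gamma$, I would analyze the action of $A$ on $D_\Gamma$ and argue by cases on whether its elements act elliptically or hyperbolically.

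First I would treat the cyclic case. Given $g\in A_\Gamma$ of infinite order, semisimplicity forces $g$ to be either hyperbolic or elliptic on $D_\Gamma$. If $g$ is hyperbolic, with translation length $\tau(g)>0$, then $\tau(g^n)=|n|\tau(g)$, while cocompactness of $A_\Gamma\act D_\Gamma$ supplies a constant $C>0$ such that $\tau(h)\le C|h|_S$ for every $h\in A_\Gamma$; combining these inequalities yields $|g^n|_S\ge |n|\tau(g)/C$, so $\langle g\rangle \hookrightarrow A_\Gamma$ is a quasi-isometric embedding. If $g$ is elliptic, it fixes a vertex of $D_\Gamma$ and hence lies in a conjugate of a standard parabolic $A_{\Gamma'}$ with $|\Gamma'|\le 2$. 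Both infinite cyclic subgroups and dihedral Artin groups act geometrically on $CAT(0)$ $2$--complexes (for the dihedral case see \cite{BradyMcCammond2000}), so cyclic subgroups are quasi-isometrically embedded in them, and Charney--Paris transports the embedding up to $A_\Gamma$.

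For the rank two case, let $A\cong\mathbb{Z}^2$. If $A$ consists entirely of elliptic elements then, since commuting elliptic isometries of a $CAT(0)$ space have a common fixed point (iterated circumcenter argument: any element commuting with $a$ preserves the convex subcomplex $\mathrm{Fix}(a)$ and, being elliptic, fixes a point there), $A$ sits in a vertex stabilizer of $D_\Gamma$, hence after conjugation inside a dihedral parabolic $A_{\overline{st}}$. Since $A_{\overline{st}}$ acts geometrically on a $CAT(0)$ $2$--complex, its $\mathbb{Z}^2$--subgroups are quasi-isometrically embedded, and Charney--Paris concludes. Otherwise $A$ contains a hyperbolic element $g$; the whole group $A$ preserves $\mathrm{Min}(g)=C\times\mathbb{R}\subseteq D_\Gamma$ and acts diagonally. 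Using $\dim D_\Gamma=2$ we have $\dim C\le 1$, so a careful application of the algebraic flat torus theorem \cite[Ch.~II.7]{BridsonHaefliger1999}, together with torsion-freeness of $A_\Gamma$, produces either (i) a convex flat $\mathbb{E}^2\subseteq D_\Gamma$ on which $A$ acts cocompactly by translations, in which case the orbit map $A\to \mathbb{E}^2$ is a quasi-isometry, and combined with the bound $d(x_0,hx_0)\le D|h|_S$ from cocompactness this gives $|g|_A\le K(D|g|_S+L)$, yielding the desired quasi-isometric embedding $A\hookrightarrow A_\Gamma$; or (ii) a trap of $A$ inside a standard parabolic, handled as in the purely elliptic case.

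The main obstacle is case (ii) above, when $A\cong\mathbb{Z}^2$ contains a mixture of hyperbolic and elliptic elements. The difficulty is that the centralizer of an elliptic element of $A_\Gamma$ is not a priori a standard parabolic, so trapping the hyperbolic partner $b$ of an elliptic element $a$ requires genuine argument. The plan is to use that $b$ must preserve the convex subcomplex $\mathrm{Fix}(a)\subseteq D_\Gamma$, and hence act on it by isometries; combined with the explicit description of vertex and edge stabilizers of $D_\Gamma$ (trivial, infinite cyclic, or dihedral Artin) and the $2$--dimensionality of $A_\Gamma$, one should be able to force $A$ into a single dihedral parabolic $A_{\overline{st}}$ after conjugation, thereby closing the loop.
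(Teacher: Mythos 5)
Your overall architecture matches the paper's: act on the Charney--Davis modified Deligne complex, use semisimplicity and $2$--dimensionality, and cite Charney--Paris for standard parabolics. The rank--one case is essentially identical. The genuine difference, and the genuine gap, is in the mixed rank--two case, and you yourself flag it.

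You propose to handle the case where $A\cong\mathbb Z^2$ contains both an elliptic $a$ and a hyperbolic $b$ by forcing $A$ into a single conjugate of a dihedral parabolic. This does not work, and the obstruction is not merely technical. In $D_\Gamma$ the stabilizer of a $2$--simplex is trivial (one of its three vertices has trivial stabilizer), so $\operatorname{Fix}(a)$ is at most one--dimensional. The hyperbolic element $b$ preserves $\operatorname{Fix}(a)$, so its axis $\ell$ lies inside $\operatorname{Fix}(a)$. But $b$ translates along $\ell$ and therefore does not fix any vertex of $\ell$; in particular $b$ is not contained in the stabilizer of any vertex of $D_\Gamma$, hence not in any conjugate of a standard parabolic. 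So $\langle a,b\rangle$ cannot be conjugated into a dihedral parabolic, and your case (ii) cannot ``close the loop'' the way you hope. This is exactly why the paper does not attempt such a reduction.

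Instead, the paper handles the line case directly and quantitatively. It notes that the translation--length homomorphism $h\colon A\to\mathbb R$ has discrete image (translation lengths of cellular isometries of a piecewise--Euclidean complex with finitely many shapes of cells are bounded away from zero), so after passing to a finite--index subgroup one may take $A=\langle a_1,a_2\rangle$ with $a_1$ elliptic and $a_2$ genuinely translating along $\ell$. It then composes an orbit map with the CAT(0) nearest--point projection $D_\Gamma\to\ell$ to get a Lipschitz map $p\colon A_\Gamma\to\ell$ that kills the $a_1$--direction, and obtains a lower bound on $d_{A_\Gamma}(n_1a_1+n_2a_2,\ast)$ by splitting into the regimes $|n_1|\gg|n_2|$ (controlled by the already--established quasi--isometric embedding of $\langle a_1\rangle$) and $|n_1|\lesssim|n_2|$ (controlled by $p$). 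You need this kind of two--regime estimate; the ``push $A$ into a parabolic'' route simply has no chance here.

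Two smaller points. First, for the purely elliptic $\mathbb Z^2$ case you invoke a common fixed point for commuting elliptics via circumcenters; this is fine in the CAT(0) setting, but note that the paper obtains a fixed point directly from the cited trichotomy and then uses the no--inversions property of the action to get a fixed \emph{vertex}, which is what lets one conjugate into a parabolic. Second, when you obtain a flat plane $\mathbb E^2$ and claim the orbit map $A\to\mathbb E^2$ is a quasi--isometry, you should justify the lower Lipschitz bound on $d_{A_\Gamma}$ rather than only the upper one from cocompactness: the point is again that the orbit map $A_\Gamma\to D_\Gamma$ composed with projection to the flat is Lipschitz, so $|g|_S\gtrsim d_{\mathbb E^2}(gx_0,x_0)\asymp \|g\|_A$, which mirrors the paper's estimate in the line case.
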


\begin{proof}
	Let $A\subset A_\Gamma$ be an abelian subgroup. By \cite[Theorem B]{CharneyDavis} and \cite[Corollary 1.4.2]{CharneyDavis}, the presentation complex of $A_\Gamma$ is a $K(A_\Gamma,1)$ space. Thus $A$ is a free abelian with rank $\le 2$. First we assume $A\cong \mathbb Z$. Since $A_\Gamma\act D_\Gamma$ is semisimple, by \cite[Chapter II.6]{BridsonHaefliger1999}, either $A$ acts by translation on a $CAT(0)$ geodesic line $\ell\subset D_\Gamma$, or $A$ fixes a point $x\in D_\Gamma$. In the former case, we conclude $A$ is quasi-isometrically embedded by noting that any orbit map from $A_\Gamma$ to $D_\Gamma$ is coarsely Lipschitz. In the later case, since $A_\Gamma$ acts on $D_\Gamma$ without inversions, $A$ fixes a vertex in $D_\Gamma$. Thus, up to conjugation, we may assume that $A$ is contained in a standard subgroup $A_{\Gamma'}$ with $\Gamma'$ being a vertex or an edge. Any dihedral Artin group is $CAT(0)$ \cite{BradyMcCammond2000}, so $A$ is quasi-isometrically embedded in $A_{\Gamma'}$ (alternatively, dihedral Artin groups are C(4)-T(4) \cite{Pride1986}, hence they are biautomatic \cite{GerstenShort1991sc}). By \cite[Theorem 1.2]{charney2014convexity}, $A_{\Gamma'}$ is quasi-isometrically embedded in $A_\Gamma$. Hence $A$ is quasi-isometrically embedded in $A_\Gamma$.
	
	Now we assume $A\cong\mathbb Z\oplus\mathbb Z$. By \cite[Theorem II.7.20]{BridsonHaefliger1999}, either there is an $A$--invariant flat plane $P\subset X_\Gamma$ upon which $A$ acts geometrically, or there is an $A$--invariant $CAT(0)$ geodesic line $\ell\subset X_\Gamma$ upon which $A$ acts cocompactly, or $A$ fixes a point. The first and the third case can be handled in a similar way. Now we assume the second case. There is a group homomorphism $h:A\to \mathbb R$ by considering translation length of elements of $A$ along $\ell$. Since $A$ acts on $D_\Gamma$ by cellular isometries, there exists $\epsilon>0$ such that any element of $A$ with nonzero translation length has translation length $>\epsilon$. Hence $h(A)\cong \mathbb Z$. Thus by passing to a finite index subgroup of $A$, we assume $A=\langle a_1,a_2\rangle$ such that $a_1$ fixes a point in $\ell$ and $a_2$ has nonzero translation length. 
	
	Let $p\colon A_\Gamma\to \ell$ be the composition of an orbit map from $A_\Gamma$ to $D_\Gamma$ and the $CAT(0)$ nearest point projection from $D_\Gamma$ to $\ell$. Then there exists $L>0$ such that $p$ is $L$--Lipschitz. Suppose the translation length of $a_2$ is $L'$. For $i=1,2$, suppose $\langle a_i\rangle\to A_\Gamma$ is an $L_i$--bi-Lipschitz embedding. For $k=n_1a_1+n_2a_2\in A$, let $\lVert k\rVert_{\infty}=\max\{|n_1|,|n_2|\}$. Let $\ast$ be the identity element in $A_\Gamma$ and let $d_{A_\Gamma}$ denote the word metric on $A_\Gamma$ with respect to the standard generating set. If $|n_1|\ge 2L_1L_2|n_2|$, then 
	$$d_{A_\Gamma}(k,\ast)\ge \frac{|n_1|}{L_1}-L_2n_2\ge \frac{|n_1|}{2L_1}\ge \frac{1}{2L_1}\lVert k\rVert_{\infty}.$$
	If $|n_1|<2L_1L_2|n_2|$, then 
	\begin{align*}
	d_{A_\Gamma}(k,\ast)&\ge L^{-1} d_{\ell}(p(n_1a_1+n_2a_2),p(\ast))=L^{-1} d_{\ell}(p(n_2a_2),p(\ast))\\
	&\ge L^{-1}L'|n_2|>\frac{L'}{2LL_1L_2}\lVert k\rVert_{\infty}.
	\end{align*}
	Now we conclude from the above two inequalities that $A\to A_\Gamma$ is a quasi-isometric embedding.
\end{proof}

\begin{corollary}
	\label{c:solv}
	Nontrivial solvable subgroups of $2$--dimensional Artin groups are either $\mathbb Z$ or virtually $\mathbb Z^2$.
\end{corollary}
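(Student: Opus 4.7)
The plan is to combine the cohomological dimension bound $\mr{cd}(A_\Gamma) = 2$ with Theorem~\ref{t:absbgrp}(1) and the semisimple CAT(0) action of $A_\Gamma$ on the Deligne complex $D_\Gamma$, showing that any nontrivial solvable subgroup is (virtually) polycyclic of Hirsch length at most $2$, and then invoking the classification of such groups.

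First, I would note that $A_\Gamma$ is torsion-free by \cite{CharneyDavis} and admits a $2$--dimensional $K(\pi, 1)$. Hence every subgroup $H \leq A_\Gamma$ is torsion-free with $\mr{cd}(H) \leq 2$; in particular, every finitely generated abelian subgroup of $H$ is free abelian of rank at most $2$. By Theorem~\ref{t:absbgrp}(1), such abelian subgroups are additionally undistorted in $A_\Gamma$. Moreover, discreteness of translation lengths of semisimple isometries of $D_\Gamma$ (which are bounded below by a positive constant, due to cocompactness of $A_\Gamma \act D_\Gamma$ and the discrete orbit structure) rules out infinitely generated torsion-free abelian subgroups such as $\mathbb{Z}[1/n]$ or $\mathbb{Q}$, each of which would yield elements with arbitrarily small translation length.

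Now let $H$ be nontrivial solvable and let $A$ be the last nontrivial term of its derived series, so that $A \trianglelefteq H$ is abelian; by the above, $A \cong \mathbb{Z}$ or $A \cong \mathbb{Z}^2$. In the case $A \cong \mathbb{Z}^2$, I would apply the Flat Torus Theorem \cite[Ch.\ II.7]{BridsonHaefliger1999} to the semisimple action $A \act D_\Gamma$ to obtain an $A$--invariant flat plane $P \subset D_\Gamma$ on which $A$ acts cocompactly. The normality of $A$ in $H$ forces $H$ to preserve the minset of $A$, which splits isometrically as $P \times Y$, and the induced action of the torsion-free quotient $H/A$ on the flat torus $P/A$ factors through a finite group. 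Hence $[H : A] < \infty$ and $H$ is virtually $\mathbb{Z}^2$.

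If $A \cong \mathbb{Z}$, I would iterate the argument: the conjugation action of $H$ on $A$ factors through $\mathrm{Aut}(\mathbb{Z}) = \mathbb{Z}/2$, so the centralizer $C := C_H(A)$ has index at most $2$ in $H$. Applying the analysis to the derived series of $C$, and invoking Theorem~\ref{t:absbgrp}(1) at each stage to exclude Baumslag--Solitar-type distorted extensions (since $\mathrm{BS}(1,n)$ contains the exponentially distorted abelian subgroup $\mathbb{Z}[1/n]$, already excluded above), one shows that $H$ is virtually polycyclic of Hirsch length at most $2$. As $H$ is torsion-free, it is then isomorphic to $\mathbb{Z}$, $\mathbb{Z}^2$, or the Klein bottle group $\langle a, b \mid a b a^{-1} b \rangle$, all of which satisfy the conclusion. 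The main obstacle in this plan is the inductive step in the case $A \cong \mathbb{Z}$: one must carefully exclude distorted solvable extensions of $\mathbb{Z}$, and Theorem~\ref{t:absbgrp}(1) is the decisive tool for doing so.
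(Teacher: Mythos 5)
Your plan correctly identifies the two decisive inputs that drive the paper's own proof: the cohomological dimension bound $\mr{cd}(A_\Gamma)\le 2$ from \cite{CharneyDavis}, and Theorem~\ref{t:absbgrp} (abelian subgroups undistorted / translation lengths discrete). But the paper's proof then simply cites a known black-box result, \cite[Theorem 3.4]{Conner2000}: in a translation discrete group, every solvable subgroup of finite cohomological dimension is virtually $\mathbb Z^n$. With that citation plus $\mr{cd}\le 2$, the corollary is immediate. Your proposal instead attempts to re-derive Conner's theorem from scratch in this special case via the derived series, the Flat Torus Theorem, and a centralizer/iteration argument. This is a genuinely different (and considerably more laborious) route, and as written it has real gaps.

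Two concrete problems. First, in the $A\cong\mathbb Z^2$ case you invoke ``the torsion-free quotient $H/A$,'' but $H/A$ need not be torsion-free even though $H$ is: for instance for the Klein bottle group $K$ one has $K^{(1)}\cong\mathbb Z$ and $K/K^{(1)}\cong\mathbb Z/2\times\mathbb Z$. The finiteness of $[H:A]$ in that case still needs an argument, but not via torsion-freeness of the quotient. Second, and more seriously, the $A\cong\mathbb Z$ branch is not an argument: ``applying the analysis to the derived series of $C$'' and ``invoking Theorem~\ref{t:absbgrp}(1) at each stage to exclude Baumslag--Solitar-type distorted extensions'' is precisely the content of Conner's theorem that you would need to prove, and the iteration as stated does not close. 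One cannot simply induct on derived length because the relevant quotients (e.g.\ $C/A$ for $C=C_H(A)$) are not subgroups of $A_\Gamma$, so neither Theorem~\ref{t:absbgrp} nor the $\mr{cd}\le 2$ bound applies to them directly; controlling $C/A$ via the central extension $1\to A\to C\to C/A\to 1$ requires a cohomological-dimension argument (Stallings--Swan, Bieri) that your sketch does not supply. The short way through is the paper's: establish translation discreteness and then quote \cite[Theorem 3.4]{Conner2000}.
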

\begin{proof}
	Observe that, by Theorem~\ref{t:absbgrp} $A_{\Gamma}$ is translation discrete, and by \cite[Theorem 3.4]{Conner2000} 
	solvable subgroups of finite cohomological dimension in translation discrete groups are virtually $\mathbb Z^n$. Since, by \cite[Theorem B]{CharneyDavis} and \cite[Corollary 1.4.2]{CharneyDavis}
	the cohomological dimension is in our case at most $2$ we have the assertion.
\end{proof}


\bibliography{mybib}{}
\bibliographystyle{plain}
\end{document}